\documentclass[12pt]{article}                
\usepackage{graphicx, xcolor}
\usepackage{psfrag}
\usepackage{amsmath, amssymb, amsthm}
\usepackage{mathptmx}
\usepackage{esvect, bm, bbm}
\usepackage{hyperref}
\usepackage{cleveref}
\usepackage{enumitem}

\hypersetup{
	colorlinks=true,
	linkcolor={blue!80!black},
	citecolor={green!50!black},
	urlcolor={red!50!black}
}

\textwidth=6.5in
\textheight=8.9in
\topmargin=-0.6in
\oddsidemargin=0.1in
\evensidemargin=0.1in
\parindent=8mm
\parskip= 3pt
\frenchspacing

\newcommand{\N}{\mathbb{N}}
\newcommand{\Z}{\mathbb{Z}}
\newcommand{\Q}{\mathbb{Q}}
\newcommand{\R}{\mathbb{R}}
\newcommand{\Rp}{\mathbb{R}_{\geq 0}}
\newcommand{\Rpp}{\mathbb{R}_{>0}}
\newcommand{\1}{\mathbbm{1}}

\newcommand{\D}{\mathcal{D}}
\newcommand{\C}{\mathsf{C}}
\newcommand{\CC}{\mathcal{C}}
\newcommand{\EE}{\mathcal{E}}
\newcommand{\F}{\mathsf{F}}
\newcommand{\G}{\mathcal{G}}
\newcommand{\tG}{\widetilde{G}}
\renewcommand{\H}{\mathsf{H}}
\newcommand{\HH}{\mathcal{H}}

\newcommand{\I}{\mathcal{I}}
\newcommand{\K}{\mathcal{K}}
\renewcommand{\L}{\mathcal{L}}
\newcommand{\m}{\overline{m}}
\newcommand{\M}{\mathcal{M}}

\newcommand{\p}{\varphi}

\renewcommand{\P}{\mathbb{P}}

\newcommand{\TV}{\mathrm{TV}}
\newcommand{\td}{\widetilde}
\renewcommand{\bar}{\overline}

\newcommand{\abs}[1]{\lvert#1\rvert}
\newcommand{\norm}[1]{\lVert#1\rVert}
\newcommand{\Norm}[1]{{\vert\kern-0.1ex\vert\kern-0.1ex\vert#1\vert\kern-0.1ex\vert\kern-0.1ex\vert}}
\newcommand{\dNorm}[2]{{\vert\kern-0.1ex\vert\kern-0.1ex\vert#2\vert\kern-0.1ex\vert\kern-0.1ex\vert}_{#1,*}}

\newcommand{\ud}{\, \mathrm{d}}

\DeclareMathOperator{\E}{\mathbb{E}}
\DeclareMathOperator{\Var}{\operatorname{Var}}

\DeclareMathOperator{\Par}{\mathcal{P}}

\DeclareMathOperator{\BigO}{\operatorname{\mathcal{O}}}

\DeclareMathOperator{\Ber}{\operatorname{Ber}}
\DeclareMathOperator{\Poi}{\operatorname{Poi}}
\DeclareMathOperator{\Exp}{\operatorname{Exp}}
\DeclareMathOperator{\Bin}{\operatorname{Bin}}

\newcommand{\step}[1]{{\underline{\textit {Step #1}}}}

\newtheorem{theorem}{Theorem}[section]
\newtheorem{lemma}[theorem]{Lemma}
\newtheorem{proposition}[theorem]{Proposition}
\newtheorem{corollary}[theorem]{Corollary}
\newtheorem{definition}[theorem]{Definition}
\newtheorem{conjecture}{Conjecture}
\newtheorem*{acknowledgements}{Acknowledgements}

\theoremstyle{remark}
\newtheorem{remark}[theorem]{Remark}

\numberwithin{equation}{section}

\begin{document}

\title{\vspace{-1cm}Mutual information for the sparse \\
stochastic block model
}
\author{Tomas Dominguez\thanks{\textsc{\tiny Department of Mathematics, University of Toronto, tomas.dominguezchiozza@mail.utoronto.ca}} \and  Jean-Christophe Mourrat\thanks{\textsc{\tiny Department of Mathematics, ENS Lyon and CNRS, jean-christophe.mourrat@ens-lyon.fr}}}

\date{}
\maketitle
\vspace*{-0.7cm}
\begin{abstract}
We consider the problem of recovering the community structure in the stochastic block model with two communities. We aim to describe the mutual information between the observed network and the actual community structure in the sparse regime, where the total number of nodes diverges while the average degree of a given node remains bounded. Our main contributions are a conjecture for the limit of this quantity, which we express in terms of a Hamilton-Jacobi equation posed over a space of probability measures, and a proof that this conjectured limit provides a lower bound for the asymptotic mutual information. The well-posedness of the Hamilton-Jacobi equation is obtained in our companion paper. In the case when links across communities are more likely than links within communities, the asymptotic mutual information is known to be given by a variational formula. We also show that our conjectured limit coincides with this formula in this case.
\end{abstract}

\section{Introduction and main results}

The stochastic block model is the simplest generative model for networks with a community structure. It was first introduced in the machine learning and statistics literature \cite{Finberg, Holland, Yuchung, Harrison} but soon emerged independently in a variety of other scientific disciplines. In the theoretical computer science community it is often termed the planted partition model \cite{Boppana, Bui, Dyer} while the mathematics literature often refers to it as the inhomogeneous random graph model \cite{Bollobas}. Since its introduction, the stochastic block model has become a test bed for clustering and community detection algorithms used in social networks \cite{Newman_social}, protein-to-protein interaction networks \cite{Chen_protein}, recommendation systems \cite{Linden}, medical prognosis \cite{Sorlie}, DNA folding \cite{Cabreros}, image segmentation \cite{Jianbo} and natural language processing \cite{Ball} among others. In this paper we focus on the sparse stochastic block model with two communities which we now describe.

Consider $N$ individuals belonging to exactly one of two communities; it will be convenient to describe the communities using vectors of $\pm 1$ with the agreement that people with the same label belong to the same group. In this way, a vector
\begin{equation}
\sigma^*=\big(\sigma^*_1,\ldots,\sigma^*_N\big)\in \Sigma_N=\{-1,+1\}^N
\end{equation}
can be used to encode the two communities. The labels $\sigma^*_i\sim P^*$ are taken to be i.i.d.\@ Bernoulli random variables with probability of success $p\in (0,1)$ and expectation $\m$,
\begin{equation}
p=P^*(1)=\P\{\sigma_i^*=1\} \quad \text{and} \quad \m=\E \sigma_1^*=2p-1.
\end{equation}
The case $p=1/2$ is termed the symmetric stochastic block model, and for reasons that will become apparent below has received the greatest attention. The assignment vector $\sigma^*$ follows a product distribution,
\begin{equation}
\sigma^*\sim P_N^*=(P^*)^{\otimes N},
\end{equation}
so the expected sizes of the communities are $Np$ and $N(1-p)$. Using the assignment vector $\sigma^*$, a random undirected graph $\mathbf{G}_N = (G_{ij})_{i,j \le N}$ with vertex set $\{1,\ldots,N\}$ is constructed by stipulating that an edge between node $i$ and node $j$ is present with conditional probability
\begin{equation}\label{eqn: SBM edge probabilities aNbN}
\P\big\{G_{ij}=1|\sigma^*\big\}=\begin{cases}
a_N & \text{if } \sigma^*_i=\sigma^*_j\\
b_N & \text{if } \sigma^*_i\neq \sigma^*_j
\end{cases}
\end{equation}
for some $a_N,b_N\in (0,1)$ independently of all other edges. In other words, the probability that an edge is present between node $i$ and node $j$ depends only on whether or not the individuals $i$ and $j$ belong to the same community. To express \eqref{eqn: SBM edge probabilities aNbN} more succinctly, it is convenient to introduce the average and the gap of $a_N$ and $b_N$,
\begin{equation}
c_N=\frac{a_N+b_N}{2} \quad \text{and} \quad \Delta_N=\frac{a_N-b_N}{2}\in (-c_N,c_N),
\end{equation}
in such a way that
\begin{equation}\label{eqn: SBM edge probabilities cNdN}
\P\{G_{ij}=1|\sigma^*\}=c_N+\sigma^*_i\sigma^*_j\Delta_N.
\end{equation}
The data $\mathbf{G}_N=(G_{ij})$ is said to be sampled from the stochastic block model, and the inference task is to recover the assignment vector $\sigma^*$ as accurately as possible given the graph $\mathbf{G}_N$. In the case when $\Delta_N\leq 0$, it is more likely for an edge to be present between nodes in different communities and the model is called \emph{disassortative}. When $\Delta_N>0$ connections are more likely between individuals in the same community and the model is termed \emph{assortative}. 

Recently, the stochastic block model has attracted much renewed attention. On a practical level, we mention for instance extensions allowing for overlapping communities \cite{Airoldi} that have proved to be a good fit for real data sets in massive networks \cite{Gopalan}. On a theoretical level, the predictions put forth in \cite{DKMZ} using deep but non-rigorous statistical physics arguments have been particularly stimulating. The theoretical study of the stochastic block model has seen significant progress in two main directions: exact recovery and detection. The exact recovery task aims to determine the regimes of $a_N$ and $b_N$ for which there exists an algorithm that completely recovers the two communities with high probability. Of course, a necessary condition for exact recovery is connectivity of the random graph $\mathbf{G}_N$; this makes exact recovery impossible in the sparse regime. The sharp threshold for exact recovery was obtained in \cite{Abbe_recovery, Mossel_recovery}, where it was shown that in the symmetric dense regime, $p=1/2$, $a_N=a\log(N)/N$ and $b_N=b\log(N)/N$,  exact recovery is possible, and efficiently so, if and only if $\sqrt{a} - \sqrt{b} \ge 2 $. On the other hand, the detection task is to construct a partition of the graph $\mathbf{G}_N$ that is positively correlated with the assignment vector $\sigma^*$ with high probability. The sharp threshold for detection in the sparse regime was obtained in \cite{Massoulie, Mossel_reconstruction, Mossel_detection}, where it was shown that in the symmetric sparse regime, $p=1/2$, $a_N=a/N$ and $b_N=b/N$, detection is solvable, and efficiently so, if and only if $(a-b)^2>2(a+b)$. Notice that detection is much easier in the asymmetric case \cite{Caltagirone}. Indeed, the expected degree of node $i$ conditional on its community membership is given by
\begin{equation}
\E[\deg(i)|\sigma^*_i]=(N-1)(c_N+\m\Delta_N\sigma^*_i),
\end{equation}
so meaningful information about the community structure is revealed from the degrees of nodes.

Despite this clear picture regarding the thresholds for exact recovery and detection in the setting of two communities, several questions remain open. In this paper, we focus on the problem of quantifying exactly how much information about the communities can be recovered by observing the graph $\mathbf{G}_N$. The mutual information between the assignment vector $\sigma^*$ and the random graph~$\mathbf{G}_N$ is defined by
\begin{equation}\label{eqn: SBM mutual information}
I(\mathbf{G}_N; \sigma^*)=\E\log\frac{\P(\mathbf{G}_N,\sigma^*)}{\P(\mathbf{G}_N)\P(\sigma^*)}=\E\log \frac{\P(\mathbf{G}_N|\sigma^*)}{\P(\mathbf{G}_N)}.
\end{equation}
The asymptotic value of this mutual information has been computed in the dense regime \cite{Barbier_dense, Deshpande, Lelarge} and the sparse disassortative regime \cite{Abbe_disassortative, CKPZ}. Its determination in the assortative sparse regime has proved more challenging. After we posted a first version of this paper to the arXiv, this problem was resolved in \cite{yu2022ising} in the case $p = 1/2$, building upon the earlier works \cite{Abbe_assortative, kanade2016global, mossel2016belief, mossel2016local}. The approach developed there does not generalize well to more complex models such as when more than two communities are present \cite{gu2023uniqueness}. In contrast, our aim here is to propose a new approach to the analysis of the community detection problem that would be robust to model modifications. 
%While our results are still partial, this approach could also inform the exact identification of the detection threshold in settings with more than two communities. 
We will discuss in much greater detail the relationship between these very recent works and our contribution near the end of the introduction; see also \cite{Abbe_book, Abbe_more_communities, Abbe_learning} for more on problems with more than two communities.

Henceforth, we focus exclusively on the sparse stochastic block model with
\begin{equation}
c_N=\frac{c}{N} \quad \text{and} \quad \Delta_N=\frac{\Delta}{N}
\end{equation}
for some $c>0$ and some non-zero $\Delta\in (-c,c)$. The case $\Delta=0$ is trivial since it corresponds to the case where the graph $\mathbf{G}_N$ and the assignment vector $\sigma^*$ are independent. 
The probability \eqref{eqn: SBM edge probabilities cNdN} that an edge is present between node $i$ and node $j$ becomes
\begin{equation}\label{eqn: SBM edge probabilities}
\P\{G_{ij}=1|\sigma^*\}=\frac{c+\Delta \sigma_i^*\sigma_j^*}{N}
\end{equation}
for the family of conditionally independent Bernoulli random variables $\mathbf{G}_N=(G_{ij})$. The expected degree of any node $i$ remains bounded with $N$,
\begin{equation}
\E \deg(i)=\frac{N-1}{N}\big(c+\Delta \m^2\big),
\end{equation}
so we are indeed in the sparse regime. The likelihood of the model is given by
\begin{equation}\label{eqn: SBM likelihood}
\P\big\{\mathbf{G}_N=(G_{ij})|\sigma^*=\sigma\big\}=\prod_{i<j}\Big(\frac{c+\Delta \sigma_i\sigma_j}{N}\Big)^{G_{ij}}\Big(1-\frac{c+\Delta \sigma_i\sigma_j}{N}\Big)^{1-G_{ij}}=\frac{\exp\big(H_N^\circ(\sigma)\big)}{N^{\sum_{i<j}G_{ij}}},
\end{equation}
while Bayes' formula implies that the posterior of the model is the Gibbs measure
\begin{equation}\label{eqn: SBM posterior}
\P\big\{\sigma^*=\sigma|\mathbf{G}_N=(G_{ij})\big\}=\frac{\exp\big(H_N^\circ(\sigma)\big)P_N^*(\sigma)}{\int_{\Sigma_N}\exp\big(H_N^\circ(\tau)\big)\ud P_N^*(\tau)}
\end{equation}
associated with the Hamiltonian
\begin{equation}\label{eqn: SBM Hamiltonian 0}
H_N^\circ(\sigma)=\sum_{i<j}\log\bigg[\big(c+\Delta \sigma_i\sigma_j\big)^{G_{ij}}\Big(1-\frac{c+\Delta \sigma_i\sigma_j}{N}\Big)^{1-G_{ij}}\bigg].
\end{equation}
Moreover, up to an error vanishing with $N$ and a simple additive constant, the normalized mutual information~\eqref{eqn: SBM mutual information} coincides with the free energy
\begin{equation}\label{eqn: SBM free energy 0}
\overline{F}_N^\circ=\frac{1}{N}\E\log \int_{\Sigma_N}\exp H_N^\circ(\sigma)\ud P_N^*(\sigma).
\end{equation}
Indeed, \eqref{eqn: SBM likelihood} and Bayes' formula imply that
\begin{equation}
I(\mathbf{G}_N;\sigma^*)=\binom{N}{2}\E\log(c+\Delta \sigma_1^*\sigma_2^*)^{G_{12}}\Big(1-\frac{c+\Delta \sigma_1^*\sigma_2^*}{N}\Big)^{1-G_{12}}-N\overline{F}_N^\circ.
\end{equation}
Averaging with respect to the randomness of $G_{12}$ and Taylor expanding the logarithm reveals that
\begin{equation}\label{eqn: SBM MI and free energy 0}
\frac{1}{N}I(\mathbf{G}_N;\sigma^*)=\frac{1}{2}\E\big(c+\Delta \sigma_1^*\sigma_2^*\big)\log\big(c+\Delta \sigma_1^*\sigma_2^*\big)-\frac{c}{2}-\frac{\Delta \m^2}{2}-\overline{F}_N^\circ+\BigO\big(N^{-1}\big).
\end{equation}
To study the mutual information \eqref{eqn: SBM mutual information}, we take the perspective of statistical physics and instead focus on the free energy \eqref{eqn: SBM free energy 0}. Notice that the mutual information between two independent random variables is zero, so for $\Delta=0$, the free energy is found by setting the right-hand side of \eqref{eqn: SBM MI and free energy 0} equal to zero.

For technical reasons, it will be convenient to modify the free energy \eqref{eqn: SBM free energy 0} without changing its limiting value. Conditionally on $\sigma^*$, the modified Hamiltonian will be a sum of independent random variables, and the sum will be over a Poisson-distributed number of terms. The main advantage of this construction is that we can then conveniently vary the continuous parameter encoding the Poisson random variable, and in particular study derivatives with respect to this parameter. To be more precise, we introduce a random variable $\smash{\Pi_{1}\sim\Poi\binom{N}{2}}$ as well as an independent family of i.i.d.\@ random matrices $\smash{(G^k)_{k \in \N}}$ each having conditionally independent entries $\smash{(G_{i,j}^k)_{i,j\leq N}}$ taking values in $\{0,1\}$ with conditional distribution
\begin{equation}\label{eqn: SBM G distribution}
\P\big\{G_{i,j}^k=1|\sigma^*\big\}=\frac{c+\Delta \sigma_{i}^*\sigma_{j}^*}{N}.
\end{equation}
Given a collection of random indices $\smash{(i_k,j_k)_{k\in \N}}$ sampled uniformly at random from $\smash{\{1,\ldots,N\}^2}$, independently of the other random variables, we define the Hamiltonian $H_N$ on $\Sigma_N$ by
\begin{equation}\label{eqn: SBM Hamiltonian}
H_N(\sigma)=\sum_{k\leq \Pi_{1}}\log \bigg[\big(c+\Delta \sigma_{i_k}\sigma_{j_k}\big)^{G^k_{i_k,j_k}}\Big(1-\frac{c+\Delta \sigma_{i_k}\sigma_{j_k}}{N}\Big)^{1-G^k_{i_k,j_k}}\bigg],
\end{equation}
and write
\begin{equation}\label{eqn: SBM free energy}
\overline{F}_N=\frac{1}{N}\E\log \int_{\Sigma_N} \exp H_N(\sigma)\ud P_N^*(\sigma)
\end{equation}
for its associated free energy. We show in \Cref{SBM app equivalence} that the difference between the free energies in \eqref{eqn: SBM free energy 0} and \eqref{eqn: SBM free energy} tends to $0$ as $N$ tends to infinity. Together with \eqref{eqn: SBM MI and free energy 0}, this implies that
\begin{equation}\label{eqn: SBM MI and FE}
\frac{1}{N}I(\mathbf{G}_N;\sigma^*)=\frac{1}{2}\E\big(c+\Delta \sigma_1^*\sigma_2^*\big)\log\big(c+\Delta \sigma_1^*\sigma_2^*\big)-\frac{c}{2}-\frac{\Delta \m^2}{2}-\overline{F}_N+o_N(1).
\end{equation}
The problem of finding the asymptotic value of the mutual information \eqref{eqn: SBM mutual information} has therefore been reduced to the task of determining the limit of the free energy \eqref{eqn: SBM free energy}. The main contributions of this work are the conjecture that
\begin{equation}
\lim_{N\to \infty}\overline{F}_N=f(1,0),
\end{equation}
where $f(t,\mu)$ is the solution to an infinite-dimensional Hamilton-Jacobi equation defined in \eqref{eqn: SBM enriched free energy HJ equation}, and a proof that $f(1,0)$ provides an upper bound for the limit of the free energy; the matching lower bound will remain open.

To motivate and define the infinite-dimensional Hamilton-Jacobi equation \eqref{eqn: SBM enriched free energy HJ equation}, we will introduce an ``enriched'' free energy functional
by transforming the free energy \eqref{eqn: SBM free energy} into a function of a ``time'' variable $t \ge 0$ and a non-negative measure $\mu$. The ``time'' variable will be used to vary the intensity of the Poisson point process $\Pi_1$ appearing in the Hamiltonian \eqref{eqn: SBM Hamiltonian}. We introduced the Hamiltonian $H_N$ to replace $H_N^\circ$ in order to allow for convenient integration-by-parts-like calculations as we study derivatives with respect to this parameter $t$. The non-negative measure $\mu$ will be decomposed as $\mu = s \bar{\mu}$ for $s \geq 0$ and a probability measure $\bar{\mu}$. It will be used to consider a situation in which we also observe the graph of connections of a simpler setting in which each individual $i$ can form connections with its own set of neighbour candidates. To be more specific, each individual $i$ will have an independent number $\Poi(sN)$ of neighbour candidates indexed by the pairs $(i,k)$ for $k \leq \Poi(sN)$. Each candidate neighbour $(i,k)$ will be independently assigned a random ``type'' $x_{i,k}$ sampled from the distribution $\bar{\mu}$, and an edge will be present between individual $i$ and its candidate neighbour $(i,k)$ with probability $N^{-1} (c+ \Delta \sigma_i^* x_{i,k})$. In the inference problem, the ``types'' $x_{i,k}$ are revealed to the statistician. The lack of interactions between individuals makes this piece of information much simpler to understand than the original community detection problem we aim to make progress upon. In total, this allows us to define an ``enriched'' free energy, function of $t$ and $\mu$, and the quantity in \eqref{eqn: SBM free energy} can then be recovered by evaluating this enriched free energy at $t = 1$ and $\mu = 0$. We next aim to study whether variations in the $t$ variable can be suitably compensated by variations in the $\mu$ variable, leaving the free energy roughly constant. More precisely, we hope to discover that the derivative in~$t$ of this functional can be expressed, up to a small error, as a function of its derivative in $\mu$. On a heuristic level, one can see that this indeed seems to be possible, as will be clarified below. Combining ideas from the theory of viscosity solutions with the multi-overlap concentration result in \cite{BarMOC}, we will be able to prove one inequality between the limit free energy and the solution to the partial differential equation that arises. Although we expect the converse bound to also be valid, significant technical challenges prevent us from proving it at the moment. The difficulty is that the control we have on the ``small error'' appearing in the equation at finite $N$ is relatively weak. In particular, we cannot show, and do not expect, that it becomes small as $N$ tends to infinity for each individual choice of~$t$ and $\mu$. On the other hand, controlling the error after we perform a small averaging over $t$ and $\mu$ is possible, but does not suffice for the identification of the limit.

Let us now define the enriched free energy precisely. Denote by $\Pr[-1,1]$ the set of probability measures on $[-1,1]$, and given $\mu \in \Pr[-1,1]$, consider a sequence $x=(x_{i,k})$ of i.i.d.\@ random variables with law $\mu$. For each $s>0$ and $i\geq 1$, let $\Pi_{i,s}\sim \Poi(sN)$ be independent over $i\geq 1$, and introduce the Hamiltonian on $\Sigma_N$,
\begin{equation}\label{eqn: SBM Hamiltonian s}
\widetilde{H}_N^{s,\mu}(\sigma)=\sum_{i\leq N}\sum_{k\leq \Pi_{i,s}}\log\bigg[\big(c+\Delta \sigma_ix_{i,k}\big)^{\tG_{i,k}^x}\Big(1-\frac{c+\Delta \sigma_ix_{i,k}}{N}\Big)^{1-\tG_{i,k}^x}\bigg],
\end{equation}
where the random variables $(\tG_{i,k}^x)$ are independent with conditional distribution
\begin{equation}\label{eqn: SBM tG distribution}
\P\big\{\tG_{i,k}^x=1|\sigma^*,x\big\}=\frac{c+\Delta \sigma_i^*x_{i,k}}{N}.
\end{equation}
As alluded to above, this is the Hamiltonian associated with the task of inferring the signal $\sigma^*$ from the data
\begin{equation}
\widetilde{\D}_N^{s,\mu}=\big(\Pi_{i,s}, (x_{i,k})_{k\leq \Pi_{i,s}}, (\tG_{i,k})_{k\leq \Pi_{i,s}}\big)_{i\leq N},
\end{equation}
in the sense that the Gibbs measure associated with  $\smash{\widetilde{H}_N^{s,\mu}}$ is the conditional law of $\sigma^*$ given the data~$\smash{\widetilde{\D}_N^{s,\mu}}$ (as in the identity \eqref{eqn: SBM posterior} for the Hamiltonian $H_N^\circ$ and the data $\mathbf G_N$).
For each $t\geq 0$, let $\smash{\Pi_t\sim \Poi t\binom{N}{2}}$, and consider a time-dependent version of the Hamiltonian \eqref{eqn: SBM Hamiltonian} defined on $\Sigma_N$ by
\begin{equation}\label{eqn: SBM Hamiltonian t}
H_N^t(\sigma)=\sum_{k\leq \Pi_t}\log \bigg[\big(c+\Delta \sigma_{i_k}\sigma_{j_k}\big)^{G^k_{i_k,j_k}}\Big(1-\frac{c+\Delta \sigma_{i_k}\sigma_{j_k}}{N}\Big)^{1-G^k_{i_k,j_k}}\bigg].
\end{equation}
Notice that this is the Hamiltonian associated with the task of inferring the signal $\sigma^*$ from the data 
\begin{equation}
\D_N^t=\big(\Pi_t, (i_k,j_k)_{k\leq \Pi_t},(G^k_{i_k,j_k})_{k\leq \Pi_t}\big).
\end{equation}
We now introduce an enriched Hamiltonian on $\Sigma_N$,
\begin{equation}\label{eqn: SBM enriched Hamiltonian s}
\widetilde{H}_N^{t,s,\mu}(\sigma)=H_N^t(\sigma)+\widetilde{H}_N^{s,\mu}(\sigma),
\end{equation}
and denote by
\begin{equation}\label{eqn: SBM enriched free energy s}
\widetilde{F}_N(t,s,\mu)=\frac{1}{N}\E\log \int_{\Sigma_N} \exp \widetilde{H}_N^{t,s,\mu}(\sigma)\ud P_N^*(\sigma)
\end{equation}
its associated free energy. Observe that $\widetilde{F}_N(1,0,\mu)=\overline{F}_N$ and that \eqref{eqn: SBM enriched Hamiltonian s} is the Hamiltonian associated with inferring the signal $\sigma^*$ from the data
\begin{equation}\label{eqn: SBM enriched data}
\widetilde{\D}_N^{t,s,\mu}=(\D_N^t, \widetilde{\D}_N^{s,\mu}),
\end{equation}
where the randomness in these two data sets is taken to be independent conditionally on $\sigma^*$. To obtain a Hamilton-Jacobi equation, it will be convenient to reinterpret the enriched free energy~\eqref{eqn: SBM enriched free energy s} as a function of the time-parameter $t>0$ and a finite measure $\mu$; the parameter $s$ will become the total mass of this finite measure. 
We denote by $\M_s$ the space of signed measures on~$[-1,1]$,
\begin{equation}
\M_s=\big\{\mu \mid \mu \text{ is a signed measure on } [-1,1]\big\},
\end{equation}
and by $\M_+$ the cone of non-negative measures on this interval,
\begin{equation}\label{eqn: SBME M+}
\M_+=\big\{\mu \in \M_s\mid \mu \text{ is a non-negative measure}\big\}.
\end{equation}
We follow the convention that a signed measure can only take finite values, and in particular, every $\mu \in \M_+$ must have finite total mass. This implies that every non-zero measure $\mu \in \M_+$ induces a probability measure,
\begin{equation}
\bar{\mu}=\frac{\mu}{\mu[-1,1]}\in \Pr[-1,1].
\end{equation}
Given a measure $\mu \in \M_+$, we define the Hamiltonian $\smash{H_N^{t,\mu}}$ on $\Sigma_N$ by
\begin{equation}\label{eqn: SBM enriched Hamiltonian}
H_N^{t,\mu}(\sigma)=\widetilde{H}_N^{t,\mu[-1,1],\bar{\mu}}(\sigma),
\end{equation}
where $\widetilde{H}_N^{0,0}=0$ for the zero measure by continuity. The free energy associated with this Hamiltonian is given by
\begin{equation}\label{eqn: SBM enriched free energy}
\overline{F}_N(t,\mu)=\widetilde{F}_N\big(t,\mu[-1,1],\bar{\mu}\big),
\end{equation}
and once again $\overline{F}_N = \overline{F}_N(1,0)$, where $0$ denotes the zero measure. The free energy in \eqref{eqn: SBM enriched free energy} will be termed the enriched free energy, and in \Cref{sec: SBM HJ eqn derivation} we will show that, up to a ``small error'', it satisfies an infinite-dimensional Hamilton-Jacobi equation which we now describe.

Introduce the function $g:[-1,1]\to \R$ defined by
\begin{equation}\label{eqn: SBM g function}
g(z)=(c+\Delta z)\big(\log(c+\Delta z)-1\big)=(c+\Delta z)\log(c)+c\sum_{n\geq 2}\frac{(-\Delta/c)^n}{n(n-1)}z^n-c
\end{equation}
as well as the cone of functions
\begin{equation}\label{eqn: SBM cone of functions}
\CC_\infty=\bigg\{G_\mu : [-1,1]\to\R \mid G_\mu(x)=\int_{-1}^1 g(xy)\ud \mu(y) \text{ for some } \mu\in \M_+\bigg\}
\end{equation}
and the non-linearity $\C_\infty: \CC_\infty\to \R$ given by
\begin{equation}\label{eqn: SBM infinite non-linearity}
\C_\infty(G_\mu)=\frac{1}{2}\int_{-1}^1 G_\mu(x)\ud \mu(x).
\end{equation}
This non-linearity is well-defined by the Fubini-Tonelli theorem (see equations (1.6)-(1.7) in \cite{TD_JC_HJ}). Given a function $f:[0,\infty)\times \M_+\to \R$ and measures $\mu,\nu\in \M_+$, we denote by $D_\mu f(t,\mu;\nu)$ the Gateaux derivative of the function $f(t,\cdot)$ at the measure $\mu$ in the direction $\nu$,
\begin{equation}\label{eqn: SBME Gateaux derivative}
D_\mu f(t,\mu;\nu)=\lim_{\epsilon \to 0}\frac{f(t,\mu+\epsilon \nu)-f(t,\mu)}{\epsilon}.
\end{equation}
We will say that the Gateaux derivative of $f$ admits a density at the measure $\mu \in \M_+$ if there exists a bounded measurable function $x\mapsto D_\mu f(t,\mu,x)$ defined on the interval $[-1,1]$ with
\begin{equation}\label{eqn: SBME Gateaux derivative density}
D_\mu f(t,\mu;\nu)=\int_{-1}^1 D_\mu f(t,\mu,x)\ud \nu(x)
\end{equation}
for every measure $\nu \in \M_+$. We will often abuse notation and identify the density $D_\mu f(t,\mu,\cdot)$ with the Gateaux derivative $D_\mu f(t,\mu)$. In \Cref{sec: SBM HJ eqn derivation} we will show that, up to an error vanishing with $N$, the Gateaux derivative of the enriched free energy is indeed of the form $G_{\mu^*}$ for some $\mu^* \in \M_+$. In fact, the measure $\mu^*$ is the law of the Gibbs average of a uniformly sampled spin coordinate for the Gibbs measure associated with the Hamiltonian \eqref{eqn: SBM enriched Hamiltonian s}. We will also argue that, if this Gibbs measure satisfies suitable overlap concentration properties, then the time derivative of the free energy is essentially given by $\int_{-1}^1 g(xy) \ud \mu^*(x) \ud \mu^*(y)$. In short, assuming the validity of these overlap concentration properties, we are led to believe that the large-$N$ limit of the free energy \eqref{eqn: SBM enriched free energy} should satisfy the infinite-dimensional Hamilton-Jacobi equation 
\begin{equation}\label{eqn: SBM enriched free energy HJ equation}
\left\{
\begin{aligned}
\partial_t f(t,\mu)&=\C_\infty\big(D_\mu f(t,\mu)\big) & \text{on }& \Rpp\times \M_+,\\
f(0,\mu)&=\psi(\mu)& \text{on }& \M_+,
\end{aligned}
\right.
\end{equation}
where the initial condition $\smash{\psi:\M_+\to\R}$ is the limit of $\smash{F_N(0,\cdot)}$ and can be readily computed, see Lemma~\ref{SBM convergence of initial condition discrete}. 
The well-posedness of this equation is  established in \cite{TD_JC_HJ}, and leads to the conjecture that the enriched free energy converges to the solution to this equation. Remembering \eqref{eqn: SBM MI and FE}, this translates into a conjecture for the asymptotic mutual information. 

\begin{conjecture}\label{SBM main conjecture}
If $f$ denotes the unique viscosity solution to the infinite-dimensional Hamilton-Jacobi equation \eqref{eqn: SBM enriched free energy HJ equation}, then the limit of the free energy \eqref{eqn: SBM free energy} is given by
\begin{equation}
\lim_{N\to \infty}\overline{F}_N=f(1,0).
\end{equation}
In particular, the asymptotic value of the mutual information \eqref{eqn: SBM mutual information} is
\begin{align}
\lim_{N\to\infty}\frac{1}{N}I(\mathbf{G}_N;\sigma^*)=\frac{1}{2}\E\big(c+\Delta \sigma_1^*\sigma_2^*\big)\log\big(c+\Delta \sigma_1^*\sigma_2^*\big)-\frac{c}{2}-\frac{\Delta\m^2}{2}-f(1,0).
\end{align}
\end{conjecture}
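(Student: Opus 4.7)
The strategy is to show that the enriched free energy $\bar F_N(t,\mu)$ is an approximate solution of the Hamilton-Jacobi equation \eqref{eqn: SBM enriched free energy HJ equation}, and then to invoke the viscosity-solution comparison theory from the companion paper \cite{TD_JC_HJ} to identify its $N \to \infty$ limit as $f$. The first preparatory step is to establish that $\bar F_N(t,\mu)$ is non-decreasing and uniformly Lipschitz in $(t,\mu)$, with constants independent of $N$: monotonicity follows from a data-processing / Nishimori inequality, while the Lipschitz bounds are standard given the boundedness of $\log(c+\Delta z)$ on $[-1,1]$ and the Poisson tail estimates.

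\textbf{Derivatives and approximate HJ identity.} Next, using a Poisson-process differentiation identity together with the Nishimori identity, I would show
\begin{equation*}
\partial_t \bar F_N(t,\mu) = \tfrac{1}{2}\,\E\bigl\langle g(\sigma_I \sigma_J)\bigr\rangle_{t,\mu} + o_N(1),
\end{equation*}
where $I,J$ are uniform on $\{1,\ldots,N\}$, and that the Gateaux derivative admits a density of the form $D_\mu \bar F_N(t,\mu,x) = G_{\mu_N^\star(t,\mu)}(x) + o_N(1)$, with $\mu_N^\star(t,\mu)$ the law of the Gibbs mean $\langle\sigma_I\rangle_{t,\mu}$ under the joint expectation. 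Comparing the two expressions, the putative identity $\partial_t \bar F_N \approx \C_\infty(D_\mu \bar F_N)$ reduces to being able to replace $\E\langle g(\sigma_I\sigma_J)\rangle$ by $\int\!\!\int g(xy)\,\ud\mu_N^\star(x)\,\ud\mu_N^\star(y)$. Expanding $g$ in the convergent power series of \eqref{eqn: SBM g function}, this in turn reduces to a multi-overlap concentration statement, which after a small Poisson perturbation of $\mu$ is furnished by \cite{BarMOC}\,---\,albeit only in a $\mu$-averaged sense.

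\textbf{Upper bound via viscosity comparison.} To derive the upper bound $\limsup_N \bar F_N(1,0) \leq f(1,0)$ (equivalently the lower bound on the mutual information) asserted in the paper, I would combine the previous averaged identity with the concavity of $\bar F_N$ in the signal strength\,---\,a standard consequence of Bayesian Fisher-information inequalities\,---\,and with the convex structure of the nonlinearity $\C_\infty$. Jensen's inequality then upgrades the averaged approximate identity to a genuine pointwise-in-$(t,\mu)$ one-sided HJ inequality in the viscosity sense, and the comparison principle from \cite{TD_JC_HJ} yields $\bar F_N(t,\mu) \leq f(t,\mu) + o_N(1)$ uniformly on compact subsets, hence at $(t,\mu) = (1,0)$.

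\textbf{Main obstacle.} The hardest step\,---\,and the one currently blocking the full conjecture\,---\,is the matching lower bound $\liminf_N \bar F_N(1,0) \geq f(1,0)$. This would require promoting the approximate HJ identity to a two-sided, pointwise-in-$(t,\mu)$ equality, but the multi-overlap concentration of \cite{BarMOC} delivers only averaged smallness, with no quantitative uniformity in $(t,\mu)$. Closing this gap appears to demand either a strengthened overlap concentration result exploiting the Bayesian Nishimori symmetry specific to the sparse SBM, or a direct Aizenman--Sims--Starr-type interpolation producing a matching lower bound via an explicit Hopf-Lax trial measure for $f$.
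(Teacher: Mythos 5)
First, note that the statement you are addressing is a \emph{conjecture}: the paper itself proves only the upper bound $\limsup_N \overline F_N \leq f(1,0)$ (\Cref{SBM main result}) and explicitly leaves the matching lower bound open. Your proposal is honest about this, and your overall architecture for the proved half (derivative computations via Poisson differentiation and Nishimori, reduction to multi-overlap concentration, then viscosity comparison with the solution built in \cite{TD_JC_HJ}) is the same as the paper's. The problem is the mechanism you propose for turning the averaged approximate identity into a pointwise one-sided viscosity inequality.

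Your step ``Upper bound via viscosity comparison'' rests on the ``concavity of $\bar F_N$ in the signal strength'' together with ``the convex structure of the nonlinearity $\C_\infty$'' and Jensen's inequality. This is a genuine gap. The nonlinearity $\C_\infty$ is quadratic with kernel $g(xy)$, and this kernel is positive semi-definite (after adding a constant) only in the disassortative case $\Delta\le 0$; for $\Delta>0$ the alternating signs in the expansion \eqref{eqn: SBM g function} destroy convexity, yet \Cref{SBM main result} is claimed (and proved in the paper) for all $\Delta$. Likewise, the convexity/concavity of $(t,\mu)\mapsto \bar F_N(t,\mu)$ that an I-MMSE-type argument would give in the dense Gaussian setting is known to \emph{fail} in the sparse regime \cite{kireeva2023breakdown} --- the paper leans on exactly this failure to rule out the Hopf-type formula \eqref{e.hopf}. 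Moreover, the paper explicitly states that control of the overlap-concentration error ``for most values'' of the parameters, or after local averaging, is \emph{not} sufficient for the viscosity-solution machinery, so a Jensen upgrade of the $\lambda$- (or $\mu$-) averaged statement of \cite{BarMOC} cannot be the missing ingredient. What the paper actually does is different in kind: it proves a finitary multi-overlap concentration theorem (\Cref{SBMA finitary MOC}) valid at \emph{any} perturbation parameter satisfying the verifiable condition \eqref{eqn: SBMA limits to get FDS}, and then verifies that condition precisely at the finite-$N$ contact point $(t_N,x_N,\lambda_N)$ of a smooth function touching the limit free energy from above, by combining the Hessian constraint coming from the touching (\Cref{SBM perturbed free energy Hessian bounded above}, \Cref{SBM perturbed free energy Hessian bounded below}) with Efron--Stein concentration of the free energy and of its $\lambda$-gradient (\Cref{SBM free energy uniform concentration}, \Cref{perturbed free energy concentration of gradient}, \Cref{SBM concentration of FDS Hamiltonians}); this yields the approximate subsolution property at the contact point (\Cref{SBM free energy approximate solution at contact point}) and only then the comparison principle. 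Without some replacement for this contact-point argument, your sketch does not deliver the pointwise subsolution inequality, even in the disassortative case. Your identification of the lower bound as the main open obstacle is correct and consistent with the paper.
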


The main result of this paper is a proof of the upper bound in \Cref{SBM main conjecture}.

\begin{theorem}\label{SBM main result}
Denote by $f$ the unique viscosity solution to the infinite-dimensional Hamilton-Jacobi equation \eqref{eqn: SBM enriched free energy HJ equation}. For every $t\geq 0$ and  $\mu\in \M_+$, the limit of the enriched free energy \eqref{eqn: SBM enriched free energy} satisfies the upper bound $\smash{\limsup_{N\to \infty}\overline{F}_N(t,\mu)\leq f(t,\mu)}$.
In particular, the free energy \eqref{eqn: SBM free energy} satisfies the upper bound
\begin{equation}
\label{e.SBM.main}
\limsup_{N\to \infty}\overline{F}_N\leq f(1,0).
\end{equation}
\end{theorem}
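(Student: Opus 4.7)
The plan is to show that the enriched free energy $\overline{F}_N$ is an approximate viscosity subsolution of the infinite-dimensional Hamilton-Jacobi equation \eqref{eqn: SBM enriched free energy HJ equation}, and then to invoke the comparison principle established in the companion paper \cite{TD_JC_HJ}. Since \eqref{e.SBM.main} is just the special case $(t,\mu) = (1,0)$ of the more general assertion, it suffices to prove that $\limsup_{N \to \infty} \overline{F}_N(t,\mu) \leq f(t,\mu)$ for every $(t,\mu) \in \Rp \times \M_+$.

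First, I would derive the two relevant derivative formulas. Differentiating in the Poisson intensities underlying the two Hamiltonians in \eqref{eqn: SBM enriched Hamiltonian s} and using the Nishimori identity of the posterior, one shows that, up to an error vanishing with $N$, the Gateaux derivative $D_\mu \overline{F}_N(t,\mu)$ admits a density of the form $G_{\mu_N^\star}(x) = \int_{-1}^1 g(xy)\, d\mu_N^\star(y)$, where $\mu_N^\star = \mu_N^\star(t,\mu) \in \M_+$ is the appropriately normalized law of the Gibbs magnetization $\langle \sigma_I \rangle$ of a uniformly chosen coordinate $I \in \{1,\dots,N\}$ under the Gibbs measure associated with $H_N^{t,\mu}$. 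A parallel differentiation in $t$ yields $\partial_t \overline{F}_N(t,\mu) = \tfrac{1}{2} \E \langle g(R_{1,2}) \rangle_{t,\mu} + o_N(1)$, where $R_{1,2} = N^{-1} \sigma^1 \cdot \sigma^2$ is the overlap of two independent replicas. These computations are the content of \Cref{sec: SBM HJ eqn derivation}.

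The second step is to combine these identities into an approximate subsolution inequality. Invoking the multi-overlap concentration result of \cite{BarMOC}, applied to the posterior after introducing a small side-information perturbation that is compatible with Nishimori and does not affect the limit of the free energy, one obtains the asymptotic factorization $\E \langle g(R_{1,2}) \rangle_{t,\mu} \approx \int_{-1}^{1} \int_{-1}^{1} g(xy)\, d\mu_N^\star(x)\, d\mu_N^\star(y) = 2\, \C_\infty(G_{\mu_N^\star})$, with an error controlled only after averaging in the perturbation parameters, i.e.\ in $(t,\mu)$. This yields the approximate Hamilton-Jacobi identity $\partial_t \overline{F}_N(t,\mu) \approx \C_\infty(D_\mu \overline{F}_N(t,\mu))$ in the averaged sense. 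The viscosity-subsolution argument then proceeds as follows: if $\phi$ is a smooth test function and $\overline{F}_N - \phi$ has a local maximum at an interior point $(t_0,\mu_0) \in \Rpp \times \M_+$, then matching derivatives at the maximum and sending $N \to \infty$ gives $\partial_t \phi(t_0,\mu_0) \leq \C_\infty(D_\mu \phi(t_0,\mu_0))$, which is precisely the viscosity subsolution condition for any accumulation point of $(\overline{F}_N)$. Combined with the convergence of the initial condition (Lemma~\ref{SBM convergence of initial condition discrete}), the comparison principle from \cite{TD_JC_HJ} delivers the desired upper bound.

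The main obstacle, explicitly emphasized in the introduction, is that the error in the approximate Hamilton-Jacobi identity is not controlled pointwise in $(t,\mu)$ but only after averaging. To sidestep this difficulty I would regularize $\overline{F}_N$ by convolution with a smooth probability kernel $\rho_\epsilon$ on a suitable neighbourhood of $(t,\mu)$-space at scale $\epsilon > 0$. The mollified quantity $\overline{F}_N \ast \rho_\epsilon$ inherits a genuine approximate subsolution inequality with a pointwise error that vanishes as $N \to \infty$ for each fixed $\epsilon$, precisely because the $\rho_\epsilon$-averaging absorbs the averaging needed to tame the error. Applying the comparison principle to the mollified function, and then sending first $N \to \infty$ and then $\epsilon \to 0$ using the continuity of $f$ proved in \cite{TD_JC_HJ}, yields \eqref{e.SBM.main}. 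The symmetric argument for the matching lower bound breaks down because Jensen's inequality, used via the convexity of $g$, points in only one direction, which is the fundamental reason only the upper bound is accessible through this approach.
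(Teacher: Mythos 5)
The skeleton you describe (derivative computations, overlap factorization, viscosity subsolution, comparison principle) matches the paper, but the step you dispose of by mollification is exactly where the paper's real work lies, and your mollification argument has a genuine gap. If $\partial_t \overline{F}_N \le \C_\infty(D_\mu \overline{F}_N) + \mathrm{err}_N$ with $\mathrm{err}_N$ small only after averaging, then convolving gives $\partial_t(\overline{F}_N \ast \rho_\epsilon) \le \big(\C_\infty(D_\mu \overline{F}_N)\big)\ast\rho_\epsilon + \mathrm{err}_N\ast\rho_\epsilon$, and the first term on the right is \emph{not} $\C_\infty\big(D_\mu(\overline{F}_N\ast\rho_\epsilon)\big)$: the quadratic non-linearity does not commute with averaging. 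To pass the convolution inside you would need concavity of $\C_\infty$ (it is not concave; in the disassortative case it is convex, which is the wrong direction for subsolutions) or uniform-in-$N$ control of the oscillation of $D_\mu\overline{F}_N$ at scale $\epsilon$, which is precisely the kind of information one does not have. Moreover, the averaging that tames the error in \cite{BarMOC} is over the auxiliary perturbation parameters of the added Gaussian and exponential channels, not over $(t,\mu)$ itself, and the introduction of the paper explicitly flags that control of the error after a small averaging in $(t,\mu)$ is available but insufficient. So "the $\rho_\epsilon$-averaging absorbs the averaging needed to tame the error" is an unjustified claim, and it is the crux of the proof.

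What the paper does instead is to make the perturbation parameters part of the viscosity argument. One projects onto dyadic measures, adds the perturbation Hamiltonian \eqref{eqn: SBM perturbed Hamiltonian} with parameters $\lambda$, and chooses the contact point $(t_N,x_N,\lambda_N)$ of $\overline{F}_N'^{(K)}-\td\phi$ jointly in $(t,x,\lambda)$. Being touched from above by a smooth function forces the $\lambda$-gradient identity \eqref{eqn: SBM free energy subsolution perturbation derivative} and an upper bound on the $\lambda$-Hessian (\Cref{SBM perturbed free energy Hessian bounded above}); combined with a convexity lower bound (\Cref{SBM perturbed free energy Hessian bounded below}) and Efron--Stein concentration of the free energy (\Cref{SBM free energy uniform concentration}), this upgrades to concentration of the $\lambda$-gradient of the unaveraged free energy (\Cref{perturbed free energy concentration of gradient}), hence of the quantities $\L_k$ at the contact point (\Cref{SBM concentration of FDS Hamiltonians}). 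This verifies the hypothesis of the \emph{finitary} multi-overlap concentration theorem of \Cref{SBM app multioverlap concentration} — valid for any parameter satisfying a checkable condition, not merely on average — and yields the pointwise error bound of \Cref{SBM free energy approximate solution at contact point} exactly where the viscosity argument needs it; the comparison principle is then applied at the level of the projected equation with the extended non-linearity $\widetilde{\H}_{b,K,R}$, and one removes $K$, $\epsilon$ at the end. Your closing explanation of why the lower bound fails (Jensen via convexity of $g$) is also not the actual obstruction: the asymmetry comes from the fact that a test function touching from below does not constrain the Hessian in the direction needed to extract gradient, and hence multi-overlap, concentration at the contact point.
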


Although the matching lower bound still remains open, we give some support in favor of \Cref{SBM main conjecture} by proving that, in the disassortative regime, it matches the variational formula for the asymptotic free energy obtained in \cite{CKPZ}. (The condition $p = 1/2$ was also assumed in \cite{CKPZ}.) We state this formula using the notation in \cite{PanNote}, where a more direct proof is obtained using an interpolation argument and a cavity computation. Denote by
\begin{equation}
\M_p=\bigg\{\mu\in \Pr[-1,1]\mid \int_{-1}^1 x\ud \mu=\m\bigg\}
\end{equation}
the set of probability measures with mean $\m=2p-1$, and introduce the functional $\Par:\M_p\to \R$ defined by
\begin{equation}\label{eqn: SBM Parisi functional}
\Par(\mu)=\psi(\mu)+\frac{c}{2}+\frac{\Delta\m^2}{2}-\frac{1}{2}\E(c+\Delta x_1x_2)\log(c+\Delta x_1x_2),
\end{equation}
where $x_1$ and $x_2$ are independent samples from the probability measure $\mu$.
\begin{theorem}\label{SBM disassortative Hopf-Lax}
In the disassortative sparse stochastic block model with $\Delta\leq 0$, the limit of the free energy \eqref{eqn: SBM free energy} is given by
\begin{equation}\label{eqn: SBM disassortative Hopf-Lax}
\lim_{N\to \infty}\overline{F}_N=\sup_{\mu \in \M_p}\Par(\mu) = f(1,0).
\end{equation}
\end{theorem}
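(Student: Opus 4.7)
The first equality $\lim_{N\to\infty}\overline{F}_N = \sup_{\mu \in \M_p}\Par(\mu)$ is the Parisi-type variational formula for the disassortative sparse stochastic block model, established in \cite{CKPZ} and revisited via a cavity/interpolation argument in \cite{PanNote}. It thus remains to identify this supremum with $f(1,0)$. A short algebraic manipulation using \eqref{eqn: SBM infinite non-linearity} and the series expansion \eqref{eqn: SBM g function} yields the compact form
\[
\Par(\mu) = \psi(\mu) - \C_\infty(G_\mu),
\]
and combining \Cref{SBM main result} with the CKPZ formula gives the upper bound $\sup_{\mu\in\M_p}\Par(\mu) \le f(1,0)$ for free.

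For the reverse inequality $f(1,0) \le \sup_{\mu\in\M_p}\Par(\mu)$, the plan is to construct explicit classical supersolutions of \eqref{eqn: SBM enriched free energy HJ equation} that dominate $f$ and coincide with $\Par(\mu)$ at the target point. For each $\mu \in \M_+$, consider the affine-in-measure, linear-in-time function
\[
u_\mu(t, \nu) = \psi(\mu) + \int_{-1}^1 G_\mu(x)\,\mathrm{d}(\nu - \mu)(x) + t\, \C_\infty(G_\mu).
\]
The Gateaux derivative is $D_\nu u_\mu = G_\mu$ and $\partial_t u_\mu = \C_\infty(G_\mu) = \C_\infty(D_\nu u_\mu)$, so $u_\mu$ is a classical solution of the Hamilton-Jacobi equation. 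Moreover, $u_\mu(0, \mu) = \psi(\mu)$ and the symmetry $\int G_\mu \,\mathrm{d}\mu = 2\,\C_\infty(G_\mu)$ gives $u_\mu(1, 0) = \Par(\mu)$. If we can exhibit some $\mu^* \in \M_p$ satisfying the global supergradient inequality
\[
\psi(\nu) \le u_{\mu^*}(0, \nu) = \psi(\mu^*) + \int_{-1}^1 G_{\mu^*}(x)\,\mathrm{d}(\nu - \mu^*)(x) \quad \text{for every } \nu \in \M_+,
\]
the comparison principle for viscosity solutions of \eqref{eqn: SBM enriched free energy HJ equation} from \cite{TD_JC_HJ} yields $f \le u_{\mu^*}$ throughout $\Rp \times \M_+$, and evaluating at $(1, 0)$ closes the circle: $f(1, 0) \le \Par(\mu^*) \le \sup_{\mu \in \M_p}\Par(\mu)$.

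The natural candidate for $\mu^*$ is a maximizer of $\Par$ over $\M_p$, whose existence follows from weak compactness of $\M_p$ and upper semicontinuity of $\Par$. Using $D_\mu[\C_\infty(G_\mu)] = G_\mu$, the first-order optimality condition at such a maximizer formally reads $D_\mu \psi(\mu^*) = G_{\mu^*}$, up to Lagrange multipliers enforcing the mass and mean constraints defining $\M_p$. The disassortative hypothesis $\Delta \le 0$ enters decisively here: by \eqref{eqn: SBM g function}, the Taylor coefficients of $g$ are non-negative for $n \ge 2$, giving a convexity structure to $\mu \mapsto \C_\infty(G_\mu)$ that, combined with a concavity property of $\psi$ inherited from the Bayesian interpretation of the initial condition, is expected to convert the constrained first-order condition into the desired global supergradient inequality.

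The main technical obstacle is precisely bridging the constrained first-order condition at $\mu^*$ with the unconstrained supergradient inequality on all of $\M_+$: the affine Lagrange corrections $x \mapsto a + bx$ appearing in $D_\mu \psi(\mu^*) - G_{\mu^*}$ must be shown either to integrate to zero against every admissible direction $\nu - \mu^*$, or to be absorbed into the explicit Poisson-type expression for $\psi$ provided by \Cref{SBM convergence of initial condition discrete}. An alternative route, likely cleaner, is to invoke a Hopf-type representation of the viscosity solution from the companion paper \cite{TD_JC_HJ}, valid under the convexity of $\C_\infty$ available in the disassortative regime, and to verify directly that this representation at $(1, 0)$ reduces to the supremum $\sup_{\mu \in \M_p}\Par(\mu)$.
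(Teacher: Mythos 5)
Your proposal does not close the two places where the real work lies. First, the identity $\lim_{N\to\infty}\overline{F}_N=\sup_{\mu\in\M_p}\Par(\mu)$ cannot simply be imported from \cite{CKPZ}: that result is proved under the assumption $p=1/2$, while the theorem here is stated for general $p\in(0,1)$. The paper therefore proves the lower bound $\liminf_{N\to\infty}\overline{F}_N\geq\sup_{\mu\in\M_p}\Par(\mu)$ itself, via the interpolation $\p(t)=\widetilde{F}_N(t,1-t,\nu)$ for $\nu\in\M_p$, where the sign $\Delta\le 0$ makes the coefficients $(-\Delta/c)^n/(n(n-1))$ non-negative so that the error term $\sum_{n\ge2}\E\langle(R_{[n]}-\E x_1^n)^2\rangle$ has a definite sign; some such argument (or a reference valid for general $p$) is needed before your ``for free'' upper bound $\sup_{\mu\in\M_p}\Par(\mu)\le f(1,0)$ via \Cref{SBM main result} is available.

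Second, and more importantly, the direction $f(1,0)\le\sup_{\mu\in\M_p}\Par(\mu)$ is exactly the part you do not prove. Your primary route requires the global supergradient inequality $\psi(\nu)\le\psi(\mu^*)+\int_{-1}^1 G_{\mu^*}\ud(\nu-\mu^*)$ for all $\nu\in\M_+$ at a constrained maximizer $\mu^*$; concavity of $\psi$ on $\M_+$ is not established anywhere (and is not used in the paper), and the constrained first-order condition only yields $D_\mu\psi(\mu^*,\cdot)=G_{\mu^*}+a+bx$ with unresolved multipliers, a gap you acknowledge but leave open. Your ``cleaner alternative'' is indeed the paper's route, but the content is precisely in the step you defer: one must first check that the Hopf--Lax representation applies, i.e.\ that $\td g_b$ is non-negative definite for $b$ large when $\Delta\le0$ (this uses the Schur product theorem, not merely non-negativity of the Taylor coefficients of $g$), and then show that the resulting supremum $f(1,0)=\sup_{\nu\in\Pr[-1,1]}\big(\psi(\nu)-\tfrac12\int_{-1}^1 G_\nu\ud\nu\big)$ can be restricted to $\M_p$ — note that your identity $\Par(\mu)=\psi(\mu)-\C_\infty(G_\mu)$ only holds on $\M_p$, so this restriction is not cosmetic. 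The paper achieves it by approximating $\psi$ by $\psi_N$, using that maximizers $\nu_N$ of the translated functionals satisfy an approximate first-order condition $D_\mu\td\psi_{b,N}(\nu_N,\cdot)=\td G_{b,\nu_N}+\BigO(N^{-1})$, that the Nishimori identity forces this Gateaux derivative to equal $\td G_{b,\nu_N^*}$ for some $\nu_N^*\in\M_p$, and then matching total masses and moments of order $n\ne1$, followed by Prokhorov and Stone--Weierstrass, to conclude that the limiting maximizer lies in $\M_p$. None of this machinery appears in your proposal, so the inequality $f(1,0)\le\sup_{\mu\in\M_p}\Par(\mu)$, and with it the theorem, remains unproven as written.
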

Our results generalize immediately to the case in which the measure $P^*$ is arbitrary with compact support, with the understanding that the link probabilities are given by \eqref{eqn: SBM edge probabilities}. We also believe that they generalize without much change to settings with more than two communities, although we have not checked every technical detail. 

Before closing this introduction, we discuss alternatives to the conjecture and the approach proposed in this paper. It will facilitate this discussion to point out that the proof of Theorem~\ref{SBM disassortative Hopf-Lax} also yields that when $\Delta \le 0$, we can identify the limit of $\bar F_N(t,\mu)$ for every $t \ge 0$ and $\mu \in \M_+$ as
\begin{equation}
\label{e.lim.barfn.t.mu}
\lim_{N\to \infty}\overline{F}_N(t,\mu) =  f(t,\mu) = \sup_{\nu \in \Pr[-1,1]}\bigg(\psi(\mu + t\nu) - \frac{t}{2} \int_{-1}^1 G_{\nu}(y) \ud \nu(y)\bigg),
\end{equation}
where we recall that $G_\nu$ is defined in \eqref{eqn: SBM g function}-\eqref{eqn: SBM cone of functions}. The identity \eqref{e.lim.barfn.t.mu} also remains valid if we take the supremum over all $\nu \in \M_+$, and it is at times convenient to operate over variables that can vary freely inside a cone.
%Moreover, using the Taylor expansion in \eqref{eqn: SBM g function}, we see that the quadratic form $\nu \mapsto \int G_\nu(y) \ud \nu(y)$ is nonnegative and therefore convex when $\Delta \le 0$, and we can thus rewrite the variational formula above as
%\begin{equation}
%\label{e.expanded.Hopf.Lax}
%f(t,\mu) = \sup_{\nu \in \Pr[-1,1]} \inf_{\rho \in \Pr[-1,1]} \Ll( \psi(\mu + \nu) - \int_{-1}^1 G_\rho(y) \ud \nu(y) + t \int_{-1}^1 G_{\rho}(y) \ud \rho(y) \Rr) .
%\end{equation}

Concerning the limit of the free energy, one may hope that the formulas given in \eqref{eqn: SBM disassortative Hopf-Lax} and~\eqref{e.lim.barfn.t.mu} in the case $\Delta \le 0$ remain valid in general. It seems difficult to identify the exact range of validity of these formulas. We would be surprised if they hold for arbitrary measures $P^*$, but we could not quickly find a counter-example. We are however confident that these formulas will not generalize to settings with more than two communities. 

To see this, we rely on the fact that the problem of identifying the limit of the free energy becomes simpler in the dense regime. Indeed, if the average degree of a node diverges as $N$ tends to infinity, then central-limit-theorem effects take place, and one can equivalently study a fully-connected model with Gaussian noise \cite{Deshpande, Lelarge}. Such models have been studied extensively \cite{barbier2016, barbier2019adaptive, barbier2017layered, JC_HB_FinRank, chen2022hamilton, chen2021limiting, chen2022hamilton2, kadmon2018statistical, Lelarge, lesieur2017statistical,  luneau2021mutual, luneau2020high, mayya2019mutual, miolane2017fundamental,  JC_matrix, JC_HJ, reeves2020information, reeves2019geometry}. In this setting, a formula analogous to \eqref{eqn: SBM disassortative Hopf-Lax}-\eqref{e.lim.barfn.t.mu} is known to be valid as long as the relevant non-linearity is convex; but in general, one needs to modify this formula into a ``sup-inf'' formulation. Possibly the simplest setting in which this happens is for the problem in which we observe a rank-one matrix of the form $X Y^{\mathsf T}$ plus noise, where $X$ and $Y$ are two vectors with i.i.d.\ coordinates. In this setting, the non-linearity replacing~$\C_\infty$ in \eqref{eqn: SBM enriched free energy HJ equation} is the mapping $(x,y) \mapsto xy$, which is non-convex. The functional to be optimized over as in~\eqref{e.lim.barfn.t.mu} would look like $\psi(x_0 + t x,y_0 + t y) - \frac t 2 x y$. Finding counter-examples to the formula is made relatively easy by considering candidates with, say, $x = 0$; in this case, the counter-term $xy$ vanishes, so we can freely choose $y$ as large as desired to maximize the $\psi$ functional and obtain a contradiction. A similar phenomenon also occurs in the context of spin glasses, and a more precise discussion of this point can be found in Subsection~6.2 of \cite{JC_NC}. 

Coming back to the sparse setting investigated in this paper, we can leverage this observation to demonstrate that the formulas \eqref{eqn: SBM disassortative Hopf-Lax} or \eqref{e.lim.barfn.t.mu} will also be invalid in general. To give a concrete example, consider the following scenario, which can be thought of as a problem with four communities, or as a bipartite version of the two-community problem. We first color the $N$ nodes in red or blue, say with groups of sizes about $N/2$. We think of this coloring as fixed, e.g. the red nodes are the first $\lfloor N/2 \rfloor$ indices in $\{1,\ldots, N\}$, and it is perfectly known to the statistician. Next, we attribute $\pm 1$ labels to each node independently, possibly with different biases according to the color of the node. Finally, we draw links between nodes $i$ and $j$ according to the formula in \eqref{eqn: SBM edge probabilities}, with the additional constraint that only links between nodes of different colors are allowed. The task is to study the asymptotic behavior of the mutual information between the $\pm 1$ labels and the observed graph. This problem is constructed in such a way that, in the limit of diverging average degree, it reduces to the problem of observing a noisy version of $X Y^{\mathsf T}$, as discussed in the previous paragraph --- the vectors $X$ and $Y$ contain the $\pm 1$ labels of the red and blue nodes respectively. Using the results of \cite{Deshpande, Lelarge} to justify the large-degree approximation, or possibly even directly, we are confident that we can then produce counter-examples to the formulas \eqref{eqn: SBM disassortative Hopf-Lax} and \eqref{e.lim.barfn.t.mu}.

For fully-connected models with possibly non-convex non-linearities such as the $X Y^{\mathsf T}$ example, the limit of the free energy was identified in the form of a ``sup-inf'' formula; see \cite{JC_HB_FinRank} for the most general results. Translating this result into our present context would suggest that the limit free energy might be given by
\begin{equation}
\label{e.hopf}
\sup_{\rho \in \M_+} \inf_{\nu \in \M_+} \bigg( \psi(\nu) + \int_{-1}^1 G_\rho(y) \ud (\mu - \nu)(y) + t \int_{-1}^1 G_{\rho}(y) \ud \rho(y) \bigg).
\end{equation}
%Notice carefully that the outer supremum in \eqref{e.hopf} is now over $\rho$ and not over $\nu$ as in \eqref{e.expanded.Hopf.Lax}. 
The key ingredient for showing the validity of the corresponding formula in the dense regime is that the enriched free energy is a convex function of its parameters in this case. In our setting, the question would translate into whether the mapping $(t,\mu) \mapsto \bar F_N(t,\mu)$ is convex. However, it was shown in \cite{kireeva2023breakdown} that this mapping is in fact \emph{not} convex in the sparse regime, even in the limit of large~$N$. This non-convexity property not only breaks down the proof strategy of \cite{JC_HB_FinRank}; in fact, we can leverage it to assert that the quantity \eqref{e.hopf} can therefore \emph{not} be the limit of the free energy in this case. Indeed, the expression in \eqref{e.hopf} is a supremum over $\rho$ of affine functions of~$(t,\mu)$; it therefore follows that the whole expression is convex in $(t,\mu)$. By \cite{kireeva2023breakdown}, it is therefore not possible that the expression in \eqref{e.hopf} be the limit of the free energy. 

To sum up, if we aim for a formula that is robust to model changes, then both \eqref{e.lim.barfn.t.mu} and~\eqref{e.hopf} can be ruled out. We do not know of alternative candidate variational formulas for the limit of the free energy. This situation seems analogous to that encountered in the context of spin glasses with possibly non-convex interactions, as discussed in Section~6 of \cite{JC_NC}.

We now turn to a discussion of the recent works \cite{Abbe_assortative, gu2023uniqueness, kanade2016global, mossel2016belief, mossel2016local, yu2022ising}. Noticing that the graph $\mathbf G_N$ locally looks like a tree, these works aim to leverage a connection between community detection and a process of broadcasting on trees. We briefly describe the latter problem on a regular tree for convenience. We first attribute a random $\pm 1$ random variable $\sigma^*$ to the root node. Then, recursively and independently along each edge, we ``broadcast'' it to each child node, by flipping the sign of the spin with some fixed probability $\delta \in (0,1)$. One basic question is to determine the mutual information between the spin $\sigma^*$ attributed to the root node and the spins on all the nodes at a given depth, in the limit of large depth. A fruitful variant of this question consists in adding a ``survey'' of all nodes, by randomly revealing the spins attached to each node independently with some fixed probability $\epsilon$. If, in the limit of large depth, the knowledge of the spins on all the leaf vertices does not bring meaningful additional information on $\sigma^*$ on top of surveying compared with surveying alone, then one can directly relate the mutual information between $\sigma^*$ and the survey to the mutual information in the community detection problem; in this case, one may speak of ``boundary irrelevance''. To decide whether boundary irrelevance holds, one can study the evolution of the log-likelihood ratio between the two hypotheses $\sigma^* = \pm 1$ upon revealing the boundary information at a given depth. One can indeed calculate the law of this quantity recursively as the depth varies, by iterating a fixed map called the ``BP operator''. In order to establish the property of boundary irrelevance, it then essentially suffices to show that this BP operator admits a unique non-trivial fixed point. Building upon earlier works, it was recently established in \cite{yu2022ising} that this uniqueness property holds for the setting corresponding to the detection of two balanced communities, $p = 1/2$. As a byproduct, this yields a full identification of the limit of the mutual information \eqref{eqn: SBM mutual information} in this case. The uniqueness of a non-trivial fixed point to the BP operator has subsequently been shown to be false in general for models with more than two communities \cite{gu2023uniqueness}. 

We now point out some connections between the present paper and this series of works, and discuss how our approach might ultimately be able to circumvent the difficulties associated with the possible existence of multiple fixed points to the BP operator. To start with, recall that the function~$\psi$ is the limit of $\bar F_N(0,\cdot)$, which itself corresponds to a simple inference problem in which there is no interaction between the nodes $\{1,\ldots, N\}$. We can therefore identify this object explicitly, see Lemma~\ref{SBM convergence of initial condition discrete}. From Remark~\ref{r.calculation.derivative.psi}, we can also identify a mapping $\Gamma : \M_+ \to \Pr[-1,1]$ such that for every $\mu \in \M_+$, we have $D_\mu \psi(\mu,\cdot) = G_{\Gamma(\mu)}$. This mapping is closely related to the BP operator discussed above, and is described as follows. Let $\sigma^*$ be sampled according to $P^*$, and conditionally on $\sigma^*$, let $\Pi(\mu)$ denote a Poisson point process with intensity measure ${(c+\Delta \sigma^* x)} \ud \mu(x)$. Then the probability measure $\Gamma(\mu)$ is defined to be the law of the random variable
\begin{equation}
\frac{\int_{\Sigma_1} \sigma \exp(-\Delta \sigma \int_{-1}^1 x\ud \mu) \prod_{x\in \Pi(\mu)} (c+\Delta \sigma x)\ud P^*(\sigma)}{\int_{\Sigma_1} \exp(-\Delta \sigma \int_{-1}^1 x\ud \mu) \prod_{x\in \Pi(\mu)} (c+\Delta \sigma x)\ud P^*(\sigma)}.
\end{equation}
%whose Hamiltonian is the mapping
%\begin{equation*}  %\label{e.}
%\sigma \mapsto -\Delta \sigma \int_{-1}^1 x \ud \mu(x) +  \sum_{x \in \Pi(\mu)} \log(c+ \Delta \sigma x).
%\end{equation*}
%
Notice next that the condition for the measure $\nu$ to be a critical point in the variational problem on the right side of \eqref{e.lim.barfn.t.mu} can be written as
\begin{equation}
\label{e.crit.point.1}
 G_\nu = D_\mu \psi(\mu+t\nu, \cdot).
\end{equation}
At least when $\Delta < 0$, the mapping $\nu \mapsto G_\nu$ is also injective, so the relation \eqref{e.crit.point.1} can be equivalently written as
\begin{equation}  
\label{e.fixed.point}
\nu = \Gamma(\mu + t \nu).
\end{equation}
Restricting to the case of $(t,\mu) = (1,0)$, this boils down to finding fixed points of the mapping~$\Gamma$. That there is a connection between the variational formula in Theorem~\ref{SBM disassortative Hopf-Lax} and some BP fixed point equation has already been observed in \cite{CKPZ, DKMZ} and elsewhere. The less classical question is to relate this to the Hamilton-Jacobi equation \eqref{eqn: SBM enriched free energy HJ equation} for arbitrary $\Delta$. In finite dimensions, Hamilton-Jacobi equations can be solved for short times using the method of characteristics. Moreover, the slope of the characteristic line is computed by evaluating the gradient of the non-linearity at the gradient of the initial condition. In our context, the characteristic line emanating from a measure $\nu \in \M_+$ is the trajectory 
\begin{equation}  
\label{e.charact.line}
t' \mapsto (t',\nu - t' \Gamma(\nu)),
\end{equation}
for $t'$ varying in $\Rp$. As long as characteristic lines emanating from different choices of $\nu$ do not intersect each other, we can then calculate the value of the solution along each characteristic line using the equation and the fact that the gradient of the solution remains constant along each line~\cite{Evans}. The condition~\eqref{e.fixed.point} turns out to be equivalent to asking that the characteristic line emanating from $\mu + t \nu$ passes through the point $(t,\mu)$, since the latter condition can be written as $\mu = \mu + t \nu - t \Gamma(\mu + t\nu)$.
In other words, for each fixed $(t,\mu)$, there is a simple one-to-one correspondence between the fixed points to \eqref{e.fixed.point} and the characteristic lines that pass through~$(t,\mu)$. The formula for prescribing the value of the solution along a characteristic line starting from $\mu + t\nu$ is then as in the supremum in~\eqref{e.lim.barfn.t.mu}. As long as $t$ is sufficiently small that the equation~\eqref{e.fixed.point} has a unique solution for each~$\mu$, this gives us a clear procedure for computing the solution to \eqref{eqn: SBME Gateaux derivative density}. Once characteristic lines start to intersect each other, the viscosity solution to \eqref{eqn: SBM enriched free energy HJ equation} aggregates these conflicting trajectories in a physically reasonable way, and our conjecture is that the free energy $\bar F_N$ is tracking this in the limit of large $N$.

Another alternative to the conjecture proposed here would be that the limit of the free energy is the maximal value one gets by plugging every possible solution of the fixed-point equation~\eqref{e.fixed.point}  into the functional inside the supremum in \eqref{e.lim.barfn.t.mu}. But in view of the discussion in the previous paragraph, counter-examples to the variational formula in \eqref{e.lim.barfn.t.mu} seem to produce counter-examples to this possibility as well.

To conclude this introduction, we give a brief outline of the paper. In \Cref{sec: SBM HJ eqn derivation} we show that, up to an error vanishing with $N$, the enriched free energy \eqref{eqn: SBM enriched free energy} satisfies the infinite-dimensional Hamilton-Jacobi equation \eqref{eqn: SBM enriched free energy HJ equation}, provided that all multi-overlaps concentrate. 
The derivative computations that lead to the Hamilton-Jacobi equation are similar in spirit to those in Lemma 6 of \cite{PanDIL}, with some new ideas required to compute the Gateaux derivative. \Cref{sec: SBM HJ eqn WP} is devoted to establishing the well-posedness of the infinite-dimensional Hamilton-Jacobi equation \eqref{eqn: SBM enriched free energy HJ equation} using the results in \cite{TD_JC_HJ}, which in turn follows ideas from \cite{HB_HJ, HB_cone, JC_upper, JC_NC}.
In \Cref{sec: SBM upper bound}, a finitary version of the multi-overlap concentration result in \cite{BarMOC} is combined with the strategy introduced in \cite{JC_upper,JC_NC} to prove \Cref{SBM main result}. The final section is devoted to the proof of \Cref{SBM disassortative Hopf-Lax}. Using the Hopf-Lax formula established in~\cite{TD_JC_HJ}, the variational expression in \eqref{eqn: SBM disassortative Hopf-Lax} is shown to coincide with the right side of \eqref{eqn: SBM disassortative Hopf-Lax}, and we can thus appeal to \Cref{SBM main result} to obtain an upper bound for the limit free energy. The matching lower bound is obtained through an interpolation argument taken from \cite{PanNote}. So as to not disrupt the flow of the paper, a number of technical arguments have been postponed to the appendices. In \Cref{SBM app equivalence}, it is shown that the free energy functionals \eqref{eqn: SBM free energy 0} and \eqref{eqn: SBM free energy} are asymptotically equivalent. The proof relies on the binomial-Poisson approximation. \Cref{SBM app FE concentration} is devoted to proving that a perturbed version of the enriched free energy~\eqref{eqn: SBM enriched free energy} is self-averaging, in the sense that the unaveraged free energy concentrates around its average value. This concentration result plays an important part in the proof of \Cref{SBM main result} and relies upon the generalized Efron-Stein inequality \cite{Boucheron}. In \Cref{SBM app multioverlap concentration}, a finitary version of the multi-overlap concentration result in \cite{BarMOC} is established. In addition to being finitary, the most notable difference between our multi-overlap result and that in \cite{BarMOC} is that we show multi-overlap concentration for any perturbation parameter satisfying a condition that may be verified in practice, as opposed to obtaining multi-overlap concentration on average over the set of perturbation parameters. This additional control on the choice of parameters is essential in the proof of \Cref{SBM main result}.

%\jccomment{this will obviously have to be rephrased somehow :)}
%We plan to tackle the lower bound in Conjecture 1 in future work, following some of the ideas introduced in \cite{JC_HB_FinRank}. Using also the vector spin techniques developed in \cite{PanMS,PanVec,  PanPotts} and applied in \cite{Adhikari, Arous, WeiKuo_Vec, Contucci, TD_lpG, Justin_Auk, JustinP, JC_upper}, we are optimistic that our results should extend to the setting of an arbitrary finite number of communities. 

\begin{acknowledgements}
We would like to warmly thank Dmitry Panchenko and Jean Barbier for sharing their notes \cite{PanNote} on the free energy in the disassortative sparse stochastic block model with us, which provided us with a very useful starting point and helped us with many of the computations in \Cref{sec: SBM HJ eqn derivation}. 
\end{acknowledgements}

\section{The Hamilton-Jacobi equation}\label{sec: SBM HJ eqn derivation}

In this section, we compute the derivative of the enriched free energy \eqref{eqn: SBM enriched free energy} with respect to $t\geq 0$ and $\mu \in\M_+$. This will allow us to see that, up to an error vanishing with $N$, the enriched free energy heuristically satisfies \eqref{eqn: SBM enriched free energy HJ equation}. It will be convenient to write $\langle \cdot \rangle$ for the average with respect to the Gibbs measure associated with the Hamiltonian \eqref{eqn: SBM enriched Hamiltonian}. This means that for any bounded and measurable function $f=f(\sigma^1,\ldots,\sigma^n)$ of finitely many replicas, 
\begin{equation}\label{eqn: SBM Gibbs average}
 \langle f(\sigma^1, \ldots, \sigma^n) \rangle = \langle f\rangle   =\frac{\int_{\Sigma_N^n}f(\sigma^1,\ldots,\sigma^n)\prod_{\ell\leq n}\exp H_N^{t,\mu}(\sigma^\ell)\ud P_N^*(\sigma^\ell)}{\big(\int_{\Sigma_N} \exp H_N^{t,\mu}(\sigma)\ud P_N^*(\sigma)\big)^n}.
\end{equation}
In this notation, the replicas $\sigma^1,\ldots,\sigma^n$ represent i.i.d.\@ samples under the random measure $\langle \cdot\rangle$.
By construction, we have that
\begin{equation}
\label{eqn: SBM Gibbs conditional}
    \langle f(\sigma^1) \rangle = \E \big[ f(\sigma^*) \big \vert \D_N^{t,\mu} \big],
\end{equation}
where $\smash{\D_N^{t,\mu}=\widetilde{\D}_N^{t,\mu[-1,1],\bar{\mu}}}$ is the data defined in \eqref{eqn: SBM enriched data}.

Our computations will be considerably simplified by the Nishimori identity. This identity will allow us to freely interchange one replica $\sigma^\ell$ by the signal $\sigma^*$ when taking an average with respect to all sources of randomness, thus avoiding a cascade of new replicas as we differentiate the free energy. This identity states that, for every bounded and measurable function $f=f(\sigma^1,\ldots,\sigma^n,\D_N^{t,\mu})$ of finitely many replicas and the data, 
\begin{equation}\label{eqn: SBM Nishimori identity}
\E\big\langle f\big(\sigma^1,\sigma^2,\ldots,\sigma^n, \D_N^{t,\mu}\big)\big\rangle=\E\big\langle f\big(\sigma^*,\sigma^2,\ldots,\sigma^n, \D_N^{t,\mu}\big)\big\rangle.
\end{equation}
This can be first verified for functions of product form using \eqref{eqn: SBM Gibbs conditional}, and then extended to all bounded and measurable functions by a monotone class argument.

We now turn our attention to the computation of the time derivative of the enriched free energy~\eqref{eqn: SBM enriched free energy}. We fix a finite measure $\mu \in \M_+$ and proceed as in Lemma 6 of \cite{PanDIL}. For each parameter $\lambda>0$ and every integer $m\geq 0$, we denote by
\begin{equation}\label{eqn: SBM Poisson density}
\pi(\lambda,m)=\frac{\lambda^m}{m!}\exp(-\lambda)
\end{equation}
the mass attributed to the atom $m$ by a $\Poi(\lambda)$ distribution. It will be convenient to set the convention that $\pi(\lambda,-1) = 0$. We write
\begin{equation}\label{eqn: SBM conditioned Hamiltonian 1}
H_{N,m}(\sigma)=\sum_{k\leq m}\log \bigg[\big(c+\Delta \sigma_{i_k}\sigma_{j_k}\big)^{G^k_{i_k,j_k}}\Big(1-\frac{c+\Delta \sigma_{i_k}\sigma_{j_k}}{N}\Big)^{1-G^k_{i_k,j_k}}\bigg]
\end{equation}
for the Hamiltonian \eqref{eqn: SBM Hamiltonian} conditional on there being $m$ terms in the sum, and introduce the partition function
\begin{equation}
Z_{N,m}=\int_{\Sigma_N} \exp\Big(H_{N,m}(\sigma)+\widetilde{H}_N^{\mu[-1,1],\bar{\mu}}(\sigma)\Big)\ud P_N^*(\sigma).
\end{equation}
In this notation, the enriched free energy \eqref{eqn: SBM enriched free energy} may be expressed as
\begin{equation}\label{eqn: SBM enriched free energy rep1}
\overline{F}_N(t,\mu)=\frac{1}{N}\sum_{m\geq 0}\pi\Big(t\binom{N}{2},m\Big)\E \log Z_{N,m}.
\end{equation}
To take the time derivative of this expression, we will rely upon the simple fact that
\begin{equation}\label{eqn: SBM Poisson density derivative}
\partial_\lambda\pi(\lambda,m)=\pi(\lambda,m-1)-\pi(\lambda,m).
\end{equation}

\begin{lemma}\label{SBM enriched free energy time derivative}
For any $t>0$ and $\mu \in \M_+$,
\begin{equation}\label{eqn: SBM enriched free energy time derivative}
\partial_t\overline{F}_N(t,\mu)=\frac{1}{2}\E \big(c+\Delta \langle \sigma_i\sigma_j\rangle\big)\log \big( c+\Delta \langle \sigma_i\sigma_j\rangle\big)-\frac{\Delta \m^2}{2}-\frac{c}{2}+\BigO(N^{-1}),
\end{equation}
where the indices $i,j\in \{1,\ldots,N\}$ are uniformly sampled independently of all other sources of randomness.
\end{lemma}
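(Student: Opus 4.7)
The plan is to differentiate the Poisson representation \eqref{eqn: SBM enriched free energy rep1} term-by-term in $t$, re-index the resulting sum so as to recognize a single Gibbs expectation at parameter $t$, and then reduce everything to the stated formula via the Nishimori identity \eqref{eqn: SBM Nishimori identity} and a first-order Taylor expansion in $N^{-1}$. Applying \eqref{eqn: SBM Poisson density derivative} to each summand of \eqref{eqn: SBM enriched free energy rep1}, using the convention $\pi(\lambda,-1)=0$, and shifting $m\mapsto m+1$ in the sum that carries the $\pi(\lambda,m-1)$ factor, I obtain
\begin{equation*}
\partial_t\overline{F}_N(t,\mu)=\frac{\binom{N}{2}}{N}\sum_{m\geq 0}\pi\Big(t\binom{N}{2},m\Big)\,\E\big[\log Z_{N,m+1}-\log Z_{N,m}\big].
\end{equation*}
The ratio $Z_{N,m+1}/Z_{N,m}$ equals the Gibbs average under $\langle\cdot\rangle_m$ of the single new factor contributed by the $(m{+}1)$-st edge draw $(i_{m+1},j_{m+1},G^{m+1})$; this fresh data is independent of $\langle\cdot\rangle_m$, and summing in $m$ against the Poisson weights reconstitutes the Gibbs average at parameter $t$, so that
\begin{equation*}
\partial_t\overline{F}_N(t,\mu)=\frac{\binom{N}{2}}{N}\,\E\log\Big\langle (c+\Delta\sigma_i\sigma_j)^{G}\Big(1-\tfrac{c+\Delta\sigma_i\sigma_j}{N}\Big)^{1-G}\Big\rangle,
\end{equation*}
with $(i,j)$ uniform on $\{1,\ldots,N\}^2$, $G$ conditionally Bernoulli of mean $(c+\Delta\sigma_i^*\sigma_j^*)/N$ given $\sigma^*,i,j$, and $\langle\cdot\rangle$ the Gibbs average associated to $H_N^{t,\mu}$.

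Next I would separate the contributions of $G=0$ and $G=1$. Since $\langle\cdot\rangle$ is independent of $G$, the log inside the expectation is exactly $G\log(c+\Delta\langle\sigma_i\sigma_j\rangle)+(1-G)\log\bigl(1-(c+\Delta\langle\sigma_i\sigma_j\rangle)/N\bigr)$. Averaging $G$ against its conditional law produces a prefactor $(c+\Delta\sigma_i^*\sigma_j^*)/N$ on the first term and its complement on the second; because each remaining integrand depends on $\sigma^*$ only through the data $\D_N^{t,\mu}$, the Nishimori identity \eqref{eqn: SBM Nishimori identity} replaces $\sigma_i^*\sigma_j^*$ by $\langle\sigma_i\sigma_j\rangle$ in both places. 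Setting $X=(c+\Delta\langle\sigma_i\sigma_j\rangle)/N$ and using the elementary expansion $(1-X)\log(1-X)=-X+\BigO(X^2)$ — valid with an absolute constant, since $c+\Delta z\in[c-|\Delta|,c+|\Delta|]$ for $z\in[-1,1]$ — gives
\begin{equation*}
\E\log\langle\cdots\rangle=\tfrac{1}{N}\E\big[(c+\Delta\langle\sigma_i\sigma_j\rangle)\log(c+\Delta\langle\sigma_i\sigma_j\rangle)\big]-\tfrac{1}{N}\E[c+\Delta\langle\sigma_i\sigma_j\rangle]+\BigO(N^{-2}).
\end{equation*}
A final application of Nishimori yields $\E\langle\sigma_i\sigma_j\rangle=\E\sigma_i^*\sigma_j^*=\m^2+\BigO(N^{-1})$, the correction absorbing the $\BigO(N^{-1})$ chance that $i=j$. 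Multiplying through by $\binom{N}{2}/N$ and using $\binom{N}{2}/N^2=1/2+\BigO(N^{-1})$ produces precisely the announced identity.

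This is essentially the strategy of Lemma~6 of \cite{PanDIL} adapted to the Bernoulli link structure at hand, and I do not anticipate a genuine obstacle: the uniform boundedness of $c+\Delta z$ away from $0$ and $\infty$ on $[-1,1]$ makes every logarithm and Taylor remainder absolutely bounded, so the $\BigO(N^{-1})$ error is clean. The one point requiring a moment of care is the re-indexing of the Poisson sum in the first step, where one must verify that after the shift the surviving weight $\pi(t\binom{N}{2},m)$ reconstructs a bona fide expectation against $\Pi_t$ and that the edge data $(i_{m+1},j_{m+1},G^{m+1})$ is conditionally independent of $\langle\cdot\rangle_m$; both properties are built into the construction of $H_N^t$ in \eqref{eqn: SBM Hamiltonian t}.
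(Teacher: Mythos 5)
Your proposal is correct and follows essentially the same route as the paper's proof: differentiate the Poisson representation \eqref{eqn: SBM enriched free energy rep1} via \eqref{eqn: SBM Poisson density derivative}, recognize the ratio $Z_{N,m+1}/Z_{N,m}$ as the Gibbs average of a fresh edge factor, average over the conditional Bernoulli law of $G$, and finish with the Nishimori identity and a first-order Taylor expansion. The only (cosmetic) difference is that you apply Nishimori to both prefactors before Taylor expanding, whereas the paper expands the logarithm first and invokes Nishimori afterwards; the estimates are identical.
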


\begin{proof}
To simplify notation, let $\lambda(t)=t\binom{N}{2}$. Leveraging \eqref{eqn: SBM Poisson density derivative} to differentiate the right-hand side of \eqref{eqn: SBM enriched free energy rep1} yields
\begin{align}\label{eqn: SBM enriched free energy time derivative key}
\partial_t\overline{F}_N(t,\mu)&=\frac{1}{N}\binom{N}{2}\sum_{m\geq 0} \big(\pi(\lambda(t),m-1)-\pi(\lambda(t),m)\big) \E \log Z_{N,m}\notag\\
&=\frac{1}{N}\binom{N}{2}\sum_{m\geq 0}\pi(\lambda(t),m)\E \log \frac{Z_{N,m+1}}{Z_{N,m}}.
\end{align}
Denote by $i,j\in \{1,\ldots,N\}$ uniformly sampled indices, and write $G_{i,j}$ for a random variable with conditional distribution \eqref{eqn: SBM G distribution}. These random variables are taken to be independent of all other sources of randomness and of each other. Since
$$Z_{N,m+1}\stackrel{d}{=} \int_{\Sigma_N} \big(c+\Delta \sigma_i\sigma_j\big)^{G_{i,j}}\Big(1-\frac{c+\Delta \sigma_i\sigma_j}{N}\Big)^{1-G_{i,j}}\exp \Big(H_{N,m}(\sigma)+H_N^{\mu[-1,1],\bar{\mu}}(\sigma)\Big)\ud P_N^*(\sigma),$$
it follows by \eqref{eqn: SBM enriched free energy time derivative key} and the definition of the Gibbs average in \eqref{eqn: SBM Gibbs average} that
$$\partial_t \overline{F}_N(t,\mu)=\frac{1}{N}\binom{N}{2}\E \log \bigg\langle \big(c+\Delta \sigma_i\sigma_j\big)^{G_{i,j}}\Big(1-\frac{c+\Delta \sigma_i\sigma_j}{N}\Big)^{1-G_{i,j}}\bigg\rangle.$$
Remembering the explicit form of the conditional distribution \eqref{eqn: SBM G distribution}, and averaging with respect to the randomness of $G_{i,j}$ reveals that
\begin{align*}
\partial_t \overline{F}_N(t,s,\mu)=\frac{1}{2}\E\big(c+\Delta \sigma_i^*\sigma_j^*\big)& \log \langle c+\Delta \sigma_i\sigma_j\rangle\\
&+\frac{N}{2}\E\bigg(1-\frac{c+\Delta \sigma_i^*\sigma_j^*}{N}\bigg)\log \Big\langle 1-\frac{c+\Delta \sigma_i\sigma_j}{N}\Big\rangle+\BigO(N^{-1}),
\end{align*}
Taylor expanding the logarithm and keeping only first order terms reduces this to
\begin{align*}
\partial_t \overline{F}_N(t,\mu)&=\frac{1}{2}\E \big(c+\Delta \sigma_i^*\sigma_j^*\big)\log \langle c+\Delta \sigma_i\sigma_j\rangle-\frac{\Delta}{2}\E \langle\sigma_i\sigma_j\rangle-\frac{c}{2}+\BigO(N^{-1})\\
&=\frac{1}{2}\E \big(c+\Delta \sigma_i^*\sigma_j^*\big)\log \langle c+\Delta \sigma_i\sigma_j\rangle-\frac{\Delta}{2}\E \sigma_i^*\sigma_j^*-\frac{c}{2}+\BigO(N^{-1})\\
&=\frac{1}{2}\E \big(c+\Delta \sigma_i^*\sigma_j^*\big)\log \big(c+\Delta \langle\sigma_i\sigma_j\rangle\big)-\frac{\Delta\m^2}{2}-\frac{c}{2}+\BigO(N^{-1}),
\end{align*}
where the second equality uses the Nishimori identity \eqref{eqn: SBM Nishimori identity} and the third equality uses the fact that $i$ and $j$ are distinct with overwhelming probability. Noticing that the Gibbs average $\langle \sigma_i\sigma_j\rangle$ is a measurable function of the data by \eqref{eqn: SBM Gibbs conditional}, and applying the Nishimori identity \eqref{eqn: SBM Nishimori identity} completes the proof.
\end{proof}

To compare \eqref{eqn: SBM enriched free energy time derivative} with the Gateaux derivative of the enriched free energy which we will compute below, it will be convenient to Taylor expand the logarithm. This will make the dependence of the time-derivative of the enriched free energy on the multi-overlaps 
\begin{equation}\label{eqn: SBM enriched multi-overlap}
R_{\ell_1,\ldots,\ell_n}=\frac{1}{N}\sum_{i\leq N}\sigma_i^{\ell_1}\cdots \sigma_i^{\ell_n}
\end{equation}
associated with the enriched Hamiltonian \eqref{eqn: SBM enriched Hamiltonian} explicit. Here $(\sigma^\ell)$ denotes a sequence of i.i.d.\@ replicas sampled from the Gibbs measure \eqref{eqn: SBM Gibbs average}. To simplify notation, we will write $R_{[n]}=R_{1,\ldots,n}$.

\begin{corollary}\label{SBM enriched free energy time derivative Taylor}
For any $t>0$ and $\mu \in \M_+$,
\begin{equation}
\partial_t \overline{F}_N(t,\mu)=\frac{1}{2}\big(c+\Delta\m^2\big)\log (c)+\frac{c}{2}\sum_{n\geq 2}\frac{(-\Delta/c)^n}{n(n-1)}\E\big\langle R_{[n]}^2\big\rangle-\frac{c}{2}+\BigO(N^{-1}).
\end{equation}
\end{corollary}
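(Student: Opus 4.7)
The plan is to derive the claimed expansion directly from \Cref{SBM enriched free energy time derivative} by expanding the function $z\mapsto (c+\Delta z)\log(c+\Delta z)$ as a power series in $z$ and then identifying the moments $\E\langle \sigma_i\sigma_j\rangle^n$ with $\E\langle R_{[n]}^2\rangle$. The starting point is the identity
\begin{equation*}
\partial_t\overline{F}_N(t,\mu)=\frac{1}{2}\E \big(c+\Delta \langle \sigma_i\sigma_j\rangle\big)\log \big( c+\Delta \langle \sigma_i\sigma_j\rangle\big)-\frac{\Delta \m^2}{2}-\frac{c}{2}+\BigO(N^{-1})
\end{equation*}
provided by the lemma, where $i,j\in\{1,\ldots,N\}$ are sampled uniformly at random.

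The first step is to record the identity
\begin{equation*}
(c+\Delta z)\log(c+\Delta z) = (c+\Delta z)\log(c) + \Delta z + c\sum_{n\geq 2}\frac{(-\Delta/c)^n}{n(n-1)}z^n,
\end{equation*}
valid for $z\in[-1,1]$, which follows by expanding $\log(c+\Delta z) = \log(c) - \sum_{n\geq 1} n^{-1}(-\Delta/c)^n z^n$, multiplying through by $c+\Delta z$, reindexing the second sum and combining. Equivalently, this is a direct consequence of the formula for $g$ in \eqref{eqn: SBM g function} together with the relation $g(z) = (c+\Delta z)\log(c+\Delta z) - (c+\Delta z)$.

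Next I would apply this expansion to $z = \langle\sigma_i\sigma_j\rangle$ and take expectations. The linear term $\frac{\Delta}{2}\E\langle\sigma_i\sigma_j\rangle$ equals $\frac{\Delta \m^2}{2}$ by the Nishimori identity~\eqref{eqn: SBM Nishimori identity} (combined with the fact that $\E\sigma_i^*\sigma_j^* = \m^2$ up to an $\BigO(N^{-1})$ error for $i=j$), and it cancels the $-\frac{\Delta\m^2}{2}$ already present in the lemma. Similarly, the constant-logarithm term contributes $\frac{1}{2}(c+\Delta\m^2)\log(c)$.

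Finally, for the remaining sum I would observe that for each $n\geq 2$,
\begin{equation*}
\E\langle\sigma_i\sigma_j\rangle^n = \E\big\langle \sigma_i^1\sigma_j^1\cdots\sigma_i^n\sigma_j^n\big\rangle = \E\bigg\langle\bigg(\frac{1}{N}\sum_{i\leq N}\sigma_i^1\cdots\sigma_i^n\bigg)\bigg(\frac{1}{N}\sum_{j\leq N}\sigma_j^1\cdots\sigma_j^n\bigg)\bigg\rangle = \E\big\langle R_{[n]}^2\big\rangle,
\end{equation*}
where the first equality uses that the replicas are i.i.d.\@ under $\langle\cdot\rangle$, and the second averages $i,j$ uniformly over $\{1,\ldots,N\}^2$. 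Substituting this into the expanded expression yields exactly the desired formula. There is no real obstacle here; the only subtle point is justifying that term-by-term integration of the power series is permitted, which is immediate since $|\langle\sigma_i\sigma_j\rangle|\leq 1$ and $|\Delta|<c$, giving a uniformly convergent series on $[-1,1]$.
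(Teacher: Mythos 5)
Your proof is correct and follows essentially the same route as the paper: Taylor expansion of the logarithm in \Cref{SBM enriched free energy time derivative}, the Nishimori identity for the constant and linear terms, and the identification of the index-averaged replica moments $\E\langle\sigma_i\sigma_j\rangle^n$ with $\E\langle R_{[n]}^2\rangle$. The only (harmless) difference is that you rearrange the series at the scalar level, via the expansion of $g$ in \eqref{eqn: SBM g function}, before taking expectations, whereas the paper performs the equivalent recombination after writing $\E(c+\Delta\langle\sigma_i\sigma_j\rangle)\langle\sigma_i\sigma_j\rangle^n=c\E\langle R_{[n]}^2\rangle+\Delta\E\langle R_{[n+1]}^2\rangle$.
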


\begin{proof}
A Taylor expansion of the logarithm shows that 
$$\log\big( c+\Delta \langle\sigma_i\sigma_j\rangle\big)=\log(c)+\log \Big(1+\frac{\Delta}{c}\langle \sigma_i\sigma_j\rangle\Big)=\log(c)-\sum_{n\geq 1}\frac{(-\Delta/c)^n}{n}\langle \sigma_i\sigma_j\rangle^n.$$
Together with the Nishimori identity \eqref{eqn: SBM Nishimori identity} this implies that
\begin{align}\label{eqn: SBM enriched free energy time derivative rep2 key}
\E\big(c+\Delta \langle \sigma_i \sigma_j\rangle\big)\log\big( c+\Delta \langle\sigma_i\sigma_j\rangle\big)=\big(c+\Delta \m^2\big)&\log(c)\notag\\
&-\sum_{n\geq 1}\frac{(-\Delta/c)^n}{n}\E\big(c+\Delta\langle \sigma_i\sigma_j\rangle\big)\langle \sigma_i\sigma_j\rangle^n.
\end{align}
Averaging with respect to the randomness of the uniformly sampled indices $i,j\in \{1,\ldots,N\}$ reveals that
%Since the indices $i,j\in \{1,\ldots,N\}$ are distinct with overwhelming probability, the Nishimori identity \eqref{eqn: SBM Nishimori identity} reveals that
$$\E\big(c+\Delta\langle \sigma_i\sigma_j\rangle \big)\langle \sigma_i\sigma_j\rangle^n=c\E\big\langle R_{[n]}^2\big\rangle+\Delta \E\big\langle R_{[n+1]}^2\big\rangle.$$
Remembering that $\abs{\Delta}<c$ and noticing that $\E\langle R_1^2\rangle=\m^2+\BigO(N^{-1})$ by the Nishimori identity, it follows that
\begin{align*}
\sum_{n\geq 1}\frac{(-\Delta/c)^n}{n}\E\big(c+\Delta\langle \sigma_i\sigma_j\rangle\big)&\langle \sigma_i\sigma_j\rangle^n\\
&=-\Delta\E\big\langle R_1^2\big\rangle+c\sum_{n\geq 2}\bigg(\frac{(-\Delta/c)^n}{n}-\frac{(-\Delta/c)^n}{n-1}\bigg)\E\big\langle R_{[n]}^2\big\rangle +\BigO(N^{-1})\\
&=-\Delta \m^2-c\sum_{n\geq 2}\frac{(-\Delta/c)^n}{n(n-1)}\E\big\langle R_{[n]}^2\big\rangle +\BigO(N^{-1}).
\end{align*}
Substituting this into \eqref{eqn: SBM enriched free energy time derivative rep2 key} and invoking \Cref{SBM enriched free energy time derivative} completes the proof.
\end{proof}

The computation of the Gateaux derivative of the enriched free energy \eqref{eqn: SBM enriched free energy} at a measure $\smash{\mu \in \M_+}$ in the direction of a probability measure $\smash{\nu\in \Pr[-1,1]}$,
\begin{equation}\label{eqn: SBM free energy Gateaux derivative}
D_\mu \overline{F}_N(t,\mu; \nu)=\lim_{\epsilon \to 0}\frac{\overline{F}_N(t,\mu+\epsilon \nu)-\overline{F}_N(t,\mu)}{\epsilon},
\end{equation}
is slightly more involved. It will be useful to compute the derivative of the free energy \eqref{eqn: SBM enriched free energy s} with respect to the parameter $s\geq 0$ first. Fix a probability measure $\mu\in \Pr[-1,1]$ and a time $t\geq 0$. For each $i\leq N$ write
\begin{align}\label{eqn: SBM conditioned Hamiltonian 2}
\widetilde{H}_{N,m}^{s,i}(\sigma)=\sum_{j\neq i}\sum_{k\leq \Pi_{j,s}}\log&\bigg[\big(c+\Delta \sigma_jx_{j,k}\big)^{\tG_{j,k}^x}\Big(1-\frac{c+\Delta \sigma_jx_{j,k}}{N}\Big)^{1-\tG_{j,k}^x}\bigg]\notag\\
& +\sum_{k\leq m}\log\bigg[\big(c+\Delta \sigma_ix_{i,k}\big)^{\tG_{i,k}^x}\Big(1-\frac{c+\Delta \sigma_ix_{i,k}}{N}\Big)^{1-\tG_{i,k}^x}\bigg]
\end{align}
for the Hamiltonian \eqref{eqn: SBM Hamiltonian s} conditional on the $i$'th Poisson sum containing $m$ terms, and denote by
\begin{equation}
Z_{N,m}^{s,i}=\int_{\Sigma_N} \exp \big(H_{N}^t(\sigma)+\widetilde{H}^{s,i}_{N,m}(\sigma)\big)\ud P_N^*(\sigma)
\end{equation}
its associated partition function. In this notation, the free energy \eqref{eqn: SBM enriched free energy s} may be expressed as
\begin{equation}\label{eqn: SBM enriched free energy rep2}
\widetilde{F}_N(t,s,\mu)=\frac{1}{N}\sum_{m\geq 0}\pi(sN,m)\E \log Z_{N,m}^{s,i}.
\end{equation}

\begin{lemma}\label{SBM enriched free energy s derivative}
For any $t>0$, $s>0$ and $\mu \in \Pr[-1,1]$,
\begin{equation}\label{eqn: SBM enriched free energy s derivative}
\partial_s \widetilde{F}_N(t,s,\mu)=\E\big(c+\Delta \langle \sigma_i\rangle x_i\big)\log \big(c+\Delta \langle\sigma_i\rangle x_i\big)-c-\Delta \m \E x_1+\BigO(N^{-1}),
\end{equation}
where the index $i\in \{1,\ldots,N\}$ is uniformly sampled and the random variables $(x_i)$ are sampled from the measure $\mu$ independently of all other sources of randomness.
\end{lemma}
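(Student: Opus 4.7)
The plan is to follow the same strategy as the proof of Lemma~\ref{SBM enriched free energy time derivative}, starting from the representation \eqref{eqn: SBM enriched free energy rep2}. The only real subtlety is that $s$ enters this representation in two places: through the Poisson weight $\pi(sN,m)$ for the singled-out vertex $i$, and through the partition function $Z_{N,m}^{s,i}$ itself, which still depends on $s$ via the remaining Poisson variables $\Pi_{j,s}$ for $j\neq i$. I would handle this by temporarily promoting $s$ to a vector $(s_1,\ldots,s_N)$ in which $s_i$ governs only $\Pi_{i,s_i}$, computing each partial derivative $\partial_{s_i}$ separately (since then only $\pi(s_iN,m)$ in \eqref{eqn: SBM enriched free energy rep2} depends on $s_i$), and recovering the desired derivative via the chain rule evaluated on the diagonal $s_1=\cdots=s_N=s$. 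Equivalently, and perhaps more cleanly, one can first invoke Poisson superposition to rewrite $\widetilde{H}_N^{s,\mu}$ as a single $\Poi(sN^2)$ sum with uniformly chosen indices, which produces a structure fully analogous to that treated in Lemma~\ref{SBM enriched free energy time derivative}, with $\binom{N}{2}$ replaced by $N^2$.

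With either setup, using $\partial_\lambda\pi(\lambda,m) = \pi(\lambda,m-1) - \pi(\lambda,m)$ and the telescoping trick in \eqref{eqn: SBM enriched free energy time derivative key}, each partial derivative collapses into the weighted log-ratio $\sum_m \pi(s_iN,m)\,\E\log(Z_{N,m+1}^{s,i}/Z_{N,m}^{s,i})$, in which the ratio is the Gibbs average of the $(m+1)$-st factor in \eqref{eqn: SBM conditioned Hamiltonian 2}. Summing over $i$ and using exchangeability leads to
\begin{equation*}
\partial_s\widetilde{F}_N(t,s,\mu) = N\,\E\log\Big\langle (c+\Delta\sigma_i x_i)^{G_i}\big(1-(c+\Delta\sigma_i x_i)/N\big)^{1-G_i}\Big\rangle,
\end{equation*}
where $i$ is a uniformly sampled index, $x_i\sim\mu$, and $G_i$ is a Bernoulli random variable with conditional parameter $(c+\Delta\sigma_i^* x_i)/N$, all independent of each other and of the other sources of randomness. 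The prefactor $N$ here reflects that summing the Poisson intensity across the $N$ vertices gives total intensity $sN^2$, one factor of $N$ of which cancels the $1/N$ in front of $\widetilde{F}_N$.

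To conclude, I would evaluate the logarithm by case analysis on $G_i\in\{0,1\}$, obtaining $G_i\log(c+\Delta x_i\langle\sigma_i\rangle) + (1-G_i)\log(1-(c+\Delta x_i\langle\sigma_i\rangle)/N)$, average over $G_i$ using its conditional distribution, and Taylor expand $\log(1-z/N) = -z/N + \BigO(N^{-2})$. The Bernoulli parameter $1/N$ then cancels the outer factor $N$, producing an $\BigO(1)$ expression to which the Nishimori identity \eqref{eqn: SBM Nishimori identity} can be applied: since $x_i$ is independent of $\sigma^*$ and of the data, $\sigma_i^*$ can be replaced by $\langle\sigma_i\rangle$ in the leading term, and $\E\langle\sigma_i\rangle = \m$ can be used in the linear correction, yielding the claimed identity. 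The main obstacle I anticipate is precisely the careful bookkeeping of the two sources of $s$-dependence in \eqref{eqn: SBM enriched free energy rep2}; once this is handled by the per-vertex decomposition or the superposition reformulation, the rest of the argument is a direct adaptation of the calculations in Lemma~\ref{SBM enriched free energy time derivative} and Corollary~\ref{SBM enriched free energy time derivative Taylor}.
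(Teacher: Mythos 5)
Your argument is correct and follows essentially the same route as the paper: the per-vertex treatment of the two sources of $s$-dependence (your vectorization of $s$ is just the paper's product rule after conditioning on the number of terms in each Poisson sum), followed by the telescoping identity, the distributional identity for $Z_{N,m+1}^{s,i}$, averaging over the Bernoulli variable, Taylor expansion of the logarithm, and two applications of the Nishimori identity. Your alternative via Poisson superposition into a single $\Poi(sN^2)$ process is an equivalent repackaging and would work just as well.
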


\begin{proof}
Conditioning on the number of terms in each of the Poisson sums that appear in the definition of the free energy \eqref{eqn: SBM enriched free energy s} and leveraging the product rule as well as equations \eqref{eqn: SBM enriched free energy rep2} and \eqref{eqn: SBM Poisson density derivative}, we see that
\begin{align}\label{eqn: SBM enriched free energy s derivative key}
\partial_s \widetilde{F}_N(t,s,\mu)&=\frac{1}{N}\sum_{i\leq N}\sum_{m\geq 0}\partial_s\pi(sN,m)\E \log Z_{N,m}^{s,i}=\sum_{i\leq N}\sum_{m\geq 0}\pi(sN,m)\E\log \frac{Z_{N,m+1}^{s,i}}{Z_{N,m}^{s,i}}.
\end{align}
For each $i\leq N$, denote by $x_i$ a sample from the measure $\mu$, and write $\tG_i^x$ for a random variable with conditional distribution \eqref{eqn: SBM tG distribution}. These random variables are taken to be independent for different values of $i\leq N$, and independent of all other sources of randomness. Since
\begin{equation}\label{eqn: SBM s derivative distributional equality}
Z_{N,m+1}^{s,i}\stackrel{d}{=}\int_{\Sigma_N} \big(c+\Delta \sigma_ix_i\big)^{\tG_i^x}\Big(1-\frac{c+\Delta \sigma_ix_i}{N}\Big)^{1-\tG_i^x}\exp \big(H_N^t(\sigma)+\widetilde{H}_{N,m}^{s,i}(\sigma)\big)\ud P_N^*(\sigma),
\end{equation}
it follows by \eqref{eqn: SBM enriched free energy s derivative key} and the definition of the Gibbs average in \eqref{eqn: SBM Gibbs average} that
\begin{equation}\label{eqn: SBM enriched free energy s derivative exact}
\partial_s \widetilde{F}_N(t,s,\mu)=\sum_{i\leq N}\E \log \bigg\langle\big(c+\Delta \sigma_ix_i\big)^{\tG_i^x}\Big(1-\frac{c+\Delta \sigma_ix_i}{N}\Big)^{1-\tG_i^x} \bigg\rangle.
\end{equation}
Remembering the explicit form of the conditional distribution \eqref{eqn: SBM G distribution} reveals that
\begin{align*}
\partial_s \widetilde{F}_N(t,s,\mu)=\frac{1}{N}\sum_{i\leq N}\E\big(c+\Delta \sigma_i^*x_i\big)\log \langle c&+\Delta \sigma_ix_i\rangle\\
&+\sum_{i\leq N}\E\Big(1-\frac{c+\Delta \sigma_i^* x_i}{N}\Big)\log \Big\langle 1-\frac{c+\Delta \sigma_ix_i}{N}\Big\rangle.
\end{align*}
Taylor expanding the logarithm and keeping only first order terms reduces this to 
\begin{align*}
\partial_s \widetilde{F}_N(t,s,\mu)&=\frac{1}{N}\sum_{i\leq N}\E\big(c+\Delta \sigma_i^*x_i\big)\log \langle c+\Delta \sigma_ix_i\rangle-c-\frac{\Delta}{N}\sum_{i\leq N}\E x_i\E\langle \sigma_i\rangle+\BigO(N^{-1})\\
&=\frac{1}{N}\sum_{i\leq N}\E\big(c+\Delta \sigma_i^* x_i\big)\log \big(c+\Delta \langle \sigma_i\rangle x_i\big)-c-\Delta \m \E x_1+\BigO(N^{-1}),
\end{align*}
where the second equality uses the Nishimori identity \eqref{eqn: SBM Nishimori identity}. Noticing that the Gibbs average $\langle \sigma_i\rangle$ is a measurable function of the data by \eqref{eqn: SBM Gibbs conditional} and applying the Nishimori identity \eqref{eqn: SBM Nishimori identity} completes the proof.
\end{proof}

Before leveraging this result to compute the Gateaux derivative \eqref{eqn: SBM free energy Gateaux derivative}, we begin with some distributional identities which will simplify the calculation. Fix a finite measure $\mu\in \M_+$ and a probability measure $\nu\in \Pr[-1,1]$. Let $s=\mu[-1,1]$ and fix $\epsilon>0$. Introduce the measure $\lambda=\mu+\epsilon \nu$ and observe that 
\begin{equation}\label{eqn: SBM sum of measures is mixture}
\bar{\lambda}=\frac{\lambda}{s+\epsilon}=\frac{s}{s+\epsilon}\bar{\mu}+\frac{\epsilon}{s+\epsilon}\nu.
\end{equation}
Denote by $(x_{i,k})$ i.i.d.\@ random variables sampled from the measure $\bar{\mu}$, and write $(y_{i,k})$ for i.i.d.\@ random variables sampled from the measure $\nu$. Given i.i.d.\@ random variables $(w_{i,k})$ with distribution $\smash{\Ber(\frac{s}{s+\epsilon})}$, notice that by \eqref{eqn: SBM sum of measures is mixture} the random variables
\begin{equation}
z_{i,k}=x_{i,k}^{w_{i,k}}y_{i,k}^{1-w_{i,k}}
\end{equation}
are i.i.d.\@ with distribution $\bar{\lambda}$. In particular, if $(\tG_{i,k}^z)$ are independent random variables with conditional distribution \eqref{eqn: SBM tG distribution}, the Hamiltonian \eqref{eqn: SBM Hamiltonian s} may be expressed as
\begin{equation}\label{eqn: SBM GD distributional identity}
\widetilde{H}_N^{s+\epsilon,\bar{\lambda}}(\sigma)\stackrel{d}{=}\sum_{i\leq N}\sum_{k\leq \Pi_{i,s+\epsilon}}\log\bigg[\big(c+\Delta \sigma_iz_{i,k}\big)^{\tG_{i,k}^z}\Big(1-\frac{c+\Delta \sigma_iz_{i,k}}{N}\Big)^{1-\tG_{i,k}^z}\bigg].
\end{equation}
This identity will allow us to linearize the enriched free energy \eqref{eqn: SBM enriched free energy} upon realizing that
\begin{equation}
\overline{F}_N(t,\mu+\epsilon \nu)=\widetilde{F}_N\big(t,s+\epsilon,\bar \lambda\big).
\end{equation}
To make the computation as clear as possible, it will be convenient to introduce additional notation. In the same spirit as \eqref{eqn: SBM conditioned Hamiltonian 2}, for each $i\leq N$, write
\begin{align}\label{eqn: SBM conditioned Hamiltonian 3}
\widetilde{H}_{N,m}^{s,i,+}(\sigma)&=\widetilde{H}_{N,m}^{s,i}+\log\bigg[\big(c+\Delta \sigma_iy_i\big)^{\tG_{i}^y}\Big(1-\frac{c+\Delta \sigma_iy_{i}}{N}\Big)^{1-\tG_{i}^y}\bigg]
\end{align}
for the Hamiltonian \eqref{eqn: SBM Hamiltonian s} conditional on the $i$'th Poisson sum containing $m+1$ terms one of which is sampled from the measure $\nu$. Denote by
\begin{equation}
Z_{N,m}^{s,i,+}=\int_{\Sigma_N} \exp\big(H_N^t(\sigma)+\widetilde{H}_{N,m}^{s,i,+}(\sigma)\big)\ud P_N^*
\end{equation}
its associated partition function. It will be useful to record the following consequence of Taylor's theorem,
\begin{equation}\label{eqn: SBM GD Taylor}
\Big(\frac{s}{s+\epsilon}\Big)^{\sum_{i\leq N}m_i}=1-\frac{\epsilon}{s+\epsilon}\sum_{i\leq N}m_i+o(\epsilon),
\end{equation}
as well as the elementary identity,
\begin{equation}\label{eqn: SBM Poisson density multiplication}
m\pi(\lambda,m)=\lambda\pi(\lambda,m-1).
\end{equation}

\begin{lemma}\label{SBM enriched free energy Gateaux derivative}
For any $t>0$, $\mu \in \M_+$ and $\nu \in \Pr[-1,1]$,
\begin{align}\label{eqn: SBM enriched free energy Gateaux derivative}
D_\mu \overline{F}_N(t,\mu; \nu)=\E\big(c+\Delta \langle \sigma_i\rangle y_i\big)\log\big(c&+\Delta\langle \sigma_i\rangle y_i\big)\notag\\
&+N\E\Big(1-\frac{c+\Delta \langle \sigma_i\rangle y_i}{N}\Big)\log\Big(1-\frac{c+\Delta\langle \sigma_i\rangle y_i}{N}\Big)
\end{align}
where the index $i\in \{1,\ldots,N\}$ is uniformly sampled and the random variables $(y_i)$ are sampled from the measure $\nu$ independently of all other sources of randomness.
\end{lemma}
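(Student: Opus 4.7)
My plan is to exploit the distributional identity \eqref{eqn: SBM GD distributional identity} to linearize the perturbation, and then carry out a Poisson thinning/conditioning argument analogous in spirit to the proof of Lemma~\ref{SBM enriched free energy s derivative}. Writing $s=\mu[-1,1]$ and $\lambda=\mu+\epsilon\nu$, one has $\bar\lambda=\frac{s}{s+\epsilon}\bar\mu+\frac{\epsilon}{s+\epsilon}\nu$, so by Poisson superposition the Hamiltonian $\widetilde H_N^{s+\epsilon,\bar\lambda}$ can be coupled to the sum of an independent $x$-part with intensity $sN$ (with types drawn from $\bar\mu$) and a $y$-part with intensity $\epsilon N$ (with types drawn from $\nu$). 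The $x$-part alone precisely reproduces the Hamiltonian defining $\overline F_N(t,\mu)$, and the partition function $Z_N^{t,\mu+\epsilon\nu}$ agrees with $Z_N^{t,\mu}$ whenever no $y$-term is drawn at any site.

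Next, I would condition on the $N$ independent counts $\Pi_{i,\epsilon}'\sim\Poi(\epsilon N)$ giving the number of $y$-terms at site~$i$. Elementary Poisson estimates give, for $N$ fixed and $\epsilon\to 0$, that $\P(\Pi_{i,\epsilon}'=0)=1-\epsilon N+O(\epsilon^2)$, $\P(\Pi_{i,\epsilon}'=1)=\epsilon N+O(\epsilon^2)$, and $\P(\Pi_{i,\epsilon}'\ge 2)=O(\epsilon^2)$. Hence the probability of no $y$-term anywhere is $1-\epsilon N^2+O(\epsilon^2)$, the probability of exactly one $y$-term, located at site~$i$, is $\epsilon N+O(\epsilon^2)$, and all remaining configurations contribute total probability $O(\epsilon^2)$. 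In the first case $Z_N^{t,\mu+\epsilon\nu}=Z_N^{t,\mu}$, and in the second case, in the notation of \eqref{eqn: SBM conditioned Hamiltonian 3}, we have $Z_N^{t,\mu+\epsilon\nu}=Z_{N,0}^{s,i,+}$, which equals $Z_N^{t,\mu}$ times the single extra factor $(c+\Delta\sigma_iy_i)^{\tG_i^y}(1-\frac{c+\Delta\sigma_iy_i}{N})^{1-\tG_i^y}$.

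Combining these observations and dividing by $\epsilon$ yields
\begin{equation*}
\frac{\overline F_N(t,\mu+\epsilon\nu)-\overline F_N(t,\mu)}{\epsilon}=\sum_{i\le N}\E\log\bigg\langle(c+\Delta\sigma_iy_i)^{\tG_i^y}\Big(1-\tfrac{c+\Delta\sigma_iy_i}{N}\Big)^{1-\tG_i^y}\bigg\rangle+o(1).
\end{equation*}
Averaging the inner expression with respect to the conditional law of $\tG_i^y$ given in \eqref{eqn: SBM tG distribution}, using that $\langle c+\Delta\sigma_iy_i\rangle=c+\Delta\langle\sigma_i\rangle y_i$ by linearity of $\langle\cdot\rangle$, and then applying the Nishimori identity \eqref{eqn: SBM Nishimori identity} to replace the factors $(c+\Delta\sigma_i^*y_i)$ and $(1-\frac{c+\Delta\sigma_i^*y_i}{N})$ by $(c+\Delta\langle\sigma_i\rangle y_i)$ and $(1-\frac{c+\Delta\langle\sigma_i\rangle y_i}{N})$ respectively (valid since $\langle\sigma_i\rangle$ is a measurable function of the data by \eqref{eqn: SBM Gibbs conditional}), one arrives at the desired formula after writing the average over $i\le N$ as an expectation over a uniformly sampled index.

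The main obstacle will be the rigorous justification of the $o(\epsilon)$ error term in Step~2. Since each logarithmic summand in $H_N^t$ and $\widetilde H_N^{s,\bar\mu}$ is bounded in absolute value by $O(\log N)$, one has $|\log Z_N^{t,\mu+\epsilon\nu}|\le C_N\cdot(1+\Pi_t+\sum_i\Pi_{i,s}+\sum_i\Pi_{i,\epsilon}')$, so that the contribution of configurations with two or more $y$-terms, which have total probability $O(\epsilon^2)$, is controlled by the exponentially decaying tails of the Poisson counts. This produces a legitimate $o(\epsilon)$ remainder for each fixed $N$, which is all that is needed since the Gateaux derivative \eqref{eqn: SBM enriched free energy Gateaux derivative} is asserted at fixed $N$.
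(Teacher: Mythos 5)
Your proposal is correct, but it follows a genuinely different route from the paper. The paper writes $\bar F_N(t,\mu+\epsilon\nu)=\widetilde F_N(t,s+\epsilon,\bar\lambda)$ and represents each type drawn from $\bar\lambda$ as a Bernoulli$\big(\tfrac{s}{s+\epsilon}\big)$ mixture of an $x$-type and a $y$-type; it then expands $\big(\tfrac{s}{s+\epsilon}\big)^{\sum_i m_i}$ to first order in $\epsilon$, uses $m\,\pi(\lambda,m)=\lambda\,\pi(\lambda,m-1)$, and must finally add back the separately computed $\partial_s\widetilde F_N$ from Lemma~\ref{SBM enriched free energy s derivative}, since in that decomposition the Poisson intensity has been raised to $(s+\epsilon)N$; a by-product is that the Gibbs brackets there depend on $\epsilon$, which forces the continuity remark made at the end of the paper's proof. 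You instead invoke Poisson superposition: the marked process with intensity $(s+\epsilon)N$ and marks $\bar\lambda$ is the independent sum of the original $\bar\mu$-process at intensity $sN$ and a small $\nu$-process at intensity $\epsilon N$, so the increment of the free energy is carried entirely by the $\nu$-part and a single conditioning on its counts gives the first-order term directly, with a bracket $\langle\cdot\rangle$ that does not depend on $\epsilon$ and no recombination with the $s$-derivative needed; the subsequent averaging over $\tG_i^y$, the linearity giving $\langle c+\Delta\sigma_i y_i\rangle=c+\Delta\langle\sigma_i\rangle y_i$, and the Nishimori replacement (legitimate since $y_i$ is independent of $(\sigma^*,\D_N^{t,\mu})$ and $\langle\sigma_i\rangle$ is data-measurable) are the same as in the paper. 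Two small points to tighten: your identification of the one-$y$-term partition function with $Z_{N,0}^{s,i,+}$ is a notational slip, since that object conditions the $i$'th $\bar\mu$-sum to be empty — what you actually use, namely $Z_N^{t,\mu}$ times the Gibbs average of the single extra factor, is the right statement; and for the remainder, a bare Cauchy--Schwarz with $\P(\text{at least two }y\text{-terms})=\BigO(\epsilon^2)$ only yields $\BigO(\epsilon)$, so one should use the independence of the $y$-counts from $H_N^{t}$ and $\widetilde H_N^{s,\bar\mu}$ together with $\E\big[\sum_i\Pi'_{i,\epsilon}\,\1\{\sum_i\Pi'_{i,\epsilon}\geq 2\}\big]=\BigO(\epsilon^2)$ at fixed $N$, which your bound on $|\log Z|$ in terms of the Poisson counts readily supports.
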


\begin{proof}
Leveraging \eqref{eqn: SBM GD distributional identity}, conditioning on the number of random variables $(w_{i,k})$ that are equal to one and using \eqref{eqn: SBM GD Taylor}, we see that
\begin{align*}
\widetilde{F}_N(t,s+\epsilon,\bar \lambda)=\widetilde{F}_N(t,s+\epsilon, \bar \mu)&-\frac{\epsilon}{s+\epsilon}\sum_{i\leq N}\sum_{m\geq 0}m\pi((s+\epsilon)N,m)\frac{1}{N}\E\log Z_{n,m}^{s+\epsilon,i}\\
&+\frac{\epsilon}{s+\epsilon}\sum_{i\leq N}\sum_{m\geq 0}(m+1)\pi((s+\epsilon)N,m+1)\frac{1}{N}\E\log Z_{N,m}^{s+\epsilon,i,+}+o(\epsilon).
\end{align*}
Keeping in mind \eqref{eqn: SBM Poisson density multiplication}, this simplifies to
\begin{align}\label{eqn: SBM enriched free energy GD key}
\widetilde{F}_N\big(t,s+\epsilon,\bar{\lambda}\big)=\widetilde{F}_N&(t,s+\epsilon,\bar{\mu})\notag\\
&+\epsilon \sum_{i\leq N}\sum_{m\geq 0}\pi\big((s+\epsilon)N,m\big)\bigg(\E\log \frac{Z_{N,m}^{s+\epsilon,i,+}}{Z_{N,m}^{s+\epsilon,i}}-\E\log \frac{Z_{N,m+1}^{s+\epsilon,i}}{Z_{N,m}^{s+\epsilon,i}}\bigg)+o(\epsilon).    
\end{align}
For each $i\leq N$, denote by $x_i$ a sample from the measure $\bar{\mu}$ and by $y_i$ a sample from the measure $\nu$. Write $\tG_i^x$ and $\tG_i^y$ for random variables with conditional distribution \eqref{eqn: SBM tG distribution}. These random variables are taken to be independent for different values of $i\leq N$, and independent of all other sources of randomness. Combining \eqref{eqn: SBM enriched free energy GD key} with \eqref{eqn: SBM s derivative distributional equality} and the distributional identity
$$Z_{N,m}^{s+\epsilon,i,+}\stackrel{d}{=}\int_{\Sigma_N} \big(c+\Delta \sigma_i y_i\big)^{\tG_i^y}\Big(1-\frac{c+\Delta \sigma_i y_i}{N}\Big)^{1-\tG_i^y}\exp\big(H_N^t(\sigma)+\widetilde{H}_{N,m}^{s+\epsilon,i}(\sigma)\big)\ud P_N^*(\sigma)$$
yields
\begin{align}\label{eqn: SBM enriched free energy Gateaux derivative key}
\widetilde{F}_N(t,s+\epsilon,\bar{\lambda})=\widetilde{F}_N(t,s+&\epsilon,\bar{\mu})+\epsilon \sum_{i\leq N}\E\log \bigg\langle\big(c+\Delta \sigma_iy_i\big)^{\tG_i^y}\Big(1-\frac{c+\Delta \sigma_iy_i}{N}\Big)^{1-\tG_i^y} \bigg\rangle\notag\\
&\quad -\epsilon \sum_{i\leq N}\E\log \bigg\langle\big(c+\Delta \sigma_ix_i\big)^{\tG_i^x}\Big(1-\frac{c+\Delta \sigma_ix_i}{N}\Big)^{1-\tG_i^x} \bigg\rangle+o(\epsilon).
\end{align}
Together with \eqref{eqn: SBM enriched free energy s derivative exact}, this implies that
\begin{align*}
D_\mu \overline{F}_N(t,\mu;\nu)&=\lim_{\epsilon \to 0}\frac{\widetilde{F}_N(t,s+\epsilon,\bar{\lambda})-\widetilde{F}_N(t,s+\epsilon,\bar{\mu})}{\epsilon}+\partial_s \overline{F}_N(t,s,\bar{\mu})\\
&=\sum_{i\leq N}\E\log \bigg\langle \big(c+\Delta \sigma_iy_i\big)^{\tG_i^y}\Big(1-\frac{c+\Delta \sigma_iy_i}{N}\Big)^{1-\tG_i^y} \bigg\rangle.
\end{align*}
Notice that the Gibbs averages in \eqref{eqn: SBM enriched free energy Gateaux derivative key} depend on $\epsilon$, so in taking this limit we have implicitly used the fact that this dependence is continuous. Proceeding exactly as in the proof of \Cref{SBM enriched free energy s derivative} (after display \eqref{eqn: SBM enriched free energy s derivative exact}) completes the proof.
\end{proof}

To compare \eqref{eqn: SBM enriched free energy Gateaux derivative} with the time-derivative of the enriched free energy in \Cref{SBM enriched free energy time derivative Taylor} it will be convenient to once again Taylor expand the logarithm. We will write
\begin{align}\label{eqn: SBM Gateaux derivative density}
D_\mu \overline{F}_N(t,\mu,x)=\E\big(c+\Delta \langle \sigma_i\rangle x\big)\log\big(c&+\Delta\langle \sigma_i\rangle x\big)\notag\\
&+N\E\Big(1-\frac{c+\Delta \langle \sigma_i\rangle x}{N}\Big)\log\Big(1-\frac{c+\Delta\langle \sigma_i\rangle x}{N}\Big)
\end{align}
for the density of the Gateaux derivative $\smash{D_\mu \overline{F}_N(t,\mu)}$. Taylor expanding the logarithm shows that
\begin{equation}\label{eqn: SBM enriched free energy approximate Gateaux derivative}
D_\mu \overline{F}_N(t,\mu,x)=\E\big(c+\Delta \langle \sigma_i\rangle x\big)\log \big(c+\Delta \langle\sigma_i\rangle x\big)-c-\Delta \m x+\BigO(N^{-1}).
\end{equation}

\begin{corollary}\label{SBM enriched free energy Gateaux derivative Taylor}
For every $t>0$ and $\mu \in \M_+$,
\begin{equation}\label{eqn: SBM enriched free energy Gateaux derivative Taylor}
D_\mu \overline{F}_N(t,\mu,x)=\big(c+\Delta \m x\big)\log(c)+c\sum_{n\geq 2}\frac{(-\Delta/c)^n}{n(n-1)}\E\langle R_{[n]}\rangle x^n-c+\BigO(N^{-1}).
\end{equation}
\end{corollary}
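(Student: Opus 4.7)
The plan is to start from the approximate form of the Gateaux derivative density recorded in equation~\eqref{eqn: SBM enriched free energy approximate Gateaux derivative}, namely
\begin{equation*}
D_\mu \overline{F}_N(t,\mu,x)=\E\big(c+\Delta \langle \sigma_i\rangle x\big)\log \big(c+\Delta \langle\sigma_i\rangle x\big)-c-\Delta \m x+\BigO\big(N^{-1}\big),
\end{equation*}
and then process the right-hand side in a manner closely parallel to the proof of Corollary~\ref{SBM enriched free energy time derivative Taylor}. Since $\abs{\Delta}<c$ and $\abs{\langle \sigma_i\rangle x}\leq 1$, a geometric expansion of the logarithm gives
\begin{equation*}
\log\big(c+\Delta \langle \sigma_i\rangle x\big)=\log(c)-\sum_{n\geq 1}\frac{(-\Delta/c)^n}{n}\langle \sigma_i\rangle^n x^n,
\end{equation*}
and the plan is to substitute this expansion into the main term and regroup.

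The first step will be to isolate the logarithmic constant: using that $i$ is uniformly sampled together with the Nishimori identity~\eqref{eqn: SBM Nishimori identity},
\begin{equation*}
\E\big(c+\Delta \langle \sigma_i\rangle x\big)\log(c)=\big(c+\Delta\m x\big)\log(c).
\end{equation*}
The second step is to compute the remaining series. For each $n\geq 1$, expand
\begin{equation*}
\E\big(c+\Delta\langle \sigma_i\rangle x\big)\langle \sigma_i\rangle^n=c\,\E\langle \sigma_i\rangle^n+\Delta x\,\E\langle \sigma_i\rangle^{n+1},
\end{equation*}
and interpret the pure powers as averages over replicas, $\langle \sigma_i\rangle^n=\langle \sigma_i^1\cdots\sigma_i^n\rangle$. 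The uniform average over $i\in\{1,\dots,N\}$ that is built into $\E$ then converts these single-site averages into multi-overlaps, yielding $\E\langle \sigma_i\rangle^n=\E\langle R_{[n]}\rangle$.

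The third step is a standard index-shift: writing $\Delta x\,\E\langle R_{[n+1]}\rangle x^n$ as a coefficient on $x^{n+1}$ and renaming $n\mapsto n-1$, one obtains
\begin{equation*}
\sum_{n\geq 1}\frac{(-\Delta/c)^n}{n}\E\big(c+\Delta\langle \sigma_i\rangle x\big)\langle \sigma_i\rangle^n x^n=-\Delta\m x-c\sum_{n\geq 2}\frac{(-\Delta/c)^n}{n(n-1)}\E\langle R_{[n]}\rangle x^n+\BigO\big(N^{-1}\big),
\end{equation*}
after collecting the two resulting sums over $n\geq 2$ into the combination $\tfrac{1}{n}-\tfrac{1}{n-1}=-\tfrac{1}{n(n-1)}$, and treating the isolated $n=1$ contribution by $\E\langle R_1\rangle=\m$, which holds exactly by Nishimori. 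Plugging this into the starting expression, the terms $-\Delta\m x$ and $+\Delta\m x$ cancel, giving the announced formula. None of the steps presents a real obstacle; the only mild care needed is to justify exchanging the infinite sum with the expectation, which follows at once from $\abs{\Delta/c}<1$, $\abs{x}\leq 1$ and $\abs{\langle \sigma_i\rangle}\leq 1$, so the series is dominated by a convergent geometric series uniformly in all the randomness.
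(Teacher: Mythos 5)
Your proposal is correct and follows essentially the same route as the paper: Taylor expansion of the logarithm, the Nishimori identity to give $\E\langle\sigma_i\rangle=\m$ and to identify $\E\langle\sigma_i\rangle^n=\E\langle R_{[n]}\rangle$ via the uniform index, the index shift producing $\tfrac1n-\tfrac1{n-1}=-\tfrac{1}{n(n-1)}$, and the cancellation of $\Delta\m x$. The only cosmetic difference is that you work pointwise in $x$ directly with the density \eqref{eqn: SBM enriched free energy approximate Gateaux derivative}, whereas the paper carries out the same algebra with $y_i$ sampled from a test direction $\nu\in\Pr[-1,1]$ before reading off the density.
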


\begin{proof}
Fix $\nu\in \Pr[-1,1]$. A Taylor expansion of the logarithm shows that
\begin{align}\label{eqn: SBM enriched free energy Gateaux derivative rep1 key}
\E\big(c+\Delta \langle \sigma_i\rangle y_i\big)\log\big(c+\Delta \langle \sigma_i\rangle y_i\big)=\big(c+\Delta \m \E y_1\big)&\log(c)\notag\\
&-\sum_{n\geq 1}\frac{(-\Delta/c)^n}{n}\E\big(c+\Delta\langle\sigma_i\rangle y_i\big)y_i^n\langle \sigma_i\rangle^n,
\end{align}
where we have used that $y_i$ is sampled from $\nu\in \Pr[-1,1]$ and is independent of all other sources of randomness. 
Since $\abs{\Delta}< c$ and $\E \langle \sigma_i\rangle=\m$ by the Nishimori identity,
\begin{align*}
\sum_{n\geq 1}\frac{(-\Delta/c)^n}{n}\E\big(c+\Delta \langle \sigma_i\rangle y_i\big)y_i^n\langle \sigma_i\rangle^n&=\sum_{n\geq 1}\frac{(-\Delta/c)^n}{n}\big(c\E y_i^n\E\langle \sigma_i\rangle^n+\Delta \E y_i^{n+1}\E \langle\sigma_i\rangle^{n+1}\big)\\
&=-\Delta \E y_i\E\langle \sigma_i\rangle+c\sum_{n\geq 2}\bigg(\frac{(-\Delta/c)^n}{n}-\frac{(-\Delta/c)^n}{n-1}\bigg)\E y_i^n\E\langle \sigma_i\rangle^n\\
&=-\Delta \m \E y_1-c\sum_{n\geq 2}\frac{(-\Delta/c)^n}{n(n-1)} \E\langle R_{[n]}\rangle\E y_i^n.
\end{align*}
Substituting this into \eqref{eqn: SBM enriched free energy Gateaux derivative rep1 key} and recalling \eqref{eqn: SBM enriched free energy approximate Gateaux derivative} completes the proof.
\end{proof}

This result implies that the Gateaux derivative density \eqref{eqn: SBM Gateaux derivative density} is close to an element in the cone of functions \eqref{eqn: SBM cone of functions}. Indeed, if $\mu^*=\L(\langle \sigma_i\rangle)$ denotes the law of the Gibbs average of a uniformly sampled spin coordinate, then \eqref{eqn: SBM enriched free energy approximate Gateaux derivative} may be {formally} rewritten as
\begin{equation}\label{eqn: SBM enriched free energy in cone}
D_\mu \overline{F}_N(t,\mu,x)\simeq \int_{-1}^1 g(xy)\ud \mu^*(y)=G_{\mu^*}(x),
\end{equation}
using the Nishimori identity \eqref{eqn: SBM Nishimori identity} to assert that $\E\langle \sigma_i \rangle = \m$. It follows by another application of the Nishimori identity that
\begin{equation}
\C_\infty\big(D_\mu \overline{F}_N(t,\mu)\big)\simeq \frac{1}{2}\big(c+\Delta \m^2\big)\log(c)+\frac{c}{2}\sum_{n\geq 2}\frac{(-\Delta/c)^n}{n(n-1)}\big(\E\langle R_{[n]}\rangle\big)^2-\frac{c}{2}.
\end{equation}
Comparing this with the expression in \Cref{SBM enriched free energy time derivative Taylor}, and assuming the approximate concentration of all the multi-overlaps,
\begin{equation}\label{eqn: SBM approximate multioverlap concentration}
\E\langle R_{[n]}^2\rangle\simeq \big(\E\langle R_{[n]}\rangle\big)^2,
\end{equation}
reveals that, up to a small error, the enriched free energy \eqref{eqn: SBM enriched free energy} formally satisfies the infinite-dimensional Hamilton-Jacobi equation \eqref{eqn: SBM enriched free energy HJ equation},
\begin{equation}
\partial_t f(t,\mu)=\C_\infty(D_\mu f(t,\mu)) \,\,\, \text{ on } \Rpp\times \M_+.
\end{equation}

As already mentioned in the introduction, the difficulty in making this informal derivation rigorous is that we do not expect the concentration of the multi-overlaps \eqref{eqn: SBM approximate multioverlap concentration} to be valid for each choice of the parameters $t$ and $\mu$. On the positive side, the arguments in \cite{BarMOC} reveal that the concentration of the multi-overlaps can be enforced through a small perturbation of the Hamiltonian which does not affect the limit of the free energy, for most values of the perturbation parameters. Yet, the solution theory to Hamilton-Jacobi equations is rather sensitive to details, and in particular, this control ``for most values'' or after a suitable local averaging is not sufficient to allow us to conclude. The next section will clarify the nature of this solution theory.

\section{Well-posedness of the Hamilton-Jacobi equation}\label{sec: SBM HJ eqn WP}

In this section, we leverage the main result in \cite{TD_JC_HJ} to establish the well-posedness of the infinite-dimensional Hamilton-Jacobi equation  \eqref{eqn: SBM enriched free energy HJ equation}. The first order of business will be to identify the initial condition in  \eqref{eqn: SBM enriched free energy HJ equation}. For each integer $N\geq 1$, let
\begin{equation}\label{eqn: SBM HJ initial condition}
\psi_N(\mu)=\overline{F}_N(0,\mu)=\widetilde{F}_N(0,\mu[-1,1],\bar \mu),
\end{equation}
and notice that the initial condition $\psi:\M_+\to \R$ in  \eqref{eqn: SBM enriched free energy HJ equation} should be the limit of the initial conditions $(\psi_N)$. Following \cite{PanNote}, we will first compute this limit for discrete measures $\mu \in \M_+$, and later show that the convergence may be extended to all measures in $\M_+$ through a density argument. Given a measure $\mu\in \M_+$, it will be convenient to write $\Pi_{\pm}(\mu)$ for the generalized Poisson point process with mean measure $(c\pm \Delta x)\ud \mu(x)$ on $[-1,1]$. By generalized Poisson process we mean that an atom $a\in [-1,1]$ with $\mu(\{a\})>0$ is counted with independent Poisson multiplicity having mean $(c\pm \Delta a)\mu(\{a\})$. It turns out that the limit of the initial conditions \eqref{eqn: SBM HJ initial condition} is given by an appropriate average with respect to the randomness of the generalized Poisson point processes $\Pi_\pm(\mu)$, which we denote by $\E$ as usual.  

\begin{lemma}\label{SBM convergence of initial condition discrete}
For any discrete measure $\mu \in \M_+$, the sequence $(\psi_N(\mu))$ converges to
\begin{align}\label{eqn: SBM asymptotic initial condition}
\psi(\mu)=-\mu[-1,1] c&+p\E \log \int_{\Sigma_1} \exp(-\mu[-1,1]\Delta \sigma \E x_{1})\prod_{x\in \Pi_+(\mu)}(c+\Delta \sigma x)\ud P^*(\sigma)\notag\\
&+(1-p)\E \log \int_{\Sigma_1} \exp(-\mu[-1,1]\Delta \sigma \E x_{1})\prod_{x\in \Pi_-(\mu)}(c+\Delta \sigma x)\ud P^*(\sigma),
\end{align}
where $x_{1}$ has law $\bar \mu$. 
\end{lemma}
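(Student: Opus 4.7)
The plan is to exploit the fact that at $t=0$ the Hamiltonian \eqref{eqn: SBM Hamiltonian s} is a sum $\widetilde H_N^{s,\bar\mu}(\sigma) = \sum_{i\le N} h_i(\sigma_i)$ of local terms, where each $h_i$ depends only on the single spin $\sigma_i$ and on the independent $i$-indexed data $(\Pi_{i,s}, (x_{i,k})_k, (\widetilde G_{i,k}^x)_k)$. Because $P_N^* = (P^*)^{\otimes N}$, the partition function factorizes, so
\[
\psi_N(\mu) = \frac{1}{N}\sum_{i\le N}\E\log Z_i = \E\log Z_1, \qquad Z_i = \int_{\Sigma_1}\exp h_i(\sigma)\ud P^*(\sigma),
\]
with the $Z_i$ i.i.d. Conditioning on $\sigma_1^*\in\{\pm 1\}$ gives $\psi_N(\mu) = pA_N^+ + (1-p)A_N^-$, and the task reduces to identifying the limit of each $A_N^\pm = \E[\log Z_1\mid\sigma_1^*=\pm 1]$.

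Conditional on $\sigma_1^* = \pm 1$, the marked sequence $\{(x_{1,k}, \widetilde G_{1,k}^x)\}_{k\le\Pi_{1,s}}$ is a Poisson point process on $[-1,1]\times\{0,1\}$ with intensity $sN\ud\bar\mu(x)\otimes \mathrm{Ber}(N^{-1}(c\pm\Delta x))(\ud g)$. The Poisson coloring theorem splits it into two independent processes on $[-1,1]$: the edge process $E$ of points with mark $g=1$ and the non-edge process $NE$ of points with mark $g=0$. The intensity of $E$ is $(c\pm\Delta x)\ud\mu(x)$, and because $\mu$ is discrete, Poisson thinning at each atom $b_j$ of $\mu$ produces an independent $\Poi((c\pm\Delta b_j)\mu(\{b_j\}))$ multiplicity, so $E$ realizes exactly the generalized Poisson point process $\Pi_\pm(\mu)$. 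The process $NE$ has intensity $sN(1-N^{-1}(c\pm\Delta x))\ud\bar\mu(x)$.

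Next, decompose $h_1(\sigma) = S^E(\sigma) + S^{NE}_N(\sigma)$ into the edge contribution $S^E(\sigma) = \sum_{x\in\Pi_\pm(\mu)}\log(c+\Delta\sigma x)$ (distributed independently of $N$) and the non-edge contribution $S^{NE}_N(\sigma) = \sum_{x\in NE}\log(1-(c+\Delta\sigma x)/N)$. Taylor's formula $\log(1-y) = -y+\BigO(y^2)$ gives
\[
S^{NE}_N(\sigma) = -\frac{1}{N}\sum_{x\in NE}(c+\Delta\sigma x) + \BigO\bigl(|NE|/N^2\bigr).
\]
Campbell's formula for Poisson integrals shows that the main term converges in probability to $-s\int_{-1}^1(c+\Delta\sigma x)\ud\bar\mu(x) = -sc - s\Delta\sigma\E x_1$, with variance $\BigO(1/N)$, while $|NE| = \BigO(N)$ in probability kills the remainder. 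Hence for each $\sigma\in\Sigma_1$, $h_1(\sigma) \to S^E(\sigma) - sc - s\Delta\sigma\E x_1$ in probability, and since $\Sigma_1$ is finite the same convergence transfers to $Z_1$, whose limit matches the integrand of \eqref{eqn: SBM asymptotic initial condition}.

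To promote this convergence in probability to convergence of $\E\log Z_1$, one uses the sandwich $h_1(1) + \log p \le \log Z_1 \le \log 2 + \max_\sigma h_1(\sigma)$ and bounds $\E|h_1(\sigma)|^2$ uniformly in $N$. Since $|\Delta|<c$, each summand of $h_1(\sigma)$ is bounded (the non-edge summands being of order $\BigO(1/N)$ around their mean), and the Poisson variance computation from the previous step gives $\E|h_1(\sigma)|^2 \le C$ independently of $N$. This $L^2$ control is the main technical step: one must carefully track the coupling between the $\BigO(1)$-many edge contributions (each of $\BigO(1)$ size) and the $\BigO(N)$-many non-edge contributions (each of $\BigO(1/N)$ size). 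Once this uniform integrability is in hand, dominated convergence identifies the limit $\psi(\mu)$ as in \eqref{eqn: SBM asymptotic initial condition}.
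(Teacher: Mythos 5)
Your proposal is correct and follows essentially the same route as the paper's proof: reduction to a single site, splitting the Poisson points according to the mark $\tG=0$ or $\tG=1$, Taylor expansion of the $\log\big(1-\tfrac{c+\Delta\sigma x}{N}\big)$ factors, a law-of-large-numbers/Campbell estimate for the non-edge sum, the Poisson coloring theorem to identify the edge points with $\Pi_\pm(\mu)$, and averaging over $\sigma_1^*$. The only (cosmetic) difference is the endgame: the paper propagates the error bounds directly in $L^1$ through the free energy via $\sup_\sigma$-bounds on the Hamiltonian difference, whereas you pass through convergence in probability of $Z_1$ and upgrade via an $L^2$/uniform-integrability argument — both work, and the "coupling between edge and non-edge contributions" you flag as delicate is in fact trivial since the marking theorem makes the two processes independent.
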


\begin{remark}  
\label{r.calculation.derivative.psi}
From this lemma and its extension to any $\mu \in \M_+$ proved in Proposition~\ref{SBM convergence of initial condition} below, one can also show that for every $\mu \in \M_+$, the density of the Gateaux derivative $D_\mu \psi(\mu)$ is 
\begin{align}  %\label{e.}
D_\mu \psi(\mu,x) = p\E \langle c+\Delta \sigma x\rangle_+ \log\langle c+\Delta &\sigma x\rangle_+ \notag\\
&+ (1-p)\E \langle c+\Delta \sigma x\rangle_- \log\langle c+\Delta \sigma x\rangle_- -c-\Delta \m x,
\end{align}
where $\langle \cdot \rangle_{\pm}$ denote the Gibbs averages given by
\begin{equation}
\langle f(\sigma)\rangle_{\pm} := \frac{\int_{\Sigma_1} f(\sigma)\exp(-\mu[-1,1]\Delta \sigma \E x_{1}) \prod_{x\in \Pi_{\pm}(\mu)} (c+\Delta \sigma x)\ud P^*(\sigma)}{\int_{\Sigma_1}\exp(-\mu[-1,1]\Delta \sigma \E x_{1}) \prod_{x\in \Pi_{\pm}(\mu)} (c+\Delta \sigma x)\ud P^*(\sigma)}.
\end{equation}
\end{remark}

\begin{proof}[Proof of Lemma~\ref{SBM convergence of initial condition discrete}]
Since $\mu\in\M_+$ is a discrete measure, it may be expressed as
$$\mu=\sum_{\ell\leq K}p_\ell\delta_{a_\ell}$$
for some integer $K\geq 1$, some atoms $a_\ell\in [-1,1]$ and some weights $p_\ell\geq 0$. Let $s=\mu[-1,1]$, and introduce independent Poisson random variables $\Pi_{i,s}\sim \Poi(sN)$ in such a way that
$$\psi_N(\mu)=\frac{1}{N}\sum_{i\leq N}\E \log \int_{\Sigma_1} \exp \sum_{k\leq \Pi_{i,s}}\log\bigg[(c+\Delta \sigma x_{i,k})^{\tG_{i,k}^x}\Big(1-\frac{c+\Delta \sigma x_{i,k}}{N}\Big)^{1-\tG_{i,k}^x}\bigg]\ud P^*(\sigma),$$
where $(x_{i,k})$ are i.i.d.\@ random variables with law $\bar \mu$. Since each of the expectations in this average is the same,
\begin{equation}\label{eqn: SBM asymptotic initial condition psi}
\psi_N(\mu)=\E \log \int_{\Sigma_1} \exp \sum_{k\leq \Pi_{1,s}}\log\bigg[(c+\Delta \sigma x_{k})^{\tG_{k}^x}\Big(1-\frac{c+\Delta \sigma x_{k}}{N}\Big)^{1-\tG_{k}^x}\bigg]\ud P^*(\sigma),
\end{equation}
where $(x_k)$ are i.i.d.\@ random variables with law $\bar \mu$ and $\tG_k^x$ has conditional distribution \eqref{eqn: SBM tG distribution} for $i=1$ and $x_{1,k}$ is replaced by $x_k$. To simplify this further, introduce the random index sets
$$\I_0=\big\{k\leq \Pi_{1,s}\mid \tG_{k}^x=0\big\} \quad \text{and} \quad \I_1=\big\{k\leq \Pi_{1,s}\mid \tG_{k}^x=1\big\}.$$
Decomposing the sum in \eqref{eqn: SBM asymptotic initial condition psi} according to the partition $\{k\leq \Pi_{1,s}\}=\I_0\sqcup \I_1$ and applying Taylor's theorem to the logarithm reveals that
$$\psi_N(\mu)=\E\log \int_{\Sigma_1}\prod_{k\in \I_1}(c+\Delta \sigma x_{k})\exp\Big(-\frac{\Delta \sigma}{N}\sum_{k\in \I_0}x_{k}\Big)\ud P^*(\sigma)-\Big(\frac{c}{N}+\BigO\big(N^{-2}\big)\Big)\E\lvert \I_0\rvert.$$
Conditionally on $\sigma^*$, $\Pi_{1,s}$ and $(x_{k})$, the random variable $\abs{\I_0}$ is a sum of Bernoulli random variables with probability of success $\smash{1-\frac{c+\Delta \sigma_1^*x_{k}}{N}}$. It therefore has mean
\begin{equation}\label{eqn: SBM asymptotic initial condition I0 mean}
\E\lvert \I_0\rvert=\E\Pi_{1,s}\E\Big(1-\frac{c+\Delta \sigma_1^*x_{1}}{N}\Big)=sN\Big(1-\frac{c+\Delta \m \E x_{1}}{N}\Big)
\end{equation}
and variance bounded by
\begin{equation}\label{eqn: SBM asymptotic initial condition I0 variance}
\Var\abs{\I_0}=\E\Pi_{1,s}\E\Big(1-\frac{c+\Delta \sigma_1^*x_{1}}{N}\Big)\Big(\frac{c+\Delta \sigma_1^*x_{1}}{N}\Big)\leq s(c+\abs{\Delta}).
\end{equation}
Using \eqref{eqn: SBM asymptotic initial condition I0 mean} and introducing the random index sets $\I_1^\ell = \{k\in \I_1\mid x_{k}=a_\ell\}$ reveals that
$$\psi_N(\mu)=\E\log \int_{\Sigma_1}\prod_{\ell\leq K}\prod_{k\in \I_1^\ell}(c+\Delta \sigma a_\ell)\exp\Big(-\frac{\Delta \sigma}{N}\sum_{k\in \I_0}x_{k}\Big)\ud P^*(\sigma)-cs+\BigO\big(N^{-1}\big).$$
Observe that for any $\sigma\in \Sigma_1$,
\begin{align*}
\E \Big\lvert -\frac{\Delta \sigma}{N}\sum_{k\in \I_0}x_{k}+\Delta \sigma s \E x_{1}\Big\rvert&\leq \abs{\Delta}s \E \Big\lvert \frac{1}{Ns}\sum_{k\in \I_0}x_{k}-\E x_{1}\Big\rvert\\
&\leq \abs{\Delta}s \E\Big\lvert \frac{1}{Ns}\sum_{k\leq Ns}x_{k}-\E x_{1}\Big\rvert+\frac{\Delta}{N}\big(\Var\abs{\I_0}+\lvert \E\abs{\I_0}-Ns\rvert\big)
\end{align*}
where we have used the fact that $\abs{x_{k}}\leq 1$ and Jensen's inequality in the second inequality. Recalling \eqref{eqn: SBM asymptotic initial condition I0 variance} and invoking the strong law of large numbers shows that
\begin{equation}\label{eqn: SBM asymptotic initial condition pre-limit}
\psi_N(\mu)=\E\log \int_{\Sigma_1}\exp(-\Delta \sigma s\E x_{1})\prod_{\ell\leq K}\prod_{k\in \I_1^\ell}(c+\Delta \sigma a_\ell)\ud P^*(\sigma)-cs+o_N(1).
\end{equation}
The Poisson coloring theorem (see Chapter 5 in \cite{Kingman}) implies that $\abs{\I_1^\ell}$ is a Poisson random variable with mean
$$\E\Pi_{1,s}\cdot \P\{\tG_{1}^x=1, x_{1}=a_\ell\}=sN\cdot \frac{c+\Delta \sigma_1^*a_\ell}{N}\cdot \bar \mu(a_\ell)=\big(c+\Delta \sigma_1^*a_\ell\big) \mu(a_\ell),$$
so averaging \eqref{eqn: SBM asymptotic initial condition pre-limit} over the randomness of $\sigma^*$ yields
\begin{align*}
\psi_N(\mu)=-&cs+p\E\log \int_{\Sigma_1}\exp(-\Delta \sigma s\E x_{1})\prod_{x\in \Pi_+(\mu)}(c+\Delta \sigma x)\ud P^*(\sigma)\\
&+(1-p)\E\log \int_{\Sigma_1}\exp(-\Delta \sigma s\E x_{1})\prod_{x\in \Pi_-(\mu)}(c+\Delta \sigma x)\ud P^*(\sigma)+o_N(1).
\end{align*}
This completes the proof.
\end{proof}

To extend this convergence to all measures in $\M_+$, we will rely upon the continuity of the functional \eqref{eqn: SBM asymptotic initial condition} with respect to the Wasserstein distance on the space of probability measures,
\begin{align}
W(\P,\Q)&=\sup\bigg\{\Big\lvert \int_{-1}^1 h(x)\ud \P(x)-\int_{-1}^1 h(x)\ud \Q(x)\Big\rvert \mid \norm{h}_{\text{Lip}}\leq 1\bigg\} \label{eqn: SBM Wasserstein}\\
&=\inf\bigg\{\int_{[-1,1]^2}\abs{x-y}\ud\nu(x,y) \mid \nu\in \Pr\big([-1,1]^2\big) \text{ has marginals } \P \text{ and } \Q\bigg\}.\label{eqn: SBM Wasserstein inf}
\end{align}
Here $\norm{\cdot}_{\mathrm{Lip}}$ denotes the Lipschitz semi-norm
\begin{equation}
\norm{h}_{\mathrm{Lip}}=\sup_{x\neq x'\in [-1,1]}\frac{\abs{h(x)-h(x')}}{\abs{x-x'}}
\end{equation}
defined on the space of functions $h:[-1,1]\to \R$. This continuity will be obtained as a consequence of the following uniform bound on the spatial derivatives of the Gateaux derivative density \eqref{eqn: SBM Gateaux derivative density}.

\begin{lemma}\label{SBM derivatives of Gateux density bound}
For every $N$ large enough (relative to $c$), $\mu\in \M_+$, $t\geq 0$ and $x\in [-1,1]$,
\begin{align}
\big\lvert D_\mu \overline{F}_N(t,\mu,x)\big\rvert&\leq 2c\big(2+\abs{\log(2c)}+\abs{\log(c-\abs{\Delta})}\big),\label{eqn: SBM Gateux density bound}\\
\big\lvert \partial_x D_\mu \overline{F}_N(t,\mu,x)\big\rvert&\leq c\big(1+\abs{\log(2c)}+\abs{\log(c-\abs{\Delta})}\big).\label{eqn: SBM derivative of Gateux density bound}
\end{align}
\end{lemma}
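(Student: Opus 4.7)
The plan is to work directly from the explicit formula \eqref{eqn: SBM Gateaux derivative density} for the Gateaux derivative density. Both bounds will reduce to elementary real-analysis estimates once the formula is unpacked; the only probabilistic input required is the trivial pointwise bound $|\langle \sigma_i\rangle|\le 1$, which may be pulled inside the expectation.

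First I would set $A:=\langle\sigma_i\rangle$ and $y:=c+\Delta A x$, noting that $|A|\le 1$, $|x|\le 1$ and the standing assumption $|\Delta|<c$ yield $y\in[c-|\Delta|,c+|\Delta|]\subset(0,2c]$ almost surely. The integrand in \eqref{eqn: SBM Gateaux derivative density} is then $\Phi_N(y):=y\log y+N(1-y/N)\log(1-y/N)$. The first piece is bounded using the standard fact that $|y\log y|$ is uniformly controlled on a bounded interval away from $0$; concretely $|y\log y|\le \max\big((c-|\Delta|)|\log(c-|\Delta|)|,\,(c+|\Delta|)|\log(c+|\Delta|)|,\,e^{-1}\big)$, which is absorbed into $c\big(|\log(c-|\Delta|)|+|\log(2c)|\big)+O(1)$. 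For the second piece I would use the Taylor series
\begin{equation*}
(1-u)\log(1-u)=-u+\sum_{k\ge 2}\frac{u^k}{k(k-1)},\qquad u\in[0,1),
\end{equation*}
applied at $u=y/N$; since $y/N\le 2c/N\le 1/2$ for $N$ large, the tail sum is bounded by $2(y/N)^2$, giving $N(1-y/N)\log(1-y/N)=-y+O(c^2/N)$, in particular $|N(1-y/N)\log(1-y/N)|\le 2c+1$. Taking expectations and summing yields the claimed bound \eqref{eqn: SBM Gateux density bound}.

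For the derivative bound \eqref{eqn: SBM derivative of Gateux density bound}, I would differentiate $\Phi_N(y)$ in $x$ under the expectation (which is legitimate since $A$ does not depend on $x$ and $\Phi_N$ is smooth in $y$ on the relevant interval). A direct computation, using that $d/du\,[u\log u]=\log u+1$, gives the clean cancellation
\begin{equation*}
\partial_x \Phi_N(y)=\Delta A\,\log\frac{c+\Delta A x}{1-(c+\Delta A x)/N}.
\end{equation*}
The prefactor is bounded by $|\Delta A|\le c$. The numerator of the logarithm lies in $[c-|\Delta|,c+|\Delta|]\subset(0,2c]$, and the denominator lies in $[1-2c/N,1]$, so for $N$ large enough that $2c/N\le 1/2$ one has $|\log(\cdot)|\le |\log(c-|\Delta|)|+\log(2c)+|\log(1-2c/N)|$, where the last term is $O(1/N)$ and is absorbed into the constant $1$ in the stated bound.

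I do not expect any serious obstacle: the main work was done in computing the Gateaux derivative in Lemma~\ref{SBM enriched free energy Gateaux derivative}, and what remains is arithmetic bookkeeping. The only point worth attention is choosing $N$ large enough (depending on $c$) so that $y/N$ lies safely in $[0,1/2]$, which legitimises both the Taylor expansion of $(1-u)\log(1-u)$ and the bound on $|\log(1-(c+\Delta A x)/N)|$; this is precisely what the phrase "$N$ large enough (relative to $c$)" in the statement allows.
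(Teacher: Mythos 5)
Your proposal is correct and follows essentially the same route as the paper: start from the explicit density \eqref{eqn: SBM Gateaux derivative density}, use $\lvert\langle\sigma_i\rangle\rvert\le 1$ and $\abs{\Delta}<c$ to confine $c+\Delta\langle\sigma_i\rangle x$ to $[c-\abs{\Delta},2c]$, differentiate in $x$ to obtain the same cancellation leaving only the logarithmic terms, and control the $N$-dependent factor by Taylor expansion for $N$ large relative to $c$. Aside from minor constant bookkeeping (e.g.\ writing $\log(2c)$ where $\abs{\log(2c)}$ is meant, and noting the tail term $\BigO(c^2/N)$ should be absorbed using the ``$N$ large relative to $c$'' freedom so the stated constants come out exactly), the argument is complete.
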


\begin{proof}
Recall from \eqref{eqn: SBM Gateaux derivative density} that 
\begin{align*}
D_\mu \overline{F}_N(t,\mu,x)=\E\big(c+\Delta \langle \sigma_i\rangle x\big)\log\big(c&+\Delta\langle \sigma_i\rangle x\big)\notag\\
&+N\E\Big(1-\frac{c+\Delta \langle \sigma_i\rangle x}{N}\Big)\log\Big(1-\frac{c+\Delta\langle \sigma_i\rangle x}{N}\Big).
\end{align*}
It follows by a direct computation that
$$\partial_x D_\mu \overline{F}_N(t,\mu,x)=\Delta \E \langle \sigma_i\rangle\log\big(c+\Delta\langle \sigma_i\rangle x\big)-\Delta \E\langle \sigma_i\rangle \log\Big(1-\frac{c+\Delta\langle \sigma_i\rangle x}{N}\Big).$$
Since all spin configuration coordinates are bounded by one and $\abs{\Delta}<c$, Taylor's theorem implies that for $N$ large enough,
\begin{align*}
\big\lvert D_\mu \overline{F}_N(t,\mu,x)\big\rvert&\leq 2c\big(2+\abs{\log(2c)}+\abs{\log(c-\abs{\Delta})}\big),\\
\big\lvert \partial_x D_\mu \overline{F}_N(t,\mu,x)\big\rvert&\leq c\big(1+\abs{\log(2c)}+\abs{\log(c-\abs{\Delta}\big)}\big).
\end{align*}
Notice that the choice of $N$ only depends on $c$ as $x\in [-1,1]$ and $\abs{\Delta}< c$. This completes the proof.
\end{proof}

\begin{lemma}\label{SBM initial condition W Lipschitz}
The initial condition $\psi_N$ satisfies the Lipschitz bound
\begin{equation}
\abs{\psi_N(\P)-\psi_N(\Q)}\leq c\big(1+\abs{\log(2c)}+\abs{\log(c-\abs{\Delta})}\big)\big)W(\P,\Q)
\end{equation}
for all probability measures $\P,\Q\in \Pr[-1,1]$.
\end{lemma}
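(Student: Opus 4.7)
The plan is to interpolate linearly between $\P$ and $\Q$, convert the spatial Lipschitz bound on the Gateaux derivative density into a Wasserstein estimate via Kantorovich-Rubinstein duality, and then integrate. Concretely, I would introduce $\mu_s=(1-s)\P+s\Q\in\Pr[-1,1]\subset\M_+$ for $s\in[0,1]$ and set $\varphi(s)=\psi_N(\mu_s)$. By the fundamental theorem of calculus, $|\psi_N(\P)-\psi_N(\Q)|\leq \int_0^1 |\varphi'(s)|\ud s$, so the task reduces to bounding $|\varphi'(s)|$ uniformly in $s\in(0,1)$ by $c(1+|\log(2c)|+|\log(c-|\Delta|)|)\cdot W(\P,\Q)$.

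Next, I would identify $\varphi'(s)$ with the Gateaux derivative of $\psi_N$ at $\mu_s$ in the signed direction $\Q-\P$. Although \eqref{eqn: SBME Gateaux derivative} is stated for non-negative directions, the extension is harmless here: for $s\in(0,1)$ and $|\epsilon|$ small the measure $\mu_s+\epsilon(\Q-\P)=(1-s-\epsilon)\P+(s+\epsilon)\Q$ still belongs to $\M_+$, and decomposing $\Q-\P$ via its Hahn decomposition into a difference of non-negative measures, together with the existence of the bounded density in \eqref{eqn: SBM Gateaux derivative density}, yields
\begin{equation*}
\varphi'(s)=\int_{-1}^1 D_\mu\psi_N(\mu_s,x)\ud(\Q-\P)(x).
\end{equation*}

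The key input is then the spatial derivative bound \eqref{eqn: SBM derivative of Gateux density bound}, applied at $t=0$, which says that the density $x\mapsto D_\mu\psi_N(\mu_s,x)$ is Lipschitz on $[-1,1]$ with $\norm{D_\mu\psi_N(\mu_s,\cdot)}_{\mathrm{Lip}}\leq L$, where $L:=c(1+|\log(2c)|+|\log(c-|\Delta|)|)$. The Kantorovich-Rubinstein dual characterization of the $1$-Wasserstein distance in \eqref{eqn: SBM Wasserstein} then immediately gives $|\varphi'(s)|\leq L\cdot W(\P,\Q)$ for every $s\in(0,1)$, and integrating over $s\in[0,1]$ produces the claimed bound with exactly the stated constant.

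The main subtlety, to the extent that there is one, is the technical justification that $\varphi$ is absolutely continuous with derivative given by the integral above. This boils down to an interchange of differentiation with the Gibbs average and Poisson sum appearing in \eqref{eqn: SBM asymptotic initial condition psi}, which is routine via dominated convergence, using the explicit Poisson-sum structure of the Hamiltonian together with the uniform bounds established in Lemma~\ref{SBM derivatives of Gateux density bound}. In effect, all the analytical content has already been packaged in that lemma; the present Lipschitz estimate is really a corollary of it combined with $W_1$ duality.
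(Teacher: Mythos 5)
Your proposal is correct and follows essentially the same route as the paper: interpolate linearly between the two measures, write the difference via the fundamental theorem of calculus as an integral of the Gateaux derivative in the direction $\P-\Q$, bound the Lipschitz seminorm of the density $D_\mu\psi_N(\mu_s,\cdot)$ by $c\big(1+\abs{\log(2c)}+\abs{\log(c-\abs{\Delta})}\big)$ using \eqref{eqn: SBM derivative of Gateux density bound}, and conclude with the dual (Lipschitz-test-function) formulation \eqref{eqn: SBM Wasserstein} of the Wasserstein distance. Your extra remarks on handling the signed direction via Hahn decomposition and justifying differentiation under the integral are fine but add nothing beyond what the paper's proof already implicitly uses.
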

\begin{proof}
The fundamental theorem of calculus and the definition of the Gateaux derivative in \eqref{eqn: SBME Gateaux derivative} imply that
$$\psi_N(\P)-\psi_N(\Q)=\int_0^1 \frac{\mathrm{d}}{\mathrm{d} t}\psi_N\big(\Q+t(\P-\Q)\big)\ud t=\int_0^1 D_\mu\psi_N \big(\Q+t(\P-\Q); \P-\Q\big)\ud t.$$
Since the Gateaux derivative of the initial condition admits a continuously differentiable density, 
$$\abs{\psi_N(\P)-\psi_N(\Q)}\leq \int_0^1\Big\lvert  \int_{-1}^1 f_t(x)\ud \P(x)-\int_{-1}^1 f_t(x)\ud \Q(x)\Big\rvert\ud t$$
for the continuously differentiable function $f_t(x)=D_\mu\psi_N(\Q+t(\P-\Q),x)$. The mean value theorem and \eqref{eqn: SBM derivative of Gateux density bound} reveal that $\smash{\norm{f_t}_{\text{Lip}}\leq c\big(1+\abs{\log(2c)}+\abs{\log(c-\abs{\Delta})}\big)}$. It follows by definition of the Wasserstein distance \eqref{eqn: SBM Wasserstein} that
$$\abs{\psi_N(\mu)-\psi_N(\nu)}\leq  c\big(1+\abs{\log(2c)}+\abs{\log(c-\abs{\Delta})}\big)W(\P,\Q).$$
This completes the proof.
\end{proof}

\begin{lemma}\label{SBM asymptotic initial condition continuous}
The functional $\psi:\M_+\to \R$ defined by \eqref{eqn: SBM asymptotic initial condition} is continuous with respect to the weak convergence of measures. This means that for any sequence of measures $(\mu_n)\subset \M_+$ converging weakly to a measure $\mu\in \M_+$, we have
\begin{equation}
\lim_{N\to \infty}\psi(\mu_n)=\psi(\mu).
\end{equation}
\end{lemma}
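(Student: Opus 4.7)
The plan is to unpack the explicit formula \eqref{eqn: SBM asymptotic initial condition} and verify that each ingredient depends continuously on $\mu$ under weak convergence, exploiting the compactness of $[-1,1]$. Given $\mu_n \to \mu$ weakly in $\M_+$, the total masses $s_n := \mu_n[-1,1]$ converge to $s := \mu[-1,1]$. If $s = 0$, then the total intensity of $\Pi_\pm(\mu_n)$ tends to zero, so these point processes are empty with probability tending to one; the inner exponential $\exp(-s_n\Delta\sigma\E x_1)$ tends to $1$ uniformly in $\sigma \in \Sigma_1$, and a direct calculation gives $\psi(\mu_n) \to 0 = \psi(0)$. Henceforth I assume $s > 0$, in which case $\bar\mu_n \to \bar\mu$ weakly in $\Pr[-1,1]$.

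The heart of the argument is to show $\E \log Z_\pm^{\mu_n} \to \E \log Z_\pm^\mu$, where $Z_\pm^\mu$ denotes the inner integral over $\Sigma_1$ in \eqref{eqn: SBM asymptotic initial condition}. Since $\sigma$ takes only the values $\pm 1$, I would rewrite $Z_\pm^\mu = p\,e^{S_\pm^{+}(\mu)} + (1-p)\,e^{S_\pm^{-}(\mu)}$, where for $\epsilon \in \{\pm 1\}$,
\begin{equation*}
S_\pm^{\epsilon}(\mu) = -\epsilon\, s \Delta \int_{-1}^1 x \ud \bar\mu(x) + \sum_{x \in \Pi_\pm(\mu)} \log(c + \epsilon \Delta x).
\end{equation*}
The deterministic term depends continuously on $\mu$ by weak convergence. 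For the Poisson sum, Campbell's formula gives the Laplace transform
\begin{equation*}
\E \exp\Bigl( u \sum_{x \in \Pi_\pm(\mu_n)} \log(c + \epsilon \Delta x)\Bigr) = \exp\Bigl( \int_{-1}^1 \bigl[(c + \epsilon \Delta x)^u - 1\bigr](c \pm \Delta x) \ud \mu_n(x)\Bigr),
\end{equation*}
whose integrand is continuous and bounded on $[-1,1]$; weak convergence of $\mu_n$ therefore yields convergence of Laplace transforms in a neighborhood of $u=0$, hence convergence in distribution of the Poisson sums. Combining, $S_\pm^{\epsilon}(\mu_n) \to S_\pm^{\epsilon}(\mu)$ in distribution, and then $\log Z_\pm^{\mu_n} \to \log Z_\pm^{\mu}$ in distribution by the continuous mapping theorem (using that $Z_\pm^\mu > 0$ almost surely).

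To upgrade distributional convergence to convergence of expectations, I will establish uniform integrability via the crude bound $|\log Z_\pm^\mu| \leq s|\Delta| + |\Pi_\pm(\mu)| \cdot M$ with $M := \max(|\log(c + |\Delta|)|, |\log(c - |\Delta|)|)$. Since $|\Pi_\pm(\mu_n)|$ is a Poisson variable with mean $s_n (c \pm \Delta \int x \ud \bar\mu_n)$, uniformly bounded as $s_n \to s$, the random variables $\log Z_\pm^{\mu_n}$ have uniformly bounded second moments and are thus uniformly integrable. Hence $\E \log Z_\pm^{\mu_n} \to \E \log Z_\pm^{\mu}$; combined with $-s_n c \to -sc$, this gives $\psi(\mu_n) \to \psi(\mu)$. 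The main obstacle is the Poisson-sum convergence in distribution, but once it is reformulated via Laplace transforms, the weak continuity of the intensity-to-distribution map on the compact interval $[-1,1]$ is immediate, and the rest is routine.
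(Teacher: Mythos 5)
Your route is viable and genuinely different from the paper's, but as written it has one real gap: the continuous mapping step. You show that each $S_\pm^{+}(\mu_n)$ and $S_\pm^{-}(\mu_n)$ converges in distribution separately, and then apply the continuous mapping theorem to $\log Z_\pm^{\mu_n}=\log\big(p\,e^{S_\pm^{+}(\mu_n)}+(1-p)\,e^{S_\pm^{-}(\mu_n)}\big)$. Since $S_\pm^{+}$ and $S_\pm^{-}$ are built from the \emph{same} point process $\Pi_\pm(\mu_n)$, $\log Z_\pm^{\mu_n}$ is a function of the pair, and marginal convergence of the two coordinates does not yield convergence in distribution of the pair. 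The fix stays entirely within your framework: apply Campbell's formula to the joint Laplace transform $\E\exp\big(u_1\sum_{x\in\Pi_\pm(\mu_n)}\log(c+\Delta x)+u_2\sum_{x\in\Pi_\pm(\mu_n)}\log(c-\Delta x)\big)$, whose exponent is again an integral of a bounded continuous function against $(c\pm\Delta x)\ud\mu_n(x)$, giving joint convergence of $(S_\pm^{+},S_\pm^{-})$; alternatively, note that the finite point processes $\Pi_\pm(\mu_n)$ converge in distribution as random measures because their intensity measures converge weakly with bounded total mass, and the functional $\Pi\mapsto\log Z_\pm$ is continuous. The remaining pieces of your argument are sound: the $\mu=0$ case via the crude bound, the uniform second-moment bound through the Poisson counts, and the passage from convergence in distribution plus uniform integrability to convergence of expectations.

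For comparison, the paper proceeds by an explicit coupling rather than by distributional convergence: it couples $\Pi_+(\mu_n)$ and $\Pi_+(\mu)$ through a common collection of marks drawn from a coupling $\nu$ of $\bar\mu_n$ and $\bar\mu$ together with coupled Poisson counts, bounds the difference of the two Hamiltonians pathwise, and then optimizes over $\nu$ to obtain a quantitative estimate of $|\psi^1(\mu_n)-\psi^1(\mu)|$ in terms of $|\int x\ud(\mu_n-\mu)|$, the difference of total masses, and $W(\bar\mu_n,\bar\mu)$. That approach buys an explicit modulus of continuity (a Wasserstein-type Lipschitz bound) of the same flavor as the estimates used elsewhere in the paper, whereas your soft argument via Laplace functionals and uniform integrability is shorter in spirit but purely qualitative, yielding continuity without any rate.
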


\begin{proof}
To alleviate the exposition, we will instead prove the continuity of the functional
$$\psi^1(\mu)=\E\log\int_{\Sigma_1}\exp(-\mu[-1,1]\Delta \sigma \E x_1)\prod_{x\in \Pi_+(\mu)}(c+\Delta \sigma x)\ud P^*(\sigma)$$
with respect to the weak convergence of measures. Up to an additive constant, the asymptotic initial condition $\psi(\mu)$ is the weighted average of $\psi^1(\mu)$ and another functional of the same form whose continuity can be established using an identical argument, so this suffices. For each measure $\mu\in \M_+$ introduce the Hamiltonian
$$H(\sigma,\mu)=-\Delta \sigma \int_{-1}^1 x\ud \mu(x)+\sum_{x\in \Pi_+(\mu)}(c+\Delta\sigma x)$$
in such a way that the asymptotic initial condition is its associated free energy,
$$\psi^1(\mu)=\E\log\int_{\Sigma_1}\exp H(\sigma,\mu)\ud P^*(\sigma).$$
Consider a sequence of measures $(\mu_n)\subset \M_+$ converging weakly to a measure $\mu\in \M_+$, and let $\Pi_n$ and $\Pi$ be independent Poisson random variables with means $\mu_n[-1,1]$ and $\mu[-1,1]$, respectively. Introduce a collection $\smash{(X_k^n,X_k)_{k\in \N}}$ of i.i.d.\@ random vectors with joint law $\smash{\nu\in \Pr\big([-1,1]^2\big)}$ having marginals $\smash{\bar{\mu}_n}$ and $\smash{\bar{\mu}}$. In this way, the coordinates $\smash{(X_k^n)_{k\in \N}}$ are i.i.d.\@ with law $\smash{\bar{\mu}_n}$, the coordinates $\smash{(X_k)_{k\in \N}}$ are i.i.d.\@ with law $\smash{\bar \mu}$, and we have the equalities in distribution
$$\sum_{x\in \Pi_+(\mu_n)}(c+\Delta \sigma x)\stackrel{d}{=}\sum_{k\leq \Pi_n}\big(c+\Delta \sigma X_k^n\big) \quad \text{and} \quad \sum_{x\in \Pi_+(\mu)}(c+\Delta x)\stackrel{d}{=}\sum_{k\leq \Pi}\big(c+\Delta \sigma X_k\big).$$
It follows that for any $\sigma\in \Sigma_1$,
$$\abs{H(\sigma,\mu_n)-H(\sigma,\mu)}\leq \abs{\Delta}\Big\lvert \int_{-1}^1 x\ud \mu_n(x)-\int_{-1}^1 x\ud \mu(x)\Big\rvert+c\big\lvert \Pi_n-\Pi\big\rvert+\abs{\Delta}\Big\lvert \sum_{k\leq \Pi_n}X_k^n-\sum_{k\leq \Pi}X_k\Big\rvert,$$
and therefore,
$$\big\lvert \psi^1(\mu_n)-\psi^1(\mu)\big\rvert\leq \abs{\Delta}\Big\lvert \int_{-1}^1 x\ud \mu_n(x)-\int_{-1}^1 x\ud \mu(x)\Big\rvert+c\E\big\lvert \Pi_n-\Pi\big\rvert+\abs{\Delta}\E\Big\lvert \sum_{k\leq \Pi_n}X_k^n-\sum_{k\leq \Pi}X_k\Big\rvert.$$
To simplify this further, define the random variable $\Pi'_n=\min(\Pi_n,\Pi)$, introduce a Poisson random variable $\smash{\Pi''_n}$ independent of all other sources of randomness with mean $\smash{\abs{\mu_n[-1,1]-\mu[-1,1]}}$, and define the collection of random variables $\smash{(Z_k^n)}$ by
$$Z_k^n=\begin{cases}
X_k^n& \text{if } \Pi_n'=\Pi\\
X_k& \text{otherwise}
\end{cases}.$$
The basic properties of Poisson random variables and the fact that $\smash{\abs{Z_k^n}\leq 1}$ imply that
\begin{align*}
\big\lvert \psi^1(\mu_n)-\psi^1(\mu)\big\rvert&\leq c\abs{\Delta}\Big\lvert \int_{-1}^1 x\ud(\mu_n-\mu)(x)\Big\rvert+c\E\Pi_n''+\abs{\Delta}\E\sum_{k\leq \Pi_n''}\big\lvert Z_k^n\big\rvert+\abs{\Delta}\E\sum_{k\leq \Pi_n'}\big\lvert X_k^n-X_k\big\rvert\\
&\leq c\bigg(\Big\lvert \int_{-1}^1 x\ud(\mu_n-\mu)(x)\Big\rvert+2\E\Pi_n''+\E\Pi_n'\int_{[-1,1]^2}\abs{x-y}\ud \nu(x,y)\rvert\bigg).
\end{align*}
Taking the infimum over all couplings $\smash{\nu\in \Pr\big([-1,1]^2\big)}$ with marginals $\smash{\bar{\mu}_n}$ and $\smash{\bar \mu}$ reveals that for $n$ large enough,
$$\big\lvert \psi^1(\mu_n)-\psi^1(\mu)\big\rvert\leq c\bigg(\Big\lvert \int_{-1}^1 x\ud(\mu_n-\mu)(x)\Big\rvert+2\big\lvert\mu_n[-1,1]-\mu[-1,1]\big\rvert+3\mu[-1,1]W\big(\bar{\mu}_n,\bar \mu\big)\bigg),$$
where we have used that $\E \Pi_n'\leq \E \Pi+\E\Pi_n\leq 3\mu[-1,1]$ for $n$ large enough as $\mu_n$ converges weakly to $\mu$. Letting $n$ tend to infinity and recalling that the Wasserstein distance \eqref{eqn: SBM Wasserstein inf} metrizes the weak convergence of probability measures completes the proof.
\end{proof}

\begin{proposition}\label{SBM convergence of initial condition}
For any measure $\mu \in \M_+$, the sequence $(\psi_N(\mu))$ converges to \eqref{eqn: SBM asymptotic initial condition}.
\end{proposition}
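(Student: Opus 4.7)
The plan is to extend Lemma~\ref{SBM convergence of initial condition discrete} to arbitrary $\mu \in \M_+$ by a density argument. The ingredients will be: discrete measures of prescribed total mass are dense in $\M_+$ for the weak topology; the target $\psi$ is weakly continuous on $\M_+$ by Lemma~\ref{SBM asymptotic initial condition continuous}; and the prelimits $\psi_N$ enjoy a modulus of continuity that is uniform in $N$. Once these pieces are in place, the conclusion will follow from a standard triangle inequality.

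The main step is to upgrade Lemma~\ref{SBM initial condition W Lipschitz} from probability measures to measures of arbitrary but equal total mass. Concretely I would show that for any $s \geq 0$ and $\P, \Q \in \Pr[-1,1]$,
\begin{equation}
\abs{\psi_N(s\P) - \psi_N(s\Q)} \leq s\,c\bigl(1+\abs{\log(2c)}+\abs{\log(c-\abs{\Delta})}\bigr) W(\P,\Q),
\end{equation}
with the constant independent of $N$. The proof mirrors that of Lemma~\ref{SBM initial condition W Lipschitz}: integrate along the straight path $\tau \mapsto s(\Q + \tau(\P-\Q))$, which stays inside $\M_+$ because each $(1-\tau)\Q + \tau\P$ is a probability measure; extend the density representation \eqref{eqn: SBME Gateaux derivative density} by linearity to the signed direction $s(\P-\Q)$, which is legitimate since the density $D_\mu \psi_N(\cdot, x)$ is a bounded measurable function on $[-1,1]$; and then combine the uniform spatial Lipschitz bound from Lemma~\ref{SBM derivatives of Gateux density bound} with the Kantorovich--Rubinstein dual form \eqref{eqn: SBM Wasserstein} of $W$.

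Granted this estimate, the conclusion is a triangle-inequality argument. Given $\mu \in \M_+$, set $s = \mu[-1,1]$, pick a sequence of discrete probability measures $\bar\mu_n$ with $W(\bar\mu_n, \bar\mu) \to 0$ (for instance by quantizing $[-1,1]$ into finitely many subintervals and concentrating the mass of each on a representative point), and set $\mu_n = s\bar\mu_n \in \M_+$, which is discrete and converges weakly to $\mu$. The triangle inequality gives
\begin{equation}
\abs{\psi_N(\mu) - \psi(\mu)} \leq \abs{\psi_N(\mu) - \psi_N(\mu_n)} + \abs{\psi_N(\mu_n) - \psi(\mu_n)} + \abs{\psi(\mu_n) - \psi(\mu)}.
\end{equation}
The first summand is bounded uniformly in $N$ by a constant multiple of $W(\bar\mu_n, \bar\mu)$ via the upgraded Lipschitz bound; the third vanishes as $n \to \infty$ by Lemma~\ref{SBM asymptotic initial condition continuous}; and for each fixed $n$, the middle term vanishes as $N \to \infty$ by Lemma~\ref{SBM convergence of initial condition discrete}. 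Choosing $n$ large and then sending $N \to \infty$ completes the argument. The only mildly delicate point is the signed-measure extension of the Gateaux derivative formula needed for the Lipschitz upgrade; once that bookkeeping is handled, the rest is routine.
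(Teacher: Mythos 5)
Your proposal is correct and follows essentially the same route as the paper: approximate $\mu$ by discrete measures of the same total mass, apply the triangle inequality, and combine the $N$-uniform Wasserstein--Lipschitz bound on $\psi_N$ with Lemmas~\ref{SBM asymptotic initial condition continuous} and~\ref{SBM convergence of initial condition discrete}. The only difference is that you spell out the extension of Lemma~\ref{SBM initial condition W Lipschitz} to measures of equal (not necessarily unit) total mass, with the factor $s=\mu[-1,1]$ made explicit, a point the paper uses implicitly when bounding $\abs{\psi_N(\mu_n)-\psi_N(\mu)}$.
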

\begin{proof}
Consider a sequence $(\mu_n)_{n\geq 1}$ of discrete measures such that $\mu_n[-1,1]=\mu[-1,1]$ for all $n\geq 1$ and $\bar \mu_n\to \bar \mu$ with respect to the Wasserstein distance \eqref{eqn: SBM Wasserstein}. By the triangle inequality and \Cref{SBM initial condition W Lipschitz},
\begin{align*}
\big\lvert \psi(\mu)-\psi_N(\mu)\big\rvert&\leq \big\lvert \psi(\mu)-\psi(\mu_n)\big\rvert+\big\lvert \psi(\mu_n)-\psi_N(\mu_n)\big\rvert+\big\lvert \psi_N(\mu_n)-\psi_N(\mu)\big\rvert\notag\\
&\leq \big\lvert \psi(\mu)-\psi(\mu_n)\big\rvert+\big\lvert \psi(\mu_n)-\psi_N(\mu_n)\big\rvert\\
&\qquad\qquad\qquad\quad\,\,\,\,\,\,\,+c\big(1+\abs{\log(2c)}+\abs{\log(c-\abs{\Delta})}\big)W\big(\bar{\mu}_n,\bar \mu\big)
\end{align*}
where we have used the fact that $\mu_n[-1,1]=\mu[-1,1]$. Combining \Cref{SBM asymptotic initial condition continuous} with \Cref{SBM convergence of initial condition discrete} to let $N\to \infty$ and then $n\to \infty$ completes the proof.
\end{proof}

This result identifies the initial condition for the infinite-dimensional Hamilton-Jacobi equation \eqref{eqn: SBM enriched free energy HJ equation}.
We may now leverage the main result in \cite{TD_JC_HJ} to establish the well-posedness of this infinite-dimensional Hamilton-Jacobi equation. Before we do this, let us introduce the notation used in \cite{TD_JC_HJ}; for more details and motivation regarding this notation, we encourage the reader to consult \cite{TD_JC_HJ}.
Given an integer $K\geq 1$, we write
\begin{equation}\label{eqn: SBME dyadic rationals}
\D_K=\Big\{k=\frac{i}{2^K} \mid -2^K\leq i<  2^K\Big\}
\end{equation}
for the set of dyadic rationals on $[-1,1]$ at scale $K$. We index vectors using the set of dyadic rationals, writing
$\smash{x=(x_k)_{k\in \D_K}\in \Rp^{\D_K}}$ for a non-negative sequence with values in $\Rp$ indexed by the dyadic rationals $\D_K$. More generally, given two sets $\mathbb{A}$ and $\mathcal{B}$, we write $\mathbb{A}^\mathcal{B}$ for the set of functions from $\mathcal{B}$ to $\mathbb{A}$.
We denote the set of discrete measures supported on the dyadic rationals at scale $K$ in the interval $[-1,1]$ by
\begin{equation}
\M^{(K)}_+=\Big\{\mu\in \M_+\mid \mu=\frac{1}{\abs{\D_K}}\sum_{k\in \D_K}x_k\delta_k\text{ for some } x=(x_k)_{k\in \D_K}\in \Rp^{\D_K}\Big\},
\end{equation}
and we project a general measure $\mu \in \M_+$ onto $\M_+^{(K)}$ via the mapping
\begin{equation}\label{eqn: SBME weights of dyadic measure}
x^{(K)}(\mu)=\big(\abs{\D_K}\mu\big[k,k+2^{-K}\big)\big)_{k\in \D_K}
\in \Rp^{\D_K},
\end{equation}
whose inverse assigns to each $\smash{x \in \Rp^{\D_K}}$ the measure
\begin{equation}\label{eqn: SBME measure for given vector}
\mu^{(K)}_x=\frac{1}{\abs{\D_K}}\sum_{k\in \D_K}x_{k}\delta_k \in \M_+^{(K)}.
\end{equation}
We identify any real-valued function $\smash{f:\Rp\times \M_+^{(K)}\to \R}$ with the function 
\begin{equation}
f^{(K)}(t,x)=f\big(t,\mu_x^{(K)}\big)
\end{equation}
defined on $\smash{\Rp\times \Rp^{\D_K}}$, and we identify the projection of the initial condition with the function
\begin{equation}\label{eqn: SBM projected modified initial condition}
\psi^{(K)}(x)=\psi\big(\mu_x^{(K)}\big)
\end{equation}
defined on $\smash{\Rp^{\D_K}}$. The Gateaux derivative of a real-valued function $\smash{f:\Rp\times \M_+^{(K)}\to \R}$ at the measure $\smash{\mu\in \M_+^{(K)}}$ becomes the gradient $\smash{\abs{\D_K}\nabla f^{(K)}(t,x^{(K)}(\mu))}$ by duality. Indeed,
\begin{equation}\label{eqn: SBM Gateux derivative projection}
D_\mu f(t,\mu; \nu)=\frac{\mathrm{d}}{\mathrm{d} \epsilon}\Big|_{\epsilon=0}f^{(K)}(t,x^{(K)}(\mu)+\epsilon x^{(K)}(\nu))=\nabla f^{(K)}(t,x^{(K)}(\mu))\cdot x^{(K)}(\nu)
\end{equation}
for any direction $\smash{\nu\in \M_+^{(K)}}$. We fix $b>0$ large enough so the modified kernel
\begin{equation}
\label{e.def.tdg}
\td g_b (z) = g(z) + b
\end{equation}
is strictly positive, and we introduce the symmetric matrices
\begin{equation}\label{eqn: SBM matrix tGK}
G^{(K)}=\frac{1}{\abs{\D_K}^2}\big(g(kk')\big)_{k,k'\in \D_K} \quad \text{and} \quad \td G_b^{(K)}=\frac{1}{\abs{\D_K}^2}\big(\td g_b(kk')\big)_{k,k'\in \D_K}
\end{equation}
in $\smash{\R^{\D_K\times \D_K}}$. We also define the projected cone
\begin{equation}
\widetilde{\CC}_{b,K}=\Big\{\widetilde{G}_b^{(K)}x\in \R^{\D_K}\mid x\in \Rp^{\D_K}\Big\},
\end{equation}
the projected non-linearity $\widetilde{\C}_{b,K}:\widetilde{\CC}_{b,K}\to \R$ given by
\begin{equation}\label{eqn: projected non-linearity b}
\widetilde{\C}_{b,K}\big(\widetilde{G}^{(K)}_bx\big)=\frac{1}{2}\widetilde{G}_b^{(K)}x\cdot x=\frac{1}{2\abs{\D_K}^2}\sum_{k,k'\in \D_K}\td g_b(kk')x_kx_{k'},
\end{equation}
and the closed convex set
\begin{equation}
\K_{=1,K}=\Big\{G^{(K)}x\in \R^{\D_K}\mid x\in \Rp^{\D_K}\text{ and } \Norm{x}_1= a\Big\}.
\end{equation}
We will measure quantities in the normalized $\ell^1$ and $\ell^{1,*}$ norms,
\begin{equation}\label{eqn: SBME normalized norms}
\Norm{x}_1=\frac{1}{\abs{\D_K}}\sum_{k\in \D_K}\abs{x_{k}} \quad \text{and} \quad \dNorm{1}{y}=\max_{k\in \D_K}\abs{\D_K}\abs{y_{k}},
\end{equation}
and it will be convenient to write \begin{equation}
B_{K,R}=\big\{y\in \R^{\D_K}\mid \dNorm{1}{y}\leq R\big\}
\end{equation}
for the ball of radius $R>0$ centered at the origin with respect to the normalized-$\ell^{1,*}$ norm in $\smash{\R^{\D_K}}$. Recall that a function $\smash{h:\R^d\to \R}$ is said to be non-decreasing if $h(y)\leq h(y')$ for all $\smash{y,y'\in \R^d}$ with $\smash{y'-y\in \Rp^d}$. Proposition 2.3 in \cite{TD_JC_HJ} will give the existence of a uniformly Lipschitz continuous and non-decreasing non-linearity $\smash{\td \H_{b,K,R}}$ which agrees with $\smash{\td \C_{b,K}}$ on the ball $\smash{\td \CC_{b,K}\cap B_{K,R}}$, and we will define the solution to the infinite-dimensional Hamilton-Jacobi equation \eqref{eqn: SBM enriched free energy HJ equation} by
\begin{equation}\label{eqn: SBM inf HJ eqn solution}
f(t,\mu)=\lim_{K\to \infty}\bigg(\td f_{b,R}^{(K)}\big(t,x^{(K)}(\mu)\big)-b\Norm{x^{(K)}(\mu)}_1-\frac{bt}{2}\bigg),
\end{equation}
where $\td f^{(K)}_{b,R}:[0,\infty)\times \Rp^{\D_K}\to \R$ is the unique solution to the Hamilton-Jacobi equation
\begin{equation}\label{eqn: SBM projected HJ eqn b}
\partial_t \td f^{(K)}(t,x)=\widetilde{\H}_{b,K, R}\big(\nabla \td f^{(K)}(t,x)\big) \quad\text{on}\quad \Rpp\times \Rpp^{\D_K}
\end{equation}
subject to the projection of the initial condition $\td \psi_{b}:\M_+\to \R$ defined by
\begin{equation}\label{e.def.tdpsi}
\td \psi_{b}(\mu)=\psi(\mu)+b\int_{-1}^1 \ud \mu.
\end{equation}
The fact that the function $f$ defined by \eqref{eqn: SBM inf HJ eqn solution} does not depend on the choice of the constants $b \in \R$ and $R > 0$ sufficiently large is established in Proposition \ref{SBM modified HJ eqn WP}, while the existence and uniqueness of the appropriate notion of solution to the Hamilton-Jacobi equation \eqref{eqn: SBM projected HJ eqn b} is established in Proposition \ref{SBM WP of HJ eqn on Rpp b}. In order to prove these propositions, we will rely on some results from \cite{TD_JC_HJ}. To appeal to these results, we must start by verifying that the initial condition $\psi$ satisfies a certain number of hypotheses introduced there. To state these hypotheses, given a closed convex set $\smash{\K\subset \R^{\D_K}}$, write
\begin{equation}\label{eqn: SBM enlarged set}
\K'=\K+B_{K,2^{-K/2}}
\end{equation}
for the neighbourhood of radius $\smash{2^{-K/2}}$ around $\K$ in the normalized-$\ell^{1,*}$ norm, and denote by
\begin{equation}\label{eqn: SBM TV metric}
\TV(\mu,\nu)=\sup\big\{\abs{\mu(A)-\nu(A)}\mid A \text{ is a measurable subset of } [-1,1]\big\}
\end{equation}
the total variation distance on $\M_+$. A non-differential criterion for the gradient of a Lipschitz continuous function to lie in a closed convex set is given in Proposition B.2 of \cite{TD_JC_HJ}.

\begin{enumerate}[label = \textbf{H\arabic*}]
\item The initial condition $\psi:\M_+\to \R$ is Lipschitz continuous with respect to the total variation distance \eqref{eqn: SBM TV metric},
\begin{equation}
\abs{\psi(\mu)-\psi(\nu)}\leq \norm{\psi}_{\mathrm{Lip},\TV}\TV(\mu,\nu)
\end{equation}
for all measures $\nu,\mu\in \M_+$. \label{SBM H TV}
\item 
\label{SBM H a}
The initial condition $\smash{\psi:\M_+\to \R}$ has the property that each of the projected initial conditions has its gradient in the set $\K_{=1,K}'$,
\begin{equation}
\nabla \psi^{(K)}\in L^\infty\big(\Rp^d;\K_{=1,K}'\big).
\end{equation}
\item The initial condition $\psi:\Pr[-1,1]\to \R$ is Lipschitz continuous with respect to the Wasserstein distance \eqref{eqn: SBM Wasserstein},
\begin{equation}
\abs{\psi(\P)-\psi(\Q)}\leq \norm{\psi}_{\mathrm{Lip},W} W(\P,\Q)
\end{equation}
for all probability measures $\P,\Q\in \Pr[-1,1]$. \label{SBM H W}
\end{enumerate}

\begin{lemma}\label{SBM assumptions for WP of HJ}
The initial condition $\psi$ in \eqref{eqn: SBM asymptotic initial condition} satisfies \eqref{SBM H TV}-\eqref{SBM H W}.
\end{lemma}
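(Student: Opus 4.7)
The plan is to verify the three conditions \ref{SBM H TV}, \ref{SBM H a}, and \ref{SBM H W} for the limit $\psi$ identified in Proposition~\ref{SBM convergence of initial condition}, making essential use of the explicit Gateaux derivative formula stated in Remark~\ref{r.calculation.derivative.psi}. Condition \ref{SBM H W} is the cleanest: Lemma~\ref{SBM initial condition W Lipschitz} already establishes the Wasserstein-Lipschitz bound for each $\psi_N$ with a constant depending only on $c$ and $\Delta$, uniform in $N$. Since $\psi_N \to \psi$ pointwise by Proposition~\ref{SBM convergence of initial condition}, the bound passes to the limit with the same constant.

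For \ref{SBM H TV}, I would interpolate along $\mu_t := (1-t)\mu + t\nu \in \M_+$ for $t \in [0,1]$ and write
$$\psi(\nu) - \psi(\mu) = \int_0^1 D_\mu \psi(\mu_t;\nu-\mu)\ud t.$$
Splitting $\nu-\mu$ via its Hahn--Jordan decomposition into two non-negative measures, the density formula from Remark~\ref{r.calculation.derivative.psi} gives
$$\big|D_\mu \psi(\mu_t;\nu-\mu)\big| \leq \big\|D_\mu \psi(\mu_t,\cdot)\big\|_\infty \cdot |\nu-\mu|\big([-1,1]\big) \leq 2 \big\|D_\mu\psi(\mu_t,\cdot)\big\|_\infty \TV(\mu,\nu).$$
Again from Remark~\ref{r.calculation.derivative.psi}, $(\mu,x)\mapsto D_\mu\psi(\mu,x)$ is uniformly bounded by a constant $C(c,\Delta)$, since $|\Delta|<c$ keeps the Gibbs-averaged quantities $\langle c+\Delta \sigma x\rangle_\pm \in [c-|\Delta|,c+|\Delta|]$ bounded away from zero so that the logarithms stay bounded. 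This gives \ref{SBM H TV} with constant $2C$.

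For \ref{SBM H a}, the crux is to recast the formula from Remark~\ref{r.calculation.derivative.psi} into the cone form $G_{\mu^*}$ for $\mu^*$ a probability measure. Using the elementary identity $(c+\Delta z)\log(c+\Delta z) = g(z) + c + \Delta z$ from \eqref{eqn: SBM g function} at $z = x\langle\sigma\rangle_\pm$, together with the Nishimori-type identity $p\E\langle\sigma\rangle_+ + (1-p)\E\langle\sigma\rangle_- = \E\sigma^* = \m$, the $\Delta x$ linear terms cancel and one obtains
$$D_\mu \psi(\mu,x) = p \E g\big(x\langle \sigma \rangle_+\big) + (1-p)\E g\big(x\langle \sigma\rangle_-\big) = \int_{-1}^1 g(xy)\ud \mu^*(y) = G_{\mu^*}(x),$$
where $\mu^* := p\L(\langle\sigma\rangle_+) + (1-p)\L(\langle\sigma\rangle_-)$ is a probability measure on $[-1,1]$. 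Setting $z := x^{(K)}(\mu^*) \in \Rp^{\D_K}$, equation~\eqref{eqn: SBM Gateux derivative projection} identifies $(\nabla \psi^{(K)}(x))_k = |\D_K|^{-1}\int g(ky)\ud \mu^*(y)$, while $(G^{(K)} z)_k$ is the corresponding Riemann sum over the dyadic partition. Lipschitz continuity of $g$ on $[-1,1]$ together with $|\mu^*|=1$ bounds the discretization error by $\|g'\|_\infty 2^{-K}$ in the normalized-$\ell^{1,*}$ norm, which fits inside $B_{K,2^{-K/2}}$ for $K$ large. Since $\Norm{z}_1 = |\mu^*| = 1$, we have $G^{(K)} z \in \K_{=1,K}$, and therefore $\nabla \psi^{(K)}(x) \in \K_{=1,K}'$, yielding \ref{SBM H a}.

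The main obstacle is the algebraic step in \ref{SBM H a}: one must track the cancellations precisely to identify $D_\mu \psi(\mu,\cdot)$ as the $g$-integral of a \emph{probability} measure (and not merely a signed or positive measure), since only the exact mass constraint $\Norm{x^{(K)}(\mu^*)}_1 = 1$ places $\nabla \psi^{(K)}$ inside $\K_{=1,K}'$ rather than in a relaxation. A secondary technical point is to set up the dyadic discretization of $\mu^*$ so that the pointwise Riemann-sum error transfers cleanly to the normalized-$\ell^{1,*}$ norm used to define $\K_{=1,K}'$.
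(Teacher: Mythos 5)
Your verification of \ref{SBM H W} matches the paper (Lemma~\ref{SBM initial condition W Lipschitz} plus Proposition~\ref{SBM convergence of initial condition}), and your algebraic identification $D_\mu\psi(\mu,\cdot)=G_{\mu^*}$ with $\mu^*=p\,\L(\langle\sigma\rangle_+)+(1-p)\,\L(\langle\sigma\rangle_-)$ a \emph{probability} measure is correct: the identity $(c+\Delta z)\log(c+\Delta z)=g(z)+c+\Delta z$ and the Nishimori identity for the limiting one-spin problem do cancel the linear terms exactly, and this is consistent with the mapping $\Gamma$ described in the introduction. The dyadic discretization estimate via $\norm{g'}_\infty 2^{-K}$ is also the same as in the paper.

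However, there is a genuine gap in how you treat \ref{SBM H TV} and especially \ref{SBM H a}: your argument takes as input the differentiability of the limit $\psi$ (the fundamental theorem of calculus along $\mu_t$, and the identification of $\nabla\psi^{(K)}$ through the density of $D_\mu\psi$), with the derivative formula supplied by Remark~\ref{r.calculation.derivative.psi}. But that remark is only asserted (``one can also show''), and pointwise convergence $\psi_N\to\psi$ does not by itself transfer the derivative computations of Section~\ref{sec: SBM HJ eqn derivation} from $\psi_N$ to $\psi$; establishing the remark would itself require an argument of the kind the lemma's proof is supposed to provide. The paper avoids this entirely: for \ref{SBM H TV} it runs your interpolation argument at finite $N$, where Lemma~\ref{SBM derivatives of Gateux density bound} bounds the density of $D_\mu\psi_N$, and then lets $N\to\infty$; for \ref{SBM H a} it shows $\nabla\psi_N^{(K)}(y)=w+\BigO(N^{-1})$ with $w\in\K'_{=1,K}$ and then passes this to the limit through an \emph{integrated, non-differential} criterion (Proposition B.2 of \cite{TD_JC_HJ}): one checks that $(x'-x)\cdot z\geq c$ for all $z\in\K'_{=1,K}$ forces $\psi^{(K)}(x')-\psi^{(K)}(x)\geq c$, which characterizes membership of the (a.e.-defined) gradient of the Lipschitz limit in the closed convex set without ever differentiating $\psi^{(K)}$. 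Your \ref{SBM H TV} step is easily repaired by running it at finite $N$ exactly as you wrote it, but for \ref{SBM H a} you need some such limiting device; as written, ``$\nabla\psi^{(K)}(x)\in\K'_{=1,K}$'' is only justified for $\psi_N^{(K)}$ up to $\BigO(N^{-1})$, not for $\psi^{(K)}$ itself.
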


\begin{proof}
Recall that \Cref{SBM derivatives of Gateux density bound} implies the existence of a constant $C>0$ which depends only on $c$ and $\Delta$ such that for every integer $N\geq 1$, $\mu \in \M_+$ and $t\geq 0$,
\begin{equation}\label{eqn: SBM Gateaux derivative bounds modified IC} 
\big\lvert D_\mu \overline{F}_N(t, \mu,x)\big\rvert\leq C\quad \text{and} \quad \big\lvert \partial_x D_\mu \overline{F}_N(t,\mu,x)\big\rvert\leq C.
\end{equation}
To establish \eqref{SBM H TV} notice that for every integer $N\geq 1$ and $\mu,\nu\in \M_+$,
$$\psi_N(\mu)-\psi_N(\nu)=\int_0^1 D_\mu\psi_N \big(\nu+t(\mu-\nu); \mu-\nu\big)\ud t=\int_0^1 \int_{-1}^1f_t(x)\ud \big( \mu-\nu\big)(x)\ud t$$
for the continuously differentiable function $\smash{f_t(x)=D_\mu\psi_N \big(\nu+t(\mu-\nu),x\big)}$. To bound this integral by the total variation distance, let $\eta=\mu-\nu\in \M_s$, and use the Hahn-Jordan decomposition to write $\smash{\eta=\eta^+-\eta^-}$ for measures $\smash{\eta^+,\eta^-\in \M_+}$ with the property that for some measurable set $D\subset[-1,1]$ and all measurable sets $E\subset [-1,1]$,
$$\eta^+(E)=\eta(E\cap D)\geq 0\quad \text{and} \quad \eta^-(E)=-\eta(E\cap D^c)\geq 0.$$
The triangle inequality and the first bound in \eqref{eqn: SBM Gateaux derivative bounds modified IC} imply that
\begin{align*}
\abs{\psi_N(\mu)-\psi_N(\nu)}&\leq \Big\lvert \int_0^1 \int_{-1}^1 f_t(x)\ud \eta^+(x)\Big\rvert+\Big\lvert\int_0^1 \int_{-1}^1 f_t(x)\ud \eta^-(x)\Big\rvert\\
&\leq C\big(\eta^+[-1,1]+\eta^-[-1,1]\big)\\
&\leq 2C\TV(\mu,\nu).
\end{align*}
Using \Cref{SBM convergence of initial condition} to let $N$ tend to infinity establishes \eqref{SBM H TV}. To prove \eqref{SBM H a} notice that by \eqref{eqn: SBM Gateux derivative projection} and \eqref{eqn: SBM enriched free energy approximate Gateaux derivative}, for every $\smash{y\in \Rp^{\D_K}}$, there exists some probability measure $\mu^*\in \Pr[-1,1]$ with
$$\partial_{x_k}\psi_N^{(K)}(y)=\frac{1}{\abs{\D_K}}D_\mu\psi_N\big(\mu_y^{(K)},k\big)=\frac{1}{\abs{\D_K}}G_{\mu^*}(k)+\BigO\big(N^{-1}\big).$$
If $\mu^*_K=\mu^{(K)}_{x^{(K)}(\mu^*)}\in \M_+^{(K)}$ denotes the projection of $\mu^*$ onto $\M_+^{(K)}$, then the mean value theorem implies that
$$\abs{G_{\mu^*}(k)-G_{\mu^*_K}(k)}\leq \sum_{k'\in \D_K}\int_{k'}^{k'+2^{-K}}\abs{g(ky)-g(kk')}\ud \mu^*(y)\leq \frac{\norm{g'}_\infty}{2^K},$$
where we have used that $\mu^*_K(k')=\mu^*[k',k'+2^{-K})$ for every dyadic $k'\in \D_K$. This means that
$$\partial_{x_k}\psi_N^{(K)}(y)=\frac{1}{\abs{\D_K}}G_{\mu^*_K}(k)+\BigO\big(2^{-2K}\big)+\BigO\big(N^{-1}\big)=G^{(K)}x^{(K)}(\mu^*_K)_k+\BigO\big(2^{-2K}\big)+\BigO\big(N^{-1}\big)$$
so, for $K$ large enough, we have $\smash{\nabla \psi^{(K)}_N(y)=w+\BigO\big(N^{-1}\big)}$ for some $\smash{w\in \K'_{=1,K}}$. At this point fix $c\in \R$ and $\smash{x,x'\in \R^d}$ with  $(x'-x)\cdot z\geq c$ for every $\smash{z\in \K_{=1,K}'}$. The fundamental theorem of calculus reveals that
$$\psi_N^{(K)}(x')-\psi_N^{(K)}(x)=\int_0^1 \nabla \psi_N^{(K)}\big(tx'+(1-t)x\big)\cdot (x'-x)\ud t\geq c+ \BigO\big(N^{-1}\big).$$
Using \Cref{SBM convergence of initial condition} to let $N$ tend to infinity shows that $\smash{\psi^{(K)}(x')-\psi^{(K)}(x)\geq c}$ and Proposition B.2 in \cite{TD_JC_HJ} gives \eqref{SBM H a}. Finally, \eqref{SBM H W} is a consequence of \Cref{SBM initial condition W Lipschitz} and \Cref{SBM convergence of initial condition}. This completes the proof.
\end{proof}

This result allows us to invoke Proposition 2.3 in \cite{TD_JC_HJ} to extend the non-linearity $\smash{\td \C_{b,K}}$ in \eqref{eqn: projected non-linearity b}. It will be convenient to write $M_b=\max_{[-1,1]}\td g_b$ and $m_b=\min_{[-1,1]}\td g_b>0$.

\begin{proposition}\label{SBM extending the non-linearity}
For every $R>0$, there exists a non-decreasing non-linearity $\smash{\widetilde{\H}_{b,K,R}:\R^{\D_K}\to \R}$ which agrees with $\smash{\widetilde{\C}_{b,K}}$ on $\smash{\widetilde{\CC}_{b,K}\cap B_{K,R}}$ and satisfies the Lipschitz continuity property
\begin{equation}\label{eqn: SBM extending the non-linearity}
\big\lvert \widetilde{\H}_{b,K,R}(y)-\widetilde{\H}_{b,K,R}(y')\big\rvert\leq \frac{8RM_b}{m_b^2}\dNorm{1}{y-y'}
\end{equation}
for all $y,y'\in \R^{\D_K}$.
\end{proposition}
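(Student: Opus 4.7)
The plan is to appeal to Proposition~2.3 of \cite{TD_JC_HJ} after verifying that the present setup fits within its scope. By the choice of $b$, the kernel $\td g_b = g + b$ is strictly positive on $[-1,1]$, with $0 < m_b \leq \td g_b \leq M_b$, and hence the symmetric matrix $\td G_b^{(K)}$ defined in \eqref{eqn: SBM matrix tGK} has all entries in the interval $[m_b/\abs{\D_K}^2, M_b/\abs{\D_K}^2]$. The non-linearity $\td \C_{b,K}$ is the restriction to the cone $\td \CC_{b,K}$ of the quadratic form obtained by parameterizing $y = \td G_b^{(K)} x$ with $x \in \Rp^{\D_K}$ and setting $\td \C_{b,K}(y) = \frac{1}{2} \td G_b^{(K)} x \cdot x$. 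The symmetry of $\td G_b^{(K)}$ guarantees that this expression depends only on $y$: if $y = \td G_b^{(K)} x = \td G_b^{(K)} x'$, then $\td G_b^{(K)}(x-x') = 0$ and the identities $\td G_b^{(K)} x \cdot x = y\cdot x'$ and $\td G_b^{(K)} x' \cdot x' = y\cdot x'$ both follow.

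The first ingredient needed for Proposition~2.3 of \cite{TD_JC_HJ} is that $\td \C_{b,K}$ be non-decreasing along the cone, in the sense that its ``gradient'' with respect to $y \in \td \CC_{b,K}$ has non-negative coordinates. Differentiating the quadratic form in $y$ recovers exactly the pre-image vector $x \in \Rp^{\D_K}$, so this is immediate from the construction of $\td \CC_{b,K}$.

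The second ingredient is a Lipschitz estimate on the bounded set $\td \CC_{b,K} \cap B_{K,R}$. The essential observation is that positivity of $\td g_b$ lets us control the pre-image: for $y = \td G_b^{(K)} x \in B_{K,R}$ and any $k \in \D_K$,
\begin{equation*}
R \geq \abs{\D_K} y_k = \frac{1}{\abs{\D_K}}\sum_{k' \in \D_K}\td g_b(kk') x_{k'} \geq m_b \Norm{x}_1,
\end{equation*}
so $\Norm{x}_1 \leq R/m_b$. Using the symmetry of $\td G_b^{(K)}$, one verifies the algebraic identity
\begin{equation*}
\td G_b^{(K)} x \cdot x - \td G_b^{(K)} x' \cdot x' = (y-y')\cdot(x+x')
\end{equation*}
for any $y = \td G_b^{(K)} x$ and $y' = \td G_b^{(K)} x'$, and invoking the duality between $\Norm{\cdot}_1$ and $\dNorm{1}{\cdot}$ yields
\begin{equation*}
\big\lvert \td \C_{b,K}(y) - \td \C_{b,K}(y') \big\rvert \leq \frac{R}{m_b}\,\dNorm{1}{y - y'}.
\end{equation*}

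With these two properties in hand, Proposition~2.3 of \cite{TD_JC_HJ} directly produces the extension $\td \H_{b,K,R}$ with the claimed properties. The prefactor $8M_b/m_b^2$ (rather than the $1/m_b$ obtained above on the cone itself) reflects the cost of the mollification and smooth-interpolation construction used in that proposition to extend across the boundary of the cone while preserving global monotonicity. The main conceptual obstacle, namely producing a non-decreasing Lipschitz extension of a monotone function defined only on a cone, has therefore been resolved in the companion paper; the remaining work, carried out above, is to verify the hypotheses and to read off the constants $m_b$, $M_b$ that enter the final bound.
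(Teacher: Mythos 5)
Your proposal is correct and takes essentially the same route as the paper, which offers no independent argument for this proposition and simply imports it from Proposition~2.3 of \cite{TD_JC_HJ} (whose hypotheses here amount to the positivity of $\td g_b$ ensured by the choice of $b$), exactly as you do. Your supplementary verifications --- that $\Norm{x}_1\leq R/m_b$ whenever $\td G_b^{(K)}x\in B_{K,R}$, the identity $\td G_b^{(K)}x\cdot x-\td G_b^{(K)}x'\cdot x'=(y-y')\cdot(x+x')$, and the resulting Lipschitz constant $R/m_b$ for $\td\C_{b,K}$ on $\td\CC_{b,K}\cap B_{K,R}$ --- are correct and consistent with the larger constant $8RM_b/m_b^2$ delivered by the extension in the companion paper.
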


The well-posedness of the Hamilton-Jacobi equation \eqref{eqn: SBM projected HJ eqn b} is the content of Theorem 1.1 in~\cite{TD_JC_HJ}. Before stating this result, let us remind the reader of the notion of a viscosity solution and introduce some more notation.

\begin{definition}
An upper semi-continuous function $\smash{u:[0,\infty)\times  \Rp^{\D_K}\to \R}$ is said to be a viscosity subsolution to \eqref{eqn: SBM projected HJ eqn b} if, given any $\smash{\phi\in C^\infty\big((0,\infty)\times  \Rpp^{\D_K}\big)}$ with the property that $u-\phi$ has a local maximum at $\smash{(t^*,x^*)\in (0,\infty)\times \Rpp^{\D_K}}$,
\begin{equation}\label{eqn: SBM subsolution conditions}
\big(\partial_t \phi-\td \H_{b,K,R}(\nabla \phi)\big)(t^*,x^*)\leq 0.
\end{equation}
\end{definition}

\begin{definition}
A lower semi-continuous function $\smash{v:[0,\infty)\times \Rp^{\D_K}\to \R}$ is said to be a viscosity supersolution to \eqref{eqn: SBM projected HJ eqn b} if, given any $\smash{\phi\in C^\infty\big((0,\infty)\times  \Rpp^{\D_K}\big)}$ with the property that $v-\phi$ has a local minimum at $\smash{(t^*,x^*)\in (0,\infty)\times \Rpp^{\D_K}}$,
\begin{equation}\label{eqn: SBM supersolution conditions}
\big(\partial_t \phi-\td \H_{b,K,R}(\nabla \phi)\big)(t^*,x^*)\geq 0.
\end{equation}
\end{definition}

\begin{definition}\label{SBM definition of viscosity solution}
A continuous function $\smash{f\in C\big([0,\infty)\times \Rp^{\D_K}\big)}$ is said to be a viscosity solution to \eqref{eqn: SBM projected HJ eqn b} if it is both a viscosity subsolution and a viscosity supersolution to \eqref{eqn: SBM projected HJ eqn b}.
\end{definition}

\noindent Given functions $h:\Rp^{\D_K}\to \R$ and $\smash{u:[0,\infty)\times \Rp^{\D_K}\to \R}$, define the semi-norms
\begin{equation}\label{eqn: SBM projected modified non-linearity Lipschitz}
\Norm{h}_{\mathrm{Lip},1}=\sup_{x\neq x'\in\Rp^{\D_K}}\frac{\abs{h(x)-h(x')}}{\Norm{x-x'}_1} \quad \text{and} \quad [u]_{0}=\sup_{\substack{t>0\\ x\in \Rp^{\D_K}}}\frac{\abs{u(t,x)-u(0,x)}}{t}.
\end{equation}
Introduce the space of functions with Lipschitz initial condition that grow at most linearly in time,
\begin{equation}
\mathfrak{L}=\big\{u:[0,\infty)\times \Rp^{\D_K}\to \R\mid u(0,\cdot) \text{ is Lipschitz continuous and } [u]_0<\infty\big\},
\end{equation}
and its subset of uniformly Lipschitz functions,
\begin{equation}
\mathfrak{L}_{\mathrm{unif}}=\Big\{u\in \mathfrak{L}\mid \sup_{t\geq 0}\Norm{u(t,\cdot)}_{\mathrm{Lip},1}<\infty\Big\}.
\end{equation}

Combining Theorem 1.1 with the arguments leading to the second conclusion of Lemma 6.1 in \cite{TD_JC_HJ} gives the following well-posedness result for the Hamilton-Jacobi equation \eqref{eqn: SBM projected HJ eqn b}.

\begin{proposition}\label{SBM WP of HJ eqn on Rpp b}
For every $R>0$, the Hamilton-Jacobi equation \eqref{eqn: SBM projected HJ eqn b} admits  a unique viscosity solution $\smash{\td f^{(K)}_{b,R}\in \mathfrak{L}_{\mathrm{unif}}}$ subject to the initial condition $\smash{\td \psi_b^{(K)}}$. Moreover, the solution has its gradient in the closed convex set $\smash{\td \K_{=1,K}'}$, 
\begin{equation}
\nabla \td f^{(K)}_{b,R} \in L^\infty\big([0,\infty)\times \Rp^{\D_K}; \td \K_{=1,K}'\big),
\end{equation}
and it satisfies the Lipschitz bound
\begin{equation}
\sup_{t>0}\Norm{\td f_{b,R}^{(K)}(t,\cdot)}_{\mathrm{Lip},1}=\Norm{\td \psi_b^{(K)}}_{\mathrm{Lip},1}\leq \norm{\td \psi_b}_{\mathrm{Lip},\TV}.
\end{equation}
\end{proposition}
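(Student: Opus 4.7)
The plan is to invoke the general well-posedness theorem, Theorem~1.1 of \cite{TD_JC_HJ}, which gives existence and uniqueness of viscosity solutions in $\mathfrak{L}_{\mathrm{unif}}$ for Hamilton-Jacobi equations on $\Rpp\times \Rpp^{\D_K}$ with a Lipschitz non-decreasing non-linearity and a Lipschitz initial condition; then one needs to upgrade the conclusion to include the gradient constraint. First I would verify the hypotheses. Proposition~\ref{SBM extending the non-linearity} ensures that $\widetilde{\H}_{b,K,R}$ is Lipschitz continuous in the $\dNorm{1}{\cdot}$ norm and non-decreasing. For the initial condition, Lemma~\ref{SBM assumptions for WP of HJ} shows that $\psi$ is Lipschitz in the total variation metric, and adding the linear term $b\int\ud\mu$ preserves this property, so $\widetilde{\psi}_b$ is Lipschitz in TV. Since $\TV(\mu_x^{(K)},\mu_{x'}^{(K)})=\Norm{x-x'}_1$ for vectors $x,x'\in\Rp^{\D_K}$, it follows that $\widetilde{\psi}_b^{(K)}$ is Lipschitz in $\Norm{\cdot}_1$ with constant at most $\norm{\widetilde{\psi}_b}_{\mathrm{Lip},\TV}$. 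Applying Theorem~1.1 of \cite{TD_JC_HJ} then yields the unique viscosity solution $\widetilde{f}_{b,R}^{(K)}\in\mathfrak{L}_{\mathrm{unif}}$.

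The Lipschitz bound on the solution follows from the comparison principle built into Theorem~1.1. For any $h\in\R^{\D_K}$, both $(t,x)\mapsto \widetilde{f}_{b,R}^{(K)}(t,x)$ and $(t,x)\mapsto \widetilde{f}_{b,R}^{(K)}(t,x+h)$ solve \eqref{eqn: SBM projected HJ eqn b} (interpreting the latter on a suitably shifted domain), so the comparison principle gives
\begin{equation*}
\sup_{t\geq 0}\sup_{x}\big|\widetilde{f}_{b,R}^{(K)}(t,x+h)-\widetilde{f}_{b,R}^{(K)}(t,x)\big|\leq \sup_{x}\big|\widetilde{\psi}_b^{(K)}(x+h)-\widetilde{\psi}_b^{(K)}(x)\big|.
\end{equation*}
Dividing by $\Norm{h}_1$ and taking the supremum yields $\sup_{t>0}\Norm{\widetilde{f}_{b,R}^{(K)}(t,\cdot)}_{\mathrm{Lip},1}\leq \Norm{\widetilde{\psi}_b^{(K)}}_{\mathrm{Lip},1}$, while evaluating at $t=0$ gives the matching lower bound and hence equality. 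The bound $\Norm{\widetilde{\psi}_b^{(K)}}_{\mathrm{Lip},1}\leq \norm{\widetilde{\psi}_b}_{\mathrm{Lip},\TV}$ has already been noted above.

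The main obstacle is the gradient constraint $\nabla \widetilde{f}_{b,R}^{(K)}\in L^\infty\big([0,\infty)\times\Rp^{\D_K};\widetilde{\K}'_{=1,K}\big)$; this is the content of the second conclusion of Lemma~6.1 in \cite{TD_JC_HJ}. The strategy is to use the non-differential criterion of Proposition~B.2 of \cite{TD_JC_HJ}: it suffices to show that whenever $x,x'\in\Rp^{\D_K}$ and $c\in\R$ satisfy $(x'-x)\cdot z\geq c$ for every $z\in \widetilde{\K}'_{=1,K}$, one has
\begin{equation*}
\widetilde{f}_{b,R}^{(K)}(t,x')-\widetilde{f}_{b,R}^{(K)}(t,x)\geq c \quad \text{for all } t\geq 0.
\end{equation*}
At $t=0$ this holds by hypothesis~\eqref{SBM H a} applied to $\widetilde{\psi}_b^{(K)}$ (the linear perturbation $b\int\ud\mu$ keeps the gradient in the same cone up to an explicit shift absorbed into the definition of $\widetilde{\K}'_{=1,K}$). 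The key input at positive time is the monotonicity of the non-linearity $\widetilde{\H}_{b,K,R}$: it implies that the partial differential equation satisfies a comparison principle that propagates monotonicity-type inequalities between subsolutions and supersolutions, and in particular between $\widetilde{f}_{b,R}^{(K)}(t,x)$ and $\widetilde{f}_{b,R}^{(K)}(t,x')-c$. Applying the comparison principle of \cite{TD_JC_HJ} to these two functions, which both solve~\eqref{eqn: SBM projected HJ eqn b}, transfers the inequality from $t=0$ to all $t\geq 0$, establishing the gradient constraint and completing the proof.
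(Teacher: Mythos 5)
Your proposal is correct and follows essentially the same route as the paper, which likewise obtains this proposition by verifying the hypotheses (via Proposition~\ref{SBM extending the non-linearity} and Lemma~\ref{SBM assumptions for WP of HJ}) and then invoking Theorem~1.1 of \cite{TD_JC_HJ} together with the arguments leading to the second conclusion of its Lemma~6.1 for the gradient constraint. The extra details you supply (the translation/comparison argument for the Lipschitz bound and the Proposition~B.2 criterion propagated by the comparison principle) are precisely the content of those cited arguments.
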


\noindent In addition to this existence and uniqueness result, it will be important to record the following comparison principle for the projected Hamilton-Jacobi equation \eqref{eqn: SBM projected HJ eqn b}. This comparison principle is a consequence of Corollary A.12 in \cite{TD_JC_HJ}.

\begin{lemma}\label{SBM comparison principle corollary on Rpd}
If $\smash{u,v\in \mathfrak{L}_{\mathrm{unif}}}$ are respectively a continuous subsolution and a continuous supersolution to \eqref{eqn: SBM projected HJ eqn b}, then
\begin{equation}
\sup_{\Rp\times \Rp^{\D_K}}\big(u(t,x)-v(t,x)\big)=\sup_{\Rp^{\D_K}}\big(u(0,x)-v(0,x)\big).
\end{equation}
\end{lemma}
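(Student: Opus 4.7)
The reverse inequality is immediate: restricting the supremum on the left-hand side to $t = 0$ gives
\[
\sup_{\Rp \times \Rp^{\D_K}}\big(u(t,x) - v(t,x)\big) \geq \sup_{\Rp^{\D_K}}\big(u(0,x) - v(0,x)\big).
\]
The substantive content of the lemma is the opposite bound. My plan is to obtain it by a direct application of Corollary A.12 of \cite{TD_JC_HJ}, which supplies a comparison principle for viscosity sub- and supersolutions of first-order Hamilton-Jacobi equations on $\Rp \times \Rp^{\D_K}$ with a locally Lipschitz, non-decreasing Hamiltonian and sub/supersolutions belonging to $\mathfrak{L}_{\mathrm{unif}}$.

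Checking the hypotheses of that corollary is straightforward. The properties that $u$ is a continuous subsolution and $v$ is a continuous supersolution in the sense of \eqref{eqn: SBM subsolution conditions}--\eqref{eqn: SBM supersolution conditions}, as well as the regularity $u, v \in \mathfrak{L}_{\mathrm{unif}}$, are precisely the standing assumptions of the present lemma. The Lipschitz continuity and monotonicity of the non-linearity $\widetilde{\H}_{b,K,R}$ required by Corollary A.12 are exactly what \Cref{SBM extending the non-linearity} furnishes. Once these facts are in place, that corollary yields
\[
\sup_{\Rp \times \Rp^{\D_K}}\big(u(t,x) - v(t,x)\big) \leq \sup_{\Rp^{\D_K}}\big(u(0,x) - v(0,x)\big),
\]
which together with the reverse inequality closes the proof.

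Behind this appeal to \cite{TD_JC_HJ}, the underlying mechanism is the classical doubling-of-variables argument: one introduces an auxiliary functional of the form $u(t,x) - v(s,y) - \varepsilon^{-1}\bigl(\abs{x-y}^2 + (t-s)^2\bigr) - \delta(\abs{x}^2 + \abs{y}^2)$, exploits the sub/supersolution inequalities at its maximum point, and then relaxes the penalty parameters $\varepsilon, \delta \to 0$. The monotonicity of $\widetilde{\H}_{b,K,R}$ is essential here, because it removes the need to impose any boundary condition on $\partial \Rp^{\D_K}$: the Hamiltonian is compatible with the order structure on the orthant, so one can treat $\Rp^{\D_K}$ effectively as the whole space for the purpose of uniqueness. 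The membership of $u$ and $v$ in $\mathfrak{L}_{\mathrm{unif}}$ then supplies enough control on their Lipschitz behavior in $x$ and linear growth in $t$ to absorb the penalty terms in the limit.

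The main obstacle, if one were to attempt a direct proof, would be implementing the doubling of variables on the half-space $\Rp^{\D_K}$ without invoking extra boundary data; this is precisely the difficulty addressed in \cite{TD_JC_HJ} using the monotonicity of the Hamiltonian. In the present paper, the only remaining task is therefore the bookkeeping of verifying that our definitions and regularity classes align with those used in Corollary A.12 of \cite{TD_JC_HJ}, after which the lemma follows with no additional work.
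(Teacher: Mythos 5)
Your proposal matches the paper's treatment: the lemma is stated there as a direct consequence of Corollary A.12 in \cite{TD_JC_HJ}, with the required Lipschitz continuity and monotonicity of $\widetilde{\H}_{b,K,R}$ supplied by \Cref{SBM extending the non-linearity} and the regularity class $\mathfrak{L}_{\mathrm{unif}}$ assumed of $u$ and $v$. Your additional remarks on the trivial $\geq$ direction and the doubling-of-variables mechanism are accurate but not needed beyond the citation.
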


\noindent The existence of the limit \eqref{eqn: SBM inf HJ eqn solution} defining $f(t,\mu)$ is a consequence of Theorems 1.2 and 1.4 in \cite{TD_JC_HJ}.

\begin{proposition}\label{SBM modified HJ eqn WP}
Let $b \in \R$ be such that the kernel $\smash{\td g_b}$ defined in \eqref{e.def.tdg} is positive on $[-1,1]$, let $\smash{\td \psi_{b}}$ be defined by \eqref{e.def.tdpsi}, and for each integer $K\geq 1$ and $\smash{R>\norm{\td \psi_b}_{\mathrm{Lip},\TV}}$, denote by $\smash{\td f_{b,R}^{(K)}}$ the unique viscosity solution to the Hamilton-Jacobi equation \eqref{eqn: SBM projected HJ eqn b} subject to the initial condition $\smash{\td \psi_b^{(K)}}$. The limit \eqref{eqn: SBM inf HJ eqn solution} exists, is finite and is independent of $R$ and $b$. This limit is defined to be the solution to the infinite-dimensional Hamilton-Jacobi equation~\eqref{eqn: SBM enriched free energy HJ equation}. 
\end{proposition}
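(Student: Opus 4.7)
The statement follows from Theorems 1.2 and 1.4 of \cite{TD_JC_HJ}, and the plan is to verify their hypotheses and then invoke them. The well-posedness of the finite-dimensional equation \eqref{eqn: SBM projected HJ eqn b} for each $K$ is already contained in Proposition \ref{SBM WP of HJ eqn on Rpp b}, the non-linearity $\td \C_{b,K}$ has been extended to a uniformly Lipschitz non-decreasing function via Proposition \ref{SBM extending the non-linearity}, and the initial condition $\psi$ satisfies the regularity assumptions \eqref{SBM H TV}--\eqref{SBM H W} by Lemma \ref{SBM assumptions for WP of HJ}. Three claims remain: independence from $R$, independence from $b$, and existence of the limit.

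Independence from $R$ is immediate from the gradient bounds. Proposition \ref{SBM WP of HJ eqn on Rpp b} asserts that $\nabla \td f_{b,R}^{(K)} \in \td \K_{=1,K}'$, and the latter set sits inside a ball $B_{K,R_0}$ in the normalized-$\ell^{1,*}$ norm with $R_0$ depending only on $\norm{\td \psi_b}_{\mathrm{Lip},\TV}$. For any two values $R, R' > R_0$, the non-linearities $\td \H_{b,K,R}$ and $\td \H_{b,K,R'}$ both coincide with $\td \C_{b,K}$ on $\td \CC_{b,K} \cap B_{K,R_0}$, so the corresponding solutions solve the same equation along their gradient trajectories; the comparison principle of Lemma \ref{SBM comparison principle corollary on Rpd} then forces them to coincide.

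Independence from $b$, which is Theorem 1.4 of \cite{TD_JC_HJ}, follows from a direct substitution: one verifies that $u(t,x) := \td f_{b,R}^{(K)}(t,x) - b\Norm{x}_1 - bt/2$ solves the projected Hamilton-Jacobi equation with shift $0$. Indeed, the identity $\td g_b(kk') = g(kk') + b$ together with the definitions \eqref{eqn: SBM matrix tGK} and \eqref{eqn: projected non-linearity b} imply that the linear-in-$x$ correction $b\Norm{x}_1 + bt/2$ exactly absorbs the extra contribution of $\td \C_{b,K}$ relative to $\C_{0,K}$ along the cone, while $\td \psi_b^{(K)}(x) - b\Norm{x}_1$ reduces to $\psi^{(K)}(x)$ by definition of $\td \psi_b$ in \eqref{e.def.tdpsi}. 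Consequently, the quantity inside the limit \eqref{eqn: SBM inf HJ eqn solution} is independent of $b$.

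The existence of the limit as $K \to \infty$ is the main content of Theorem 1.2 of \cite{TD_JC_HJ} and is the principal technical obstacle. The strategy is to show that the $K$-indexed sequence is Cauchy by embedding the coarser-scale equation into the finer-scale one via piecewise-constant extension and bounding the resulting errors: the initial-condition error is controlled by Hypothesis \eqref{SBM H TV}, since the total variation distance from $\mu$ to its projection $\mu^{(K)}_{x^{(K)}(\mu)}$ vanishes with $K$, while the error in the Hamiltonian is controlled by the uniform continuity of the kernel $g$ on $[-1,1]$. The hard part is propagating these estimates through the nonlinear PDE, which requires a refined comparison principle between viscosity solutions of Hamilton-Jacobi equations with different Hamiltonians and is the main achievement of \cite{TD_JC_HJ}. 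Once the limit is identified, the uniform Lipschitz bound in Proposition \ref{SBM WP of HJ eqn on Rpp b} ensures it is finite.
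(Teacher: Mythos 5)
Your proposal takes essentially the same route as the paper: the paper offers no argument beyond verifying the hypotheses \eqref{SBM H TV}--\eqref{SBM H W} in Lemma~\ref{SBM assumptions for WP of HJ} (together with Propositions~\ref{SBM extending the non-linearity} and \ref{SBM WP of HJ eqn on Rpp b}) and then invoking Theorems 1.2 and 1.4 of \cite{TD_JC_HJ}, which is exactly what you do. Your additional sketches of the $R$- and $b$-independence and of the Cauchy-in-$K$ argument are consistent with how the cited companion paper proceeds, so they serve as fair heuristics rather than a genuinely different proof.
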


To establish \Cref{SBM main result}, the idea will be to show that the enriched free energy \eqref{eqn: SBM enriched free energy} is essentially a viscosity subsolution to the infinite-dimensional Hamilton-Jacobi equation \eqref{eqn: SBM enriched free energy HJ equation}.

\section{The free energy upper bound}\label{sec: SBM upper bound}

In this section, we combine the computations in \Cref{sec: SBM HJ eqn derivation} with the arguments in \cite{JC_NC} to essentially show that the enriched free energy \eqref{eqn: SBM enriched free energy} is a viscosity subsolution to the infinite-dimensional Hamilton-Jacobi equation \eqref{eqn: SBM enriched free energy HJ equation}. This equation is given a precise meaning by \Cref{SBM modified HJ eqn WP}. As alluded to in \Cref{sec: SBM HJ eqn derivation}, we will perturb the enriched Hamiltonian \eqref{eqn: SBM enriched Hamiltonian} to enforce the concentration of all the multi-overlaps \eqref{eqn: SBM enriched multi-overlap} without changing the limit of the associated free energy. The perturbation Hamiltonian we will now describe was introduced in \cite{BarMOC} to prove a general multi-overlap concentration result whose finitary version we establish in \Cref{SBM app multioverlap concentration}.

Fix an integer $K_+$ which will be chosen sufficiently large in the course of this section, and write $\lambda=(\lambda_0,\lambda_1,\ldots,\lambda_{K_+})$ for a perturbation parameter with $\lambda_0\in [1/2,1]$ and $\lambda_k\in [2^{-k-1},2^{-k}]$ for $1\leq k\leq K_+$. Given a sequence $(\epsilon_N)$ with $\epsilon_N=N^\gamma$ for some $-1/8<\gamma<0$ and a standard Gaussian vector $Z_0=(Z_{0,1},\ldots,Z_{0,N})$ in $\R^N$, introduce the Gaussian perturbation Hamiltonian
\begin{equation}\label{eqn: SBM gaussian perturbation}
H_N^{\mathrm{gauss}}(\sigma,\lambda_0)=\HH_0=\sum_{i\leq N}\big(\lambda_0\epsilon_N \sigma^*_i\sigma_i+\sqrt{\lambda_0\epsilon_N}Z_{0,i}\sigma_i\big)
\end{equation}
associated with the task of recovering the signal $\sigma^*$ from the data
\begin{equation}
Y^{\mathrm{gauss}}=\sqrt{\lambda_0\epsilon_N}\sigma^*+Z_0.
\end{equation}
Notice that
\begin{equation}\label{eqn: SBM epsilon sequence}
1\geq \epsilon_N \to 0 \quad \text{and} \quad N\epsilon_N\to \infty.
\end{equation}
Similarly, consider a sequence $(s_N)$ with $s_N=N^\eta$ for $4/5<\eta<1$ in such a way that
\begin{equation}\label{eqn: SBM s sequence}
\frac{s_N}{N}\to 0\quad \text{and} \quad \frac{s_N}{\sqrt{N}}\to \infty.
\end{equation}
Fix a sequence of i.i.d.\@ random variables $(\pi_k)$ with $\Poi(s_N)$ distribution as well as a sequence $e=(e_{jk})$ of random variables with $\Exp(1)$ distribution.
For every $j\leq \pi_k$, sample i.i.d.\@ random indices $i_{jk}$ uniformly from the set $\{1,\ldots,N\}$, and define the exponential perturbation Hamiltonian by
\begin{equation}\label{eqn: SBM exponential perturbation}
\HH_k=\sum_{j\leq \pi_k}\Big(\log(1+\lambda_k\sigma_{i_{jk}}\big)-\frac{\lambda_k e_{jk}\sigma_{i_{jk}}}{1+\lambda_k\sigma_{i_{jk}}^*}\Big) \quad \text{and}\quad H_N^{\mathrm{exp}}(\sigma)=\sum_{1\leq k\leq K_+} \HH_k.
\end{equation}
Observe that this is the Hamiltonian associated with the task of recovering the signal $\sigma^*$ from the independently generated data
\begin{equation}
Y_{jk}^{\mathrm{exp}}=\frac{e_{jk}}{1+\lambda_k\sigma^*_{i_{jk}}}
\end{equation}
for $j\leq \pi_k$ and $k\geq 1$. Introduce the perturbed Hamiltonian
\begin{equation}\label{eqn: SBM perturbed Hamiltonian}
H_N(\sigma,\lambda)=H_N^{t,\mu}(\sigma)+H_N^{\mathrm{gauss}}(\sigma,\lambda_0)+H_N^{\mathrm{exp}}(\sigma,\lambda)
\end{equation}
as well as its associated free energy
\begin{equation}\label{eqn: SBM perturbed free energy}
\overline{F}_N^{\text{pert}}(t,\mu,\lambda)=\frac{1}{N}\E\log\int_{\Sigma_N}\exp H_N(\sigma,\lambda)\ud P_N^*(\sigma).
\end{equation}
Since the Gibbs measure associated with the Hamiltonian in \eqref{eqn: SBM perturbed Hamiltonian} is still a conditional expectation as in \eqref{eqn: SBM Gibbs conditional}, it will still satisfy the Nishimori identity \eqref{eqn: SBM Nishimori identity}. An essential property of the perturbation Hamiltonians \eqref{eqn: SBM gaussian perturbation} and \eqref{eqn: SBM exponential perturbation} is that they do not affect the asymptotic behavior of the enriched free energy \eqref{eqn: SBM enriched free energy}.

\begin{lemma}\label{SBM perturbation effect on free energy}
For every $t>0$, $\mu \in \M_+$ and $\lambda$, the enriched free energy \eqref{eqn: SBM enriched free energy} and the perturbed free energy \eqref{eqn: SBM perturbed free energy} are asymptotically equivalent,
\begin{equation}
\lim_{N\to \infty}\big\lvert \overline{F}_N^{\mathrm{pert}}(t,\mu,\lambda)-\overline{F}_N(t,\mu)\big\rvert=0.
\end{equation}
\end{lemma}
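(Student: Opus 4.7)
The plan is to control the free energy difference by a direct supremum bound on the two perturbation Hamiltonians, exploiting that each carries only $o(N)$ worth of useful information. Starting from the definitions \eqref{eqn: SBM enriched free energy}, \eqref{eqn: SBM perturbed Hamiltonian} and \eqref{eqn: SBM perturbed free energy}, we can write
\begin{equation*}
\overline{F}_N^{\mathrm{pert}}(t,\mu,\lambda) - \overline{F}_N(t,\mu) = \frac{1}{N}\E\log\bigl\langle \exp\bigl(H_N^{\mathrm{gauss}}(\sigma,\lambda_0) + H_N^{\mathrm{exp}}(\sigma,\lambda)\bigr)\bigr\rangle,
\end{equation*}
where $\langle\cdot\rangle$ denotes the Gibbs average \eqref{eqn: SBM Gibbs average} associated with $H_N^{t,\mu}$. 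Since $e^{\inf_\sigma H(\sigma)}\leq \langle e^H\rangle\leq e^{\sup_\sigma H(\sigma)}$ for any Hamiltonian $H$, we have the pointwise bound $|\log\langle e^H\rangle|\leq \sup_\sigma|H(\sigma)|$, so the triangle inequality yields
\begin{equation*}
\bigl|\overline{F}_N^{\mathrm{pert}}(t,\mu,\lambda) - \overline{F}_N(t,\mu)\bigr| \leq \frac{1}{N}\E\sup_\sigma\bigl|H_N^{\mathrm{gauss}}(\sigma,\lambda_0)\bigr| + \frac{1}{N}\E\sup_\sigma\bigl|H_N^{\mathrm{exp}}(\sigma,\lambda)\bigr|.
\end{equation*}
It then suffices to show that both terms on the right tend to zero with $N$.

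For the Gaussian piece, the bounds $|\sigma_i|,|\sigma_i^*|\leq 1$ and $\lambda_0\leq 1$ yield
\begin{equation*}
\sup_\sigma\bigl|H_N^{\mathrm{gauss}}(\sigma,\lambda_0)\bigr|\leq \epsilon_N N + \sqrt{\epsilon_N}\sum_{i\leq N}|Z_{0,i}|,
\end{equation*}
so after taking expectations and dividing by $N$, the bound becomes $O(\epsilon_N + \sqrt{\epsilon_N})$, which vanishes by \eqref{eqn: SBM epsilon sequence}. For the exponential piece, the constraint $\lambda_k\in[2^{-k-1},2^{-k}]\subseteq(0,1/2]$ implies $|\log(1+\lambda_k\sigma_{i_{jk}})|\leq -\log(1-\lambda_k)\leq 2\lambda_k$ and $\lambda_k/(1+\lambda_k\sigma_{i_{jk}}^*)\leq 2\lambda_k$ uniformly in the spin, so
\begin{equation*}
\sup_\sigma|\HH_k|\leq \sum_{j\leq \pi_k} 2\lambda_k(1 + e_{jk}).
\end{equation*}
Taking expectations and using Wald's identity with $\E\pi_k=s_N$ and $\E e_{jk}=1$ yields $\E\sup_\sigma|\HH_k|\leq 4\lambda_k s_N$, and summing over $k=1,\ldots,K_+$ with $\sum_k\lambda_k\leq \sum_{k\geq 1}2^{-k}=1$ gives $\E\sup_\sigma|H_N^{\mathrm{exp}}(\sigma,\lambda)|\leq 4 s_N$. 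Dividing by $N$ and invoking \eqref{eqn: SBM s sequence} to conclude that $s_N/N\to 0$ completes the estimate.

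There is no serious obstacle: the lemma is a routine supremum bound reflecting that the Gaussian perturbation injects $N\epsilon_N$ observations with signal-to-noise ratio $\epsilon_N\to 0$, while the exponential perturbation injects $O(s_N)=o(N)$ observations whose individual magnitudes decay geometrically in $k$. The only care required is the uniform control of the summands in $\HH_k$, which uses the constraint $\lambda_k\leq 1/2$ to keep $\log(1+\lambda_k\sigma)$ and $(1+\lambda_k\sigma^*)^{-1}$ bounded away from their singularities.
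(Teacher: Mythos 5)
Your proposal is correct and follows essentially the same route as the paper: both bound the free energy difference by $\frac1N\E\sup_\sigma|H_N^{\mathrm{gauss}}|+\frac1N\E\sup_\sigma|H_N^{\mathrm{exp}}|$, control the Gaussian piece by $\epsilon_N N+\sqrt{\epsilon_N}\sum_i|Z_{0,i}|$, and control the exponential piece by a sum over the Poisson points using $\lambda_k\le 2^{-k}$, concluding via \eqref{eqn: SBM epsilon sequence} and \eqref{eqn: SBM s sequence}. The only differences are cosmetic (your slightly cruder bound $2\lambda_k(1+e_{jk})$ versus the paper's $\log(1+\lambda_k)+\lambda_k e_{jk}/(1-\lambda_k)$).
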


\begin{proof}
A direct computation reveals that
$$\big\lvert \overline{F}_N^{\mathrm{pert}}(t,\mu,\lambda)-\overline{F}_N(t,\mu)\big\rvert\leq \frac{1}{N}\E \max_{\sigma \in \Sigma_N}\big\lvert H_N^{\mathrm{gauss}}(\sigma,\lambda_0)\big\rvert+\frac{1}{N}\E \max_{\sigma \in \Sigma_N}\big\lvert H_N^{\mathrm{exp}}(\sigma,\lambda)\big\rvert.$$
For any spin configuration $\sigma\in \Sigma_N$,
$$\big\lvert H_N^{\mathrm{gauss}}(\sigma,\lambda_0)\big\rvert\leq N\epsilon_N+\sqrt{\epsilon_N}\sum_{i\leq N}\abs{Z_{0,i}}$$
while
$$\big\lvert H_N^{\mathrm{exp}}(\sigma,\lambda)\big\rvert\leq \sum_{1\leq k\leq K'}\sum_{j\leq \pi_k}\Big(\log(1+\lambda_k)+\frac{\lambda_ke_{jk}}{1-\lambda_k}\Big).$$
Since these bounds are uniform in $\sigma$, it follows that
$$\big\lvert \overline{F}_N^{\mathrm{pert}}(t,\mu,\lambda)-\overline{F}_N(t,\mu)\big\rvert\leq \epsilon_N+\sqrt{\epsilon_N}\E Z_{0,1}+\frac{s_N}{N}\sum_{k\geq 1}\Big(\log(1+\lambda_k)+\frac{\lambda_k}{1-\lambda_k}\Big).$$
The third term was obtained by taking the expectation with respect to the randomness of $e$ first and then with respect to the randomness of $(\pi_k)$. Leveraging \eqref{eqn: SBM epsilon sequence} and \eqref{eqn: SBM s sequence} to let $N$ tend to infinity completes the proof.
\end{proof}

\noindent With this result in mind, we abuse notation and redefine the perturbed free energy \eqref{eqn: SBM perturbed free energy},
\begin{equation}
F_N(t,\mu,\lambda)=\frac{1}{N}\log\int_{\Sigma_N}\exp H_N(\sigma,\lambda)\ud P_N^*(\sigma) \quad \text{and} \quad \overline{F}_N(t,\mu,\lambda)=\E F_N(t,\mu,\lambda).
\end{equation}
For every integer $K\geq 1$ and $x\in \Rp^{\D_K}$, we denote by
\begin{equation}\label{eqn: SBM projected perturbed free energy}
F_N^{(K)}(t,x,\lambda)=F_N\big(t,\mu_x^{(K)},\lambda\big) \quad \text{and} \quad \overline{F}_N^{(K)}(t,x,\lambda)=\E F_N^{(K)}(t,x,\lambda)
\end{equation}
the finite-dimensional projections of these perturbed free energy functionals. In the same spirit as \eqref{eqn: SBM inf HJ eqn solution}, given $b\in \R$ such that the kernel $\td g_b$ defined in \eqref{e.def.tdg} is positive on $[-1,1]$, introduce translated versions of these free energy functionals,
\begin{equation}\label{eqn: SBM modified free energy}
F_N'(t,\mu, \lambda)=F_N(t,\mu,\lambda)+b\int_{-1}^1 \ud \mu+\frac{bt}{2} \quad \text{and} \quad \overline{F}_N'(t,\mu,\lambda)=\E F_N'(t,\mu,\lambda).
\end{equation}
For every integer $K\geq 1$ and $x\in \Rp^{\D_K}$, we denote by
\begin{equation}\label{eqn: SBM projected modified free energy}
F_N'^{(K)}(t,x,\lambda)=F_N'\big(t,\mu_x^{(K)},\lambda\big) \quad \text{and} \quad \overline{F}_N'^{(K)}(t,x,\lambda)=\E F_N'^{(K)}(t,x,\lambda)
\end{equation}
the finite-dimensional projections of these modified free energy functionals. Similarly, we write
\begin{equation}\label{eqn: SBM projected free energy}
\overline{F}_N^{(K)}(t,x)=\overline{F}_N\big(t,\mu_x^{(K)}\big) \quad \text{and} \quad \overline{F}_N'^{(K)}(t,x)=\overline{F}_N^{(K)}(t,x)+b\Norm{x}_1+\frac{bt}{2}
\end{equation}
for the finite-dimensional projections of the enriched free energy \eqref{eqn: SBM modified free energy} and its translation according to \eqref{eqn: SBM inf HJ eqn solution}. Combining Lemmas~\ref{SBM enriched free energy time derivative}, \ref{SBM enriched free energy Gateaux derivative} and \ref{SBM convergence of initial condition discrete} with the Arzela-Ascoli theorem, it is possible to extract a subsequential limit $\smash{\td F^{(K)}}$ from the sequence defined by \eqref{eqn: SBM projected free energy} for varying $N$. Passing to a further subsequence and using a diagonalization argument, it is also possible to ensure that
\begin{equation}\label{eqn: SBM modified FE limsup}
\td F^{(K)}(t,x)=\limsup_{N\to \infty}\overline{F}_N'^{(K)}(t,x)
\end{equation}
for all $\smash{(t,x)\in (0,\infty)\times \Rp^{\D_K}}$. The key to establishing \Cref{SBM main result} will be to show that, in some sense, the subsequential limit $\smash{\td F^{(K)}}$ is an approximate subsolution to the Hamilton-Jacobi equation \eqref{eqn: SBM projected HJ eqn b} for some $\smash{R>\norm{\td \psi_b}_{\mathrm{Lip},\TV}+\norm{\td g_b}_\infty+\norm{\td g_b'}_\infty+1}$ which will remain fixed throughout this section. 

We fix a smooth function $\smash{\phi \in C^\infty\big((0,\infty)\times \Rpp^{\D_K}\big)}$ with the property that the difference $\smash{\td F^{(K)}-\phi}$ achieves a local maximum at some point $\smash{(t_\infty, x_\infty)\in (0,\infty)\times \Rpp^{\D_K}}$. Recalling that the index $K_+$ controls the number of terms in the perturbation Hamiltonian \eqref{eqn: SBM exponential perturbation}, we introduce the parameter
\begin{equation}\label{eqn: SBM asymptotic perturbation parameter}
\lambda_\infty=\frac{\big(1, 2^{-1}, 2^{-2},\ldots,2^{-K_+}\big)+\big(2^{-1},2^{-2},2^{-3},\ldots,2^{-K_+-1}\big)}{2}
\end{equation}
as well as the smooth function
\begin{equation}
\td \phi(t,x,\lambda)=\phi(t,x)+(t-t_\infty)^2+\norm{x-x_\infty}_2^2+\norm{\lambda-\lambda_\infty}_2^2.
\end{equation}
It is clear that $\smash{(t,x,\lambda)\mapsto \td F^{(K)}(t,x)-\td \phi(t,x,\lambda)}$ has a strict local maximum at $(t_\infty,x_\infty,\lambda_\infty)$. Arguing as in the proof of \Cref{SBM perturbation effect on free energy} shows that $\smash{(t,x,\lambda)\mapsto \overline{F}_N'^{(K)}(t,x,\lambda)}$ converges to $\smash{(t,x,\lambda)\mapsto \td F^{(K)}(t,x)}$ locally uniformly. It is therefore possible to find a sequence $(t_N,x_N,\lambda_N)$ which converges to the point $(t_\infty,x_\infty,\lambda_\infty)$ and has the property that $\smash{(t,x,\lambda)\mapsto \overline{F}_N'^{(K)}(t,x,\lambda)-\td \phi(t,x,\lambda)}$ attains a local maximum at $(t_N,x_N,\lambda_N)$. More precisely, it is possible to find a constant $C<\infty$ which is allowed to depend on $K$, $K_+$, $t_\infty$, $x_\infty$ and the function $\phi$ such that
\begin{align}\label{eqn: SBM free energy subsolution maximum}
\Big(\overline{F}_N'^{(K)}-\td \phi\Big)&(t_N,x_N,\lambda_N)\notag\\
&=\sup\Big\{\Big(\overline{F}_N'^{(K)}-\td \phi\Big)(t,x,\lambda)\mid \abs{t-t_N}+\norm{x-x_N}_2+\norm{\lambda-\lambda_N}_2\leq C^{-1}\Big\}.
\end{align} 
We will use such a constant $C < \infty$ at various places in this proof, and we understand that its value may need to be increased as we proceed; the important point is that it does not depend on $N$.
The choices of $\lambda_\infty$ in \eqref{eqn: SBM asymptotic perturbation parameter} and $\smash{x_\infty\in \Rpp^{\D_K}}$ ensure that when $N$ is large enough $(\lambda_N)_k\in (2^{-k-1},2^{-k})$ for $0\leq k\leq K_+$ and $\smash{x_N\in \Rpp^{\D_K}}$. Increasing $C<\infty$ if necessary, it is therefore possible to guarantee that for $N$ large enough the supremum on the right-hand side of \eqref{eqn: SBM free energy subsolution maximum} is taken over triples $(t,x,\lambda)$ with $t>0$, $\smash{x\in \Rpp^{\D_K}}$ and $\lambda_k\in [2^{-k-1},2^{-k}]$ for $1\leq k\leq K_+$. It follows that
\begin{equation}\label{eqn: SBM free energy subsolution spacetime derivative}
\partial_t\Big(\overline{F}_N'^{(K)}-\td \phi\Big)(t_N,x_N,\lambda_N)=0, \qquad
\nabla_x\Big(\overline{F}_N'^{(K)}-\td \phi\Big)(t_N,x_N,\lambda_N)=0
\end{equation}
and
\begin{equation}\label{eqn: SBM free energy subsolution perturbation derivative}
\nabla_\lambda\Big(\overline{F}_N^{(K)}-\td\phi\Big)(t_N,x_N,\lambda_N)=\nabla_\lambda\Big(\overline{F}_N'^{(K)}-\td \phi\Big)(t_N,x_N,\lambda_N)=0.
\end{equation}
The majority of this section will be devoted to using the second equality in \eqref{eqn: SBM free energy subsolution perturbation derivative} in conjunction with the main result in \Cref{SBM app multioverlap concentration} to show the concentration of a finite but very large number of the multi-overlaps \eqref{eqn: SBM enriched multi-overlap}. We will then combine this finitary multi-overlap concentration result with the computations in \Cref{sec: SBM HJ eqn derivation} to establish the following crucial result.

\begin{lemma}\label{SBM free energy approximate solution at contact point}
Fix $\smash{R>\norm{\td \psi_b}_{\mathrm{Lip},\TV}+\norm{\td g_b}_\infty+\norm{\td g_b'}_\infty+1}$. For every $\epsilon>0$, there exists a choice of integer $K_+\geq 1$ in the perturbed Hamiltonian \eqref{eqn: SBM perturbed Hamiltonian} with the property that for any integer $K\geq 1$, it is possible to find a constant $\smash{\EE_{\epsilon,K}}$ with
\begin{equation}
\limsup_{N\to \infty}\Big\lvert \Big(\partial_t\overline{F}_N'^{(K)}-\widetilde{\H}_{b,K,R}\Big(\nabla_x \overline{F}_N'^{(K)}\Big)\Big)(t_N,x_N,\lambda_N)\Big\rvert\leq \EE_{\epsilon,K}
\end{equation}
and $\lim_{\epsilon\to 0}\lim_{K\to \infty}\EE_{\epsilon,K}=0$.
\end{lemma}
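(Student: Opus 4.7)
[Proof proposal for Lemma~\ref{SBM free energy approximate solution at contact point}]
The plan is to exploit the critical point condition \eqref{eqn: SBM free energy subsolution perturbation derivative} on the perturbation parameters to force the concentration of many multi-overlaps, then use the explicit Taylor expansions from Corollaries~\ref{SBM enriched free energy time derivative Taylor} and~\ref{SBM enriched free energy Gateaux derivative Taylor} to equate $\partial_t \overline F_N'^{(K)}$ with $\widetilde \C_{b,K}(\nabla_x \overline F_N'^{(K)})$ up to a small error. Since $\phi$ is smooth, the condition \eqref{eqn: SBM free energy subsolution perturbation derivative} combined with local uniform convergence gives a uniform bound on $\nabla_\lambda \overline F_N^{(K)}(t_N,x_N,\lambda_N)$. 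The finitary multi-overlap concentration result of Appendix~\ref{SBM app multioverlap concentration} will then imply that for each fixed integer $n_0$, one has
\begin{equation}
\limsup_{N\to\infty}\sum_{n\leq n_0}\E\bigl\langle (R_{[n]}-\E\langle R_{[n]}\rangle)^2\bigr\rangle \leq \EE_\epsilon^{(n_0)}, \notag
\end{equation}
where $\EE_\epsilon^{(n_0)}\to 0$ as $K_+\to \infty$ (depending on $\epsilon$ and the choice of the sequences $\epsilon_N, s_N$). Together with the free-energy concentration result of Appendix~\ref{SBM app FE concentration}, this yields $\E\langle R_{[n]}^2\rangle \approx (\E\langle R_{[n]}\rangle)^2$ for $n\leq n_0$ in the limit $N\to \infty$.

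Next, set $q_n^N = \E\langle R_{[n]}\rangle$ evaluated at $(t_N,x_N,\lambda_N)$. Corollary~\ref{SBM enriched free energy time derivative Taylor} gives
\begin{equation}
\partial_t \overline F_N(t_N,\mu_{x_N}^{(K)}) = \frac{1}{2}(c+\Delta \m^2)\log c + \frac{c}{2}\sum_{n\geq 2}\frac{(-\Delta/c)^n}{n(n-1)}\E\langle R_{[n]}^2\rangle -\frac{c}{2}+\BigO(N^{-1}). \notag
\end{equation}
On the other hand, by the Nishimori identity, $\E\langle \sigma_i\rangle^n = q_n^N + \BigO(N^{-1})$ for any $n$ and a uniformly sampled $i$, so writing $\mu^*_N = \L(\langle \sigma_i\rangle)$ and $G_{\mu_N^*}^{(K)} = x^{(K)}(\mu_N^*)$, Corollary~\ref{SBM enriched free energy Gateaux derivative Taylor} shows that the density of $D_\mu \overline F_N$ at $\mu_{x_N}^{(K)}$ is, up to an $\BigO(N^{-1})$ error, given by $G_{\mu^*_N}(x) = \int g(xy)\ud \mu^*_N(y)$. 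Applying $\C_\infty$ and using the Taylor expansion of $g$ in \eqref{eqn: SBM g function} gives
\begin{equation}
\C_\infty(G_{\mu^*_N}) = \frac{1}{2}(c+\Delta \m^2)\log c + \frac{c}{2}\sum_{n\geq 2}\frac{(-\Delta/c)^n}{n(n-1)}(q_n^N)^2 -\frac{c}{2}. \notag
\end{equation}
Since $\abs{\Delta}/c<1$, the series are geometrically convergent, so choosing $n_0$ sufficiently large (depending on $\epsilon$) and using the concentration $\E\langle R_{[n]}^2\rangle\approx (q_n^N)^2$ for $n\leq n_0$ gives
\begin{equation}
\limsup_{N\to \infty}\bigl\lvert \partial_t \overline F_N(t_N,\mu_{x_N}^{(K)}) - \C_\infty(G_{\mu_N^*})\bigr\rvert \leq \EE_{\epsilon}, \notag
\end{equation}
with $\EE_\epsilon\to 0$ as $K_+\to \infty$.

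To conclude, we pass to the modified free energy. Adding $b\|x\|_1 + bt/2$ translates the kernel $g$ to $\td g_b = g+b$ and replaces $G_{\mu^*_N}$ by $\widetilde G_{\mu^*_N} := G_{\mu^*_N}+b\mu^*_N[-1,1]$; a direct check shows the corresponding identity $\partial_t \overline F_N'^{(K)} \approx \widetilde\C_{b,K}(\nabla_x \overline F_N'^{(K)})$ holds modulo the same error, where the discretization of the integral against $\mu^*_N$ by a Riemann sum at scale $\abs{\D_K}^{-1}$ costs an additional term bounded by $C(K,\norm{g'}_\infty)2^{-K}$ that is absorbed into $\EE_{\epsilon,K}$. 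Finally, by \Cref{SBM extending the non-linearity} together with the bound $\dNorm{1}{\nabla_x \overline F_N'^{(K)}(t_N,x_N,\lambda_N)} \leq \norm{\td\psi_b}_{\mathrm{Lip},\TV}+\BigO(N^{-1})+\norm{\td g_b}_\infty$ inherited from the Gateaux derivative bounds of \Cref{SBM derivatives of Gateux density bound} (together with the Lipschitz bound transported from the initial data by the comparison principle of \Cref{SBM comparison principle corollary on Rpd}), the gradient of $\overline F_N'^{(K)}$ lies in the ball where $\widetilde \H_{b,K,R}$ coincides with $\widetilde \C_{b,K}$, so $\widetilde\C_{b,K}(\nabla_x \overline F_N'^{(K)})=\widetilde\H_{b,K,R}(\nabla_x \overline F_N'^{(K)})$ at the contact point.

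The main technical obstacle is the first step: securing genuine (not merely averaged) multi-overlap concentration at the specific parameter $\lambda_N$ singled out by the contact point, which is why the finitary version of the concentration result in \Cref{SBM app multioverlap concentration} (valid pointwise under a verifiable condition on $\lambda$, supplied here by \eqref{eqn: SBM free energy subsolution perturbation derivative}) is essential, and why only a finite --- though arbitrarily large --- number $n_0(\epsilon,K_+)$ of multi-overlaps can be controlled, forcing the Taylor-tail truncation error to be estimated using $\abs\Delta<c$.
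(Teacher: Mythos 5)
Your second half follows the paper's argument closely: Taylor expansions of $\partial_t\overline F_N'^{(K)}$ and of the Gateaux derivative, the Nishimori identity to identify $\C_\infty(G_{\mu^*})$, the $2^{-K}$ discretization error, the check that the relevant point lies in the ball of radius $R$, and the geometric tail estimate using $\abs{\Delta}<c$. The problem is the first step, which is the heart of the proof and where your argument has a genuine gap.

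The finitary concentration result \Cref{SBMA finitary MOC} can only be invoked once the hypothesis \eqref{eqn: SBMA limits to get FDS} is verified, i.e.\ once $\E\langle(\L_k-\E\langle\L_k\rangle)^2\rangle$ is shown to be small at the specific parameter $\lambda_N$, and this is \emph{not} supplied by the first-order condition \eqref{eqn: SBM free energy subsolution perturbation derivative} together with ``a uniform bound on $\nabla_\lambda\overline F_N^{(K)}$'', as you assert. A bound on the gradient of the averaged free energy controls neither the thermal fluctuations $\E\langle(\HH_k'-\langle\HH_k'\rangle)^2\rangle$ nor the disorder fluctuations $\E(\langle\HH_k'\rangle-\E\langle\HH_k'\rangle)^2$, which are exactly what \eqref{eqn: SBMA limits to get FDS} demands; by \eqref{eqn: SBM lambda k derivatives} the first of these is encoded in the \emph{second} $\lambda$-derivative of the free energy, not the first. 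The paper's derivation is genuinely second order: because $\overline F_N'^{(K)}-\td\phi$ attains a local maximum at $(t_N,x_N,\lambda_N)$, the $\lambda$-Hessian of $\overline F_N^{(K)}$ is bounded above by that of the smooth test function (\Cref{SBM perturbed free energy Hessian bounded above}); a convexification of the unaveraged free energy provides the matching lower bound (\Cref{SBM perturbed free energy Hessian bounded below}); the uniform Efron--Stein concentration of \Cref{SBM free energy uniform concentration} then upgrades, via \Cref{perturbed free energy concentration of gradient}, to concentration of $\nabla_\lambda F_N^{(K)}$ about $\nabla_\lambda\overline F_N^{(K)}$, and only the combination of all of these yields \eqref{eqn: SBM concentration of FDS Hamiltonians} and hence the hypotheses of \Cref{SBMA finitary MOC}. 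Without this chain your appeal to pointwise multi-overlap concentration at $\lambda_N$ is unjustified. A secondary imprecision: $\widetilde\C_{b,K}$ is defined only on the cone $\widetilde\CC_{b,K}$, and $\nabla_x\overline F_N'^{(K)}$ lies in it only up to $\BigO(2^{-K})+\BigO(N^{-1})$ errors, so you cannot conclude $\widetilde\H_{b,K,R}(\nabla_x\overline F_N'^{(K)})=\widetilde\C_{b,K}(\nabla_x\overline F_N'^{(K)})$; the paper instead evaluates $\widetilde\C_{b,K}$ at the exact cone point $\widetilde G_b^{(K)}x^{(K)}(\mu^*)$, checks that this point lies in $B_{K,R}$ using the bound of \Cref{SBM derivatives of Gateux density bound} (your appeal to the comparison principle of \Cref{SBM comparison principle corollary on Rpd} is out of place here), and passes to the actual gradient through the Lipschitz continuity of $\widetilde\H_{b,K,R}$ from \Cref{SBM extending the non-linearity}.
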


\noindent For the time being, let us prove \Cref{SBM main result} assuming \Cref{SBM free energy approximate solution at contact point}.

\begin{proof}[Proof of \Cref{SBM main result} assuming \Cref{SBM free energy approximate solution at contact point}.]
Given $\epsilon>0$, invoke \Cref{SBM free energy approximate solution at contact point} to find an integer $K_+\geq 1$ in the perturbed Hamiltonian \eqref{eqn: SBM perturbed Hamiltonian} with the property that for any integer $K\geq 1$, it is possible to find a constant $\smash{\EE_{\epsilon,K}}$ with
\begin{equation}\label{eqn: SBM free energy approximate solution at contact point}
\limsup_{N\to \infty}\Big\lvert \Big(\partial_t\overline{F}_N'^{(K)}-\widetilde{\H}_{b,K,R}\Big(\nabla_x \overline{F}_N'^{(K)}\Big)\Big)(t_N,x_N,\lambda_N)\Big\rvert\leq \EE_{\epsilon,K}
\end{equation}
and $\lim_{\epsilon\to 0}\lim_{K\to \infty}\EE_{\epsilon,K}=0$. Given an integer $K\geq 1$, the idea will be to show that the test function $\smash{\phi\in C^\infty\big((0,\infty)\times \Rpp^{\D_K}\big)}$ introduced above satisfies the subsolution condition in \eqref{eqn: SBM subsolution conditions} for the non-linearity $\smash{\widetilde{\H}_{b,K,R}}$ at the point of contact $(t_\infty,x_\infty)$ up to the small error $\EE_{\epsilon,K}$. This will mean that the subsequential limit $\smash{\td F^{(K)}}$ of the modified free energy \eqref{eqn: SBM projected free energy} is a viscosity subsolution to the Hamilton-Jacobi equation \eqref{eqn: SBM projected HJ eqn b} up to a small error. More precisely, the function
\begin{equation}\label{eqn: SBM main result proof corrected FE}
\td F^{(K)}_\epsilon=\td F^{(K)}-t\EE_{\epsilon,K}
\end{equation}
will be a viscosity subsolution to \eqref{eqn: SBM projected HJ eqn b}. This observation will allow us to leverage the comparison principle in \Cref{SBM comparison principle corollary on Rpd} to bound the limit superior of the enriched free energy \eqref{eqn: SBM enriched free energy} by the solution $f$ to the infinite-dimensional Hamilton-Jacobi equation \eqref{eqn: SBM enriched free energy HJ equation} constructed in \Cref{SBM modified HJ eqn WP}. We proceed in two steps.\\
\step{1: $\td F_\epsilon^{(K)}$ subsolution.}\\
Since $\smash{x_N\to x_\infty}$ assume without loss of generality that $\smash{(x_N)\subset \Rpp^{\D_K}}$. It follows by \eqref{eqn: SBM free energy subsolution spacetime derivative} that
$$\big(\partial_t\td \phi-\widetilde{\H}_{b,K,R}\big(\nabla_x\td \phi\big)\big)(t_N,x_N,\lambda_N)=\Big(\partial_t\overline{F}_N'^{(K)}-\widetilde{\H}_{b,K,R}\Big(\nabla_x \overline{F}_N'^{(K)}\Big)\Big)(t_N,x_N,\lambda_N),$$
so letting $N$ tend to infinity and combining the definition of $\td \phi$ with \eqref{eqn: SBM free energy approximate solution at contact point} yields
$$\big(\partial_t\td \phi-\widetilde{\H}_{b,K,R}\big(\nabla_x\td \phi\big)\big)(t_\infty, x_\infty)\leq \EE_{\epsilon,K}.$$
This shows that the function \eqref{eqn: SBM main result proof corrected FE} satisfies the subsolution condition in \eqref{eqn: SBM subsolution conditions}.\\
\step{2: comparison principle.}\\
The comparison principle in \Cref{SBM comparison principle corollary on Rpd} gives the upper bound
\begin{equation}\label{eqn: SBM main result comparison principle}
\td F^{(K)}(t,x)\leq \td f^{(K)}_{b,R}(t,x)+t\EE_{\epsilon,K}.
\end{equation}
We have implicitly used that $\smash{\td F^{(K)}}$ and $\smash{\td f_{b,R}^{(K)}}$ are continuous and have the same initial condition by \Cref{SBM convergence of initial condition}. We have also used that they both belong to the solution space $\smash{\mathfrak{L}_{\mathrm{unif}}}$ by \Cref{SBM WP of HJ eqn on Rpp b}, \eqref{SBM enriched free energy time derivative}, \eqref{SBM enriched free energy Gateaux derivative} and a simple application of the mean value theorem. With \eqref{eqn: SBM main result comparison principle} in mind, fix a finite measure $\mu\in \M_+$, and introduce the discrete measure $$\mu^{(K)}=\mu^{(K)}_{x^{(K)}(\mu)}$$
defined in \eqref{eqn: SBME measure for given vector}.
It is readily verified that $\smash{\bar{\mu}^{(K)}\to \bar{\mu}}$ with respect to the Wasserstein distance \eqref{eqn: SBM Wasserstein}. Moreover, an identical argument to that in \Cref{SBM initial condition W Lipschitz} leveraging the second bound in \eqref{eqn: SBM derivative of Gateux density bound} reveals that
$${\overline{F}_N}(t,\mu)\leq C'\mu[-1,1]W\big(\bar \mu,\bar\mu^{(K)}\big)+\overline{F}_N^{(K)}\big(t,x^{(K)}(\mu)\big)$$
for some constant $C'$ that depends only on $c$. We use the letter $C'$ instead of $C$ to emphasize that the constant $C'$ does not depend on $K$.
Letting $N$ tend to infinity, recalling \eqref{eqn: SBM projected free energy},  \eqref{eqn: SBM modified FE limsup} and leveraging \eqref{eqn: SBM main result comparison principle} yields
$$\limsup_{N\to \infty}\overline{F}_N(t,\mu)\leq C'\mu[-1,1]W\big(\bar \mu,\bar\mu^{(K)}\big)+\td f_{b,R}^{(K)}\big(t,x^{(K)}(\mu)\big)-b\Norm{x^{(K)}(\mu)}_1-\frac{bt}{2}+t\EE_{\epsilon,K}.$$
Invoking \Cref{SBM free energy approximate solution at contact point} and \Cref{SBM modified HJ eqn WP} to let $K$ tend to infinity and then $\epsilon$ tend to zero completes the proof.
\end{proof}

The rest of this section is devoted to the proof of \Cref{SBM free energy approximate solution at contact point} which will be obtained by combining the computations of \Cref{sec: SBM HJ eqn derivation} with the main result of \Cref{SBM app multioverlap concentration} to show the concentration of a finite but very large number of the multi-overlaps \eqref{eqn: SBM enriched multi-overlap}. In the notation of \cite{BarMOC}, for any perturbation parameter $\lambda$, let
\begin{equation}
\lambda_{0,N}=\epsilon_N\lambda_0,
\end{equation}
and introduce the quantities
\begin{align}
\L_0&=\frac{\HH_0'}{N\epsilon_N}, \quad \text{ where } \quad  \HH_0'=\partial_{\lambda_0} H_N^{\mathrm{gauss}}(\sigma,\lambda_0)=\epsilon_N\bigg(\sigma\cdot \sigma^*+\frac{\sigma\cdot Z_0}{2\sqrt{\lambda_{0,N}}}\bigg),\label{eqn: SBM lambda 0 derivative of Hamiltonian}\\
\L_k&=\frac{\HH_k'}{s_N}, \quad \text{ where } \quad  \HH_k'=\partial_{\lambda_k} H_N^{\mathrm{exp}}(\sigma,\lambda)=\sum_{j\leq \pi_k}\sigma_{i_{jk}}\bigg(\frac{1}{1+\lambda_k\sigma_{i_{jk}}}-\frac{e_{jk}}{(1+\lambda_k\sigma^*_{i_{jk}})^2}\bigg) \label{eqn: SBM lambda k derivative of Hamiltonian}
\end{align}
for $1\leq k\leq K_+$. The importance of these quantities stems from the fact that
\begin{align}
\partial_{\lambda_0} F_N^{(K)}(t,x,\lambda)=\frac{1}{N}\big\langle \HH_0'\big\rangle,&\,\,\,\,\,\,\, \partial^2_{\lambda_0} F_N^{(K)}(t,x,\lambda)=\frac{1}{N}\Big(\big\langle (\HH_0'-\langle \HH_0'\rangle)^2\big\rangle-\frac{\epsilon_N^2}{4\lambda_{0,N}^{3/2}}\langle \sigma\rangle \cdot Z_0\Big),\label{eqn: SBM lambda 0 derivatives}\\
\partial_{\lambda_k} F_N^{(K)}(t,x,\lambda)=\frac{1}{N}\big\langle \HH_k'\big\rangle, &\,\,\,\,\,\,\, \partial^2_{\lambda_k} F_N^{(K)}(t,x,\lambda)=\frac{1}{N}\big(\big\langle (\HH_k'-\langle \HH_k'\rangle)^2\big\rangle+\big\langle \HH_k''\big\rangle\big)\label{eqn: SBM lambda k derivatives}
\end{align}
for $1\leq k\leq K_+$ while
\begin{equation}\label{eqn: SBM lambda kj derivative}
\partial_{\lambda_k\lambda_j} F_N^{(K)}(t,x,\lambda)=\frac{1}{N}\big(\langle \HH'_j\HH'_k\rangle-\langle \HH'_j\rangle \langle \HH'_k\rangle\big)
\end{equation}
for $0\leq j\neq k\leq K_+$. 
Here, and for the remainder of this section, the Gibbs average $\langle \cdot \rangle$ will always be associated with the perturbed Hamiltonian \eqref{eqn: SBM perturbed Hamiltonian} evaluated at a triple $(t,x,\lambda)$ which will be clear from the context. It will also be convenient to record that for $1\leq k\leq K_+$,
\begin{equation}\label{eqn: SBM Hk second derivative}
\HH_k''=\partial_{\lambda_k}^2 H_N^{\mathrm{exp}}(\sigma,\lambda)=\sum_{j\leq \pi_k}\bigg(-\frac{1}{(1+\lambda_k\sigma_{i_{jk}})^2}+2\frac{\sigma_{i_{jk}}\sigma^*_{i_{jk}}e_{jk}}{(1+\lambda_k\sigma^*_{i_{jk}})^3}\bigg) \text{ and }  \big\lvert \E \big\langle \HH_k''\big\rangle\big\rvert\leq Cs_N.
\end{equation}

To obtain the concentration of the multi-overlaps \eqref{eqn: SBM enriched multi-overlap} we will show the concentration \eqref{eqn: SBMA limits to get FDS} of the quantities $\L_k$ for the Gibbs measure with parameters given by the contact point $(t_N,x_N,\lambda_N)$. This concentration will be deduced from the fact that the averaged free energy is being ``touched from above'' by a smooth function at the contact point, thereby constraining its Hessian at this point, together with the concentration of the free energy $\smash{F_N^{(K)}}$ about its average $\smash{\overline{F}_N^{(K)}}$. Due to the constraint on the Hessian at the contact point, we will be able to extend the concentration result on the free energy into an estimate on the concentration of its gradient. We decompose this argument into a series of four lemmas: the first two essentially bound the Hessian of the perturbed free energy \eqref{eqn: SBM projected perturbed free energy} from above and from below; the third leverages the free energy concentration result in \Cref{SBM app FE concentration} to estimate the uniform $L^p$-distance between the quenched and averaged free energies \eqref{eqn: SBM projected perturbed free energy}; while the fourth extends this to a control on the gradient of the free energy.

\begin{lemma}\label{SBM perturbed free energy Hessian bounded above}
For any perturbation parameter $\lambda$ with $\norm{\lambda}_2\leq C^{-1}$,
\begin{equation}\label{eqn: SBM perturbed free energy Hessian bounded above}
\overline{F}_N^{(K)}(t_N,x_N,\lambda_N+\lambda)-\overline{F}_N^{(K)}(t_N,x_N,\lambda_N)-\lambda\cdot \nabla_{\lambda} \overline{F}_N^{(K)}(t_N,x_N,\lambda_N)\leq C\norm{\lambda}_2^2.
\end{equation}
\end{lemma}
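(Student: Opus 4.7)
The plan is to exploit the fact that the triple $(t_N,x_N,\lambda_N)$ is a local maximum of $\smash{\overline{F}_N'^{(K)} - \td\phi}$ as recorded in \eqref{eqn: SBM free energy subsolution maximum}, restricted to a neighbourhood of size $C^{-1}$. The key observation is that the test function $\smash{\td\phi(t,x,\lambda) = \phi(t,x) + (t-t_\infty)^2 + \|x-x_\infty\|_2^2 + \|\lambda-\lambda_\infty\|_2^2}$ has a very simple dependence in the perturbation variable $\lambda$: the only term that matters is the quadratic $\|\lambda-\lambda_\infty\|_2^2$, with Hessian equal to twice the identity. This is what produces the quadratic upper bound on the free energy.

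First I would specialize the maximum condition \eqref{eqn: SBM free energy subsolution maximum} to triples of the form $(t_N,x_N,\lambda_N+\lambda)$ with $\|\lambda\|_2 \le C^{-1}$, which is allowed provided $C$ is taken large enough to guarantee that the perturbation parameter remains in the prescribed box $[2^{-k-1},2^{-k}]$ around $(\lambda_N)_k$ for each $k$. Since $\smash{\overline{F}_N'^{(K)}}$ and $\smash{\overline{F}_N^{(K)}}$ differ only by a term of the form $b\|x\|_1+bt/2$ that is independent of $\lambda$, this inequality becomes
\begin{equation}
\overline{F}_N^{(K)}(t_N,x_N,\lambda_N+\lambda) - \overline{F}_N^{(K)}(t_N,x_N,\lambda_N) \leq \td\phi(t_N,x_N,\lambda_N+\lambda) - \td\phi(t_N,x_N,\lambda_N).
\end{equation}

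Next I would compute the right-hand side explicitly. Since $\phi$ and the terms $(t-t_\infty)^2$, $\|x-x_\infty\|_2^2$ do not depend on $\lambda$, only the term $\|\lambda-\lambda_\infty\|_2^2$ contributes, yielding
\begin{equation}
\td\phi(t_N,x_N,\lambda_N+\lambda) - \td\phi(t_N,x_N,\lambda_N) = 2\lambda\cdot(\lambda_N - \lambda_\infty) + \|\lambda\|_2^2.
\end{equation}
In particular, differentiating in $\lambda$ shows $\nabla_\lambda \td\phi(t_N,x_N,\lambda_N) = 2(\lambda_N - \lambda_\infty)$. Combining this with the critical point identity in \eqref{eqn: SBM free energy subsolution perturbation derivative} gives $\nabla_\lambda \overline{F}_N^{(K)}(t_N,x_N,\lambda_N) = 2(\lambda_N-\lambda_\infty)$, so that subtracting $\lambda\cdot\nabla_\lambda\overline{F}_N^{(K)}(t_N,x_N,\lambda_N)$ from both sides of the previous display eliminates the linear term and leaves $\|\lambda\|_2^2$, which is bounded by $C\|\lambda\|_2^2$ after absorbing the implicit constant.

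There is essentially no analytic obstacle here; the result is a direct translation of the local maximum condition into a quantitative quadratic bound, made possible by the deliberately chosen quadratic form of the $\lambda$-dependence in $\td\phi$. The only subtle point is ensuring that the neighbourhood of radius $C^{-1}$ lies inside the domain of the supremum in \eqref{eqn: SBM free energy subsolution maximum}, which is arranged by enlarging $C$ as needed; recall from the discussion around \eqref{eqn: SBM free energy subsolution maximum} that $C$ is allowed to depend on $K,K_+,t_\infty,x_\infty$ and $\phi$ but not on $N$.
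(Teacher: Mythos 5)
Your proposal is correct and follows essentially the same route as the paper: both exploit the local maximum condition \eqref{eqn: SBM free energy subsolution maximum} to bound the free energy increment by the increment of $\td\phi$, and then use the vanishing of $\nabla_\lambda\big(\overline{F}_N'^{(K)}-\td\phi\big)$ at the contact point to cancel the linear term. The only (harmless) difference is that you compute the exactly quadratic increment of $\td\phi$ directly, whereas the paper writes Taylor's formula with integral remainder for both $\overline{F}_N'^{(K)}$ and $\td\phi$ and compares the remainders; the conclusion is the same.
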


\begin{proof}
Fix a perturbation parameter $\lambda$ with $\norm{\lambda}_2\leq C^{-1}$, and notice that \eqref{eqn: SBM free energy subsolution maximum} gives
$$\overline{F}_N'^{(K)}(t_N,x_N,\lambda_N+\lambda)-\overline{F}_N'^{(K)}(t_N,x_N,\lambda_N)\leq \td \phi(t_N,x_N,\lambda_N+\lambda)-\td \phi(t_N,x_N,\lambda_N).$$
On the other hand, Taylor's formula with integral remainder implies that
\begin{align}\label{eqn: SBM perturbed free energy Hessian bounded above key} 
\overline{F}_N'^{(K)}(t_N,&x_N,\lambda_N+\lambda)-\overline{F}_N'^{(K)}(t_N,x_N,\lambda_N) \notag\\
&=\lambda\cdot \nabla_{\lambda} \overline{F}_N'^{(K)}(t_N,x_N,\lambda_N)+\int_0^1 (1-s)\lambda\cdot \nabla_{\lambda}^2\overline{F}_N'^{(K)}(t_N,x_N,\lambda_N+s\lambda)\lambda \ud s, 
\end{align}
and similarly,
\begin{align*}
\td \phi(t_N,x_N,\lambda_N&+\lambda)-\td \phi(t_N,x_N,\lambda_N)\\
&=\lambda\cdot \nabla_\lambda \td \phi(t_N,x_N,\lambda_N) +\int_0^1 (1-s)\lambda\cdot \nabla_\lambda^2\td \phi(t_N,x_N,\lambda_N+s\lambda)\lambda \ud s.
\end{align*}
Combining \eqref{eqn: SBM free energy subsolution spacetime derivative} with the chain rule shows that $\lambda\cdot \nabla_\lambda \widetilde{\phi}(t_N,x_N,\lambda_N)=\lambda\cdot \nabla_{\lambda}\overline{F}_N'^{(K)}(t_N,x_N,\lambda_N)$, and therefore
\begin{align*}
\int_0^1 (1-s)\lambda\cdot \nabla_{\lambda}^2\overline{F}_N'^{(K)}(t_N,x_N,\lambda_N+s\lambda)\lambda \ud s&\leq \int_0^1 (1-s)\lambda\cdot \nabla_\lambda^2\td \phi(t_N,x_N,\lambda_N+s\lambda)\lambda \ud s\\
&\leq C\norm{\lambda}_2^2.
\end{align*}
Substituting this into \eqref{eqn: SBM perturbed free energy Hessian bounded above key} gives
\begin{align*}
\overline{F}_N^{(K)}(t_N,x_N,\lambda_N+\lambda)-\overline{F}_N^{(K)}(t_N,x_N,\lambda_N)&=\overline{F}_N'^{(K)}(t_N,x_N,\lambda_N+\lambda)-\overline{F}_N'^{(K)}(t_N,x_N,\lambda_N)\\
&\leq \lambda\cdot \nabla_{\lambda} \overline{F}_N'^{(K)}(t_N,x_N,\lambda_N)+C\norm{\lambda}_2^2\\
&=\lambda\cdot \nabla_{\lambda} \overline{F}_N^{(K)}(t_N,x_N,\lambda_N)+C\norm{\lambda}_2^2.
\end{align*}
This completes the proof.
\end{proof}

\begin{lemma}\label{SBM perturbed free energy Hessian bounded below}
There exists a random variable $X$ with $\E X^2\leq C$ such that, for all perturbation parameters $\lambda$ with $\norm{\lambda}_2\leq C^{-1}$,
\begin{equation}\label{eqn: SBM perturbed free energy Hessian bounded below}
F_N^{(K)}(t_N,x_N,\lambda_N+\lambda)-F_N^{(K)}(t_N,x_N,\lambda_N)-\lambda\cdot \nabla_{\lambda} F_N^{(K)}(t_N,x_N,\lambda_N)\geq -X\norm{\lambda}_2^2.
\end{equation}
\end{lemma}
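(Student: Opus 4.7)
The plan is to apply Taylor's formula with integral remainder in the perturbation parameter $\lambda$, identify the dominant part of the Hessian as a Gibbs covariance matrix (hence positive semi-definite), and absorb the remaining lower-order terms into a random variable $X$ with bounded second moment. First, write
\begin{equation*}
F_N^{(K)}(t_N,x_N,\lambda_N+\lambda)-F_N^{(K)}(t_N,x_N,\lambda_N)-\lambda\cdot \nabla_\lambda F_N^{(K)}(t_N,x_N,\lambda_N)=\int_0^1(1-s)\,\lambda\cdot \nabla_\lambda^2 F_N^{(K)}(t_N,x_N,\lambda_N+s\lambda)\lambda\,\ud s.
\end{equation*}

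Using \eqref{eqn: SBM lambda 0 derivatives}, \eqref{eqn: SBM lambda k derivatives}, and \eqref{eqn: SBM lambda kj derivative}, decompose the Hessian at a parameter $\mu$ as $\nabla_\lambda^2 F_N^{(K)}(t_N,x_N,\mu)=M(\mu)+E(\mu)$, where
\begin{equation*}
M_{jk}(\mu)=\tfrac{1}{N}\bigl\langle (\HH_j'-\langle\HH_j'\rangle)(\HH_k'-\langle\HH_k'\rangle)\bigr\rangle_\mu
\end{equation*}
is a Gibbs covariance matrix (hence positive semi-definite) and $E(\mu)$ is diagonal with entries $E_{00}(\mu)=-\tfrac{\sqrt{\epsilon_N}}{4N\mu_0^{3/2}}\langle\sigma\rangle_\mu\cdot Z_0$ and $E_{kk}(\mu)=\tfrac{1}{N}\langle \HH_k''\rangle_\mu$ for $1\le k\le K_+$. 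Since $\lambda\cdot M(\mu)\lambda\ge 0$, we obtain $\lambda\cdot\nabla_\lambda^2 F_N^{(K)}(t_N,x_N,\mu)\lambda\ge -(\max_{0\le k\le K_+} |E_{kk}(\mu)|)\,\|\lambda\|_2^2$, so integrating in $s$ yields the claimed bound with
\begin{equation*}
X:=\tfrac{1}{2}\sup_{\mu\in B(\lambda_N,C^{-1})}\max_{0\le k\le K_+}|E_{kk}(\mu)|,
\end{equation*}
where $B(\lambda_N,C^{-1})$ denotes the closed Euclidean ball of radius $C^{-1}$ around $\lambda_N$.

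It remains to check that $\E X^2\le C$. Taking $C$ large enough (depending on $K_+$) so that $\mu_0$ and $1-|\mu_k|$ remain bounded below by positive constants on $B(\lambda_N,C^{-1})$ for every $0\le k\le K_+$ (possible because $\lambda_\infty$ lies in the interior of the admissible domain and $K_+$ is fixed), and using the trivial pointwise bounds $|\langle \sigma_i\rangle_\mu|\le 1$ and $|\langle \HH_k''\rangle_\mu|\le \max_{\sigma\in \Sigma_N}|\HH_k''(\sigma)|$, the explicit form of $\HH_k''$ read off from \eqref{eqn: SBM Hk second derivative} gives the $\mu$-uniform bounds
\begin{equation*}
\sup_\mu|E_{00}(\mu)|\le \tfrac{C\sqrt{\epsilon_N}}{N}\sum_{i\le N}|Z_{0,i}|,\qquad \sup_\mu|E_{kk}(\mu)|\le \tfrac{C}{N}\sum_{j\le \pi_k}(1+e_{jk}).
\end{equation*}
By Cauchy-Schwarz and the Gaussian moment $\E Z_{0,1}^2=1$, the first quantity has second moment at most $C\epsilon_N\le C$; a compound Poisson computation using $\E\pi_k=s_N$ and $\E e_{jk}^2=2$ yields second moment at most $Cs_N^2/N^2\le C$ for the second. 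Summing over the fixed range $0\le k\le K_+$ then gives $\E X^2\le C$.

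The main technical point is keeping the estimates uniform in $\mu$ over the ball around $\lambda_N$ despite the implicit $\mu$-dependence of the Gibbs averages $\langle\cdot\rangle_\mu$; this is handled by reducing every term to $\mu$-independent deterministic bounds on the underlying external randomness, and by choosing $C$ (depending on $K_+$) large enough that the ball stays strictly inside the admissible parameter domain where $\mu_0>0$ and $|\mu_k|<1$.
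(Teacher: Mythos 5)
Your proof is correct, and it reaches the same essential ingredients as the paper — the derivative formulas \eqref{eqn: SBM lambda 0 derivatives}--\eqref{eqn: SBM lambda kj derivative}, the positive semi-definiteness of the Gibbs covariance part of the Hessian, and second-moment bounds on $\frac{\sqrt{\epsilon_N}}{N}\sum_{i\le N}|Z_{0,i}|$ and $\frac 1N\sum_{j\le \pi_k}(1+e_{jk})$ — but the mechanism is organized differently. The paper does not bound the Hessian along the whole segment; instead it adds explicit compensators, defining $h(\lambda)=F_N^{(K)}(\lambda)-\frac{\sqrt{\lambda_{0,N}}}{N}\sum_{i\leq N}|Z_{0,i}|+\frac1N\sum_{1\le k\le K_+}\sum_{j\le\pi_k}(8\lambda_k^2 e_{jk}-\log(1-\lambda_k))$, checks that $\nabla_\lambda^2 h$ is dominated from below by the Gibbs covariance (hence $h$ is convex), applies the gradient inequality $h(\lambda_N+\lambda)\ge h(\lambda_N)+\lambda\cdot\nabla_\lambda h(\lambda_N)$, and then Taylor-expands only the explicit compensating functions to second order around $\lambda_N$ to extract $-X\norm{\lambda}_2^2$. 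Your route — Taylor with integral remainder for $F_N^{(K)}$ itself, splitting $\nabla_\lambda^2 F_N^{(K)}(\mu)$ into the covariance matrix plus the diagonal error with entries $-\frac{\sqrt{\epsilon_N}}{4N\mu_0^{3/2}}\langle\sigma\rangle_\mu\cdot Z_0$ and $\frac1N\langle\HH_k''\rangle_\mu$, and taking a supremum of the error over the ball — buys a more direct statement (no auxiliary convex function, and an $X$ with an extra favourable $\sqrt{\epsilon_N}$ in the Gaussian part), at the price of needing the error bounds uniformly in $\mu$ over $B(\lambda_N,C^{-1})$; this uniformity is genuinely available here because $|\langle\sigma_i\rangle_\mu|\le 1$ and $\max_\sigma|\HH_k''(\sigma)|$ admit $\mu$-free majorants once $\mu_0$ and $1-|\mu_k|$ are bounded below on the ball, and the supremum is measurable by continuity in $\mu$. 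The paper's convexification avoids any uniform-in-$\mu$ control (everything is evaluated at $\lambda_N$ apart from the one-dimensional Taylor remainders of $\sqrt{\cdot}$ and $\log$), which is why it is phrased the way it is; both arguments yield an $X$ built from the same noise sums with $\E X^2\le C$, with $C$ allowed to depend on $K_+$ but not on $N$, as required.
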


\begin{proof}
Since $t_N$ and $x_N$ remain fixed throughout, write $\smash{F_N^{(K)}(\lambda)}$ for $\smash{F_N^{(K)}(t_N,x_N,\lambda)}$. Introduce the function
\begin{equation}\label{eqn: SBM perturbed free energy Hessian bounded below convexification}
h(\lambda)=F_N^{(K)}(\lambda)-\frac{\sqrt{\lambda_{0,N}}}{N}\sum_{i\leq N}\abs{Z_{0,i}}+\frac{1}{N}\sum_{1\leq k\leq K_+}\sum_{j\leq \pi_k}\big(8\lambda_k^2e_{jk}-\log(1-\lambda_k)\big).
\end{equation}
Leveraging \eqref{eqn: SBM lambda 0 derivatives} and Hölder's inequality, one can see that
$$\partial_{\lambda_0}^2h(\lambda)=\frac{1}{N}\big\langle (\HH_0'-\langle \HH_0'\rangle)^2\big\rangle-\frac{\epsilon_N^2}{4N\lambda_{0,N}^{3/2}}\langle \sigma\rangle \cdot Z_0+\frac{\epsilon_N^2}{4N\lambda_{0,N}^{3/2}}\sum_{i\leq N}\abs{Z_{0,i}}\geq \frac{1}{N}\big\langle (\HH_0'-\langle \HH_0'\rangle)^2\big\rangle.$$
Using \eqref{eqn: SBM lambda k derivatives} and \eqref{eqn: SBM Hk second derivative} reveals that for $1\leq k\leq K_+$,
\begin{align*}
\partial_{\lambda_k}^2h(\lambda)=\frac{1}{N}\big\langle &(\HH_k'-\langle \HH_k'\rangle)^2\big\rangle\\
&+\frac{1}{N}\bigg\langle \sum_{j\leq \pi_k}\Big(-\frac{1}{(1+\lambda_k\sigma_{i_{jk}})^2}+2\frac{\sigma_{i_{jk}}\sigma^*_{i_{jk}}e_{jk}}{(1+\lambda_k\sigma^*_{i_{jk}})^3}+16e_{jk}+\frac{1}{(1-\lambda_k)^2}\Big)\bigg\rangle. 
\end{align*}
Since $\lambda_k\leq 1/2$ and all spin configuration coordinates are bounded by one, it is actually the case that
$$\partial_{\lambda_k}^2h(\lambda)\geq \frac{1}{N}\big\langle (\HH_k'-\langle \HH_k'\rangle)^2\big\rangle.$$
Together with \eqref{eqn: SBM lambda kj derivative}, this shows that $\nabla_\lambda^2h$ is positive definite and therefore $h$ is convex. It follows that for any perturbation parameter $\lambda$ with $\norm{\lambda}_2\leq C^{-1}$,
$$h(\lambda_N+\lambda)\geq h(\lambda_N)+\lambda \cdot\nabla_{\lambda} h(\lambda_N).$$
Remembering the definition of $h$ in \eqref{eqn: SBM perturbed free energy Hessian bounded below convexification}, this shows that the left-hand side of \eqref{eqn: SBM perturbed free energy Hessian bounded below} is bounded from below by
\begin{align*}
\frac{1}{N}& \bigg(\sqrt{(\lambda_N)_{0,N}+\lambda_{0,N}}-\sqrt{(\lambda_N)_{0,N}}-\frac{\lambda_{0,N}}{2\sqrt{(\lambda_N)_{0,N}}}\bigg)\sum_{i\leq N}\abs{Z_{0,i}}\\
&\qquad\qquad+\frac{1}{N}\sum_{1\leq k\leq K_+}\sum_{j\leq \pi_k}8e_{jk}\Big((\lambda_N)_k^2+2\lambda_k(\lambda_N)_k-\big((\lambda_N)_k+\lambda_k\big)^2\Big)\\
&\qquad\qquad\qquad\qquad +\frac{1}{N}\sum_{1\leq k\leq K_+}\sum_{j\leq \pi_k}\bigg(\log\bigg(\frac{1-(\lambda_N)_k-\lambda_k}{1-(\lambda_N)_k}\bigg)+\frac{\lambda_k}{1-(\lambda_N)_k}\bigg).
\end{align*}
Increasing $C$ if necessary, Taylor's theorem with differential remainder gives a perturbation parameter $\tilde{\lambda}$ with $\tilde{\lambda}_k\in [2^{-k-1},2^{-k}]$ for $0\leq k\leq K_+$ whose value might not be the same at each occurrence such that
\begin{align*}
\sqrt{(\lambda_N)_{0,N}+\lambda_{0,N}}-\sqrt{(\lambda_N)_{0,N}}-\frac{\lambda_{0,N}}{2\sqrt{(\lambda_N)_{0,N}}}&=-\frac{(\lambda_{0,N})^2}{8\tilde{\lambda}_{0,N}^{3/2}}\geq -\sqrt{\epsilon_N}\lambda_0^2\geq -\lambda_0^2\\
(\lambda_N)_k^2+2\lambda_k(\lambda_N)_k-\big((\lambda_N)_k+\lambda_k\big)^2&=-\lambda_k^2\\
\log\bigg(\frac{1-(\lambda_N)_k-\lambda_k}{1-(\lambda_N)_k}\bigg)+\frac{\lambda_k}{1-(\lambda_N)_k}&=-\frac{\lambda_k^2}{2(1-\tilde{\lambda}_k)^2}\geq -2\lambda_k^2.
\end{align*}
It follows that the left-hand side of \eqref{eqn: SBM perturbed free energy Hessian bounded below} is bounded from below by
$$-\frac{\lambda_0^2}{N}\sum_{i\leq N}\abs{Z_{0,i}}-\frac{1}{N}\sum_{1\leq k\leq K_+}\lambda_k^2\sum_{j\leq \pi_k}(8e_{jk}+2)\geq -X\norm{\lambda}_2^2$$
for the random variable
$$X=\frac{1}{N}\sum_{i\leq N}\abs{Z_{0,i}}+\frac{1}{N}\sum_{1\leq k\leq K_+}\sum_{j\leq \pi_k}(8e_{jk}+2).$$
Using the Cauchy-Schwarz inequality, taking the average with respect to the randomness of $(e_{jk})$ before the average with respect to the randomness of $(\pi_k)$ and remembering \eqref{eqn: SBM s sequence} shows that
\begin{align*}
\E X^2&\leq \frac{C}{N^2}\bigg(\E\Big(\sum_{i\leq N}\abs{Z_{0,i}}\Big)^2+\sum_{1\leq k\leq K_+}\E\Big(\sum_{j\leq \pi_k}(8 e_{jk}+2)\Big)^2\bigg)\\
&\leq \frac{C}{N^2}\Big(N\E\abs{Z_{0,1}}+(N^2-N)\E\abs{Z_{0,1}Z_{0,2}}+\sum_{1\leq k\leq K_+} \E\pi_k\sum_{j\leq \pi_k}(8e_{jk}+2)^2\Big)\\
&\leq \frac{C}{N^2}\big(N^2+s_N^2+s_N\big)\leq C.
\end{align*}
This completes the proof.
\end{proof}

\begin{lemma}\label{SBM free energy uniform concentration}
For every $M > 0$ small enough, $p \in [1,\infty)$ and $\epsilon>0$, there exists $C < \infty$ not depending on $N$ such that
\begin{equation}
\bigg(\E\sup_{\norm{\lambda}_\infty\leq M}\Big\lvert\Big(F_N^{(K)}-\overline{F}_N^{(K)}\Big)(t_N,x_N,\lambda_N+\lambda)\Big\rvert^p\bigg)^{\frac{1}{p}}\leq CN^{-\frac{1}{2}+\epsilon}.
\end{equation}
\end{lemma}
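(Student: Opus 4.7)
The plan is to combine pointwise $L^q$ concentration coming from the Efron--Stein-type result of \Cref{SBM app FE concentration} with a random Lipschitz-in-$\lambda$ bound and a standard $\delta$-net argument to upgrade to uniform control in $\lambda$. First I would invoke \Cref{SBM app FE concentration}, applied to the perturbed free energy at $(t_N,x_N,\lambda_N+\lambda)$, to obtain that for every $q \in [1,\infty)$ and every fixed $\lambda$ with $\norm{\lambda}_\infty \leq M$,
\begin{equation*}
\big\| F_N^{(K)}(t_N,x_N,\lambda_N+\lambda)-\overline{F}_N^{(K)}(t_N,x_N,\lambda_N+\lambda)\big\|_{L^q} \leq C_q N^{-1/2},
\end{equation*}
with $C_q$ independent of $N$ and $\lambda$.

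Next I would bound the gradient of $\lambda \mapsto F_N^{(K)}(t_N,x_N,\lambda_N+\lambda)$ by a random variable $L_N$ with $L^q$-moments uniformly bounded in $N$. Using the explicit expressions \eqref{eqn: SBM lambda 0 derivatives}--\eqref{eqn: SBM lambda k derivatives} and $|\sigma_i|\leq 1$, for $M$ small enough that $\lambda_N+\lambda$ stays in a compact subset of the natural parameter domain (so that the $\lambda_0$-component is bounded away from $0$ and each $\lambda_k$-component for $1 \leq k \leq K_+$ is bounded away from $1$), a valid pointwise Lipschitz constant in the $\ell^\infty$ norm is
\begin{equation*}
L_N := \epsilon_N + \frac{C\sqrt{\epsilon_N}}{N}\sum_{i\leq N}|Z_{0,i}| + \sum_{k=1}^{K_+}\frac{C}{N}\sum_{j \leq \pi_k}(1+e_{jk}).
\end{equation*}
Standard moment bounds for Gaussian averages and compound-Poisson sums, together with $\epsilon_N \to 0$ and $s_N/N = N^{\eta-1} \to 0$, then yield $\|L_N\|_{L^q} \leq C_q$ for every $q \in [1,\infty)$, uniformly in $N$ (in fact this quantity tends to zero).

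Third, I would fix $\delta := N^{-1/2}$ and choose a $\delta$-net $\Lambda_\delta$ in $\ell^\infty$ for the ball $\{\norm{\lambda}_\infty \leq M\} \subset \R^{K_+ + 1}$ with cardinality at most $(C/\delta)^{K_+ + 1} \leq C N^{(K_+ + 1)/2}$. For any $\lambda$ with $\norm{\lambda}_\infty \leq M$ and any nearest point $\lambda' \in \Lambda_\delta$, the Lipschitz bound gives
\begin{equation*}
\big|F_N^{(K)}-\overline{F}_N^{(K)}\big|(\lambda_N+\lambda) \leq \big|F_N^{(K)}-\overline{F}_N^{(K)}\big|(\lambda_N+\lambda') + 2L_N \delta.
\end{equation*}
Combined with the elementary $\big\|\max_{\lambda'\in\Lambda_\delta} Y_{\lambda'}\big\|_{L^q} \leq |\Lambda_\delta|^{1/q} \max_{\lambda'} \|Y_{\lambda'}\|_{L^q}$ and Step~1, this yields
\begin{equation*}
\Big\|\sup_{\norm{\lambda}_\infty \leq M}\big|F_N^{(K)}-\overline{F}_N^{(K)}\big|(\lambda_N+\lambda)\Big\|_{L^q} \leq C_q N^{-1/2 + (K_+ + 1)/(2q)} + 2\|L_N\|_{L^q}\delta.
\end{equation*}
Given $p \in [1,\infty)$ and $\epsilon > 0$, taking $q := \max\!\big(p,(K_++1)/(2\epsilon)\big)$ produces the desired bound in $L^q$, and monotonicity of $L^r$-norms transfers it to $L^p$.

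The main technical point is the second step. The partial derivative $\partial_{\lambda_0} F_N^{(K)}$ formally involves the singular factor $1/\sqrt{\lambda_{0,N}}$ coming from the Gaussian perturbation, which is of order $\epsilon_N^{-1/2}$ and could in principle blow up. This singularity is tamed by the prefactor $\epsilon_N$ inside $\HH_0'$ combined with the fact that $\lambda_0$ stays bounded away from zero once $M$ is sufficiently small, leaving an overall factor $\sqrt{\epsilon_N} \to 0$; this is what makes the random Lipschitz constant $L_N$ have $L^q$-moments uniformly bounded in $N$ and is what allows the $\delta$-net estimate to close.
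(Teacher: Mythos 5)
Your proposal is correct and follows essentially the same route as the paper: pointwise $L^q$ concentration from the appendix result (Proposition \ref{SBM free energy concentration}), a random Lipschitz-in-$\lambda$ constant with moments bounded uniformly in $N$, a discretization of the $\lambda$-ball whose cardinality is absorbed by taking $q$ large, and Jensen to pass to $L^p$. The only harmless imprecisions are that the increment of the averaged free energy $\overline{F}_N^{(K)}$ is controlled by $\E L_N$ rather than $L_N$, so the net step should read $(L_N+\E L_N)\delta$ in place of $2L_N\delta$, and that $M$ must be taken small enough (depending on $K_+$) so that $(\lambda_N+\lambda)_k$ remains in $[2^{-k-1},2^{-k}]$, which is the parameter range in which Proposition \ref{SBM free energy concentration} is stated.
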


\begin{proof}
Let $0<M<1/2$ be small enough so that $(\lambda_N+\lambda)_k\in [2^{-k-1},2^{-k}]$ for $0\leq k\leq K_+$ whenever $\norm{\lambda}_\infty\leq M$, and for each perturbation parameter $\lambda$ introduce the random variable
$$Y(\lambda)=\frac{1}{N}\sum_{0\leq k\leq K_+}\abs{\langle \HH_k'\rangle},$$
where the Gibbs average is associated with the perturbed Hamiltonian \eqref{eqn: SBM perturbed Hamiltonian} evaluated at the triple $(t_N,x_N,\lambda_N+\lambda)$. The relevance of these random variables stems from the fact that by the mean value theorem, \eqref{eqn: SBM lambda 0 derivatives} and \eqref{eqn: SBM lambda k derivatives}, for every $\lambda,\lambda'$ in the $\ell^\infty$-ball of radius $M$,
$$\big\lvert F_N^{(K)}(t_N,x_N,\lambda_N+\lambda)-F_N^{(K)}(t_N,x_N,\lambda_N+\lambda')\big\rvert\leq C\sup_{\norm{\eta}_\infty\leq M}Y(\eta)\norm{\lambda-\lambda'}_1.$$
Averaging this inequality also shows that for every $\lambda,\lambda'$ in the $\ell^\infty$-ball of radius $M$,
$$\big\lvert \overline{F}_N^{(K)}(t_N,x_N,\lambda_N+\lambda)-\overline{F}_N^{(K)}(t_N,x_N,\lambda_N+\lambda')\big\rvert\leq\E\sup_{\norm{\eta}_\infty\leq M}Y(\eta)\norm{\lambda-\lambda'}_1$$
These two bounds imply that for any even integer $q\geq 2$,
\begin{align*}
\E\sup_{\norm{\lambda}_\infty\leq M}\Big\lvert\Big(F_N^{(K)}-\overline{F}_N^{(K)}\Big)(t_N,x_N,\lambda_N+\lambda)\Big\rvert^q\leq \E\sup_{\lambda \in A_\epsilon}\Big\lvert\Big(F_N^{(K)}-\overline{F}_N^{(K)}&\Big)(t_N,x_N,\lambda_N+\lambda)\Big\rvert^q\\
&\quad +C\epsilon^{q}\E\sup_{\norm{\lambda}_\infty\leq M}Y(\lambda)^q
\end{align*}
for $A_\epsilon=\epsilon \Z^{1+K_+}\cap\{\norm{\lambda}_\infty\leq M\}$. Indeed, every $\lambda$ is at most at distance $\epsilon (K_++1)$ from an element in $A_\epsilon$ with respect to the $\ell^1$-norm. Bounding the supremum over $A_\epsilon$ by the sum over $A_\epsilon$ and invoking the free energy concentration result in \Cref{SBM free energy concentration} shows that
\begin{equation}\label{eqn: SBM free energy uniform concentration key}
\E\sup_{\norm{\lambda}_\infty\leq M}\Big\lvert\Big(F_N^{(K)}-\overline{F}_N^{(K)}\Big)(t_N,x_N,\lambda_N+\lambda)\Big\rvert^q\leq C \abs{A_\epsilon}N^{-\frac{q}{2}}+C\epsilon^q\E\sup_{\norm{\lambda}_\infty\leq M}\abs{Y(\lambda)}^q.
\end{equation}
To bound the moments of $\smash{\sup_{\norm{\lambda}_\infty\leq M}\abs{Y(\lambda)}}$ fix $1\leq k\leq K_+$. Hölder's inequality and \eqref{eqn: SBM lambda k derivative of Hamiltonian} reveal that
\begin{align*}
\E \sup_{\norm{\lambda}_\infty\leq M}\abs{\langle \HH_k'\rangle}^q&\leq \E\sup_{\norm{\lambda}_\infty\leq M} \bigg\lvert\sum_{j\leq \pi_k}\frac{1}{1-(\lambda_N+\lambda)_k}+\frac{e_{jk}}{(1-(\lambda_N+\lambda)_k)^2}\bigg\rvert^q\\
&\leq \E \bigg\lvert \sum_{j\leq \pi_k}\frac{4(1+e_{jk})}{(1-2M)^2}\bigg\rvert^q\leq \E \pi_k^{q-1}\sum_{j\leq \pi_k}\sum_{j\leq \pi_k}\bigg\lvert \frac{4(1+e_{jk})}{(1-2M)^2}\bigg\rvert^q\\
&\leq C\E \pi_k^q,
\end{align*}
where the last inequality is found by averaging over the randomness of $(e_{jk})$. Similarly \eqref{eqn: SBM lambda 0 derivative of Hamiltonian} and Hölder's inequality give
\begin{align*}
\E \sup_{\norm{\lambda}_\infty\leq M}\abs{\langle \HH_0'\rangle}^q&\leq\epsilon_N^q \E\sup_{\norm{\lambda}_\infty\leq M}\bigg\lvert \abs{\sigma\cdot \sigma^*}+\frac{\abs{\sigma\cdot Z_0}}{2\epsilon_N^{q/2}((\lambda_N)_0+\lambda_0)^{q/2}}\bigg\rvert^q\\
&\leq C\epsilon_N^{q}\bigg(N^q+\frac{2^q\E\abs{\sigma\cdot Z_0}^q}{2\epsilon_N^{q/2}(1-2M)^q}\bigg)\leq C \epsilon_N^{\frac{q}{2}}N^q.
\end{align*}
Combining these two inequalities with \Cref{SBMA Poisson moment bounds} and recalling the choices \eqref{eqn: SBM epsilon sequence} and \eqref{eqn: SBM s sequence} of $\epsilon_N$ and $s_N$ shows that
\begin{equation}\label{eqn: SBM Lipschitz constant of unaveraged free energy finite}
\E \sup_{\norm{\lambda}_\infty\leq M}\abs{Y(\lambda)}^q\leq C.
\end{equation}
Substituting this into \eqref{eqn: SBM free energy uniform concentration key} and noticing that $\abs{A_\epsilon}$ is of order $\epsilon^{-(K_++1)}$ yields
$$\E\sup_{\norm{\lambda}_\infty\leq M}\Big\lvert\Big(F_N^{(K)}-\overline{F}_N^{(K)}\Big)(t_N,x_N,\lambda_N+\lambda)\Big\rvert^q\leq C\big(\epsilon^{-(K_++1)}N^{-\frac{q}{2}}+\epsilon^q\big).$$
Taking $1/q$'th powers and choosing $\epsilon=N^{-\frac{q}{2(q+K_++1)}}$ gives
$$\bigg(\E\sup_{\norm{\lambda}_\infty\leq M}\Big\lvert\Big(F_N^{(K)}-\overline{F}_N^{(K)}\Big)(t_N,x_N,\lambda_N+\lambda)\Big\rvert^q\bigg)^{\frac{1}{q}}\leq CN^{-\frac{q}{2(q+K_++1)}}.$$
Notice that the power on the right-hand side can be made arbitrarily close to $-\frac{1}{2}$ by taking $q$ large enough. Invoking Jensen's inequality completes the proof. 
\end{proof}

\begin{lemma}\label{perturbed free energy concentration of gradient}
For every $\epsilon>0$, there exists a constant $C<\infty$ not depending on $N$ such that
\begin{equation}
\E\Big\lVert\nabla_{\lambda}\Big(F_N^{(K)}-\overline{F}_N^{(K)}\Big)(t_N,x_N,\lambda_N)\Big\rVert_2^2\leq CN^{-\frac{1}{2}+\epsilon}.
\end{equation}
\end{lemma}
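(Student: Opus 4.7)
The plan is to extract concentration of the gradient of the fluctuation $\delta F := F_N^{(K)} - \overline{F}_N^{(K)}$ in the $\lambda$-variable from a semi-concavity estimate together with the uniform concentration result of \Cref{SBM free energy uniform concentration}. Write $v = \nabla_\lambda (\delta F)(t_N, x_N, \lambda_N)$ for the random vector whose second moment we wish to control. First I would subtract the upper Hessian bound of \Cref{SBM perturbed free energy Hessian bounded above} from the lower Hessian bound of \Cref{SBM perturbed free energy Hessian bounded below} to obtain, for every perturbation $\lambda$ with $\norm{\lambda}_2 \leq C^{-1}$,
\begin{equation*}
(\delta F)(t_N, x_N, \lambda_N + \lambda) - (\delta F)(t_N, x_N, \lambda_N) - \lambda \cdot v \geq -(X + C) \norm{\lambda}_2^2,
\end{equation*}
where $X$ is the random variable from \Cref{SBM perturbed free energy Hessian bounded below}, satisfying $\E X^2 \leq C$.

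Next I would specialize this inequality to $\lambda = \pm \eta\, e_j$ for each coordinate direction $e_j$, $j \in \{0, 1, \ldots, K_+\}$, where $\eta > 0$ is a finite-difference parameter to be optimized at the end. Combining the two resulting one-sided inequalities (one from each sign) bounds each coordinate of $v$ by
\begin{equation*}
|v_j| \leq \frac{2}{\eta}\, \sup_{\norm{\lambda}_\infty \leq \eta} |(\delta F)(t_N, x_N, \lambda_N + \lambda)| + (X + C)\, \eta.
\end{equation*}
Squaring, summing over the $K_+ + 1$ coordinates, and taking expectations (using $\E X^2 \leq C$) yields
\begin{equation*}
\E \norm{v}_2^2 \leq C'\, \bigg(\frac{1}{\eta^2}\, \E\sup_{\norm{\lambda}_\infty \leq \eta} |(\delta F)(t_N, x_N, \lambda_N + \lambda)|^2 + \eta^2\bigg),
\end{equation*}
with a constant $C'$ that may depend on $K_+$.

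Finally, I would invoke \Cref{SBM free energy uniform concentration} with $p = 2$ and any $\epsilon' > 0$ to bound the remaining supremum by $C\, N^{-1 + 2\epsilon'}$, which is allowed as soon as $\eta$ is smaller than the threshold $M$ from that lemma. Choosing $\eta = N^{-1/4 + \epsilon'/2}$ balances the two contributions and produces $\E \norm{v}_2^2 \leq C\, N^{-1/2 + \epsilon'}$, which is the desired bound after renaming $\epsilon'$ to $\epsilon$. The argument is essentially a routine finite-difference manipulation leveraging that the two Hessian bounds pinch $\delta F$ from both sides; there is no real obstacle, only the bookkeeping of confirming that the step size $\eta$ lies in the admissible range in which all of the preceding lemmas apply, which is automatic for $N$ large enough.
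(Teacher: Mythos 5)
Your proposal is correct and follows essentially the same route as the paper: both combine \Cref{SBM perturbed free energy Hessian bounded above} and \Cref{SBM perturbed free energy Hessian bounded below} to pinch $F_N^{(K)}-\overline{F}_N^{(K)}$ from both sides, insert the uniform concentration bound of \Cref{SBM free energy uniform concentration} with $p=2$, and optimize the finite-difference scale to $N^{-1/4+\epsilon/2}$. The only (harmless) difference is that you difference along the $\pm$ coordinate directions, picking up a constant depending on $K_+$, whereas the paper differences once along the random direction of the gradient itself, which is precisely why it needs the supremum over $\lambda$ in \Cref{SBM free energy uniform concentration}.
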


\begin{proof}
Given $\mu \in [0,C^{-1}]$, consider the random perturbation parameter
$$\lambda=\mu \cdot \frac{\nabla_{\lambda}\Big(F_N^{(K)}-\overline{F}_N^{(K)}\Big)(t_N,x_N,\lambda_N)}{\Big\lVert \nabla_{\lambda}\Big(F_N^{(K)}-\overline{F}_N^{(K)}\Big)(t_N,x_N,\lambda_N)\Big\rVert_2}.$$
Combining \Cref{SBM perturbed free energy Hessian bounded above} with \Cref{SBM perturbed free energy Hessian bounded below} shows that
\begin{align*}
\Big(F_N^{(K)}-\overline{F}_N^{(K)}\Big)(t_N,x_N,\lambda_N+\lambda)-\Big(&F_N^{(K)}-\overline{F}_N^{(K)}\Big)(t_N,x_N,\lambda_N)\\
&\geq \mu\Big\lVert \nabla_{\lambda}\Big(F_N^{(K)}-\overline{F}_N^{(K)}\Big)(t_N,x_N,\lambda_N)\Big\rVert_2-(C+X)\norm{\lambda}_2^2.
\end{align*}
Rearranging, squaring and taking expectations yields
\begin{align*}
\mu^2\E\Big\lVert\nabla_{\lambda}\Big(F_N^{(K)}-\overline{F}_N^{(K)}\Big)(t_N,x_N,\lambda_N)\Big\rVert_2^2\leq C\bigg(\E\sup_{\norm{\lambda}_{\infty}\leq C^{-1}}\Big\rvert\Big(F_N^{(K)}-\overline{F}_N^{(K)}\Big)(t_N,x_N,\lambda_N&+\lambda)\Big\rvert^2+\mu^4\bigg),
\end{align*}
where we have used the fact that $\E X^2\leq C$ and $\norm{\lambda}_2=\mu$. Invoking \Cref{SBM free energy uniform concentration} gives
$$\E\Big\lVert\nabla_{\lambda}\Big(F_N^{(K)}-\overline{F}_N^{(K)}\Big)(t_N,x_N,\lambda_N)\Big\rVert_2^2\leq C\bigg(\frac{1}{N^{1-2\epsilon}\mu^2}+\mu^2\bigg).$$
Optimizing over $\mu$ leads to the choice $\mu=N^{-\frac{1}{4}+\frac{\epsilon}{2}}$ and completes the proof.
\end{proof}

\begin{lemma}\label{SBM concentration of FDS Hamiltonians}
For any $1\leq k\leq K_+$, there exists a constant $C<\infty$ not depending on $N$ such that
\begin{equation}\label{eqn: SBM concentration of FDS Hamiltonians}
\E \big\langle (\L_0-\E\langle \L_0\rangle)^2\big\rangle\leq CN^{-\frac{1}{4}} \quad \text{and} \quad \E \big\langle (\L_k-\E\langle \L_k\rangle)^2\big\rangle\leq CN^{-\frac{1}{20}}.
\end{equation}
Here, the Gibbs average $\langle \cdot \rangle$ is associated with the perturbed Hamiltonian \eqref{eqn: SBM perturbed Hamiltonian} evaluated at the contact point $(t_N,x_N,\lambda_N)$. 
\end{lemma}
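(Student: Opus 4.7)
The plan is to split
\[
\E\big\langle(\L_k-\E\langle\L_k\rangle)^2\big\rangle = \E\big\langle(\L_k-\langle\L_k\rangle)^2\big\rangle + \E\big(\langle\L_k\rangle-\E\langle\L_k\rangle\big)^2
\]
into a \emph{thermal} and a \emph{disorder} contribution, and to bound each separately by leveraging the identities \eqref{eqn: SBM lambda 0 derivatives}--\eqref{eqn: SBM lambda k derivatives} that link these quantities to the first and second $\lambda$-derivatives of $F_N^{(K)}$ at the contact point $(t_N,x_N,\lambda_N)$. The thermal variance will be controlled by the fact that the averaged free energy is touched from above at $\lambda_N$ by the smooth function $\widetilde\phi$, while the disorder variance will be controlled directly by the gradient concentration estimate from Lemma~\ref{perturbed free energy concentration of gradient}.

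For the thermal part, I would first specialize Lemma~\ref{SBM perturbed free energy Hessian bounded above} to $\lambda = r e_k$ with $|r|$ small to obtain the one-sided increment bound
\[
\overline F_N^{(K)}(t_N,x_N,\lambda_N + r e_k) - \overline F_N^{(K)}(t_N,x_N,\lambda_N) - r\,\partial_{\lambda_k}\overline F_N^{(K)}(t_N,x_N,\lambda_N) \leq C r^2,
\]
and then, since $\overline F_N^{(K)}$ is smooth in $\lambda$ at each fixed $N$, divide by $r^2$ and let $r \to 0$ to obtain the pointwise bound $\partial_{\lambda_k}^2 \overline F_N^{(K)}(t_N,x_N,\lambda_N) \leq 2C$ with $C$ independent of $N$. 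For $k=0$, rearranging \eqref{eqn: SBM lambda 0 derivatives} then provides a formula for $\E\langle(\HH_0'-\langle\HH_0'\rangle)^2\rangle$ whose remaining $Z_0$-term is handled by a Gaussian integration by parts, $\E\langle\sigma\rangle \cdot Z_0 = \sqrt{\lambda_{0,N}}\sum_i(1-\E\langle\sigma_i\rangle^2) \leq N\sqrt{\lambda_{0,N}}$; combining these two estimates gives $\E\langle(\HH_0'-\langle\HH_0'\rangle)^2\rangle \leq CN$ and hence $\E\langle(\L_0-\langle\L_0\rangle)^2\rangle \leq C/(N\epsilon_N^2) \leq CN^{-3/4}$ using $|\gamma|<1/8$. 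The analogous argument for $k\geq 1$ via \eqref{eqn: SBM lambda k derivatives} and the bound $|\E\langle\HH_k''\rangle| \leq Cs_N$ from \eqref{eqn: SBM Hk second derivative} yields $\E\langle(\L_k-\langle\L_k\rangle)^2\rangle \leq CN/s_N^2 \leq CN^{-3/5}$ since $\eta>4/5$.

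For the disorder variance, the identities $\partial_{\lambda_0} F_N^{(K)} = \epsilon_N\langle\L_0\rangle$ and $\partial_{\lambda_k} F_N^{(K)} = (s_N/N)\langle\L_k\rangle$ for $k\geq 1$, combined with Lemma~\ref{perturbed free energy concentration of gradient}, will yield
\[
\E(\langle\L_0\rangle-\E\langle\L_0\rangle)^2 \leq CN^{-1/2 + \epsilon + 2|\gamma|} \qquad\text{and}\qquad \E(\langle\L_k\rangle-\E\langle\L_k\rangle)^2 \leq CN^{3/2 + \epsilon - 2\eta}
\]
for every $\epsilon>0$. The constraints $|\gamma|<1/8$ and $\eta>4/5$ give $-1/2 + 2|\gamma| < -1/4$ and $3/2 - 2\eta < -1/10$, so choosing $\epsilon$ small enough produces the bounds $CN^{-1/4}$ and $CN^{-1/20}$ respectively; adding the (much smaller) thermal contribution completes the proof. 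The main subtlety is the first step, since Lemma~\ref{SBM perturbed free energy Hessian bounded above} only provides an integrated finite-difference estimate: passing to a pointwise Hessian bound at $\lambda_N$ relies on smoothness of $\overline F_N^{(K)}$ in $\lambda$ at each fixed $N$, and the crucial point is that the resulting upper bound $2C$ remains uniform in $N$.
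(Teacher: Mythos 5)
Your proposal is correct and takes essentially the same route as the paper: the same split into thermal and disorder variances, with the thermal part controlled via the identities \eqref{eqn: SBM lambda 0 derivatives}--\eqref{eqn: SBM lambda k derivatives} together with the Hessian upper bound of \Cref{SBM perturbed free energy Hessian bounded above}, the bound \eqref{eqn: SBM Hk second derivative} on $\E\langle\HH_k''\rangle$, and the disorder part controlled by \Cref{perturbed free energy concentration of gradient}, yielding the same exponents. The only cosmetic differences are that you make explicit the passage from the integrated increment bound to the pointwise bound $\partial_{\lambda_k}^2\overline{F}_N^{(K)}(t_N,x_N,\lambda_N)\leq C$ (which the paper uses implicitly) and that you treat the $Z_0$ cross term by Gaussian integration by parts, where the paper simply bounds it crudely; both give a term of smaller order, so the conclusion is unaffected.
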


\begin{proof}
A direct computation using \eqref{eqn: SBM lambda 0 derivatives} shows that
\begin{align*}
N^2\epsilon_N^2\E \big\langle (\L_0-\E\langle \L_0\rangle)^2\big\rangle&=\E \big\langle (\HH_0'-\langle \HH_0'\rangle)^2\big\rangle+\E \big(\langle \HH_0'\rangle-\E\langle \HH'_0\rangle\big)^2\\
&=N\partial_{\lambda_0}^2\overline{F}_N^{(K)}(t_N,x_N,\lambda_N)+\frac{\epsilon_N^2}{4\lambda_{0,N}^{3/2}}\E\langle \sigma\rangle \cdot Z_0\\
&\qquad\qquad\qquad\qquad\quad+N^2\E\Big(\partial_{\lambda_0}\Big(F_N^{(K)}-\overline{F}_N^{(K)} \Big)(t_N,x_N,\lambda_N)\Big)^2.
\end{align*}
It follows by  \Cref{SBM perturbed free energy Hessian bounded above} and \Cref{perturbed free energy concentration of gradient} that for any $\epsilon>0$,
$$\E \big\langle (\L_0-\E\langle \L_0\rangle)^2\big\rangle\leq \frac{C}{N^2\epsilon_N^2}\Big(N+N\epsilon_N^{2-\frac{3}{2}}+N^{2-\frac{1}{2}+\epsilon}\Big)=C\Big(N^{2\abs{\gamma}-1}+N^{\frac{3}{2}\abs{\gamma}-1}+N^{2\abs{\gamma}+\epsilon-\frac{1}{2}}\Big).$$
Remembering that $-1/8<\gamma<0$ gives the first bound in \eqref{eqn: SBM concentration of FDS Hamiltonians}. To establish the second bound, fix $1\leq k\leq K_+$. A direct computation using \eqref{eqn: SBM lambda k derivatives} yields
\begin{align*}
s_N^2\E \big\langle (\L_k-\E\langle \L_k\rangle)^2\big\rangle&=\E \big\langle (\HH_k'-\langle \HH_k'\rangle)^2\big\rangle+\E \big(\langle \HH_k'\rangle-\E\langle \HH'_k\rangle\big)^2\\
&=N\partial_{\lambda_k}^2 \overline{F}_N^{(K)}(t_N,x_N,\lambda_N)-\E\langle \HH_k''\rangle\\
&\qquad\qquad\qquad\qquad\quad+N^2\E\Big(\partial_{\lambda_k}\Big(F_N^{(K)}-\overline{F}_N^{(K)} \Big)(t_N,x_N,\lambda_N)\Big)^2.
\end{align*}
Invoking \eqref{eqn: SBM Hk second derivative}, \Cref{SBM perturbed free energy Hessian bounded above} and \Cref{perturbed free energy concentration of gradient} reveals that for any $\epsilon>0$,
$$\E \big\langle (\L_k-\E\langle \L_k\rangle)^2\big\rangle\leq \frac{C}{s_N^2}\Big(N+s_N+N^{\frac{3}{2}+\epsilon}\Big)=C\Big(N^{1-2\eta}+N^{-\eta}+N^{\frac{3}{2}+\epsilon-2\eta}\Big).$$
Choosing $\epsilon=1/20$, and recalling that $-1/8<\gamma<0$ and $4/5<\eta<1$ completes the proof.
\end{proof}

This result implies the fundamental assumption \eqref{eqn: SBMA limits to get FDS} in \Cref{SBM app multioverlap concentration}. Combining this with \Cref{SBMA concentration of R1}, \Cref{SBMA concentration of R2} and \Cref{SBMA Franz de Sanctis} and fixing $\epsilon>0$, it is possible to find $\delta>0$ so the statement of \Cref{SBMA finitary MOC} holds. In particular, setting $K_+=\lfloor \delta^{-1}\rfloor$ in the perturbed Hamiltonian \eqref{eqn: SBM perturbed Hamiltonian} ensures that
\begin{equation}\label{eqn: SBM multi-overlap concentration}
\E\big\langle (R_{[m]}-\E\langle R_{[m]}\rangle)^2\big\rangle\leq \epsilon
\end{equation}
for $1\leq m\leq \lfloor \epsilon^{-1}\rfloor$. Together with the computations in \Cref{sec: SBM HJ eqn derivation}, this multi-overlap concentration allows us to finally give a proof of \Cref{SBM free energy approximate solution at contact point}.

\begin{proof}[Proof of \Cref{SBM free energy approximate solution at contact point}.]
To alleviate notation, we always implicitly assume that $\overline{F}_N^{(K)}$ and its derivatives are evaluated at the contact point $(t_N,x_N,\lambda_N)$. The definition of the modified free energy in \eqref{eqn: SBM modified free energy} and \Cref{SBM enriched free energy time derivative Taylor} reveal that
\begin{equation}\label{eqn: SBM free energy approximate solution at contact time-derivative}
\partial_t\overline{F}_N'^{(K)}=\frac{1}{2}\big(c+\Delta\m^2\big)\log (c)+\frac{c}{2}\sum_{n\geq 2}\frac{(-\Delta/c)^n}{n(n-1)}\E\big\langle R_{[n]}^2\big\rangle-\frac{c}{2}+\frac{b}{2}+\BigO(N^{-1}).
\end{equation}
On the other hand, the duality relation \eqref{eqn: SBM Gateux derivative projection}, the definition of the modified free energy in \eqref{eqn: SBM modified free energy} and \Cref{SBM enriched free energy Gateaux derivative Taylor} imply that for any $k\in \D_K$,
\begin{align*}
\partial_{x_k}\overline{F}_N'^{(K)}&=\frac{1}{\abs{\D_K}}\bigg(\big(c+\Delta \m k\big)\log(c)+c\sum_{n\geq 2}\frac{(-\Delta/c)^n}{n(n-1)}\E\langle R_{[n]}\rangle k^n-c+b\bigg)+\BigO(N^{-1}).
\end{align*}
If we denote by $\mu^*=\L(\langle \sigma_i\rangle)$ the law of the Gibbs average of a uniformly sampled spin coordinate, then the Nishimori identity \eqref{eqn: SBM Nishimori identity} and the definition of $\td g_b$ in \eqref{e.def.tdg} allow us to rewrite this as 
$$\partial_{x_{k}}\overline{F}_N'^{(K)}=\frac{1}{\abs{\D_K}}\int_{-1}^1 \td g_b(ky)\ud \mu^*(y)+\BigO\big(N^{-1}\big).$$
The mean value theorem shows that
\begin{align*}
\abs{\D_K}\Big\lvert \frac{1}{\abs{\D_K}}\int_{-1}^1 \td g_b(ky)\ud \mu^*(y) - \widetilde{G}_b^{(K)}x^{(K)}(\mu^*)_{k}\Big\rvert
&\leq \sum_{k'\in \D_K}\int_{k'}^{k'+2^{-K}}\abs{\td g_b(ky)-\td g_b(kk')}\ud \mu^*(y)\\
&\leq \frac{\norm{\td g_b'}_\infty}{2^K},
\end{align*}
which means that
$$\dNorm{1}{\nabla_x \overline{F}_N'^{(K)}-\widetilde{G}_b^{(K)}x^{(K)}(\mu^*)}\leq \frac{\norm{\td g'_b}_\infty}{2^K}+\BigO\big(N^{-1}\big).$$
In particular,
$$\dNorm{1}{\widetilde{G}_b^{(K)}x^{(K)}(\mu^*)}\leq \dNorm{1}{\nabla_x \overline{F}_N'^{(K)}}+\frac{\norm{\td g_b'}_\infty}{2^K}+\BigO\big(N^{-1}\big)\leq \norm{\td g_b}_\infty+\norm{\td g_b'}_\infty+\BigO\big(N^{-1}\big)\leq R$$
for $N$ large enough.
Remembering that $\smash{\widetilde{\H}_{b,K,R}}$ coincides with $\smash{\widetilde{\C}_{b,K}}$ on $\smash{\td \CC_{b,K}\cap B_{K,R}}$ and leveraging the Lipschitz continuity of $\smash{\widetilde{\H}_{b,K,R}}$ in \Cref{SBM extending the non-linearity} gives
$$\Big\lvert \widetilde{\H}_{b,K,R}\Big(\nabla_x \overline{F}_N'^{(K)}\Big)-\widetilde{\C}_{b,K}\big(\widetilde{G}_b^{(K)}x^{(K)}(\mu^*)\big)\Big\rvert\leq \frac{8RM_b\norm{\td g_b'}_\infty}{2^Km_b^2}+\BigO\big(N^{-1}\big).$$
Another application of the mean value theorem shows that
$$\Big\lvert \widetilde{\C}_{b,K}\big(\widetilde{G}_b^{(K)}x^{(K)}(\mu^*)\big)-\frac{1}{2}\int_{-1}^1 \int_{-1}^1 \td g_b(xy)\ud \mu^*(y)\ud \mu^*(x)\Big\rvert\leq \frac{\norm{\td g_b'}_\infty}{2^K},
$$
while a direct computation using the Nishimori identity \eqref{eqn: SBM Nishimori identity} reveals that
$$\frac{1}{2}\int_{-1}^1 \td g_b(xy)\ud \mu^*(y)\ud\mu^*(x)=\frac{1}{2}\big(c+\Delta \m^2\big)\log(c)+\frac{c}{2}\sum_{n\geq 2}\frac{(-\Delta/c)^n}{n(n-1)}\big(\E\langle R_{[n]}\rangle\big)^2-\frac{c}{2}+\frac{b}{2}.$$
It follows by \eqref{eqn: SBM free energy approximate solution at contact time-derivative} that up to an error vanishing with $N$,
$$\Big\lvert \partial_t\overline{F}_N'^{(K)}-\widetilde{\H}_{K,R}\Big(\nabla_x \overline{F}_N'^{(K)}\Big)\Big\rvert\leq \Big\lvert\frac{c}{2}\sum_{n\geq 2}\frac{(-\Delta/c)^n}{n(n-1)}\E\big\langle (R_{[n]}-\E\langle R_{[n]}\rangle)^2\big\rangle\Big\rvert+\frac{8RM_b\norm{\td g_b'}_\infty}{2^Km_b^2}+\frac{\norm{\td g'}_\infty}{2^K}.$$
Invoking the multi-overlap concentration \eqref{eqn: SBM multi-overlap concentration}, noticing that the multi-overlaps are bounded by one and using the formula for the sum of a geometric series implies that, up to an error vanishing with $N$,
$$\Big\lvert \partial_t\overline{F}_N'^{(K)}-\widetilde{\H}_{K,R}\Big(\nabla_x \overline{F}_N'^{(K)}\Big)\Big\rvert\leq \frac{\epsilon c^2}{2(c-\abs{\Delta})}+\frac{c}{2}\sum_{n\geq \lfloor\epsilon^{-1}\rfloor}\big(\abs{\Delta}/c\big)^n+\frac{8RM_b\norm{\td g_b'}_\infty}{2^Km_b^2}+\frac{\norm{\td g'}_\infty}{2^K}.$$
Defining $\EE_{\epsilon,K}$ to be the right-hand side of this expression completes the proof.
\end{proof}

\section{The disassortative sparse stochastic block model}\label{sec: SBM disassortative Hopf-Lax}
In this section, we leverage \Cref{SBM main result} to recover the known variational formula for the sparse stochastic block model in the disassortative regime, $\Delta\leq 0$. The idea will be to apply Theorem 1.5 in~\cite{TD_JC_HJ} to the infinite-dimensional Hamilton-Jacobi equation \eqref{eqn: SBM enriched free energy HJ equation} and obtain an infinite-dimensional Hopf-Lax formula. This Hopf-Lax formula will coincide with the standard variational formula obtained in \cite{PanNote, CKPZ} and stated in \Cref{SBM main result}. The key insight will be that in the disassortative setting, for $b$ large enough, the function $\td g_b$ in \eqref{e.def.tdg} is such that
\begin{equation}\label{eqn: SBME tg non-negative definite}
\int_{-1}^1 \int_{-1}^1\td g_b(xy) \ud \mu(x)\ud \mu(y)\geq 0
\end{equation}
for every signed measures $\mu \in \M_s$. This assumption is equivalent to the non-negative definiteness of each of the matrices $\smash{\td G_b^{(K)}}$ and to the convexity of each of the projected non-linearities \eqref{eqn: projected non-linearity b}.

\begin{lemma}
If $\Delta\leq0$ and $b$ is large enough, then the function $\td g_b:[-1,1]\to \R$ defined in \eqref{e.def.tdg} satisfies \eqref{eqn: SBME tg non-negative definite}.
\end{lemma}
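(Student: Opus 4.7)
My plan is to diagonalize the quadratic form in \eqref{eqn: SBME tg non-negative definite} by means of the power series representation of $g$ supplied in \eqref{eqn: SBM g function}. Adding $b$ yields
\begin{equation*}
\tilde g_b(z) = (b + c\log c - c) + (\Delta \log c)\, z + c\sum_{n\geq 2} \frac{(-\Delta/c)^n}{n(n-1)}\, z^n.
\end{equation*}
Setting $z = xy$ and integrating against $\ud \mu(x) \ud \mu(y)$, the uniform convergence of the series on $[-1,1]^2$ together with the finiteness of $|\mu|[-1,1]$ justifies an interchange of summation and integrations by Fubini-Tonelli, exactly as in equations (1.6)-(1.7) of \cite{TD_JC_HJ}. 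Because each $(xy)^n$ factorizes as $x^n y^n$, writing $m_n := \int_{-1}^1 x^n \ud \mu(x)$ produces the diagonal representation
\begin{equation*}
\int_{-1}^1 \int_{-1}^1 \tilde g_b(xy)\, \ud \mu(x)\, \ud \mu(y) = (b + c\log c - c)\, m_0^2 + (\Delta \log c)\, m_1^2 + c\sum_{n\geq 2} \frac{(-\Delta/c)^n}{n(n-1)}\, m_n^2.
\end{equation*}

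It thus reduces to checking that every coefficient in this expansion is non-negative. The coefficients indexed by $n \geq 2$ are non-negative immediately under the hypothesis $\Delta \leq 0$, since then $(-\Delta/c)^n \geq 0$. The $n = 0$ coefficient is made non-negative by choosing any $b \geq c - c\log c$, which is exactly where the ``$b$ large enough'' condition enters. The main obstacle is the linear coefficient $\Delta \log c$: it is unaffected by the choice of $b$, and under $\Delta \leq 0$ it is non-negative precisely when $c \leq 1$. I therefore expect the proof to either invoke such a regime on $c$ (possibly built into the standing assumptions used by \cite{TD_JC_HJ}), or else to refine the diagonal decomposition by absorbing any potentially negative contribution from $m_1^2$ into the $m_0^2$ and $m_2^2$ terms by further enlarging $b$, for instance via a Cauchy-Schwarz-type inequality that controls $m_1^2$ in terms of $m_0$ and $m_2$ and by completing a square to dominate the resulting cross term.
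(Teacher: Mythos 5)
Your diagonalization is, in substance, the same computation as the paper's: expanding $\td g_b(xy)$ in powers of $xy$ and integrating term by term gives the quadratic form the value $(b+c\log c-c)m_0^2+\Delta\log(c)\,m_1^2+c\sum_{n\ge2}\frac{(-\Delta/c)^n}{n(n-1)}m_n^2$, which is exactly the decomposition the paper writes at the matrix level as $(b+c\log c-c)\mathbf{\iota}\mathbf{\iota}^T+\Delta\log(c)\,\mathbf{k}\mathbf{k}^T+c\sum_{n\geq 2}\frac{(-\Delta/c)^n}{n(n-1)}(\mathbf{k}\mathbf{k}^T)^{\odot n}$ for the projections $\td G_b^{(K)}$; your remark that $\int_{-1}^1\int_{-1}^1(xy)^n\ud\mu(x)\ud\mu(y)=m_n^2\geq 0$ even spares you the Schur product theorem for the $n\geq 2$ terms. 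The genuine gap is that your argument stops exactly where the proof has to be made, namely at the coefficient $\Delta\log(c)$ of $m_1^2$, and neither of the two escape routes you sketch closes it. There is no standing assumption $c\leq 1$ anywhere in the paper ($c>0$ is arbitrary with $\abs{\Delta}<c$), so you cannot ``invoke such a regime on $c$''; and the bound $m_1^2\leq m_0m_2$ that your absorption scheme would require is false for signed measures: the lemma is asserted for every $\mu\in\M_s$, and $\mu=\delta_1-\delta_{-1}$ has $m_0=m_2=0$ while $m_1=2$, so no enlargement of $b$ can compensate a term that survives when $m_0=0$. As written, the proposal therefore identifies the obstruction but does not prove the statement.

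For comparison, the paper reduces to discrete measures supported on $\D_K$, groups the constant and linear terms, asserts that for $b>2c\abs{\log(c)}+c$ the matrix $(b+c\log c-c)\mathbf{\iota}\mathbf{\iota}^T+\Delta\log(c)\,\mathbf{k}\mathbf{k}^T$ is non-negative definite, and handles the $n\geq 2$ terms via $\Delta\leq 0$ and the Schur product theorem. When $c\leq 1$ no grouping is needed at all, since then $\Delta\log(c)\geq 0$ and every coefficient in your expansion is non-negative once $b\geq c-c\log c$; in that regime your argument, completed by this one-line observation, is a correct and slightly more elementary version of the paper's. When $c>1$, however, your worry is substantive rather than cosmetic: increasing $b$ only strengthens the $\mathbf{\iota}\mathbf{\iota}^T$ direction, and a vector orthogonal to $\mathbf{\iota}$ but not to $\mathbf{k}$ (equivalently a signed measure with $m_0=0$ and $m_1\neq 0$, as above) is insensitive to it, so the paper's grouping does not obviously neutralize the linear term either. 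The treatment of the $n=1$ term for general $c$ is thus the real content of this lemma, and it is precisely what your proposal leaves unproved.
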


\begin{proof}
By a simple approximation argument, it suffices to establish \eqref{eqn: SBME tg non-negative definite} for a discrete signed measure of the form
$$\mu=\frac{1}{\abs{\D_K}}\sum_{k\in \D_K} x_k \delta_k$$
for some $\smash{x\in \R^{\D_K}}$. For such a measure,
$$\int_{-1}^1 \int_{-1}^1\td g_b(xy) \ud \mu(x)\ud \mu(y)=\frac{1}{\abs{\D_K}^2}\sum_{k,k'\in\D_K}\td g_b(kk')x_kx_{k'}=x^T\widetilde{G}_b^{(K)}x,$$
so \eqref{eqn: SBME tg non-negative definite} is equivalent to the non-negative definiteness of each of the matrices $\smash{\widetilde{G}_b^{(K)}}$. Observe that for any $k,k'\in \D_K$,
$$\big(\widetilde{G}_b^{(K)}\big)_{kk'}=\frac{1}{\abs{\D_K}^2}\td g_b(kk')=\frac{1}{\abs{\D_K}^2}\bigg(b+c\log(c)-c+\Delta kk'\log(c)+c\sum_{n\geq 2}\frac{(-\Delta/c)^n}{n(n-1)}(kk')^n\bigg).$$
If we introduce the vectors $\mathbf{k}=(k)_{k\in \D_K}$ and $\mathbf{\iota}=(1)_{k\in \D_K}$, and write $\odot n$ for the $n$-fold Hadamard product on the space of $\D_K\times \D_K$ matrices, this implies that
$$\widetilde{G}_b^{(K)}=\frac{1}{\abs{\D_K}^2}\bigg((b+c\log(c)-c)\mathbf{\iota}\mathbf{\iota}^T+\Delta\log(c)\mathbf{k}\mathbf{k}^T+\sum_{n\geq 2}\frac{(-\Delta/c)^n}{n(n-1)}\big(\mathbf{k}\mathbf{k}^T\big)^{\odot n}\bigg)=\lim_{M\to \infty}\widetilde{G}^{(K)}_{b,M}$$
for the matrix
$$\widetilde{G}^{(K)}_{b,M}=\frac{1}{\abs{\D_K}^2}\bigg((b+c\log(c)-c)\mathbf{\iota}\mathbf{\iota}^T+\Delta\log(c)\mathbf{k}\mathbf{k}^T+\sum_{2\leq n\leq M}\frac{(-\Delta/c)^n}{n(n-1)}\big(\mathbf{k}\mathbf{k}^T\big)^{\odot n}\bigg).$$
Choosing $b>2c\abs{\log(c)}+c$ ensures that the first two terms in this sum define a non-negative definite matrix. Using that $\Delta \le 0$ and the Schur product theorem, we see that the matrix $\smash{\widetilde{G}^{(K)}_{b,M}}$ is a positive linear combination of non-negative definite matrices, and is therefore non-negative definite. Noticing that the limit of non-negative definite matrices is again non-negative definite completes the proof.
\end{proof}

This result allows us to apply Theorem 1.5 in \cite{TD_JC_HJ} to the infinite-dimensional Hamilton-Jacobi equation~\eqref{eqn: SBM enriched free energy HJ equation} and obtain a variational formula for its solution. To state this formula concisely, through a slight abuse of notation, 
introduce the functional $\smash{\Par:\Rpp\times \M_+\times \M_+\to \R}$ defined by
\begin{equation}
\Par(t,\mu,\nu)=\psi(\mu+t\nu)-\frac{t}{2}\int_{-1}^1 G_\nu(y)\ud \nu(y).
\end{equation}
Theorem 1.5 in \cite{TD_JC_HJ} and \eqref{SBM H a} imply that the unique solution to the infinite-dimensional Hamilton-Jacobi equation~\eqref{eqn: SBM enriched free energy HJ equation} is given by the Hopf-Lax formula
\begin{equation}\label{eqn: SBM modified FE Hopf-Lax}
f(t,\mu)=\sup_{\nu\in \Pr[-1,1]}\Par(t,\mu,\nu)
\end{equation}
for every $t>0$ and $\mu \in \M_+$. Combining this representation formula with \Cref{SBM main result} and a simple interpolation argument taken from \cite{PanNote}, we now prove \Cref{SBM disassortative Hopf-Lax}.

\begin{proof}[Proof of \Cref{SBM disassortative Hopf-Lax}.]
By \Cref{SBM main result} and the Hopf-Lax formula \eqref{eqn: SBM modified FE Hopf-Lax}, the limit of the free energy~\eqref{eqn: SBM free energy} satisfies the upper bound
\begin{equation}\label{eqn: SBM disassortative Hopf-Lax upper bound key}
\limsup_{N\to \infty}\overline{F}_N\leq \sup_{\nu\in \Pr[-1,1]}\Par(1,0,\nu),
\end{equation}
where $0$ denotes the zero measure. To show that the right-hand side of this expression may be bounded by the supremum of the functional \eqref{eqn: SBM Parisi functional} over measures $\mu\in \M_p$, for $b>1$ large enough so the function $\td g_b$ defined in \eqref{e.def.tdg} is strictly positive on $[-1,1]$ and each integer $N\geq 1$, introduce the functional $\smash{\td\Par_{b,N}: \Rp\times \M_+\times \M_+\to \R}$ defined by
$$\td \Par_{b,N}(t,\mu,\nu)=\td \psi_{b,N}(\mu+t\nu)-\frac{t}{2}\int_{-1}^1 \td G_{b,\nu}(y)\ud \nu(y).$$
Here $\td \psi_{b,N}:\M_+\to \R$ denotes the initial condition
$$\td \psi_{b,N}(\mu)=\psi_N(\mu)+b\int_{-1}^1 \ud \mu$$
and $\td G_{b,\nu}:[-1,1]\to \R$ denotes the function
$$\td G_{b,\nu}(x)=\int_{-1}^1 \td g_b(xy)\ud \nu(y).$$
By Theorem 1.5 in \cite{TD_JC_HJ}, there exists a probability measure $\td \nu\in \Pr[-1,1]$ which maximizes the right-hand side of \eqref{eqn: SBM disassortative Hopf-Lax upper bound key}. It follows by \Cref{SBM convergence of initial condition} that
\begin{equation}\label{eqn: SBM disassortative Hopf-Lax upper bound P pre}
\sup_{\nu\in \Pr[-1,1]}\Par(1,0,\nu)=\lim_{N\to \infty}\td \Par_{b,N}(1,0,\td \nu)-\frac{b}{2}\leq \limsup_{N\to \infty}\sup_{\nu\in \M_+}\td \Par_{b,N}(1,0,\nu)-\frac{b}{2}.
\end{equation}
An identical argument to that in Lemma 5.1 of \cite{TD_JC_HJ} gives a sequence of maximizing measures $(\nu_N)\subset \M_+$ with
\begin{equation}\label{eqn: SBM disassortative Hopf-Lax upper bound P maximizer}
\sup_{\nu\in \M_+}\td \Par_{b,N}(1,0,\nu)=\td \Par_{b,N}(1,0,\nu_N).
\end{equation}
By \Cref{SBM enriched free energy Gateaux derivative Taylor} and \eqref{eqn: SBM enriched free energy in cone}, the Gateaux derivative density of the initial condition $\smash{\td \psi_{b,N}}$ at the measure $\nu_N$ is given by
$\smash{D_\mu \td \psi_{b,N}(\nu_N,\cdot)=\td G_{b,\nu_N^*}+\BigO\big(N^{-1}\big)}$ for some measure $\nu_N^*\in \M_p$. Up to adding errors of $\BigO(N^{-1})$ throughout, the proof of Theorem 1.3 in \cite{TD_JC_HJ} applies and reveals that each maximizer $\nu_N\in \M_+$ satisfies the approximate first order condition
$$D_\mu \td \psi_{b,N}(\nu_N,\cdot)=\td G_{b,\nu_N}+\BigO\big(N^{-1}\big).$$
This means that
$$\td G_{b,\nu_N^*}=D_\mu\td \psi_{b,N}(\nu_N,\cdot)+\BigO\big(N^{-1}\big)=\td G_{b,\nu_N}+\BigO\big(N^{-1}\big).$$
Together with the definition of $\smash{\td g_b}$ in \eqref{e.def.tdg}, this implies that
\begin{align*}
(b+c\log(c)-c)&\int_{-1}^1 \ud \nu_N(y)+\Delta\log(c)\int_{-1}^1 y\ud \nu_N(y)x+c\sum_{n\geq 2}\frac{(-\Delta/c)^n}{n(n-1)}\int_{-1}^1 y^n\ud \nu_N(y)x^n\\
&=(b+c\log(c)-c)+\Delta \log(c)\m x+c\sum_{n\geq 2}\frac{(-\Delta/c)^n}{n(n-1)}\int_{-1}^1 y^n\ud \nu_N^*(y)x^n+\BigO\big(N^{-1}\big).
\end{align*}
Since $b>1$ and $c\log(c)-c\geq -1$, we must have
$$\int_{-1}^1 \ud \nu_N(y)=1+\BigO\big(N^{-1}\big) \quad \text{and} \quad \int_{-1}^1 y^n\ud \nu_N(y)=\int_{-1}^1 y^n\ud \nu_N^*(y)+\BigO\big(N^{-1}\big)$$
for all $n\geq 2$.
We have used the fact that $\Delta\neq 0$ and accounted for the fact that $c$ could be equal to one. Applying the Prokhorov theorem and passing to a subsequence if necessary, it is therefore possible to ensure that the sequences $\smash{(\nu_N)}$ and $\smash{(\nu_N^*)}$ converge weakly to probability measures $\smash{\nu\in \Pr[-1,1]}$ and $\smash{\nu^*\in \M_p}$ with
$$\int_{-1}^1 y^n\ud \nu(y)=\int_{-1}^1 y^n\ud \nu^*(y)$$
for all $n\neq 1$. Since the set of polynomials with degree one coefficient equal to zero form of a sub-algebra of the space of continuous functions on the compact set $[-1,1]$, the Stone-Weierstrass theorem implies that $\smash{\nu=\nu^*\in \M_p}$. Recalling that we denote $\bar \nu_N$ for the probability measure induced by $\nu_N$, and arguing as in the proof of \Cref{SBM assumptions for WP of HJ}, we have
\begin{align*}
\big\lvert \td \psi_{b,N}(\nu_N)-\td \psi_b(\nu^*)\big\rvert
&
\leq \big\lvert \td\psi_{b,N}(\nu_N)-\td \psi_{b,N}(\bar \nu_N)\big\rvert + \big\lvert \td\psi_{b,N}(\bar \nu_N)-\td \psi_{b,N}(\nu^*)\big\rvert+\big\lvert \td \psi_{b,N}(\nu^*)-\td \psi_b(\nu^*)\big\rvert\\
&
\leq C\TV(\nu_N,\bar \nu_N) +C W(\bar \nu_N, \nu^*)  + \big\lvert \td \psi_{b,N}(\nu^*)-\td \psi_b(\nu^*)\big\rvert,
\end{align*}
for some constant $C>0$ that depends only on $c$.
Recalling that the Wasserstein distance \eqref{eqn: SBM Wasserstein} metrizes the weak convergence of probability measures on $[-1,1]$, observing that 
$$\TV(\nu_N,\bar \nu_N) = \big|1- {\nu_N[-1,1]}  \big|, $$
and using \Cref{SBM convergence of initial condition} and \eqref{eqn: SBM disassortative Hopf-Lax upper bound P maximizer} to let $N$ tend to infinity in \eqref{eqn: SBM disassortative Hopf-Lax upper bound P pre} shows that
\begin{align*}
\sup_{\nu\in \Pr[-1,1]}\Par(1,0,\nu)&\leq \limsup_{N\to \infty}\widetilde{\Par}_{b,N}(1,0,\nu_N)-\frac{b}{2}=\td \psi_b(\nu^*)-\frac{1}{2}\int_{-1}^1 \td G_{b,\nu^*}(y)\ud \nu^*(y)-\frac{b}{2}\\
&\leq \sup_{\nu \in \M_p}\Par(1,0,\nu).
\end{align*}
Substituting this upper bound into \eqref{eqn: SBM disassortative Hopf-Lax upper bound key} yields
$$\limsup_{N\to \infty}\overline{F}_N\leq \sup_{\nu \in \M_p}\Par(1,0,\nu).$$
To express this upper bound in terms of the functional \eqref{eqn: SBM Parisi functional}, fix $\nu \in \M_p$ and denote by $x_1$ and $x_2$ two independent samples from the probability measure $\nu$. The definition of $g$ in \eqref{eqn: SBM g function} implies that
$$\Par(1,0,\nu)=\psi(\nu)+\frac{c}{2}-\frac{1}{2}\big(c+\Delta \m^2\big)\log(c)-\frac{c}{2}\sum_{n\geq 2}\frac{(-\Delta/c)^n}{n(n-1)}\big(\E x_1^n\big)^2.$$
A Taylor expansion of the logarithm shows that
\begin{equation}\label{eqn: SBM Taylor for mixed expectation}
c\sum_{n\geq 2}\frac{(-\Delta/c)^n}{n(n-1)}\big(\E x^n\big)^2=\E(c+\Delta x_1x_2)\log(c+\Delta x_1x_2)-\big(c+\Delta \m^2\big)\log(c)-\Delta \m^2
\end{equation}
from which it follows that
$$\Par(1,0,\nu)=\psi(\nu)+\frac{c}{2}-\frac{1}{2}\E(c+\Delta x_1x_2)\log(c+\Delta x_1x_2)+\frac{\Delta\m^2}{2}=\Par(\nu),$$
and therefore
$$\limsup_{N\to \infty}\overline{F}_N\leq \sup_{\nu\in \M_p}\Par(\nu).$$
This establishes the upper bound in \eqref{eqn: SBM disassortative Hopf-Lax}. To prove the corresponding lower bound, we follow~\cite{PanNote} and proceed by interpolation. Given a measure $\nu \in \M_p$, introduce the interpolating free energy
\begin{equation}
\p(t)=\widetilde{F}_N(t, 1-t, \nu)
\end{equation}
for the free energy $\widetilde{F}_N$ defined in \eqref{eqn: SBM enriched free energy s}. The derivative computations in \Cref{SBM enriched free energy time derivative Taylor} and \Cref{SBM enriched free energy s derivative} together with a computation identical to that in \Cref{SBM enriched free energy Gateaux derivative Taylor} imply that
\begin{align*}
\p'(t)&=\partial_t\widetilde{F}_N(t,1-t,\nu)- \partial_s\widetilde{F}_N(t,1-t,\nu)\\
&=\frac{c}{2}-\frac{1}{2}\big(c+\Delta \m^2\big)\log(c)+\frac{c}{2}\sum_{n\geq 2}\frac{(-\Delta/c)^n}{n(n-1)}\E\big\langle R_{[n]}^2\big\rangle- c\sum_{n\geq 2}\frac{(-\Delta/c)^n}{n(n-1)}\E\langle R_{[n]}\rangle \E x_1^n.\\
&=\frac{c}{2}-\frac{1}{2}\big(c+\Delta \m^2\big)\log(c)- \frac{c}{2}\sum_{n\geq 2}\frac{(-\Delta/c)^n}{n(n-1)}\big(\E x_1^n\big)^2+\frac{c}{2}\sum_{n\geq 2}\frac{(-\Delta/c)^n}{n(n-1)}\E\big\langle (R_{[n]}- \E x_1^n)^2\big\rangle.
\end{align*}
It follows by \eqref{eqn: SBM Taylor for mixed expectation} that
$$\p'(t)=\frac{c}{2}+\frac{\Delta\m^2}{2}-\frac{1}{2}\E(c+\Delta x_1 x_2)\log(c+\Delta x_1x_2)+\frac{c}{2}\sum_{n\geq 2}\frac{(-\Delta/c)^n}{n(n-1)}\E\big\langle (R_{[n]}- \E x_1^n)^2\big\rangle.$$
Since the final term in this equality is non-negative, the fundamental theorem of calculus reveals that
$$\overline{F}_N\geq \psi_N(\nu)+\frac{c}{2}+\frac{\Delta\m^2}{2}-\frac{1}{2}\E(c+\Delta x_1 x_2)\log(c+\Delta x_1x_2),$$
where we have used that $\p(1)=\overline{F}_N$ and $\p(0)=\psi_N(\nu)$. Using \Cref{SBM convergence of initial condition} to let $N$ tend to infinity gives the lower bound
$$\liminf_{N\to \infty}\overline{F}_N\geq \psi(\nu)+\frac{c}{2}+\frac{\Delta\m^2}{2}-\frac{1}{2}\E(c+\Delta x_1 x_2)\log(c+\Delta x_1x_2)=\Par(\nu).$$
Taking the supremum over all measures $\nu\in \M_p$ completes the proof.
\end{proof}

\begin{appendix}
\section{Asymptotic equivalence of free energy functionals}\label{SBM app equivalence}

In this appendix we show that the free energy functionals $\overline{F}_N^\circ$ and $\overline{F}_N$ defined in \eqref{eqn: SBM free energy 0} and \eqref{eqn: SBM free energy}, respectively, are asymptotically equivalent. This will be a consequence of the binomial-Poisson approximation. To state this result concisely, given a separable metric space $S$, recall the definition of the total variation distance
\begin{equation}
\TV(\P,\Q)=\sup\big\{\abs{\P(A)-\Q(A)}\mid A \text{ is a measurable subset of } S\big\}  
\end{equation}
between probability measures $\P,\Q\in \Pr(S)$. Approximating any measurable function with values in $S$ by a sequence of simple functions, one can check that the total variation distance admits the dual representation
\begin{equation}
\TV(\P,\Q)=\sup\bigg\{\Big\lvert \int_{-1}^1 f(x)\ud \P(x)-\int_{-1}^1 f(x)\ud \Q(x)\Big\rvert \mid f:S\to [0,1] \text{ measurable}\bigg\}.
\end{equation}
Using the Hahn-Jordan decomposition, it is also possible to show that
\begin{equation}\label{eqn: SBM dual total variation}
\TV(\P,\Q)=\inf\big\{\P\{X\neq Y\}\mid X\sim \P \text{ and } Y\sim \Q\big\}.
\end{equation}
We will use this result for discrete probability measures supported on the set of natural numbers in which case this representation follows from the Kantorovich-Rubinstein theorem (see Theorem 4.15 in \cite{PanL}). The binomial-Poisson approximation is an upper bound on the total variation distance between the convolution of Bernoulli distributions and an appropriate Poisson distribution.

\begin{lemma}\label{SBM Binomial-Poisson}
Consider independent Bernoulli random variables $X_i\sim \Ber(p_i)$ for $i\leq n$, and let $\smash{\lambda_n=\sum_{i\leq n}p_i}$. If $\smash{S_n=\sum_{i\leq n}X_i}$ and $\smash{\Pi_n\sim \Poi(\lambda_n)}$, then
\begin{equation}
\TV(S_n,\Pi_n)\leq \sum_{i\leq n}p_i^2.
\end{equation}
\end{lemma}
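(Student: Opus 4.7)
The plan is to prove the bound by a standard coupling argument, exploiting the dual representation of the total variation distance recalled in \eqref{eqn: SBM dual total variation}. The idea is to construct, on a single probability space, random variables $(X_i, Y_i)_{i \le n}$ that are jointly independent across $i$, such that the marginal law of $X_i$ is $\Ber(p_i)$, the marginal law of $Y_i$ is $\Poi(p_i)$, and the probability $\P\{X_i \neq Y_i\}$ is as small as possible. Since $\sum_{i \le n} Y_i \sim \Poi(\lambda_n)$ by the additivity of independent Poisson variables, this coupling will simultaneously realize $S_n$ and $\Pi_n$ on the same space.

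First, I would construct the optimal single-coordinate coupling. For a fixed $i$, the maximal coupling of $\Ber(p_i)$ with $\Poi(p_i)$ satisfies
\begin{equation}
\P\{X_i \neq Y_i\} = \TV\big(\Ber(p_i), \Poi(p_i)\big) = 1 - \sum_{k \ge 0} \min\big(\P\{X_i = k\}, \P\{Y_i = k\}\big).
\end{equation}
Using the elementary inequalities $1-p_i \le e^{-p_i}$ and $1-e^{-p_i} \le p_i$, a direct calculation comparing the atoms at $0$, $1$, and $\ge 2$ yields
\begin{equation}
\TV\big(\Ber(p_i), \Poi(p_i)\big) = p_i\big(1 - e^{-p_i}\big) \le p_i^2.
\end{equation}
Concretely, such a coupling can be realized by letting $Y_i \sim \Poi(p_i)$ and then defining $X_i = \mathbf{1}\{Y_i \ge 1\}$ whenever this does not violate the prescribed marginal of $X_i$, randomizing on the discrepancy set; I would spell this out carefully but not in detail here.

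Next, I would take these couplings to be jointly independent across $i \in \{1, \ldots, n\}$. Setting $\widetilde{\Pi}_n = \sum_{i \le n} Y_i$, the independence and additivity of Poisson laws give $\widetilde{\Pi}_n \sim \Poi(\lambda_n)$, while $S_n = \sum_{i \le n} X_i$ has the correct law by construction. A union bound yields
\begin{equation}
\P\big\{S_n \neq \widetilde{\Pi}_n\big\} \le \sum_{i \le n} \P\{X_i \neq Y_i\} \le \sum_{i \le n} p_i^2.
\end{equation}
Invoking \eqref{eqn: SBM dual total variation} for the pair $(S_n, \widetilde{\Pi}_n)$, which share the marginals $(S_n, \Pi_n)$, completes the proof.

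I do not anticipate a serious obstacle: the only non-mechanical point is the single-coordinate TV bound $\TV(\Ber(p_i), \Poi(p_i)) \le p_i^2$, which reduces to the two elementary inequalities mentioned above. The step of lifting this to the independent joint coupling and invoking the coupling characterization of total variation is entirely standard. If one wished to avoid the coupling language altogether, one could instead prove the one-dimensional bound by direct summation over atoms and then use the general subadditivity estimate $\TV(\mathcal L(X_1 + \cdots + X_n), \mathcal L(Y_1 + \cdots + Y_n)) \le \sum_i \TV(\mathcal L(X_i), \mathcal L(Y_i))$ for independent families, which itself is a one-line consequence of \eqref{eqn: SBM dual total variation}.
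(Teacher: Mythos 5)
Your proof is correct. The paper does not give an argument at all for this lemma — it simply cites Theorem 2.4 of \cite{PanL} — and your coupling argument is precisely the classical Le Cam proof that underlies such a citation: the single-coordinate computation $\TV(\Ber(p_i),\Poi(p_i))=1-(1-p_i)-p_ie^{-p_i}=p_i(1-e^{-p_i})\leq p_i^2$ is right (using $1-p_i\leq e^{-p_i}$ to identify the minimum at the atom $0$ and $1-e^{-p_i}\leq p_i$ for the final bound), and combining independent maximal couplings with the union bound and the representation \eqref{eqn: SBM dual total variation} gives the stated estimate with the paper's normalization of the total variation distance. The only slightly informal point is your parenthetical sketch of an explicit coupling, which is unnecessary: the abstract existence of a maximal coupling achieving $\P\{X_i\neq Y_i\}=\TV(\Ber(p_i),\Poi(p_i))$ is standard and suffices.
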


\begin{proof}
See Theorem 2.4 in \cite{PanL}.
\end{proof}

\begin{proposition}
The free energies \eqref{eqn: SBM free energy 0} and \eqref{eqn: SBM free energy} are asymptotically equivalent,
\begin{equation}
\lim_{N\to \infty}\big\lvert \overline{F}_N-\overline{F}_N^\circ\big\rvert=0.
\end{equation}
\end{proposition}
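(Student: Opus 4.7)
The plan is to couple the randomness defining $H_N^\circ$ and $H_N$ on a common probability space using the binomial-Poisson approximation (\Cref{SBM Binomial-Poisson}), bound the resulting Hamiltonian difference in an averaged sense, and then invoke the elementary inequality
\begin{equation*}
|\log Z_N - \log Z_N^\circ| \leq \sup_{\sigma \in \Sigma_N} |H_N(\sigma) - H_N^\circ(\sigma)|
\end{equation*}
to deduce that $|\overline{F}_N - \overline{F}_N^\circ| \to 0$ after taking expectation and dividing by $N$.

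The first step is to rewrite $H_N$ using Poisson thinning. The random collection $(i_k,j_k)_{k\leq \Pi_1}$ is equivalent in distribution to an independent family of Poisson counts $N_{(i,j)} \sim \Poi(\binom{N}{2}/N^2)$ indexed by the ordered pairs $(i,j) \in \{1,\ldots,N\}^2$. Diagonal contributions ($i=j$) do not depend on $\sigma$ because $\sigma_i^2 = 1$, so they add a random constant $H_N^{\mathrm{diag}}$ to $\log Z_N$ with $\E |H_N^{\mathrm{diag}}| = \BigO(1)$. Combining $(i,j)$ and $(j,i)$ by the symmetry of the log-term in its two indices, the off-diagonal part of $H_N$ becomes a sum over unordered pairs $\{i,j\}$ with $i<j$ of the form $K^+_{\{i,j\}} \log(c+\Delta\sigma_i\sigma_j) + K^-_{\{i,j\}} \log(1-(c+\Delta\sigma_i\sigma_j)/N)$, where $K^+_{\{i,j\}}\sim\Poi((1-1/N)(c+\Delta\sigma_i^*\sigma_j^*)/N)$ and $K^-_{\{i,j\}}\sim\Poi((1-1/N)(1-(c+\Delta\sigma_i^*\sigma_j^*)/N))$ are independent.

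Next, for each pair $\{i,j\}$ with $i<j$, we couple $G_{ij}$ with $K^+_{\{i,j\}}$ by applying \Cref{SBM Binomial-Poisson} to a family with a single Bernoulli index (together with a $\BigO(N^{-1})$ correction for the mismatch of parameters), obtaining $\P\{G_{ij}\neq K^+_{\{i,j\}}\} = \BigO(N^{-2})$ per pair. The total expected number of disagreements across the $\binom{N}{2}$ pairs is thus $\BigO(1)$, and since $|\log(c+\Delta\sigma_i\sigma_j)|$ is uniformly bounded in $\sigma$, the ``edge'' portion of $H_N - H_N^\circ$ contributes $\BigO(1)$ in expectation to $\sup_\sigma|H_N - H_N^\circ|$. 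The ``non-edge'' portion is handled via the Taylor expansion $\log(1-(c+\Delta\sigma_i\sigma_j)/N) = -(c+\Delta\sigma_i\sigma_j)/N + \BigO(N^{-2})$, which reduces the problem to estimating
\begin{equation*}
\frac{1}{N}\sum_{\{i,j\}, i<j} (K^-_{\{i,j\}} - (1-G_{ij}))(c+\Delta\sigma_i\sigma_j).
\end{equation*}
Using the per-pair mean $\E[K^-_{\{i,j\}} - (1-G_{ij})] = \BigO(N^{-1})$ together with the independence of the $K^-_{\{i,j\}}$ across pairs, this quantity has mean and variance of order $\BigO(1)$ uniformly in $\sigma$. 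Combining these estimates gives $\E \sup_\sigma |H_N(\sigma) - H_N^\circ(\sigma)| = \BigO(1)$, and hence $|\overline{F}_N - \overline{F}_N^\circ| = \BigO(1/N) \to 0$.

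The main obstacle is the \emph{uniform} (in $\sigma$) control of the non-edge contributions: the Poisson $K^-_{\{i,j\}}$ and the Bernoulli $(1-G_{ij})$ cannot be coupled to agree pair-by-pair with high probability (each differs with $\BigO(1)$ probability), and a naive sum would give a $\BigO(N)$ discrepancy when weighted by the small log factor. The resolution is to exploit the small mean of the Poisson-Bernoulli difference, together with the independence across pairs yielding a variance of order $\BigO(1)$, rather than attempting a direct pointwise coupling of the non-edge counts.
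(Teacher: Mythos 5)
Your reduction steps (Poisson thinning over ordered pairs, discarding the $\sigma$-independent diagonal part, splitting each pair's points into $K^+$ and $K^-$ by the coloring theorem, coupling $G_{ij}$ with $K^+_{\{i,j\}}$ via \Cref{SBM Binomial-Poisson}, and Taylor-expanding the non-edge logarithm) are sound and parallel the first half of the paper's argument. The gap is in the final step: from the per-pair mean $\E\big[K^-_{\{i,j\}}-(1-G_{ij})\big]=\BigO(N^{-1})$ and independence across pairs you obtain mean and variance $\BigO(1)$ \emph{for each fixed} $\sigma$, and from this you infer $\E\sup_{\sigma\in\Sigma_N}\lvert H_N(\sigma)-H_N^\circ(\sigma)\rvert=\BigO(1)$. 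That inference is invalid: the supremum runs over $2^N$ configurations, and pointwise second-moment bounds give no control of it. Indeed, writing $X_{ij}=K^-_{\{i,j\}}-(1-G_{ij})$ and splitting $c+\Delta\sigma_i\sigma_j$ into its $\sigma$-independent and $\sigma$-dependent parts, the drift and $\sigma$-independent pieces are uniformly $\BigO(1)$ (since $\lvert\sum_{i<j}\sigma_i\sigma_j\rvert\le\binom{N}{2}$), but the remaining piece $\frac{\Delta}{N}\sum_{i<j}(X_{ij}-\E X_{ij})\sigma_i\sigma_j$ is a centered random quadratic form whose supremum over $\Sigma_N$ is of order $N^{3/2}$ before the $1/N$ prefactor (an SK-type ground-state quantity), i.e.\ of order $\sqrt N$ after it. So the claimed $\BigO(1)$ bound on $\E\sup_\sigma\lvert H_N-H_N^\circ\rvert$ is not just unproven but false, and your ``resolution'' paragraph (per-$\sigma$ variance instead of pointwise coupling) does not address the supremum at all. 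A smaller issue: the edge portion should be bounded by $\E\lvert K^+_{\{i,j\}}-G_{ij}\rvert$ rather than the disagreement probability alone, since $K^+$ can exceed $1$; the overshoot $\E\big[K^+\1\{K^+\ge2\}\big]=\BigO(N^{-2})$ makes this a one-line fix.

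The conclusion only needs $o(N)$ control, so your route is repairable, but the repair must be supplied: either prove an $\BigO(\sqrt N)$ bound honestly (sub-exponential concentration for each fixed $\sigma$ plus a union bound over the $2^N$ configurations), which still yields $\lvert\overline F_N-\overline F_N^\circ\rvert=\BigO(N^{-1/2})\to0$; or avoid the supremum altogether, which is what the paper does. After Taylor expansion the paper absorbs the deterministic non-edge part into the reference measure and compares the two free energies as functions of the integer edge-count vector $Y=(Y_{i,j})$, using that $\partial_{Y_{i,j}}F_N(Y)=\frac1N\langle\log(c+\Delta\sigma_i\sigma_j)\rangle$ is bounded by $C/N$; the mean value theorem then reduces everything to per-pair $L^1$ estimates $\sum_{i<j}\E\lvert\widetilde G_{i,j}-G_{ij}\rvert=\BigO(1)$, exactly the quantities your coupling controls, and no bound uniform in $\sigma$ is ever required. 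Alternatively, the two-sided Gibbs bound $\langle H_N-H_N^\circ\rangle_{H_N^\circ}\le\log Z_N-\log Z_N^\circ\le\langle H_N-H_N^\circ\rangle_{H_N}$ achieves the same effect.
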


\begin{proof}
Introduce the Hamiltonians
\begin{align*}
\widetilde{H}_N^\circ(\sigma)&=\sum_{i<j}\bigg(G_{ij}\log(c+\Delta \sigma_i\sigma_j)-\frac{c+\Delta\sigma_i\sigma_j}{N}\bigg)\\
\widetilde{H}_N(\sigma)&=\sum_{k\leq \Pi_1}\bigg(G_{i_k,j_k}^k\log(c+\Delta \sigma_{i_k}\sigma_{j_k})-\frac{c+\Delta\sigma_{i_k}\sigma_{j_k}}{N}\bigg)
\end{align*}
on $\Sigma_N$, and denote by
$$\widetilde{F}_N^\circ=\frac{1}{N}\E\log\int_{\Sigma_N}\exp \widetilde{H}_N^\circ(\sigma)\ud P_N^*(\sigma) \quad \text{and} \quad \widetilde{F}_N=\frac{1}{N}\E\log\int_{\Sigma_N}\exp \widetilde{H}_N(\sigma)\ud P_N^*(\sigma)$$
their associated free energy functionals. A Taylor expansion of the logarithm shows that for any $\sigma\in \Sigma_N$,
\begin{align*}
 \big\lvert H_N^\circ(\sigma)-\widetilde{H}_N^\circ(\sigma)\big\rvert&\leq \sum_{i<j}\Big\lvert\frac{c+\Delta\sigma_i\sigma_j}{N} -(1-G_{ij})\Big(\frac{c+\Delta \sigma_i\sigma_j}{N}\Big)\Big\rvert+\BigO(1)\\
&\leq \frac{c+\abs{\Delta}}{N}\sum_{i<j}G_{ij}+\BigO(1) 
\end{align*}
and
\begin{align*}
\big\lvert H_N(\sigma)-\widetilde{H}_N(\sigma)\big\rvert&\leq \sum_{k\leq \Pi_1}\Big\lvert\frac{c+\Delta\sigma_{i_k}\sigma_{j_k}}{N} -(1-G_{i_k,j_k}^k)\Big(\frac{c+\Delta \sigma_{i_k}\sigma_{j_k}}{N}\Big)\Big\rvert+\BigO\big(\Pi_1/N^2\big)\\
&\leq \frac{c+\abs{\Delta}}{N}\sum_{k\leq \Pi_1}G_{i_k,j_k}^k+\BigO\big(\Pi_1/N^2\big).
\end{align*}
Since these bounds are uniform in $\sigma\in \Sigma_N$ and $\E\Pi_1=\binom{N}{2}$,
\begin{align*}
\big\lvert \overline{F}_N^\circ-\widetilde{F}_N^\circ\big\rvert&\leq \frac{c+\abs{\Delta}}{N^2}\sum_{i<j}\E G_{ij}+\BigO\big(N^{-1}\big) \leq \frac{(c+\abs{\Delta})^2}{N}+\BigO\big(N^{-1}\big)=\BigO\big(N^{-1}\big),\\
\big\lvert \overline{F}_N-\widetilde{F}_N\big\rvert&\leq \frac{c+\abs{\Delta}}{N^2}\E \sum_{k\leq \Pi_1} G_{i_k,j_k}^k+\BigO\big(\E\Pi_1/N^3\big) \leq \frac{(c+\abs{\Delta})^2}{N^3}\E\Pi_1+\BigO\big(N^{-1}\big)=\BigO\big(N^{-1}\big).
\end{align*}
By the triangle inequality, it therefore suffices to show that
\begin{equation}\label{eqn: SBM equivalence simplification 1}
\lim_{N\to \infty}\big\lvert \widetilde{F}_N-\widetilde{F}_N^\circ\big\rvert = 0.
\end{equation}
We now rewrite $\smash{\widetilde{F}_N}$ in a way that more closely resembles $\smash{\widetilde{F}_N^\circ}$. For each pair $i\leq j$ introduce the random index set
$$\I_{i,j}=\big\{k\leq \Pi_1\mid (i_k,j_k)=(i,j) \text{ or } (i_k,j_k)=(j,i)\big\},$$
and observe that
\begin{align*}
\widetilde{H}_N(\sigma)&=\sum_{i\leq j} \sum_{k\in \I_{i,j}}\bigg(G_{i_k,j_k}^k \log(c+\Delta \sigma_{i_k}\sigma_{j_k})-\frac{c+\Delta \sigma_{i_k}\sigma_{j_k}}{N}\bigg)\\
&=\sum_{i<j}\bigg(\widetilde{G}_{i,j}\log(c+\Delta \sigma_i\sigma_j)-\frac{c+\Delta \sigma_i\sigma_j}{N}\bigg)-\sum_{i\leq N}\bigg(\widetilde{G}_{i,i}\log(c+\Delta)-\frac{c+\Delta}{N}\bigg)
\end{align*}
for the random variables
$$\widetilde{G}_{i,j}=\sum_{k\in \I_{i,j}}G_{i,j}^k.$$
The Poisson coloring theorem (see Chapter 5 in \cite{Kingman}) implies that $\smash{\widetilde{G}_{i,j}}$ is a Poisson random variable with mean
$$\widetilde{\lambda}_{i,j}=\E \Pi_1 \cdot \P\{(i_1,j_1)=(i,j) \text{ or } (i_1,j_1)=(j,i)\}\cdot \P\{G^1_{i,j}=1\}=\begin{cases}
\frac{N-1}{N}\cdot \frac{c+\Delta \sigma_i^*\sigma_j^*}{N}& \text{if }i< j,\\
\frac{N-1}{2N}\cdot \frac{c+\Delta}{N}& \text{if } i=j.
\end{cases}$$
If we introduce the Hamiltonian
$$\widetilde{H}_N'(\sigma)=\sum_{i<j}\bigg(\widetilde{G}_{i,j}\log(c+\Delta \sigma_i\sigma_j)-\frac{c+\Delta \sigma_i\sigma_j}{N}\bigg)$$
and its associated free energy
$$\widetilde{F}_N'=\frac{1}{N}\E\log\int_{\Sigma_N}\exp \widetilde{H}_N'(\sigma)\ud P_N^*(\sigma),$$
then
$$\big\lvert \widetilde{F}_N'-\widetilde{F}_N\big\rvert\leq \frac{\abs{\log(c+\Delta)}}{N}\sum_{i\leq N}\E\widetilde{G}_{i,i}+\frac{c+\abs{\Delta}}{N}\leq \frac{\abs{\log(c+\Delta)}}{N}\sum_{i\leq N}\widetilde{\lambda}_{i,i}+\frac{2c}{N}=\BigO\big(N^{-1}\big).$$
Together with \eqref{eqn: SBM equivalence simplification 1} and the triangle inequality, this means that it suffices to show that
\begin{equation}\label{eqn: SBM equivalence simplification 2}
\lim_{N\to \infty}\big\lvert \widetilde{F}_N'-\widetilde{F}_N^\circ\big\rvert=0.
\end{equation}
At this point, for any random vector $Y=(Y_{i,j})_{i<j}$ introduce the Hamiltonian
$$\widetilde{H}_N(\sigma,Y)=\sum_{i<j}Y_{i,j}\log(c+\Delta \sigma_i\sigma_j)$$
and the measure 
$$\widetilde{P}_N^*(\sigma)=\exp\Big(-\sum_{i<j}\frac{c+\Delta \sigma_i\sigma_j}{N}\Big)P_N^*(\sigma).$$
Write
$$F_N(Y)=\frac{1}{N}\log \int_{\Sigma_N}\exp \widetilde{H}_N(\sigma,Y)\ud \widetilde{P}_N^*(\sigma)\quad \text{and} \quad \widetilde{F}_N(Y)=\E F_N(Y)$$
for the associated free energy functionals, and denote by $\Pi_{i,j}$ a Poisson random variable with distribution 
$$\Pi_{i,j}\sim \Poi\Big(\frac{c+\Delta\sigma_i^*\sigma_j^*}{N}\Big)$$
conditionally on $\sigma^*$. Observe that
$$\partial_{Y_{i,j}}F_N(Y)=\frac{1}{N}\big\langle \partial_{Y_{i,j}}\widetilde{H}_N(\sigma,Y)\big\rangle=\frac{1}{N}\langle \log(c+\Delta \sigma_i\sigma_j)\rangle\leq \frac{C}{N}$$
for some deterministic constant $C>0$ that depends only on $c$. Here, the bracket $\langle \cdot \rangle$ denotes the Gibbs measure associated with the Hamiltonian $\smash{\widetilde{H}_N(\sigma,Y)}$. It follows by the mean value theorem that
\begin{align*}
\big\lvert \widetilde{F}_N'-\widetilde{F}_N(\Pi)\big\rvert&=\big\lvert \widetilde{F}_N(\widetilde{G})-\widetilde{F}_N(\Pi)\big\rvert\leq \frac{C}{N}\sum_{i<j}\E\big\lvert  \widetilde{G}_{i,j}-\Pi_{i,j}\big\rvert\\
\big\lvert \widetilde{F}_N(\Pi)-\widetilde{F}_N^\circ\big\rvert&=\big\lvert \widetilde{F}_N(\Pi)-\widetilde{F}_N(G)\big\rvert\leq \frac{C}{N}\sum_{i<j}\E\big\lvert \Pi_{i,j}-G_{ij}\big\rvert.
\end{align*}
To bound the first of the sums observe that $\smash{\Pi_{i,j}\stackrel{d}{=} \widetilde{G}_{i,j}+\Pi'_{i,j}}$ for a Poisson random variable with distribution
$$\Pi'_{i,j}\sim\Poi\Big(\frac{c+\Delta \sigma_i^*\sigma_j^*}{N^2}\Big)$$
conditionally on $\sigma^*$. This means that
$$\big\lvert\widetilde{F}_N'-\widetilde{F}_N(\Pi)\big\rvert\leq \frac{C}{N}\sum_{i<j}\E\Pi'_{i,j}\leq \frac{Cc}{N}=\BigO\big(N^{-1}\big),$$
and therefore,
\begin{equation}\label{eqn: SBM equivalence simplification 3}
\big\lvert \widetilde{F}_N'-\widetilde{F}_N^\circ\big\rvert\leq \frac{C}{N}\sum_{i<j}\E\big\lvert \Pi_{i,j}-G_{ij}\big\rvert +\BigO\big(N^{-1}\big).
\end{equation}
If we write $\smash{\lambda_{i,j}=\frac{c+\Delta\sigma_i^*\sigma_j^*}{N}}$, then
\begin{align*}
\E\big\lvert \Pi_{i,j}-G_{ij}\big\rvert&\leq \E\lvert \Pi_{i,j}-G_{ij}\big\rvert\1\{\Pi_{i,j}\geq 2\}+\E\lvert \Pi_{i,j}-G_{ij}\big\rvert\1\{\Pi_{i,j}\leq 2\}\1\{\Pi_{i,j}\neq G_{ij}\}\\
&\leq \E\Pi_{i,j}\1\{\Pi_{i,j}\geq 2\}+\P\{\Pi_{i,j}\geq 2\}+3\P\{\Pi_{i,j}\neq G_{ij}\}\\
&\leq 3\P\{\Pi_{i,j}\neq G_{ij}\}+\BigO\big(\lambda_{i,j}^2\big),
\end{align*}
where we have used the fact that $\abs{G_{ij}}\leq 1$. Taking the infimum over all couplings of $\Pi_{i,j}$ and $G_{ij}$, recalling the definition of the total variation distance in \eqref{eqn: SBM dual total variation} and invoking the binomial-Poisson approximation in \Cref{SBM Binomial-Poisson} shows that
$$\E\big\lvert \Pi_{i,j}-G_{ij}\big\rvert\leq 3\TV\big(\Pi_{i,j},G_{ij}\big)+\BigO\big(\lambda_{i,j}^2\big)=\BigO\big(\lambda_{i,j}^2\big)=\BigO\big(N^{-2}\big).$$
Substituting this into \eqref{eqn: SBM equivalence simplification 3} and letting $N$ tend to infinity establishes \eqref{eqn: SBM equivalence simplification 2}. This completes the proof.
\end{proof}

\section{Concentration of the free energy}\label{SBM app FE concentration}

In this appendix we discuss the concentration of the free energy associated with the perturbed Hamiltonian \eqref{eqn: SBM perturbed Hamiltonian}. For simplicity of notation, we will ignore the Hamiltonian \eqref{eqn: SBM enriched Hamiltonian} and focus instead on the perturbed Hamiltonian
\begin{equation}\label{eqn: SBMA perturbed Hamiltonian}
H_N'(\sigma)=H_N^t(\sigma)+H_N^{\mathrm{gauss}}(\sigma)+H_N^{\mathrm{exp}}(\sigma),
\end{equation}
where the randomness of each of the Hamiltonians \eqref{eqn: SBM Hamiltonian t}, \eqref{eqn: SBM gaussian perturbation} and \eqref{eqn: SBM exponential perturbation} is independent of the randomness of the other Hamiltonians. The more general case is treated in an identical fashion but the notation becomes too cumbersome for comfort. We will often need to make the dependence of the perturbed Hamiltonian \eqref{eqn: SBMA perturbed Hamiltonian} on one of its sources of randomness $\smash{\sigma^*}$, $\smash{\Pi_t}$, $\smash{\I_1=(i_k,j_k)_{k\leq \Pi_t}}$, $\smash{\G=(G_{i_k,j_k}^k)_{k\leq \Pi_t}}$, $\smash{e=(e_{jk})}$, $\smash{\Pi'=(\pi_k)_{k\geq 0}}$, $\smash{\I_2=(i_{jk})_{j\leq \Pi'}}$ and $\smash{Z=(Z_{0,i})_{i\leq N}}$ explicit. To do so, we will abuse notation and write $\smash{H_N'(X)}$ when we want to study the dependence on the source of randomness $X$. The main objects of study will be the free energy,
\begin{equation}
F_N'=\frac{1}{N}\log \int \exp H_N'(\sigma)\ud P_N^*(\sigma),
\end{equation}
and its average 
\begin{equation}\label{eqn: SBMA perturbed free energy}
\overline{F}_N'=\frac{1}{N}\E \log \int \exp H_N'(\sigma)\ud P_N^*(\sigma).
\end{equation}
For each even $p\geq 2$, we will bound the concentration function,
\begin{equation}
v_{N,p}=\sup\Big\{\E\big\lvert F_N'-\overline{F}'_N\big\rvert^p\mid \lambda_k\in [2^{-k-1},2^{-k}] \text{ for all } k\geq 0\Big\},
\end{equation}
by means of the generalized Efron-Stein inequality (see Theorem 15.5 of \cite{Boucheron}).
\begin{lemma}[Generalized Efron-Stein inequality]\label{eqn: SBM generalized Efron-Stein}
Let $X=(X_1,\ldots,X_n)$ and $X'=(X_1',\ldots,X_n')$ be two independent copies of a vector of independent random variables, and let $f:\R^n\to \R$ be a measurable function. Introduce the random variable $Z=f(X)$, and for each $1\leq i\leq n$ let $Z_i'=f(X_1,\ldots,X_{i-1},X_i',X_{i+1},\ldots,X_n)$. If $q\geq 2$, then
\begin{equation}
\E\abs{Z-\E Z}^q\leq C \E \Big\lvert \sum_{i\leq n} \E_{X'}(Z- Z_i')^2\Big\rvert^{\frac{q}{2}},
\end{equation}
where $C>0$ is a constant that depends only on $q$.
\end{lemma}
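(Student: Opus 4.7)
The plan is to reduce the claim to two classical facts about $L^q$ norms of martingales: Burkholder's inequality relating a martingale to its quadratic variation, and Doob's maximal inequality. The overall trajectory is to decompose $Z - \E Z$ as a telescoping sum of martingale differences along the filtration built from the coordinates of $X$, express each difference in terms of the ``resampled'' variables $Z_i'$, and then pass from the predictable bracket to the unconditional quadratic form $\sum_i \E_{X'}(Z - Z_i')^2$ that appears in the statement.

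First I would introduce the filtration $\mathcal{F}_i = \sigma(X_1,\ldots,X_i)$ and the Doob martingale $M_i = \E[Z \mid \mathcal{F}_i]$, with $M_0 = \E Z$ and $M_n = Z$. The martingale differences $\Delta_i = M_i - M_{i-1}$ satisfy
\begin{equation*}
\Delta_i = \E\big[Z - Z_i' \,\big|\, \mathcal{F}_i\big],
\end{equation*}
since $X_i'$ is an independent copy of $X_i$ (so $\E[Z_i' \mid \mathcal{F}_i] = \E[Z \mid \mathcal{F}_{i-1}]$). Jensen's inequality for the conditional expectation then gives
\begin{equation*}
\Delta_i^2 \leq \E\big[(Z - Z_i')^2 \,\big|\, \mathcal{F}_i\big].
\end{equation*}
Since $X_i'$ is independent of everything in $\mathcal{F}_i$ and has the same law as $X_i$, this last conditional expectation equals $\E[Y_i \mid \mathcal{F}_i]$ where $Y_i := \E_{X'}(Z - Z_i')^2$, a quantity measurable with respect to $\mathcal{F}_n$.

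Next I would invoke Burkholder's inequality: for $q \geq 2$ there is a constant $C_q$ with
\begin{equation*}
\E\big|Z - \E Z\big|^q = \E\Big|\sum_i \Delta_i\Big|^q \leq C_q \, \E\Big|\sum_i \Delta_i^2\Big|^{q/2} \leq C_q \, \E\Big|\sum_i \E[Y_i \mid \mathcal{F}_i]\Big|^{q/2}.
\end{equation*}
The remaining step, which is the main technical obstacle, is to replace $\sum_i \E[Y_i \mid \mathcal{F}_i]$ by $\sum_i Y_i$ inside the $L^{q/2}$ norm, with a constant depending only on $q$. For this I would use duality: writing $p = (q/2)^*$ for the conjugate exponent and testing against $U \geq 0$ with $\|U\|_p \leq 1$,
\begin{equation*}
\E\Big[U \sum_i \E[Y_i \mid \mathcal{F}_i]\Big] = \sum_i \E\big[Y_i \, \E[U \mid \mathcal{F}_i]\big] \leq \E\Big[\big(\sup_i \E[U \mid \mathcal{F}_i]\big) \sum_i Y_i\Big],
\end{equation*}
using that $Y_i \geq 0$. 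Hölder and Doob's maximal inequality applied to the martingale $\E[U \mid \mathcal{F}_i]$ yield $\|\sup_i \E[U \mid \mathcal{F}_i]\|_p \leq (p/(p-1))\|U\|_p$, so taking the supremum over $U$ gives $\|\sum_i \E[Y_i \mid \mathcal{F}_i]\|_{q/2} \leq C'_q \|\sum_i Y_i\|_{q/2}$. Combining this with the previous inequality produces the claimed bound.

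The main delicacy is thus the final step, where one has to handle a sum of conditional expectations taken with respect to \emph{different} $\sigma$-algebras; the duality/Doob argument above is what converts this into the desired unconditional quadratic form. The rest of the proof is the standard martingale decomposition followed by a direct application of Burkholder's inequality, and the final constant $C$ depends only on $q$ because both Burkholder's and Doob's constants do.
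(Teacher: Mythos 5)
Your argument is correct, and it is genuinely different from what the paper does: the paper offers no proof at all, simply citing Theorem 15.5 of \cite{Boucheron}, where the inequality is obtained by the entropy method (tensorization-type arguments for moments) rather than by martingale theory. Your route is the classical Burkholder approach: the Doob martingale decomposition with $\Delta_i=\E[Z-Z_i'\mid\F_i]$ is right (since $X_i'$ is an independent copy of $X_i$, independent of $\F_i$, one indeed has $\E[Z_i'\mid\F_i]=\E[Z\mid\F_{i-1}]$), conditional Jensen gives $\Delta_i^2\le\E[Y_i\mid\F_i]$ with $Y_i=\E_{X'}(Z-Z_i')^2$, Burkholder's square-function inequality applies since $q\ge 2$, and your final duality step is the standard ``dual Doob'' lemma: the self-adjointness identity $\E[U\,\E[Y_i\mid\F_i]]=\E[Y_i\,\E[U\mid\F_i]]$ plus positivity of the $Y_i$ and Doob's $L^p$ maximal inequality give $\|\sum_i\E[Y_i\mid\F_i]\|_{q/2}\le \tfrac{q}{2}\|\sum_i Y_i\|_{q/2}$ (the borderline case $q=2$ is even simpler, since there the two sides have equal expectation, or one can use the trivial $L^\infty$ bound on $\sup_i\E[U\mid\F_i]$). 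All constants depend only on $q$, as required. What the two approaches buy: your proof is self-contained and elementary given Burkholder and Doob, but yields constants of rough order $q^{q}$ after raising the Burkholder and dual-Doob constants to the $q$-th and $(q/2)$-th powers; the Boucheron--Lugosi--Massart proof gives the sharper $(\kappa q)^{q/2}$-type growth and one-sided refinements in terms of $V^+$ and $V^-$, none of which matters for the paper, where any $C=C(q)$ suffices.
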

A key observation that will be used repeatedly without further explanation is the following: given two sources of randomness $X$ and $X'$, a configuration-independent bound on the difference of the Hamiltonians $H_N'(X)$ and $H_N'(X')$,
\begin{equation}
\max_{\sigma\in \Sigma_N}\big\lvert H_N'(X)-H_N'(X')\big\rvert\leq Y,
\end{equation}
gives a control by the possibly random $Y$ on the difference of the free energy functionals $F_N'(X)$ and $F_N'(X')$,
\begin{equation}
\big\lvert F_N'(X)-F_N'(X')\big\rvert\leq \frac{Y}{N}.
\end{equation}
The following bounds on the moments of Poisson and binomial random variables will also play their part.

\begin{lemma}\label{SBMA Poisson moment bounds}
If $\Pi$ is a $\Poi(\lambda)$ random variable for some $\lambda\geq 1$ and $k\geq 2$ is an integer, then
\begin{equation}\label{eqn: SBMA Poisson moment bounds}
\E \Pi^k\leq C\lambda^k \quad \text{and} \quad \E \big(\Pi-\E \Pi)^k\leq C\lambda^{\lfloor k/2\rfloor}
\end{equation}
for some constant $C>0$ that depends only on $k$. 
\end{lemma}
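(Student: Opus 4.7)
Both bounds are classical identities for Poisson moments; I sketch elementary arguments that make the dependence on $\lambda$ explicit.

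For the first bound I would use the expansion of powers in terms of falling factorials,
\begin{equation*}
\Pi^k=\sum_{j=1}^{k}S(k,j)\,\Pi(\Pi-1)\cdots(\Pi-j+1),
\end{equation*}
where $S(k,j)$ are the Stirling numbers of the second kind. For a $\Poi(\lambda)$ random variable, the $j$-th falling factorial moment equals $\lambda^{j}$, so
\begin{equation*}
\E\Pi^k=\sum_{j=1}^{k}S(k,j)\,\lambda^j.
\end{equation*}
Since $\lambda\geq 1$ we have $\lambda^{j}\leq \lambda^{k}$ for every $1\leq j\leq k$, and therefore $\E\Pi^k\leq B_k\lambda^k$, where $B_k=\sum_j S(k,j)$ is the $k$-th Bell number; this depends only on $k$, as required.

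For the second bound I would work at the level of the central moment generating function. Since every cumulant of $\Poi(\lambda)$ equals $\lambda$,
\begin{equation*}
\E\exp\!\big(t(\Pi-\lambda)\big)=\exp\!\big(\lambda(e^{t}-1-t)\big)=\exp\!\Big(\lambda\sum_{j\geq 2}\frac{t^{j}}{j!}\Big).
\end{equation*}
Expanding the outer exponential as a power series in the bracket and collecting terms, every appearance of $\lambda^{n}$ is paired with a product of at least $2n$ factors of $t$. Hence the coefficient of $t^{k}$, which equals $\E(\Pi-\lambda)^{k}/k!$, is a polynomial in $\lambda$ of degree at most $\lfloor k/2\rfloor$ with non-negative coefficients. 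Using $\lambda\geq 1$ to pull out the highest power gives $\E(\Pi-\lambda)^k\leq C_k\lambda^{\lfloor k/2\rfloor}$, completing the proof.

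No genuine obstacle arises; the only point that warrants care is the bookkeeping that produces the degree $\lfloor k/2\rfloor$, which follows from the observation that $e^{t}-1-t$ begins at order $t^{2}$ so each factor of $\lambda$ is paired with at least two factors of $t$. Equivalently, by the moment-cumulant formula, $\E(\Pi-\lambda)^{k}$ is a sum over set partitions of $\{1,\ldots,k\}$ with no singleton blocks of $\lambda$ raised to the number of blocks, and every such partition has at most $\lfloor k/2\rfloor$ blocks.
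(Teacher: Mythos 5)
Your proof is correct. The first bound is argued exactly as in the paper: expand $\Pi^k$ in falling factorials via Stirling numbers of the second kind, use that the $j$-th factorial moment of $\Poi(\lambda)$ is $\lambda^j$, and bound by the Bell number times $\lambda^k$ when $\lambda\geq 1$. For the second bound you take a genuinely different route. The paper differentiates the central moment $M_k(\lambda)=\E(\Pi-\E\Pi)^k$ in $\lambda$ to obtain the recursion $M_{k+1}(\lambda)=\lambda\big(M_k'(\lambda)+kM_{k-1}(\lambda)\big)$ and proves by induction that $M_k$ is a polynomial in $\lambda$ of degree $\lfloor k/2\rfloor$, from which the bound follows for $\lambda\geq1$. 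You instead read the degree bound directly off the central moment generating function $\exp\big(\lambda(e^t-1-t)\big)$: since $e^t-1-t$ starts at order $t^2$, every factor of $\lambda$ comes with at least two factors of $t$, so the coefficient of $t^k$ is a polynomial in $\lambda$ of degree at most $\lfloor k/2\rfloor$; equivalently, by the moment--cumulant formula, $\E(\Pi-\lambda)^k$ is a sum of $\lambda^{\#\mathrm{blocks}}$ over partitions of $\{1,\dots,k\}$ with no singleton blocks. This is valid (the Poisson MGF is entire, so the Taylor coefficients of the central MGF are indeed the central moments over $k!$), and it buys you a cleaner argument with an explicit non-negative constant, namely the number of singleton-free partitions, with no induction; the paper's recursion is more hands-on and self-contained, not appealing to generating-function analyticity or the moment--cumulant identity, but yields only the polynomial degree and an unspecified constant. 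Both are fine; yours is arguably the slicker of the two for this statement.
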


\begin{proof}
Denote by $\smash{\left\{\genfrac{}{}{0pt}{}{k}{j}\right\}}$ the number of ways to partition a $k$ element set into $j$ non-empty subsets. In combinatorics, such numbers are known as Stirling numbers of the second kind, and they have the property that for any integer $m\geq 0$,
\begin{equation}\label{SBMA Stirling numbers}
m^k=\sum_{ j\leq k} \left\{\genfrac{}{}{0pt}{}{k}{j}\right\}(m)_j,
\end{equation}
where $(m)_j=m(m-1)\cdots (m-j+1)$ is the falling factorial. The basic properties of the Poisson distribution imply that
\begin{align*}
\E \Pi^k&=\sum_{m\geq 0}\sum_{j \leq k} \left\{\genfrac{}{}{0pt}{}{k}{j}\right\}(m)_j\frac{\lambda^m}{m!}\exp(-\lambda)=\sum_{j \leq k} \left\{\genfrac{}{}{0pt}{}{k}{j}\right\}\lambda^j\sum_{m\geq j}\frac{\lambda^{m-j}}{(m-j)!}\exp(-\lambda)=\sum_{j\leq k} \left\{\genfrac{}{}{0pt}{}{k}{j}\right\}\lambda^j\\
&\leq \max(1,\lambda^k)B_k,
\end{align*}
where $B_k$ denotes the $k$'th Bell number. This establishes the first bound in \eqref{eqn: SBMA Poisson moment bounds}. We now prove by induction that for each $k\geq 2$, the function $M_k(\lambda)=\E(\Pi-\E\Pi)^k$ is a polynomial of degree $\lfloor k/2\rfloor$. The base case holds since $M_2(\lambda)=\Var \Pi=\lambda$, so assume the result holds for all $2\leq i\leq k$. By the product rule
\begin{align*}
M_{k}'(\lambda)&=-\sum_{m\geq 0}k(m-\lambda)^{k-1}\frac{\lambda^m}{m!}\exp(-\lambda)+\sum_{m\geq 0}(m-\lambda)^km\frac{\lambda^{m-1}}{m!}\exp(-\lambda)-M_k(\lambda)\\
&=-kM_{k-1}(\lambda)+\sum_{m\geq 0}(m-\lambda)^k(m-\lambda+\lambda)\frac{\lambda^{m-1}}{m!}\exp(-\lambda)-M_k(\lambda)\\
&=-kM_{k-1}(\lambda)+\frac{1}{\lambda}\big(M_{k+1}(\lambda)+\lambda M_k(\lambda)\big)-M_k(\lambda)\\
&=-kM_{k-1}(\lambda)+\frac{1}{\lambda}M_{k+1}(\lambda).
\end{align*}
Invoking the induction hypothesis shows that $M_{k+1}(\lambda)$ has degree $\max(\lfloor k/2\rfloor, 1+\lfloor (k-1)/2\rfloor)$. This completes the proof.
\end{proof}

\begin{lemma}\label{SBMA binomial moment bounds}
If $X$ is a $\Bin(n,p)$ random variable with $np\geq 1$ and $k\geq 1$ is an integer, then
\begin{equation}
\E X^k\leq C(np)^k
\end{equation}
for some constant $C>0$ that depends only on $k$. 
\end{lemma}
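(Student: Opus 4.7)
The plan is to adapt the Stirling-number argument from the proof of Lemma \ref{SBMA Poisson moment bounds} to the binomial setting. Writing $X = \sum_{i \le n} X_i$ as a sum of i.i.d.\ $\Ber(p)$ random variables, the starting point is the identity \eqref{SBMA Stirling numbers}, which, upon taking expectations, yields
\[
\E X^k = \sum_{j \leq k} \left\{\genfrac{}{}{0pt}{}{k}{j}\right\} \E (X)_j,
\]
where $(X)_j = X(X-1)\cdots (X-j+1)$ denotes the falling factorial.

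The key step is the classical formula for the factorial moments of the binomial, namely $\E (X)_j = (n)_j p^j \le (np)^j$. This is derived by expanding $(X)_j$ as the number of ordered $j$-tuples $(i_1, \ldots, i_j)$ of \emph{distinct} indices in $\{1,\ldots,n\}$ for which $X_{i_1} = \cdots = X_{i_j} = 1$, taking expectations, and exploiting independence of the Bernoulli factors.

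Combining these two ingredients and invoking the hypothesis $np \ge 1$ to bound $(np)^j \le (np)^k$ for every $j \le k$, we obtain
\[
\E X^k \le \sum_{j \leq k} \left\{\genfrac{}{}{0pt}{}{k}{j}\right\} (np)^k = B_k \, (np)^k,
\]
where $B_k$ is the $k$-th Bell number, which depends only on $k$. No meaningful obstacle is anticipated; the argument is entirely parallel to the Poisson case already handled in the appendix, and the only observation specific to the binomial is the inequality $(n)_j \le n^j$ replacing the exact identity used for Poisson factorial moments.
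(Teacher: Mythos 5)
Your proof is correct and takes essentially the same approach as the paper: both expand $\E X^k$ through the Stirling-number identity \eqref{SBMA Stirling numbers} and then bound the factorial moments by $\E (X)_j=(n)_jp^j\leq (np)^j\leq (np)^k$, differing only in that you obtain $\E(X)_j=(n)_jp^j$ combinatorially from the sum-of-Bernoullis representation, whereas the paper gets the same bound by rearranging the binomial pmf to identify a $\Bin(n-j,p)$ density.
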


\begin{proof}
Using \eqref{SBMA Stirling numbers} and identifying the probability density function of a $\Bin(n-j,p)$ shows that
\begin{align*}
\E X^k&=\sum_{m \leq n}\sum_{j \leq k} \left\{\genfrac{}{}{0pt}{}{k}{j}\right\}(m)_j\frac{n!}{(n-m)!m!}p^m(1-p)^{n-m}\\
&\leq \sum_{j \leq k}\left\{\genfrac{}{}{0pt}{}{k}{j}\right\}(np)^j \sum_{j\leq m \leq n} \frac{(n-j)!}{(n-m)!(m-j)!}p^{m-j}(1-p)^{n-m}\\
&\leq \max(1,(np)^k)B_k,
\end{align*}
where $B_k$ denotes the $k$'th Bell number. This completes the proof.
\end{proof}

\begin{proposition}\label{SBM free energy concentration}
For any two sequences $(\epsilon_N)$ and $(s_N)$ satisfying \eqref{eqn: SBM epsilon sequence} and \eqref{eqn: SBM s sequence} respectively and every even $p\geq 2$,
\begin{equation}
v_{N,p}\leq \frac{C(1+t^p)}{N^{p/2}}
\end{equation}
for some constant $C>0$ that depends only on $p$, $c$ and $\Delta$.
\end{proposition}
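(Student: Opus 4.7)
My plan is to apply the generalized Efron-Stein inequality of \Cref{eqn: SBM generalized Efron-Stein} to $F_N'$ viewed as a measurable function of an explicit independent family. First I would represent the randomness in \eqref{eqn: SBMA perturbed Hamiltonian} through mutually independent coordinates: the signal entries $(\sigma_i^*)_{i\leq N}$; the count $\Pi_t$ together with the edge data $(i_k,j_k,V^k)_{k\geq 1}$ coupled via $G^k:=\1\{V^k\leq (c+\Delta\sigma_{i_k}^*\sigma_{j_k}^*)/N\}$ for i.i.d.\@ uniform $V^k$; the Gaussian noise $(Z_{0,i})_{i\leq N}$; the Poisson multiplicities $(\pi_k)_{1\leq k\leq K_+}$; and the exponential-perturbation data $(i_{jk},e_{jk})_{k,j}$. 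Under this realization $F_N'$ becomes a measurable function of these independent variables, and \Cref{eqn: SBM generalized Efron-Stein} reduces the problem to estimating $\E V^{p/2}$ for the random variable $V=\sum_\alpha \E_{X'_\alpha}(F_N'-F_N^{(\alpha)})^2$, where the sum ranges over every coordinate and $F_N^{(\alpha)}$ denotes the free energy after resampling that single coordinate.

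The basic observation driving every per-source estimate is that a configuration-independent bound on $\sup_\sigma|H_N'-H_N^{(\alpha)}|$ divided by $N$ pointwise bounds $|F_N'-F_N^{(\alpha)}|$. I would then estimate each contribution in turn. Resampling one $\sigma_i^*$ perturbs the Gaussian term by $O(\epsilon_N)$, flips each of the roughly $O(\Pi_t/N)$ edge entries touching vertex $i$ with conditional probability $O(1/N)$ (each flip altering the log-term by a $c,\Delta$-dependent constant), and perturbs the $O(\sum_k\pi_k/N)$ exponential summands at $i$ by a multiple of $\lambda_k^2 e_{jk}$; this yields an $L^2$ bound of order $O(1+t^2)$ on the Hamiltonian increment, giving a total $O((1+t^2)/N)$ contribution to $\E V$ after summing over $i$. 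Resampling $\Pi_t$ changes at most $|\Pi_t-\Pi_t'|$ edge log-terms, each of typical squared size $O(1/N)$ by Taylor expansion, which together with $\Var\Pi_t=O(tN^2)$ contributes $O(t/N)$. Resampling a single $(i_k,j_k,V^k)$ with $k\leq\Pi_t$ alters exactly one edge log-term of squared expectation $O(1/N)$, and summing over $k$ yields $O(\E\Pi_t/N^3)=O(t/N)$. The $Z_{0,i}$ contribute $O(\epsilon_N)$ and the exponential-perturbation variables $(\pi_k,i_{jk},e_{jk})$ contribute $O(K_+ s_N/N^2)$, both of lower order. Assembling these yields $\E V\leq C(1+t^2)/N$ and settles the case $p=2$.

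The main obstacle is extending these estimates from expectations to $L^{p/2}$ for general even $p\geq 2$, since $V$ is itself random through the Poisson counts $\Pi_t,(\pi_k)$ and through the Bernoulli flips of the $G^k$. Each of the estimates above rests on Cauchy-Schwarz applied to Poisson or binomial sums; to carry them out in $L^{p/2}$ one would invoke the higher-moment bounds of \Cref{SBMA Poisson moment bounds} and \Cref{SBMA binomial moment bounds} to control $\E\Pi_t^{p/2}$, $\E\pi_k^{p/2}$ and the corresponding exponential and binomial aggregates, combined with repeated H\"older inequalities to disentangle the Poisson-indexed sums. The $\sigma^*$-resampling term is the most delicate, since a single $\sigma_i^*$ simultaneously influences the edge data, the Gaussian noise and the exponential perturbation, so its $L^{p/2}$ control requires the joint moment estimate $\E F_i^p\leq C(1+t^p)$ for the number of edge flips together with the corresponding exponential and Gaussian moment inputs. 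Once this is in hand one obtains $\E V^{p/2}\leq C(1+t^p)/N^{p/2}$, and \Cref{eqn: SBM generalized Efron-Stein} delivers the claimed bound on $v_{N,p}$.
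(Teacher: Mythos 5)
Your proposal rests on the same tool as the paper --- the generalized Efron--Stein inequality of \Cref{eqn: SBM generalized Efron-Stein} combined with the moment bounds of \Cref{SBMA Poisson moment bounds} and \Cref{SBMA binomial moment bounds} --- but it organizes the argument genuinely differently. The paper telescopes $F_N'-\overline F_N'$ through the partial averages $\widehat F_N'=\E_{\Pi_t}\E_{\I_1}\E_{\G|\sigma^*}F_N'$ and $\widetilde F_N'$, applies Efron--Stein source by source conditionally on the outer randomness, and must then confront the fact that resampling a coordinate of $\sigma^*$ also changes the \emph{conditional law} of $\G$; this is handled in its Step 3 by interpolating the Bernoulli edge probabilities along $u\mapsto\sigma^{*,u}_\ell$, which is precisely where the $t^p$ factor is produced. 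You instead realize $G^k$ as $\1\{V^k\le (c+\Delta\sigma^*_{i_k}\sigma^*_{j_k})/N\}$ with i.i.d.\ uniforms, so that all sources are jointly independent and a single application of Efron--Stein suffices; the interpolation is replaced by the elementary observation that resampling $\sigma^*_i$ flips each of the roughly $\Pi_t/N$ incident edge indicators with probability $\BigO(1/N)$, and the binomial/Poisson moment bounds then give the required $\E F_i^p\le C(1+t^p)$ for the flip count. This coupling route is legitimate and arguably cleaner. What the paper's sequential decomposition buys is that each Efron--Stein application involves only finitely many coordinates given the conditioning, whereas your one-shot application runs over a countably infinite family with a random number of active coordinates ($\Pi_t$, the $\pi_k$); you should either condition on the Poisson counts first or remark that the moment inequality extends to countable products of independent variables by a routine limiting argument, since the constant in \Cref{eqn: SBM generalized Efron-Stein} does not depend on the number of coordinates.

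Two points of bookkeeping. First, by your own per-coordinate scheme the Gaussian coordinates contribute $\BigO(\epsilon_N/N)$ to the Efron--Stein variance term, not $\BigO(\epsilon_N)$ as written: resampling $Z_{0,i}$ changes the Hamiltonian by $\sqrt{\lambda_0\epsilon_N}\,\lvert Z_{0,i}-Z_{0,i}'\rvert$ and hence the free energy by $\BigO(\sqrt{\epsilon_N}/N)$, and since $\epsilon_N=N^{\gamma}$ with $\gamma>-1/8$ by \eqref{eqn: SBM epsilon sequence}, a genuine contribution of order $\epsilon_N$ would \emph{not} be lower order than $1/N$ and would spoil the rate $N^{-p/2}$; the missing factor $1/N$ is therefore not cosmetic, even though it is immediate from your setup. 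Second, the passage from $p=2$ to general even $p$ is where essentially all of the paper's labor lies; your recipe (H\"older to disentangle the Poisson-indexed sums, then \Cref{SBMA Poisson moment bounds} and \Cref{SBMA binomial moment bounds}, with the $\sigma^*$-resampling term supplying the $t^p$) is the correct one and mirrors the paper's computations, but as it stands it is a sketch, and a complete write-up would need to carry out these $L^{p/2}$ estimates, in particular the joint moment bound for the number of flipped edges incident to a fixed vertex.
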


\begin{proof}
To alleviate notation, write $C>0$ for a constant that depends only on $p$, $c$ and $\Delta$ whose value might not be the same at each occurrence. Given a source of randomness $X$, write $\E_X$ for the average with respect to the randomness of $X$. The proof will rely upon the generalized Efron-Stein inequality in \Cref{eqn: SBM generalized Efron-Stein} and the fact that
$$\E=\E_{\sigma^*}\E_Z\E_{\Pi'}\E_{\I_2}\E_{e}\E_{\Pi_t}\E_{\I_1}\E_{\G|\sigma^*}.$$
Introduce the averaged free energy functionals
$$\widehat{F}_N'=\E_{\Pi_t}\E_{\I_1}\E_{\G|\sigma^*}F_N'\qquad \text{and}\qquad \widetilde{F}_N'=\E_Z\E_{\Pi'}\E_{\I_2}\E_{e}\widehat{F}_N'$$
in such a way that
\begin{equation}\label{eqn: SBMA free energy concentration key}
\E\big(F_N'-\overline{F}_N'\big)^p\leq C\Big(\E\big(F_N'-\widehat{F}_N'\big)^p+\E\big(\widehat{F}_N'-\widetilde{F}_N'\big)^p+\E\big(\widetilde{F}_N'-\overline{F}_N'\big)^p\Big).
\end{equation}
We will now bound each of these terms separately.\\
\step{1: proving $\E\big(F_N'-\widehat{F}_N'\big)^p=\BigO\big((t/N)^{p/2}\big)$.}\\
We decompose this further into
\begin{align}\label{eqn: SBMA step 1 key}
\E\big(F_N'-\widehat{F}_N'\big)^p\leq C\Big( \E\big(F_N'-\E_{\G|\sigma^*}F_N'\big)^p&+\E\big(\E_{\G|\sigma^*}F_N'-\E_{\I_1}\E_{\G|\sigma^*}F_N'\big)^p\notag\\
&+\E\big(\E_{\I_1}\E_{\G|\sigma^*}F_N'-\widehat{F}_N'\big)^p\Big)=C\big(I+II+III\big),
\end{align}
and proceed to bound $I$, $II$ and $III$ individually. By the generalized Efron-Stein inequality
$$I\leq C\E\bigg\lvert \sum_{\ell\leq \Pi_t}\E_{\G^{(\ell)}|\sigma^*}\Big(F_N'(\G)-F_N'\big(\G^{(\ell)}\big)\Big)^2\bigg\rvert^{p/2},$$
where $\smash{\big(\tG^{(\ell)}_{i,j}\big)_{i,j\in \N}}$ is an independent copy of $\smash{\big(G^{(\ell)}_{i,j}\big)_{i,j\in \N}}$. Since $\abs{\Delta}< c$ and all spin configuration coordinates are bounded by one,
\begin{align*}
\big\vert H_N'(\G)-H_N'\big(\G^{(\ell)}\big)\big\rvert\leq \abs{\log(c+&\Delta \sigma_{i_\ell}\sigma_{j_\ell})}\big\lvert G^\ell_{i_\ell,j_\ell}-\widetilde{G}_{i_\ell,j_\ell}^\ell\big\rvert\\
&+\Big\lvert \log\Big(1-\frac{c+\Delta \sigma_{i_\ell}\sigma_{j_\ell}}{N}\Big)\Big\rvert\big\lvert G_{i_\ell,j_\ell}^\ell-\widetilde{G}_{i_\ell,j_\ell}^\ell\big\rvert\leq C\big\lvert G^\ell_{i_\ell,j_\ell}-\widetilde{G}_{i_\ell,j_\ell}^\ell\big\rvert.
\end{align*}
It follows that
\begin{align*}
I&\leq \frac{C}{N^p}\E\Big\lvert \sum_{\ell\leq \Pi_t}\E_{\G^{(\ell)}|\sigma^*}\big\lvert G^\ell_{i_\ell,j_\ell}-\widetilde{G}_{i_\ell,j_\ell}^\ell\big\rvert^2\Big\rvert^{p/2}\leq \frac{C}{N^p}\E\Big\lvert \sum_{\ell\leq \Pi_t}\Big(\Big(1-\frac{2}{N}\Big)G_{i_\ell,j_\ell}^\ell+\frac{1}{N}\Big)\Big\rvert^{p/2}\\
&\leq \frac{C}{N^p}\Big(\E \Big\lvert \sum_{\ell\leq \Pi_t}G_{i_\ell,j_\ell}^\ell\Big\rvert^{p/2}+\frac{1}{N^{p/2}}\E \Pi_t^{p/2}\Big).
\end{align*}
Notice that $\smash{\sum_{\ell\leq \Pi_t}G_{i_\ell,j_\ell}^\ell}$ follows a $\smash{\Bin\Big(\Pi_t,\frac{c+\Delta \sigma_{i_\ell}^*\sigma_{j_\ell}^*}{N}\Big)}$ distribution conditionally on $\Pi_t$. Invoking \Cref{SBMA Poisson moment bounds} and \Cref{SBMA binomial moment bounds} yields
\begin{equation}\label{eqn: SBMA step 1 I}
I\leq \frac{C}{N^{p+\frac{p}{2}}}\E\Pi_t^{p/2}\leq \frac{Ct^{p/2}}{N^{p/2}}.
\end{equation}
Another application of the generalized Efron-Stein inequality gives
$$II\leq \E \bigg\lvert \sum_{\ell\leq \Pi_t}\E_{\I_1^{(\ell)}}\Big(\E_{\G|\sigma^*}F_N'(\I_1)-\E_{\G|\sigma^*}F_N'\big(\I_1^{(\ell)}\big)\Big)^2\bigg\rvert^{p/2},$$
where $\I_1^{(\ell)}$ has an independent copy $\smash{(i_\ell',j_\ell')}$ of $\smash{(i_\ell,j_\ell)}$ at the $\ell$'th coordinate but otherwise coincides with~$\I_1$. Taylor expanding the logarithm and remembering that $\smash{G_{i_\ell,j_\ell}^\ell\in\{0,1\}}$, it is readily verified that
$$\big\lvert H_N'(\I_1)\big\rvert\leq \big\lvert G_{i_\ell,j_\ell}^\ell\big\rvert\abs{\log(c+\Delta \sigma_{i_\ell}\sigma_{j_l})}+\big\lvert 1-G_{i_\ell,j_\ell}^\ell\big\rvert \Big\lvert \log\Big(1-\frac{c+\Delta \sigma_{i_\ell}\sigma_{j_\ell}}{N}\Big)\Big\rvert\leq C\Big(\big\lvert G_{i_\ell,j_\ell}^\ell\big\rvert+\frac{1}{N}\Big).$$
This means that $\smash{\big\lvert H_N'(\I_1)-H_N'\big(\I_1^{(\ell)})\big\rvert\leq C\big(\abs{G_{i_\ell,j_\ell}^\ell}+\abs{G_{i'_\ell,j'_\ell}^\ell}+\frac{1}{N}\big)}$, and therefore
$$\Big(\E_{\G|\sigma^*}F_N'(\I_1)-\E_{\G|\sigma^*}F_N'\big(\I_1^{(\ell)}\big)\Big)^2\leq \frac{C}{N^2}\Big(\E_{\G|\sigma^*}\big\lvert G_{i_\ell,j_\ell}^\ell\big\rvert+\E_{\G|\sigma^*}\big\lvert G_{i_\ell',j_\ell'}^\ell\big\rvert+\frac{1}{N}\Big)^2\leq \frac{C}{N^4}.$$
It follows that
\begin{equation}\label{eqn: SBMA step 1 II}
II\leq \frac{C}{N^{2p}}\E \Pi_t^{p/2}\leq \frac{Ct^{p/2}}{N^p}\leq \frac{Ct^{p/2}}{N^{p/2}}.
\end{equation}
A final application of the generalized Efron-Stein inequality reveals that
$$III\leq \E\Big\lvert \E_{\Pi_t'}\big(\E_{\I_1}\E_{\G|\sigma^*}F_N'(\Pi_t)-\E_{\I_1}\E_{\G|\sigma^*}F_N'(\Pi'_t)\big)^2\Big\rvert^{p/2},$$
where $\Pi_t'$ is an independent copy of $\Pi_t$. Slightly abusing notation and redefining $\Pi_t'$ to be the maximum between $\Pi_t$ and $\Pi_t'$, we see that
\begin{align*}
\big\lvert H_N'(\Pi_t')-H_N'(\Pi_t)\big\rvert&\leq \sum_{\Pi_t\leq k\leq \Pi_t'}\bigg(\big\lvert G_{i_k,j_k}^k\big\rvert\abs{\log(c+\Delta \sigma_{i_k}\sigma_{j_k})}+\big\lvert 1-G_{i_k,j_k}^k\big\rvert\Big\lvert \log\Big(1-\frac{c+\Delta \sigma_{i_k}\sigma_{j_k}}{N}\Big)\Big\rvert\bigg)\\
&\leq C\sum_{\Pi_t\leq k\leq \Pi_t'}\Big(\big\lvert G_{i_k,j_k}^k\big\rvert+\frac{1}{N}\Big).
\end{align*}
It follows that
$$\big\lvert \E_{\I_1}\E_{\G|\sigma^*}F_N'(\Pi_t)-\E_{\I_1}\E_{\G|\sigma^*}F_N'(\Pi'_t)\big\rvert\leq \frac{C}{N} \E_{\I_1}\E_{\G|\sigma^*} \sum_{\Pi_t\leq k\leq\Pi_t'}\Big(\big\lvert G_{i_k,j_k}^k\big\rvert+\frac{1}{N}\Big)\leq \frac{C}{N^2}\abs{\Pi_t'-\Pi_t},$$
and by Jensen's inequality and \Cref{SBMA Poisson moment bounds},
\begin{equation}\label{eqn: SBMA step 1 III}
III\leq \frac{C}{N^{2p}}\E\big\lvert \E_{\Pi_t'}\abs{\Pi_t'-\Pi_t}^2\big\rvert^{p/2}\leq \frac{C}{N^{2p}}\E \lvert \Pi_t-\E \Pi_t\rvert^p\leq \frac{Ct^{p/2}}{N^p}\leq \frac{Ct^{p/2}}{N^{p/2}}.
\end{equation}
Combining \eqref{eqn: SBMA step 1 key}, \eqref{eqn: SBMA step 1 I}, \eqref{eqn: SBMA step 1 II} and \eqref{eqn: SBMA step 1 III} reveals that $\E\big(F_N'-\widehat{F}_N'\big)^p=\BigO\big((t/N)^{p/2}\big)$.\\
\step{2: proving $\E\big(\widehat{F}_N'-\widetilde{F}_N'\big)^p=\BigO\big(N^{-p/2}\big).$}\\
We decompose this further into
\begin{align}\label{eqn: SBMA step 2 key}
\E\big(\widehat{F}_N'-\widetilde{F}_N'\big)^p\leq C\Big(\E\big(\widehat{F}_N'-&\E_e\widehat{F}_N'\big)^p+\E\big(\E_e\widehat{F}_N'-\E_{\I_2}\E_e\widehat{F}_N'\big)^p+\E\big(\E_{\I_2}\E_e\widehat{F}_N'-\E_{\Pi'}\E_{\I_2}\E_{e}\widehat{F}_N'\big)^p\notag\\
&+\E\big(\E_{\Pi'}\E_{\I_2}\E_e\widehat{F}_N'-\widetilde{F}_N'\big)^p\Big)=C\big(I+II+III+IV\big),
\end{align}
and proceed to bound $I$, $II$, $III$ and $IV$ individually.  By the generalized Efron-Stein inequality
$$I\leq\E \bigg\lvert \sum_{k\geq 0}\sum_{j\leq \pi_k}\E_{e^{(jk)}}\Big(\widehat{F}_N'(e)-\widehat{F}_N'\big(e^{(jk)}\big)\Big)^2\bigg\rvert^{p/2},$$
where $e^{(jk)}$ has an independent copy $e_{jk}'$ of $e_{jk}$ at the $jk$'th coordinate but otherwise coincides with~$e$. Since
$$\big\lvert H_N'(e)-H_N'\big(e^{(jk)}\big)\big\rvert\leq \frac{\abs{\lambda_k\sigma_{i_{jk}}}}{\abs{1+\lambda_k\sigma_{i_{jk}}^*}}\big \lvert e_{jk}-e'_{jk}\big\rvert\leq \frac{\lambda_k}{1-\lambda_k}\big\lvert e_{jk}-e'_{jk}\big\rvert,$$
and $\lambda_k\in [2^{-k-1},2^{-k}]$, we have
$$I\leq \frac{C}{N^{p}}\E \Big\lvert \sum_{k\geq 0}\frac{1}{2^{k}}\cdot \frac{1}{2^k}\sum_{j\leq \pi_k}\E_{e_{jk}'}\big\lvert e_{jk}-e'_{jk}\big\rvert^2 \Big\rvert^{p/2}.$$
It follows by two applications of Hölder's inequality and Jensen's inequality that
$$I\leq \frac{C}{N^p}\E \sum_{k\geq 0}\Big(\frac{1}{2^k}\sum_{j\leq \pi_k}\E_{e_{jk}'}\big\lvert e_{jk}-e_{jk}'\big\rvert^2\Big)^{p/2}\leq \frac{C}{N^p}\E\sum_{k\geq 0}\frac{1}{2^{\frac{kp}{2}}}\pi_k^{\frac{p}{2}-1}\sum_{j\leq \pi_k}\E_{e_{jk}'}\big\lvert e_{jk}-e_{jk}'\big\rvert^p.$$
Recalling that $e_{jk}\sim \Exp(1)$ while $\pi_{jk}\sim \Poi(s_N)$ and invoking \Cref{SBMA Poisson moment bounds} gives
\begin{equation}\label{eqn: SBMA step 2 I}
I\leq \frac{C}{N^p}\sum_{k\geq 0}\frac{1}{2^{\frac{kp}{2}}}\E\pi_k^{\frac{p}{2}}\leq C\Big(\frac{s_N}{N^2}\Big)^{p/2}\leq \frac{C}{N^{p/2}}.
\end{equation}
Similarly, by the generalized Efron-Stein inequality,
$$II\leq C\E\bigg\lvert \sum_{k\geq 0}\sum_{j\leq \pi_k}\E_{\I_2^{(jk)}}\Big(\E_e\widehat{F}_N'(\I_2)-\E_e\widehat{F}_N'\big(\I_2^{(jk)}\big)\Big)^2\bigg\rvert^{p/2},$$
where $\I_2^{(jk)}$ has an independent copy $i'_{jk}$ of $i_{jk}$ at the $jk$'th coordinate but otherwise coincides with~$\I_2$. By the mean value theorem,
\begin{align*}
\big\lvert H_N'(\I_2)-H_N'\big(\I^{(jk)}_2\big)\big\rvert &\leq \abs{\log(1+\lambda_k\sigma_{i_{jk}})-\log(1+\lambda_k\sigma_{i_{jk}'})}+\lambda_ke_{jk}\Big\lvert \frac{\sigma_{i_{jk}}}{1+\lambda_k\sigma^*_{i_{jk}}}-\frac{\sigma_{i_{jk}'}}{1+\lambda_k\sigma^*_{i_{jk}'}}\Big\rvert\\
&\leq C\lambda_k(1+e_{jk}).
\end{align*}
It follows once again by two applications of Hölder's inequality and \Cref{SBMA Poisson moment bounds} that
\begin{equation}\label{eqn: SBMA step 2 II}
II\leq \frac{C}{N^p}\E \sum_{k\geq 0}\frac{1}{2^{\frac{kp}{2}}}\pi_k^{\frac{p}{2}-1}\sum_{j\leq \pi_k}(1+e_{jk})^p\leq C\Big(\frac{s_N}{N^2}\Big)^{p/2}\leq \frac{C}{N^{p/2}}.
\end{equation}
Another application of the generalized Efron-Stein inequality yields
$$III\leq C\E\bigg\lvert \sum_{k\geq 0} \E_{\Pi'^{(k)}}\Big(\E_{\I_2}\E_e\widehat{F}_N'(\Pi')-\E_{\I_2}\E_e\widehat{F}_N'\big(\Pi'^{(k)}\big)\Big)^2\bigg\rvert^{p/2},$$
where $\smash{\Pi'^{(k)}}$ has an independent copy $\pi_k'$ of $\pi_k$ at the $k$'th coordinate but otherwise coincides with~$\Pi'$. Slightly abusing notation and redefining $\smash{\Pi'^{(k)}}$ to be the process with the larger $k$'th coordinate, we see that
\begin{align*}
\big\lvert H_N'(\Pi')-H_N'(\Pi'^{(k)})\big\rvert&\leq \sum_{\pi_k\leq j\leq \pi_k'}\Big\lvert \log(1+\lambda_k\sigma_{i_{jk}})-\frac{\lambda_ke_{jk}\sigma_{i_{jk}}}{1+\lambda_k\sigma^*_{i_{jk}}}\Big\rvert\\
&\leq \sum_{\pi_k\leq j\leq \pi_k'}\bigg(\Big\lvert\lambda_k\sigma_{i_{jk}}-\frac{\lambda_ke_{jk}\sigma_{i_{jk}}}{1+\lambda_k\sigma^*_{i_{jk}}}\Big\rvert+C\lambda_k^2\bigg)\\
&\leq \sum_{\pi_k\leq j\leq \pi_k'}\bigg(\lambda_k\Big\lvert 1-\frac{e_{jk}}{1+\lambda_k\sigma^*_{i_{jk}}}\Big\rvert +C\lambda_k^2\bigg)\\
&\leq \lambda_k\sum_{\pi_k\leq j\leq \pi_k'}\bigg(\frac{\abs{1-e_{jk}}+\lambda_k}{1-\lambda_k}+C\lambda_k\bigg).
\end{align*}
It follows by two applications of the Cauchy-Schwarz inequality that
\begin{align*}
III&\leq \frac{C}{N^p}\E \sum_{k\geq 0}\frac{1}{2^{\frac{kp}{2}}}\bigg(\sum_{\pi_k\leq j\leq \pi_k'}\bigg(\frac{\abs{1-e_{jk}}+\lambda_k}{1-\lambda_k}+C\lambda_k\bigg)\bigg)^p\\
&\leq \frac{C}{N^p}\E \sum_{k\geq 0}\frac{1}{2^{\frac{kp}{2}}}\abs{\pi_k-\pi_k'}^{p-1}\sum_{\pi_k\leq j\leq \pi_k'}\bigg(\frac{\abs{1-e_{jk}}+\lambda_k}{1-\lambda_k}+C\lambda_k\bigg)^p\\
&\leq \frac{C}{N^p} \sum_{k\geq 0}\frac{1}{2^{\frac{kp}{2}}}\E \abs{\pi_k-\pi_k'}^p.
\end{align*}
Since $\E\abs{\pi_k-\pi_k'}^p\leq C\E\abs{\pi_k-\E\pi_k}^p\leq s_N^{p/2}$ by \Cref{SBMA Poisson moment bounds}, this implies that
\begin{equation}\label{eqn: SBMA step 2 III}
III\leq C\Big(\frac{s_N}{N^2}\Big)^{p/2}\leq \frac{C}{N^{p/2}}.
\end{equation}
A final application of the generalized Efron-Stein inequality gives
$$IV\leq C\E \bigg\lvert \sum_{i\leq N}\E_{Z^{(i)}}\Big(\E_{\Pi'}\E_{\I_2}\E_e\widehat{F}_N'(Z)-\E_{\Pi'}\E_{\I_2}\E_e\widehat{F}_N'\big(Z^{(i)}\big)\Big)^2\bigg\rvert^{p/2},$$
where $Z^{(i)}$ has an independent copy $Z_{i,0}'$ of $Z_{i,0}$ at the $i$'th coordinate but otherwise coincides with~$Z$. Combining Hölder's inequality with the bound
$$\big\lvert H_N'(Z)-H_N'\big(Z^{(i)}\big)\big\rvert\leq \sqrt{\lambda_0\epsilon_N}\big\lvert Z_{i,0}-Z_{i,0}'\big\rvert$$
reveals that
\begin{equation}\label{eqn: SBMA step 2 IV}
IV\leq C\Big(\frac{\lambda_0 \epsilon_N}{N}\Big)^p\E \Big\lvert \sum_{i\leq N}\E_{Z^{(i)}}\big\lvert Z_{i,0}-Z_{i,0}'\big\rvert^2\Big\rvert^{p/2}\leq \frac{C}{N^p}N^{\frac{p}{2}-1}\sum_{i\leq N}\E \big\lvert Z_{i,0}-Z_{i,0}'\big\rvert^p\leq \frac{C}{N^{p/2}}.
\end{equation}
Together with \eqref{eqn: SBMA step 2 key}, \eqref{eqn: SBMA step 2 I}, \eqref{eqn: SBMA step 2 II} and \eqref{eqn: SBMA step 2 III}, this shows that $\E\big(\widehat{F}_N'-\widetilde{F}_N'\big)^p=\BigO\big(N^{-p/2}\big)$.\\
\step{3: proving $\E\big(\widetilde{F}_N'-\overline{F}_N'\big)^p=\BigO\big((t^2/N)^{p/2}\big)$}\\
Controlling the final term in \eqref{eqn: SBMA free energy concentration key} requires more care since $\widetilde{F}_N'$ depends on $\sigma^*$ both through $F_N'$ and through the conditional expectation $\E_{\G|\sigma^*}$. To simplify notation, write $\E'=\E_Z\E_{\Pi'}\E_{\I_2}\E_{e}\E_{\Pi_t}\E_{\I_1}$ in such a way that by the generalized Efron-Stein inequality
\begin{align}\label{eqn: SBMA step 3 key}
\E\big(\widetilde{F}_N'-\overline{F}_N'\big)^p&\leq C\E\bigg\lvert \sum_{\ell\leq N}\E_{\sigma^{*,(\ell)}}\Big(\E'\E_{\G|\sigma^*}F_N'(\sigma^*)-\E'\E_{\G|\sigma^{*,(\ell)}}F_N'\big(\sigma^{*,(\ell)}\big)\Big)^2\bigg\rvert^{p/2}\notag\\
&\leq C\E\bigg\lvert \sum_{\ell\leq N}\E_{\sigma^{*,(\ell)}}\Big(\E'\E_{\G|\sigma^*}F_N'(\sigma^*)-\E'\E_{\G|\sigma^{*}}F_N'\big(\sigma^{*,(\ell)}\big)\Big)^2\bigg\rvert^{p/2}\notag\\
&\quad +C\E\bigg\lvert \sum_{\ell\leq N}\E_{\sigma^{*,(\ell)}}\Big(\E'\E_{\G|\sigma^*}F_N'\big(\sigma^{*,(\ell)}\big)-\E'\E_{\G|\sigma^{*,(\ell)}}F_N'\big(\sigma^{*,(\ell)}\big)\Big)^2\bigg\rvert^{p/2}\notag\\
&=C(I+II),
\end{align}
where $\sigma^{*,(\ell)}$ has an independent copy $\widetilde{\sigma}^*_\ell$ of $\sigma^*_\ell$ at the $\ell$'th coordinate but otherwise coincides with~$\sigma^*$. Since
\begin{align*}
\big\lvert H_N'(\sigma^*)-H_N'(\sigma^{*,(l)})\big\rvert&\leq \lambda_0\epsilon_N\abs{\sigma_i}\big\lvert \sigma^*_\ell-\widetilde{\sigma}_\ell^*\big\rvert+\sum_{k\geq 1}\sum_{j:i_{jk}=\ell}\bigg\lvert \frac{\lambda_ke_{jk}\sigma_\ell}{1+\lambda_k\sigma^*_\ell}-\frac{\lambda_ke_{jk}\sigma_\ell}{1+\lambda_k\widetilde{\sigma}^*_\ell}\bigg\rvert\\
&\leq 2 \epsilon_N+\sum_{k\geq 1}\sum_{j:i_{jk}=\ell}\frac{2\lambda_k^2e_{jk}}{(1-\lambda_k)^2},
\end{align*}
and $\E e_{jk}=1$, the Fubini-Tonelli theorem and the basic properties of the multinomial distribution imply that
\begin{align*}
\big\lvert \E'\E_{\G|\sigma^*}F_N'(\sigma^*)-\E'\E_{\G|\sigma^*}F_N'\big(\sigma^{*,(\ell)}\big)\big\rvert&\leq \frac{2\epsilon_N}{N}+\frac{1}{N}\E_{\Pi'}\sum_{k\geq 1}\frac{2\lambda_k^2}{(1-\lambda_k)^2}\E_{\I_2}\abs{\{j:i_k=\ell\}}\\
&\leq \frac{2}{N}+\frac{1}{N^2}\E_{\Pi'}\sum_{k\geq 1}\frac{2\lambda_k^2}{(1-\lambda_k)^2}\pi_k\leq \frac{2}{N}+\frac{s_N}{N^2}\leq \frac{3}{N}.
\end{align*}
It follows that
\begin{equation}\label{eqn: SBMA step 3 I}
I\leq C\Big(\frac{N}{N^2}\Big)^{p/2}=\frac{C}{N^{p/2}}.
\end{equation}
To bound $II$ we will use an interpolation argument. Fix $1\leq \ell\leq N$ and condition on all sources of randomness other than $\G$. For each $u\in [0,1]$, $G\in \{0,1\}^{\Pi_t}$ and $k\leq \Pi_t$, let
\begin{align*}
P_u^{1,k}(G)&=G_k\bigg(\frac{c+\Delta\sigma_\ell^{*,u}\sigma^*_{j_k}}{N}\bigg)+(1-G_{k})\bigg(1-\frac{c+\Delta\sigma_{\ell}^{*,u}\sigma^*_{j_k}}{N}\bigg),\\
P_u^{2,k}(G)&=G_k\bigg(\frac{c+\Delta\sigma_{\ell}^{*,u}\sigma^*_{i_k}}{N}\bigg)+(1-G_{k})\bigg(1-\frac{c+\Delta\sigma_{\ell}^{*,u}\sigma^*_{i_k}}{N}\bigg),\\
P_u^{3}(G)&=G_{k}\bigg(\frac{c+\Delta(\sigma_{\ell}^{*,u})^2}{N}\bigg)+(1-G_{k})\bigg(1-\frac{c+\Delta(\sigma_{\ell}^{*,u})^2}{N}\bigg),
\end{align*}
where $\sigma_{\ell}^{*,u}=(1-u)\sigma^*_{\ell}+u\widetilde{\sigma}^*_{\ell}$.
Write $\sigma^{*,u}$ for the configuration with $\ell$'th coordinate $\sigma_\ell^{*,u}$ which otherwise coincides with~$\sigma^*$, and introduce the sets
\begin{alignat*}{2}
\I_1^1&=\{k\mid i_k=l \text{ and } j_k\neq l\}& \qquad\qquad\I_1^3&=\{k\mid i_k=j_k=l\}\\
\I_1^2&=\{k\mid i_k\neq l \text{ and } j_k=l\}& \qquad\qquad\I_1^4&=\{k\mid  i_k\neq l\neq j_k\}.
\end{alignat*}
Denote $\widetilde{\G}=(\G_k)_{k\in \I_1^4}$, $\G^{(k)}=\G\setminus \G_k$, $\widetilde{G}=(G_k)_{k\in \I_1^4}$ and $G^{(k)}=G\setminus G_k$. Define the interpolating free energy
$$\p(u)=\sum_{G\in \{0,1\}^{\Pi_t}}F_N'\big(\sigma^{*,(\ell)},G\big)\P\big\{\widetilde{\G}=\widetilde{G}|\sigma^*\big\}\cdot \prod_{k\in \I_1^1}P_u^{1,k}(G)\prod_{k\in \I_1^2}P_u^{2,k}(G)\prod_{k\in \I_1^3}P_u^{3,k}(G)$$
in such a way that $\p(1)=\E_{\G|\sigma^*}F_N'\big(\sigma^{*,(\ell)}\big)$ and $\p(0)=\E_{\G|\sigma^{*,(\ell)}}F_N'\big(\sigma^{*,(\ell)}\big)$. By the product rule,
\begin{align*}
\p'(u)&=\sum_{k\in \cup_{i\leq 3}\I_1^i}\sum_{G\in \{0,1\}^{\Pi_t}}F_N'\big(\sigma^{*,(\ell)},G\big)\P\big\{\G^{(k)}=G^{(k)}|\sigma^{*,u}\big\}(2G_k-1)\\
&\qquad\qquad\qquad\qquad\qquad\Bigg(\frac{\Delta(\widetilde{\sigma}_\ell^*-\sigma^*_\ell)(\sigma_{j_k}^*\1_{\{k\in \I_1^1\}}+\sigma_{i_k}^*\1_{\{k\in \I_1^2\}}+2\sigma_\ell^{*,u}\1_{\{k\in \I_1^3\}})}{N}\Bigg)\\
&=\sum_{k\in \I_1^1}D_k\frac{\Delta(\widetilde{\sigma}_\ell^*-\sigma^*_\ell)\sigma_{j_k}^*}{N}+\sum_{k\in \I_1^2}D_k\frac{\Delta(\widetilde{\sigma}_\ell^*-\sigma^*_\ell)\sigma_{i_k}^*}{N}\\
&\qquad\qquad\qquad\qquad\qquad\qquad\qquad\qquad+\sum_{k\in \I_1^3}D_k\frac{2\Delta\big((1-u)\sigma_\ell^*+u\widetilde{\sigma}_\ell^*\big)(\widetilde{\sigma}_\ell^*-\sigma_\ell^*)}{N}
\end{align*}
for $D_k=\E_{\G^{(k)}|\sigma^{*,u}}F_N'\big(\sigma^{*,(\ell)},\G^{(k)},\G_k=1\big)-\E_{\G^{(k)}|\sigma^{*,u}}F_N'\big(\sigma^{*,(\ell)},\G^{(k)},\G_k=0\big)$. Since
$$\big\lvert H_N'\big(\G^{(k)}, \G_k=1\big)-H_N'\big(\G^{(k)},\G_k=0\big)\big\rvert\leq \Big\rvert\log(c+\Delta \sigma_{i_k}\sigma_{j_k})-\log\Big(1-\frac{c+\Delta \sigma_{i_k}\sigma_{j_k}}{N}\Big)\Big\lvert\leq C,$$
we have $\abs{D_k}\leq \frac{C}{N}$, so the fundamental theorem of calculus yields
$$\big\lvert \E_{\G|\sigma^*}F_N'\big(\sigma^{*,(\ell)}\big)-\E_{\G|\sigma^{*,(\ell)}}F_N'\big(\sigma^{*,(l)}\big)\big\rvert\leq \sup_{u\in (0,1)}\abs{\p'(u)}\leq \frac{C}{N^2}\big(\abs{\I_1^1}+\abs{\I_1^2}+\abs{\I_1^3}\big).$$
It follows by the basic properties of the binomial distribution that
\begin{align*}
\Big(\E'\E_{\G|\sigma^*}F_N'\big(\sigma^{*,(\ell)}\big)-\E'\E_{\G|\sigma^{*,(\ell)}}F_N'\big(\sigma^{*,(\ell)}\big)\Big)^2&\leq \frac{C}{N^4}\sum_{1 \leq i \leq 3} \big(\E_{\Pi_t}\abs{\I_1^i}\big)^2=\frac{C}{N^4}\sum_{1\leq i \leq 3} \Big(\E_{\Pi_t}\frac{\Pi_t}{N}\Big)^2\\
&\leq \frac{Ct^2}{N^2},
\end{align*}
and thus
\begin{equation}\label{eqn: SBMA step 3 II}
II\leq C\bigg(\frac{N t^2}{N^2}\bigg)^{p/2}=\frac{Ct^p}{N^{p/2}}.
\end{equation}
Combining \eqref{eqn: SBMA step 3 key}, \eqref{eqn: SBMA step 3 I} and \eqref{eqn: SBMA step 3 II} reveals that $\E\big(\widetilde{F}_N'-\overline{F}_N'\big)^p=\BigO\big((t^2/N)^{p/2}\big)$. Together with \eqref{eqn: SBMA free energy concentration key}, step $1$ and step $2$ this completes the proof.
\end{proof}

\section{Multi-overlap concentration}\label{SBM app multioverlap concentration}
In this appendix we prove a finitary version of the main result in \cite{BarMOC} regarding the concentration of the multi-overlaps \eqref{eqn: SBM enriched multi-overlap}. Instead of focusing on the stochastic block model \eqref{eqn: SBM Hamiltonian} or \eqref{eqn: SBM enriched Hamiltonian}, we will work in the general setting of optimal Bayesian inference; this presents no additional difficulty, and we suspect that our restatement of the multi-overlap concentration result in \cite{BarMOC} will be useful for the analysis of other statistical inference models.

Let us describe a general optimal Bayesian inference model following \cite{BarMOC}. We consider a ground-truth signal $\sigma^*\in \Sigma_N=\{-1,+1\}^N$ with independent coordinates generated from a prior distribution $P^*$,
\begin{equation}
\sigma^*\sim P^*=\prod_{i\leq N}P_i^*.    
\end{equation}
We suppose that the data $\D=\D(\sigma^*)$ is sampled conditionally on the unknown signal $\sigma^*$ from a probability distribution $P_{\mathrm{out}}$,
\begin{equation}
\D\sim P_{\text{out}}\{\cdot|\sigma^*\}.
\end{equation}
The inference task is to recover the signal $\sigma^*$ as accurately as possible given the data $\D$ under the assumption that the likelihood $P_{\text{out}}$ and the prior $P^*$ are known to the statistician. In this setting the posterior of the model can be written explicitly. Indeed, if
\begin{equation}\label{eqn: SBMA log-likelihood}
H_N(\sigma)=\log P_{\text{out}}\big\{\D|\sigma^*=\sigma\big\}
\end{equation}
denotes the Hamiltonian or log-likelihood of the model, then the Gibbs measure or posterior distribution of the model is given by Bayes' formula,
\begin{equation}\label{eqn: SBMA generic Gibbs measure}
G_N(\ud \sigma)=\P\{\sigma^*\in \ud \sigma|\D\}=\frac{\exp H_N(\sigma) P^*(\ud \sigma)}{\int \exp H_N(\tau)P^*(\ud \tau)}.
\end{equation}
In addition to working in the context of optimal Bayesian inference, we will assume that the Hamiltonian \eqref{eqn: SBMA log-likelihood} satisfies symmetry between sites. This means that for any permutation $\rho$ of the spin indices,
\begin{equation}\label{eqn: SBMA symmetry between sites}
\P\{\sigma^*\in \ud \sigma|\D\}\stackrel{d}{=}\P\{\rho(\sigma^*)\in \ud \sigma|\D\}.
\end{equation}
Notice that both the stochastic block model \eqref{eqn: SBM Hamiltonian} and its enriched version \eqref{eqn: SBM enriched Hamiltonian} fall into the setting we have just described. For instance, in the enriched stochastic block model we have $P_i^*=\Ber(p)$ and $\smash{\D=\widetilde{\D}^{t,\mu[-1,1],\bar \mu}}$, where the data $\smash{\widetilde{\D}^{t,s,\mu}}$ was defined in \eqref{eqn: SBM enriched data}.

The concentration of the multi-overlaps \eqref{eqn: SBM enriched multi-overlap} associated with the Hamiltonian \eqref{eqn: SBMA log-likelihood} will be enforced through a small perturbation which will not affect the limit of the associated free energy \eqref{eqn: SBM free energy}. Recall the choice of the sequences $(\epsilon_N)$ and $(s_N)$ satisfying \eqref{eqn: SBM epsilon sequence} and \eqref{eqn: SBM s sequence}, respectively. Fix an integer $K_+\geq 1$, and for each perturbation parameter $\lambda \in \R^{1+K_+}$ with $\lambda_k\in [2^{-k-1},2^{-k}]$ for $0\leq k\leq K_+$ recall the definition of the perturbation Hamiltonians \eqref{eqn: SBM gaussian perturbation} and \eqref{eqn: SBM exponential perturbation}. Introduce the perturbed Hamiltonian 
\begin{equation}\label{eqn: SBMA perturbed generic Hamiltonian}
H_N(\sigma,\lambda)=H_N(\sigma)+H_N^{\mathrm{gauss}}(\sigma,\lambda_0)+H_N^{\mathrm{exp}}(\sigma,\lambda),
\end{equation}
where the randomness of each Hamiltonian is independent of the randomness of the other Hamiltonians. The multi-overlaps associated with this perturbed Hamiltonian are defined as in \eqref{eqn: SBM enriched multi-overlap},
\begin{equation}\label{eqn: SBMA generic multi-overlaps}
R_{\ell_1,\ldots,\ell_n}=\frac{1}{N}\sum_{i\leq N}\sigma_i^{\ell_1}\cdots \sigma_i^{\ell_n},
\end{equation}
where $(\sigma^\ell)$ denotes a sequence of i.i.d.\@ replicas sampled from the Gibbs measure $\langle \cdot \rangle$ associated with the perturbed Hamiltonian \eqref{eqn: SBMA perturbed generic Hamiltonian}. It is actually these multi-overlaps that will be shown to concentrate. The proof of \Cref{SBM perturbation effect on free energy} shows that the free energy functionals associated with the Hamiltonians \eqref{eqn: SBMA log-likelihood} and \eqref{eqn: SBMA perturbed generic Hamiltonian} are asymptotically equivalent; for our purposes these two Hamiltonians can therefore be thought to describe the same model. 

To establish the concentration of the multi-overlaps \eqref{eqn: SBMA generic multi-overlaps} we will closely follow the arguments in \cite{BarMOC}. The authors in \cite{BarMOC} obtain the concentration of the multi-overlaps \eqref{eqn: SBMA generic multi-overlaps} for some perturbation parameter $\lambda$ by showing that it holds on average over the set of admissible perturbation parameters. In the proof of \Cref{SBM main result} we will need to be able to obtain multi-overlap concentration for a specific perturbation parameter. Following the strategy in \cite{BarMOC}, we will propose a verifiable condition on a perturbation parameter $\lambda$ which ensures the concentration of its associated multi-overlaps. To be more precise, we will obtain the concentration of the multi-overlaps \eqref{eqn: SBMA generic multi-overlaps} up to a small error for any sequence of perturbation parameters $(\lambda^N)$ with
\begin{equation}\label{eqn: SBMA limits to get FDS}
\lim_{N\to \infty}\E\big\langle (\L_k-\E\langle \L_k\rangle)^2\big\rangle=0.
\end{equation}
The quantities $\L_k$ are defined in \eqref{eqn: SBM lambda 0 derivative of Hamiltonian} and \eqref{eqn: SBM lambda k derivative of Hamiltonian} for $0\leq k\leq K_+$, and, through a slight abuse of notation, we have written $\smash{\langle \cdot \rangle}$ for the Gibbs average with respect to the perturbed Hamiltonian \eqref{eqn: SBMA perturbed generic Hamiltonian} associated with the perturbation parameters $\smash{(\lambda^N)}$. If necessary, we will write $\langle \cdot \rangle_N$ to emphasize the dependence of this Gibbs measure on $N$.

We begin by showing that \eqref{eqn: SBMA limits to get FDS} implies the concentration of the magnetization $R_1$ and of the overlap $R_{1,2}$; the former will be immediate from the Nishimori identity \eqref{eqn: SBM Nishimori identity} while the latter will follow from a standard application of the Gaussian integration by parts formula (see Lemma 1.4 in~\cite{PanSKB}).

\begin{lemma}\label{SBMA concentration of R1}
For any integer $N\geq 1$,
\begin{equation}
\E\big\langle (R_1-\E\langle R_1\rangle)^2\big\rangle\leq \frac{1}{N}.
\end{equation}
\end{lemma}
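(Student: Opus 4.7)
The plan is to use the Nishimori identity to collapse the ``thermal plus disorder'' variance of $R_1$ into the ordinary variance of $R_1$ evaluated on the signal, and then exploit the product structure of $P^*$.

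First, I would expand the square and use the definition of the Gibbs average to get
\begin{equation*}
\E\langle (R_1-\E\langle R_1\rangle)^2\rangle=\E\langle R_1^2\rangle-(\E\langle R_1\rangle)^2.
\end{equation*}
Since $R_1$ and $R_1^2$ depend on a single replica $\sigma^1$, the Nishimori identity \eqref{eqn: SBM Nishimori identity} (which remains valid for the perturbed Hamiltonian \eqref{eqn: SBMA perturbed generic Hamiltonian} because the perturbed model is still one of optimal Bayesian inference) allows us to replace $\sigma^1$ by the signal $\sigma^*$ inside the outer expectation. Thus
\begin{equation*}
\E\langle R_1\rangle=\E\bigg[\frac{1}{N}\sum_{i\leq N}\sigma^*_i\bigg] \quad\text{and}\quad \E\langle R_1^2\rangle=\E\bigg[\Big(\frac{1}{N}\sum_{i\leq N}\sigma^*_i\Big)^2\bigg].
\end{equation*}

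Combining the two displays identifies the left-hand side with the ordinary variance of $\frac{1}{N}\sum_{i\leq N}\sigma^*_i$ under $P^*$. Since $P^*$ is a product measure and each coordinate satisfies $|\sigma^*_i|\leq 1$, independence gives
\begin{equation*}
\Var\bigg(\frac{1}{N}\sum_{i\leq N}\sigma^*_i\bigg)=\frac{1}{N^2}\sum_{i\leq N}\Var(\sigma^*_i)\leq \frac{1}{N},
\end{equation*}
which is the claimed bound. There is no real obstacle here: the argument is purely bookkeeping once Nishimori is applied, and it is the absence of any interaction between replicas in the quantity $R_1$ (in contrast to genuine overlaps $R_{[n]}$ with $n\geq 2$) that makes this concentration automatic and uniform in the perturbation parameter.
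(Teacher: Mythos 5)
Your proof is correct and is essentially the paper's argument: apply the Nishimori identity to replace the single-replica quantities $\langle R_1\rangle$ and $\langle R_1^2\rangle$ by the corresponding functions of the signal, identify the left-hand side with $\frac{1}{N^2}\sum_{i\leq N}\Var(\sigma_i^*)$, and bound this by $1/N$ using independence and boundedness of the coordinates. The paper compresses this into one display, but the steps and the justification (including that the perturbed Gibbs measure is still a posterior, hence Nishimori applies) are the same.
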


\begin{proof}
Applying the Nishimori identity reveals that
$$\E\big\langle (R_1-\E\langle R_1\rangle)^2\big\rangle=\frac{1}{N^2}\sum_{i\leq N}\E\big(\sigma_i^*-\E \sigma_i^*\big)^2\leq \frac{1}{N}.$$
This completes the proof.
\end{proof}

\begin{lemma}\label{SBMA concentration of R2}
For any integer $N\geq 1$,
\begin{equation}
\E\big\langle (R_{1,2}-\E\langle R_{1,2}\rangle)^2\big \rangle\leq 4\E\big\langle (\L_0-\E\langle \L_0\rangle)^2\big\rangle.
\end{equation}
\end{lemma}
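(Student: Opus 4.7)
The plan is to unfold $\L_0 = x + y/2$, where $x = N^{-1}\sigma\cdot\sigma^*$ is the replica-signal overlap and $y = (N\sqrt{\lambda_{0,N}})^{-1}\sigma\cdot Z_0$ is the noise-replica inner product coming from the Gaussian perturbation. First I would apply the Nishimori identity \eqref{eqn: SBM Nishimori identity} to the function $f(\sigma,\sigma')=(N^{-1}\sigma\cdot\sigma' - \E\langle R_{1,2}\rangle)^2$ to obtain the identity $\E\langle(R_{1,2}-\E\langle R_{1,2}\rangle)^2\rangle = \E\langle(x-\E\langle x\rangle)^2\rangle$. It is then enough to compare the variance of $x$ with four times the variance of $\L_0$ (where all variances and covariances below are computed under $\E\langle\cdot\rangle$), after noting that $4\Var(\L_0) - \Var(x) = 3\Var(x) + 4\Cov(x,y) + \Var(y)$.

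The next step is a collection of explicit Gaussian-integration-by-parts computations for the moments involving $y$. Using the identity $\E[Z_{0,i}\langle F\rangle] = \sqrt{\lambda_{0,N}}\,\E[\langle\sigma_i F\rangle - \langle\sigma_i\rangle\langle F\rangle]$ together with $\sigma_i^2=1$ and the Nishimori identity, I would compute
\begin{equation*}
\E\langle y\rangle = 1 - \E\langle R_{1,2}\rangle,\quad \E\langle xy\rangle = \E\langle R_{1,2}\rangle - \E\langle R_{1,*}R_{2,*}\rangle,
\end{equation*}
\begin{equation*}
\E\langle y^2\rangle = \frac{1}{N\lambda_{0,N}} + 1 - 2\E\langle R_{1,2}\rangle - \E\langle R_{1,2}^2\rangle + 2\E\langle R_{1,3}R_{2,3}\rangle,
\end{equation*}
where $R_{1,*}=N^{-1}\sigma^1\cdot\sigma^*$. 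Applying Nishimori once more to swap the replica $\sigma^3$ with $\sigma^*$ identifies $\E\langle R_{1,3}R_{2,3}\rangle = \E\langle R_{1,*}R_{2,*}\rangle$.

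Substituting these explicit expressions, the algebra collapses to the clean identity
\begin{equation*}
4\Var(\L_0) - \Var(R_{1,2}) = 2\bigl(\E\langle R_{1,2}^2\rangle - \E\langle R_{1,*}R_{2,*}\rangle\bigr) + \frac{1}{N\lambda_{0,N}}.
\end{equation*}
The right-hand side is manifestly non-negative once one observes that $\E\langle R_{1,*}R_{2,*}\rangle = \E[\langle R_{1,*}\rangle^2]$ (because $\sigma^1,\sigma^2$ are independent replicas under $\langle\cdot\rangle$ while $\sigma^*$ is fixed), and then applies Jensen's inequality $\E[\langle R_{1,*}\rangle^2] \leq \E\langle R_{1,*}^2\rangle$ followed by the Nishimori identity $\E\langle R_{1,*}^2\rangle = \E\langle R_{1,2}^2\rangle$ (obtained by swapping $\sigma^*$ with $\sigma^2$ in the definition of $R_{1,*}^2$).

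The main technical nuisance will be the careful bookkeeping of the Gaussian integration by parts applied to expressions of the form $\E[\sigma_j^* Z_{0,i}\langle\sigma_i\sigma_j\rangle]$ and $\E[Z_{0,i}Z_{0,j}\langle\sigma_i\sigma_j\rangle]$, where repeated simplifications using $\sigma_i^2=1$ must be handled correctly; no genuine conceptual obstacle is anticipated. In particular, the factor of $4$ is not tight — the proof produces the slack $2(\E\langle R_{1,2}^2\rangle - \E\langle R_{1,*}R_{2,*}\rangle) + (N\lambda_{0,N})^{-1}$, which is comforting and consistent with $\Var(R_{1,2})$ being the variance of a single linear combination among many inside $\L_0$.
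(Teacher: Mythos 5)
Your proposal is correct — I checked the Gaussian integration by parts computations and the claimed identities are in fact exact: the diagonal corrections in the double integration by parts for $\E\langle y^2\rangle$ cancel, so $4\Var(\L_0)-\Var(R_{1,2}) = 2\big(\E\langle R_{1,2}^2\rangle-\E\langle R_{1,*}R_{2,*}\rangle\big)+\tfrac{1}{N\lambda_{0,N}}$ holds with no error term, and the non-negativity argument via $\E\langle R_{1,*}R_{2,*}\rangle=\E\big[\langle R_{1,*}\rangle^2\big]\leq \E\langle R_{1,*}^2\rangle=\E\langle R_{1,2}^2\rangle$ is sound (the swap $\E\langle R_{1,*}R_{1,2}\rangle=\E\langle R_{1,*}R_{2,*}\rangle$ you use implicitly is justified by the full exchangeability of $(\sigma^*,\sigma^1,\sigma^2,\dots)$ under $\E\langle\cdot\rangle$, which follows from \eqref{eqn: SBM Gibbs conditional}). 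Your route is genuinely different from the paper's, though it draws on the same two tools. The paper never expands $\Var(\L_0)$: it computes the covariance $\E\langle(R_{1,*}-\E\langle R_{1,*}\rangle)(\L_0-\E\langle\L_0\rangle)\rangle$ with a single integration by parts (only $\E\langle\sigma\cdot Z_0\rangle$ and $\E\langle R_{1,*}\,\sigma\cdot Z_0\rangle$ are needed), shows via the Nishimori identity that this covariance is at least $\tfrac12\E\langle(R_{1,2}-\E\langle R_{1,2}\rangle)^2\rangle$, and then concludes by Cauchy--Schwarz, which is where the factor $4$ comes from. Your argument replaces Cauchy--Schwarz by a complete second-moment computation of $\L_0=x+y/2$, including the double integration by parts for $\E\langle y^2\rangle$ that the paper avoids. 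What you buy is an exact identity with an explicit non-negative slack (so a strictly sharper statement, and a built-in consistency check); what the paper's version buys is brevity and insensitivity to the bookkeeping of three-replica terms and diagonal contributions — had those $\BigO(N^{-1})$ corrections not cancelled, your inequality for every fixed $N$ would have required extra care, whereas the covariance-plus-Cauchy--Schwarz route is immune to that issue.
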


\begin{proof}
The proof is taken from the Appendix of \cite{BarMOC}, and it consists in testing the concentration of the overlap $\smash{R_{1,*}=\frac{\sigma\cdot \sigma^*}{N}}$ against the Hamiltonian $\L_0$ by means of the Gaussian integration by parts formula. Using \eqref{eqn: SBM lambda 0 derivative of Hamiltonian} shows that
\begin{align}\label{SBMA concentration of R2 covariance}
\E\big\langle (R_{1,*}-\E\langle R_{1,*}\rangle)(\L_0-\E\langle \L_0\rangle)\big\rangle
=\E\big\langle R_{1,*}(&R_{1,*}-\E\langle R_{1,*}\rangle)\big\rangle\notag\\
&+\frac{1}{2N\sqrt{\lambda_{0,N}}}\E\big\langle R_{1,*}(\sigma\cdot Z_0-\E\langle \sigma\cdot Z_0\rangle)\big\rangle.
\end{align}
The Gaussian integration by parts formula and the Nishimori identity imply that
$$\E\langle \sigma\cdot Z\rangle=N\sqrt{\lambda_{0,N}}\big(1-\E\langle R_{1,*}\rangle\big)\quad \text{and} \quad \E\big\langle R_{1,*}\sigma\cdot Z\big\rangle=N\sqrt{\lambda_{0,N}}\big(\E\langle R_{1,*}\rangle -\E\langle R_{1,*}\rangle^2\big).$$
Substituting these two equalities into \eqref{SBMA concentration of R2 covariance} reveals that
$$\E\big\langle (R_{1,*}-\E\langle R_{1,*}\rangle)(\L_0-\E\langle \L_0\rangle)\big\rangle=\frac{1}{2}\E\big\langle (R_{1,*}-\langle R_{1,*}\rangle)^2 \big\rangle+\frac{1}{2}\E\big\langle(R_{1,*}-\E\langle R_{1,*}\rangle)^2\big\rangle.$$
(There seems to be a sign error in equation (5.2) of \cite{BarMOC}.)
It follows by the Nishimori identity that
$$\E\big\langle (R_{1,*}-\E\langle R_{1,*}\rangle)(\L_0-\E\langle \L_0\rangle)\big\rangle\geq \frac{1}{2}\E\big\langle(R_{1,2}-\E\langle R_{1,2}\rangle)^2\big\rangle.$$
Invoking the Cauchy-Schwarz inequality and the Nishimori identity completes the proof.
\end{proof}

The concentration of the multi-overlaps \eqref{eqn: SBMA generic multi-overlaps} is considerably more complicated to obtain, and follows from the Franz-de Sanctis identities described in \cite{BarMOC}. The first section of this appendix will be devoted to establishing this implication. In the second section we will prove a finitary version of the multi-overlap concentration result in \cite{BarMOC} which will be uniform over an appropriate class of random probability measures. This uniformity plays its part in the proof \Cref{SBM free energy approximate solution at contact point}.

\subsection{Franz-de Sanctis identities}

The Franz-de Sanctis identities may be thought of as the Ghirlanda-Guerra identities of optimal Bayesian inference. A random probability measure which satisfies the Ghirlanda-Guerra identities must have an ultrametric support; a deep insight which leads to the appearance of the intricate Poisson-Dirichlet probability cascades in many spin glass models \cite{PanSKB}. The Franz-de Sanctis identities enforce a much simpler and more rigid structure on a random probability measure which we will describe in due course. To state these identities, it will be convenient to fix a uniform index $i\in \{1,\ldots,N\}$ and an exponential random variable $e\sim \Exp(1)$ independent of all other sources of randomness, and introduce the random variables
\begin{equation}\label{eqn: SBMA Franz de Sanctis variables}
y_{ik}=\frac{e}{1+\lambda_k\sigma_i^*}, \quad \theta_{ik}^\ell=\log(1+\lambda_k\sigma_i^\ell)-\lambda_ky_{ik}\sigma_i^\ell,\quad \text{and} \quad d_{ik}^\ell=\frac{y_{ik}\sigma_i^\ell}{1+\lambda_k\sigma_i^*}
\end{equation}
for $1\leq k\leq K_+$.

\begin{proposition}[Franz-de Sanctis identities in inference]\label{SBMA Franz de Sanctis}
For any $1\leq k\leq K_+$ and any function $f_n$ of finitely many spins on $n$ replicas and of the signal $\sigma^*$ with $\norm{f_n}_{L^\infty}\leq 1$,
\begin{equation}
\bigg\lvert \E \frac{\langle f_nd_{ik}^1\exp\big(\sum_{\ell\leq n}\theta_{ik}^\ell\big)\rangle}{\langle \exp(\theta_{ik})}\rangle^n-\E\langle f_n\rangle \E\frac{\langle d_{ik}\exp(\theta_{ik})\rangle}{\langle \exp(\theta_{ik})\rangle}\bigg\rvert\leq \bigg(2\E\big\langle( \L_k-\E\langle \L_k\rangle)^2\big\rangle+\frac{16}{s_N}\bigg)^{1/2}.
\end{equation}
\end{proposition}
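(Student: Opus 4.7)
The plan is to interpret both sides of the Franz--de Sanctis identity as Gibbs averages under an enlarged system obtained by inserting one additional Poisson atom into $\HH_k$, to rewrite these via the Poisson size-biasing (Mecke) formula in terms of the summand of $V_k := \frac{1}{s_N}\sum_{j\le \pi_k}\frac{e_{jk}\sigma_{i_{jk}}}{(1+\lambda_k\sigma^*_{i_{jk}})^2}$ at the new atom, and then to exploit the hypothesis \eqref{eqn: SBMA limits to get FDS} via a single Cauchy--Schwarz estimate.

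The uniform $i \in \{1,\ldots,N\}$ and the exponential $e \sim \Exp(1)$ have the same joint distribution as the label and weight of a single Poisson atom of $\HH_k$, so the random variable $\theta_{ik}(\sigma)$ is distributed as a single term of $\HH_k$. Denoting by $\langle \cdot \rangle'$ the Gibbs measure associated with the Hamiltonian $H_N(\sigma,\lambda) + \theta_{ik}(\sigma)$, the definitions immediately yield
\begin{equation*}
\E \frac{\langle f_n d_{ik}^1 \exp(\sum_{\ell \le n}\theta_{ik}^\ell)\rangle}{\langle \exp(\theta_{ik})\rangle^n} = \E\langle f_n d_{ik}^1\rangle', \qquad \E\frac{\langle d_{ik}\exp(\theta_{ik})\rangle}{\langle \exp(\theta_{ik})\rangle} = \E\langle d_{ik}\rangle'.
\end{equation*}
Applying the Mecke identity (equivalently $\E[\pi_k h(\pi_k)] = s_N \E[h(\pi_k+1)]$) to the sum defining $V_k$ and noting that the summand at the freshly inserted atom coincides with $d_{ik}^1 = \frac{e\sigma_i^1}{(1+\lambda_k\sigma_i^*)^2}$, we obtain the reverse identities $\E\langle f_n V_k\rangle = \E\langle f_n d_{ik}^1\rangle'$ and $\E\langle V_k\rangle = \E\langle d_{ik}\rangle'$. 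The Franz--de Sanctis bound therefore reduces to the decorrelation estimate
\begin{equation*}
|\E\langle f_n V_k\rangle - \E\langle f_n\rangle\E\langle V_k\rangle|^2 \le 2\E\langle(\L_k - \E\langle\L_k\rangle)^2\rangle + 16/s_N.
\end{equation*}

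Since $\E\langle V_k\rangle$ is a constant, the left-hand side equals $|\E\langle f_n(V_k - \E\langle V_k\rangle)\rangle|^2$, which by Cauchy--Schwarz in $L^2(\E\langle\cdot\rangle)$ together with $\E\langle f_n^2\rangle \le \|f_n\|_\infty^2 \le 1$ is bounded by $\E\langle(V_k - \E\langle V_k\rangle)^2\rangle$. Decomposing $V_k = U_k - \L_k$ with $U_k := \frac{1}{s_N}\sum_{j\le\pi_k}\frac{\sigma_{i_{jk}}}{1+\lambda_k\sigma_{i_{jk}}}$ and using $(a-b)^2 \le 2(a^2+b^2)$ yields
\begin{equation*}
\E\langle(V_k - \E\langle V_k\rangle)^2\rangle \le 2\E\langle(\L_k - \E\langle\L_k\rangle)^2\rangle + 2\E\langle(U_k - \E\langle U_k\rangle)^2\rangle.
\end{equation*}
The key observation is that $U_k$ does not depend on the exponential weights $e_{jk}$; the Nishimori identity transfers the Gibbs average onto the ground truth $\sigma^*$, yielding $\E\langle(U_k - \E\langle U_k\rangle)^2\rangle = \Var(U_k(\sigma^*))$. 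Each summand $\frac{\sigma^*_{i_{jk}}}{1+\lambda_k\sigma^*_{i_{jk}}}$ is bounded by $\frac{1}{1-\lambda_k} \le 2$, and the law of total variance combined with the compound-Poisson identity $\Var(\sum_{j\le\pi_k} g_j \,|\, \sigma^*) = s_N \E[g^2 \,|\, \sigma^*]$ produces $\Var(U_k(\sigma^*)) \le 4/s_N + 4/N \le 8/s_N$ in the regime $s_N \le N$, which holds since $s_N = N^\eta$ with $\eta < 1$.

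The main technical subtlety lies in the size-biasing step: one must carefully verify that inserting the extra atom $(i,e)$ into $\HH_k$ produces exactly the weights $\exp(\sum_\ell \theta_{ik}^\ell)$ in the numerator and $\langle\exp(\theta_{ik})\rangle^n$ in the denominator with the correct normalization for the extended partition function, and that the external factor $d_{ik}^1$ matches the summand of $V_k$ at the new atom. Once this structural identification is in place, the remainder of the argument is essentially a single Cauchy--Schwarz estimate together with a compound-Poisson variance bound on $U_k$, the former contributing the constant $2$ in front of $\E\langle(\L_k - \E\langle\L_k\rangle)^2\rangle$ and the latter the constant $16$ in $16/s_N$.
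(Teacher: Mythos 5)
Your proposal is correct and takes essentially the same route as the paper: your Mecke/size-biasing identification of the ratio expressions with $\E\langle f_n \widetilde{\L}_k(\sigma^1)\rangle$ and $\E\langle \widetilde{\L}_k\rangle$ is exactly the paper's Poisson conditioning argument (isolating one atom and replacing its index and exponential weight by fresh independent copies), and the Cauchy--Schwarz step together with the decomposition $V_k=U_k-\L_k$ and the Nishimori identity reproduces \Cref{SBMA L tilde concentration}. The only cosmetic difference is that you bound $\Var(U_k(\sigma^*))$ by the law of total variance and the compound-Poisson variance identity, giving $4/s_N+4/N\leq 8/s_N$ whenever $s_N\leq N$, whereas the paper expands the square directly and needs $N$ large enough; both yield the stated constant $16/s_N$.
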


\noindent The Franz-de Sanctis identities are proved in a similar fashion to the Ghirlanda-Guerra identities by testing the concentration of the quantities
\begin{equation}
\widetilde{\L}_k=\frac{1}{s_N}\sum_{j\leq \pi_k}\frac{\sigma_{i_{jk}}e_{jk}}{(1+\lambda_k\sigma^*_{i_{jk}})^2}
\end{equation}
defined for $1\leq k\leq K_+$ against an arbitrary function of finitely many spins and of the signal $\sigma^*$. Notice that  $\smash{\widetilde{\L}_k}$ is none other than the second term in the sum defining each $\L_k$ in \eqref{eqn: SBM lambda k derivative of Hamiltonian}. The reason for focusing only on the second term is that the first term concentrates automatically by the Nishimori identity. This is the content of Proposition 3.4 in \cite{BarMOC} which we reproduce here for completeness. We present a slightly simpler proof than that in \cite{BarMOC} which was kindly shared with us by Dmitry Panchenko.

\begin{lemma}\label{SBMA L tilde concentration}
For any $1\leq k\leq K_+$ and every large enough $N\geq 1$,
\begin{equation}
\E\Big\langle \big(\widetilde{\L}_k-\E\langle \widetilde{\L}_k\rangle\big)^2\Big\rangle\leq 2\E\big(\langle \L_k-\E\langle \L_k\rangle)^2\big\rangle+\frac{16}{s_N}.
\end{equation}
\end{lemma}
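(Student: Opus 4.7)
The plan is to decompose $\L_k$ into a piece that concentrates automatically and the piece $\widetilde{\L}_k$ we wish to control. Recalling the definition of $\L_k$ in \eqref{eqn: SBM lambda k derivative of Hamiltonian}, set
$$A_k=\frac{1}{s_N}\sum_{j\leq \pi_k}\frac{\sigma_{i_{jk}}}{1+\lambda_k\sigma_{i_{jk}}},$$
so that $\L_k=A_k-\widetilde{\L}_k$. Writing
$$\widetilde{\L}_k-\E\langle\widetilde{\L}_k\rangle=(A_k-\E\langle A_k\rangle)-(\L_k-\E\langle\L_k\rangle)$$
and applying the elementary inequality $(a-b)^2\leq 2a^2+2b^2$ reduces the lemma to the bound $\E\langle(A_k-\E\langle A_k\rangle)^2\rangle\leq 8/s_N$.

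The next step exploits the fact that $A_k$ involves a single replica. Expanding the square yields $\E\langle(A_k-\E\langle A_k\rangle)^2\rangle=\E\langle A_k^2\rangle-(\E\langle A_k\rangle)^2$, and the Nishimori identity \eqref{eqn: SBM Nishimori identity} lets us replace the replica appearing in both terms by the signal $\sigma^*$. This yields $\E\langle(A_k-\E\langle A_k\rangle)^2\rangle=\Var(A_k^*)$, where
$$A_k^*=\frac{1}{s_N}\sum_{j\leq \pi_k}\frac{\sigma^*_{i_{jk}}}{1+\lambda_k\sigma^*_{i_{jk}}}.$$
The indices $\pi_k$ and $i_{jk}$ are part of the disorder and appear identically on both sides, so Nishimori acts only on the replica label.

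It then remains to bound $\Var(A_k^*)$, which is elementary given the Poisson and uniform index structure. Conditionally on $\sigma^*$ and $\pi_k$, the summands $X_j=\sigma^*_{i_{jk}}/(1+\lambda_k\sigma^*_{i_{jk}})$ are i.i.d.\@ with common conditional mean $Y=\frac{1}{N}\sum_{i\leq N}\sigma^*_i/(1+\lambda_k\sigma^*_i)$ and are bounded by $1/(1-\lambda_k)\leq 2$ in absolute value. The law of total variance combined with $\E\pi_k=\Var(\pi_k)=s_N$ gives
$$\Var(A_k^*|\sigma^*)=\frac{\E[X_1^2|\sigma^*]}{s_N},$$
and therefore
$$\Var(A_k^*)\leq\frac{\E X_1^2}{s_N}+\Var(Y)\leq\frac{4}{s_N}+\frac{4}{N}\leq\frac{8}{s_N},$$
where $\Var(Y)\leq 4/N$ follows from the independence of the coordinates of $\sigma^*$ and the boundedness of each summand, and the last inequality uses $s_N\leq N$ from \eqref{eqn: SBM s sequence} for $N$ large enough. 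Combining this with the reduction in the first paragraph proves the lemma. There is no real obstacle here: the main point to verify is that Nishimori applies to $A_k$ and $A_k^2$, which is immediate since both are measurable functions of a single replica and the disorder.
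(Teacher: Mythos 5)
Your proof is correct and follows essentially the same route as the paper: decompose $\widetilde{\L}_k$ as the single-replica quantity $A_k=s_N^{-1}\sum_{j\leq\pi_k}\sigma_{i_{jk}}/(1+\lambda_k\sigma_{i_{jk}})$ minus $\L_k$, apply $(a-b)^2\leq 2a^2+2b^2$, use the Nishimori identity to replace the replica by $\sigma^*$, and bound the resulting disorder variance by $8/s_N$. The only difference is cosmetic: you compute that variance via the law of total variance (compound Poisson identity $\Var = s_N\E X_1^2$ plus $\Var(Y)\leq 4/N$), whereas the paper expands the second moment over diagonal and off-diagonal index pairs — both yield the same bound.
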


\begin{proof}
Introduce the quantity $$g(\sigma,\pi_k)=\sum_{j\leq \pi_k}\frac{ \sigma_{i_{jk}}}{1+\lambda_k\sigma_{i_{jk}}}$$
in such a way that $\smash{\widetilde{\L}_k=s_N^{-1}g(\sigma,\pi_k)}-\L_k$. Write $\Var$ for the variance with respect to the measure $\E\langle\cdot \rangle$. Since the variance of a sum of two random variables is bounded by twice the sum of the variance of each of the random variables,
\begin{equation}\label{eqn: SBMA L tilde concentration key}
\Var \big(\widetilde{\L}_k\big)\leq 2 \Big(\Var(\L_k)+\frac{1}{s_N^2}\Var(g)\Big).
\end{equation}
By the Nishimori identity and a direct computation,
\begin{equation}\label{eqn: SBMA L tilde concentration Var g decomposition}
\Var\big(g(\sigma,\pi_k)\big)=\E\bigg(\sum_{j\leq \pi_k}\frac{\sigma^*_{i_{jk}}}{1+\lambda_k\sigma^*_{i_{jk}}}\bigg)^2-\bigg(\E\sum_{j\leq \pi_k}\frac{\sigma^*_{i_{jk}}}{1+\lambda_k\sigma^*_{i_{jk}}}\bigg)^2.
\end{equation}
Recalling that the coordinates of the signal $\sigma^*$ are i.i.d.\@ and averaging with respect to the randomness of the indices $(i_{jk})$ reveals that
\begin{align*}
\E\bigg(\sum_{j\leq \pi_k}\frac{\sigma^*_{i_{jk}}}{1+\lambda_k\sigma^*_{i_{jk}}}\bigg)^2&=\frac{1}{N}\E\sum_{j,j'\leq \pi_k}\frac{1}{(1+\lambda_k\sigma_1^*)^2}+\frac{N^2-N}{N}\E\sum_{j,j'\leq \pi_k}\frac{\sigma_1^*\sigma_2^*}{(1+\lambda_k \sigma_1^*)(1+\lambda_k\sigma_2^*)}\\
&\leq \frac{4\E\pi_k^2}{N}+\E\pi_k^2\Big(\E\frac{\sigma_1^*}{1+\lambda_k\sigma_1^*}\Big)^2,
\end{align*}
where we have used that $\sigma_1^2=1$ and $\lambda_k\leq 1/2$. Similarly,
$$\bigg(\E\sum_{j\leq \pi_k}\frac{\sigma^*_{i_{jk}}}{1+\lambda_k\sigma^*_{i_{jk}}}\bigg)^2=\big(\E\pi_k\big)^2\Big(\E\frac{\sigma_1^*}{1+\lambda_k\sigma_1^*}\Big)^2.$$
Substituting these two bounds into \eqref{eqn: SBMA L tilde concentration Var g decomposition}, recalling \eqref{eqn: SBM s sequence} and choosing $N$ large enough yields
$$\Var\big(g(\sigma,\pi_k)\big)\leq \frac{4 \E\pi_k^2}{N}+\Var (\pi_k)\Big(\E\frac{\sigma_1^*}{1+\lambda_k\sigma_1^*}\Big)^2\leq 8s_N.$$
Plugging this into \eqref{eqn: SBMA L tilde concentration key} completes the proof.
\end{proof}

\begin{proof}[Proof of \Cref{SBMA Franz de Sanctis}.]
We follow the proof of Theorem 3.3 in \cite{BarMOC}; we will not give full details, and instead encourage the interested reader to consult \cite{BarMOC}. The Cauchy-Schwarz inequality and the fact that $\norm{f_n}_{L^\infty}\leq 1$ imply that
\begin{align*}
\big\lvert\E \big\langle f_n\widetilde{\L}_k(\sigma^1)\big\rangle-\E\langle f_n\rangle \E\big\langle \widetilde{\L}_k(\sigma)\big\rangle\big\rvert&\leq \E\big\langle (f_n-\langle f_n\rangle)^2\big\rangle^{1/2}\E\Big\langle\big( \widetilde{\L}_k-\E\langle \widetilde{\L}_k\rangle\big)^2\Big\rangle^{1/2}\\
&\leq \E\Big\langle\big( \widetilde{\L}_k-\E\langle \widetilde{\L}_k\rangle\big)^2\Big\rangle^{1/2}.
\end{align*}
By \Cref{SBMA L tilde concentration} it is therefore sufficient to prove that
\begin{equation}
\E\big\langle f_n\widetilde{\L}_k(\sigma^1)\big\rangle=\E \frac{\langle f_nd_{ik}^1\exp\big(\sum_{\ell\leq n}\theta_{ik}^\ell\big)\rangle}{\langle \exp(\theta_{ik})\rangle^n} \quad \text{and} \quad \E\big\langle\widetilde{\L}_k(\sigma)\big\rangle=\E\frac{\langle d_{ik}\exp(\theta_{ik})\rangle}{\langle \exp(\theta_{ik})\rangle}.\label{eqn: SBMA Franz de Sanctis goal}
\end{equation}
Since $\pi_k$ is independent of all other sources of randomness, taking the expectation with respect to this random variable first shows that
\begin{equation}\label{eqn: SBMA Franz de Sanctis key}
\E\big\langle f_n\widetilde{\L}_k(\sigma^1)\big\rangle=\sum_{r\geq 1}\frac{s_N^{r-1}}{(r-1)!}\exp(-s_N)\E\big\langle f_nD_{1k}^1\big\rangle_{\pi_k=r},
\end{equation}
where $D_{1k}^1=\sigma_{i_{1k}}^1e_{1k}/(1+\lambda_k\sigma^*_{i_{1k}})^2$.
To simplify this expression, we will isolate the first replica $\sigma^1$ appearing in each of the averages. It will be convenient to introduce the quantities
$$\Theta_{jk}^\ell=\log(1+\lambda_k\sigma_{i_{jk}}^\ell)-\frac{\lambda_ke_{jk}\sigma_{i_{jk}}^\ell}{1+\lambda_k\sigma^*_{i_{jk}}} \quad \text{and} \quad \HH_k^{r-1}(\sigma^\ell)=\sum_{2\leq j\leq r}\Theta_{jk}^\ell$$
as well as the partially perturbed Hamiltonian
\begin{equation}\label{eqn: SBM Franz de Sanctis partially perturbed Hamiltonian}
H_N'(\sigma)=H_N(\sigma)+H_N^{\mathrm{gauss}}(\sigma)+\sum_{\substack{1\leq k'\leq K_+\\k'\neq k}}\HH_{k'},
\end{equation}
where $\HH_k$ is defined in \eqref{eqn: SBM exponential perturbation}. Denoting by $\langle \cdot\rangle_{\pi_k=r}'$ the Gibbs measure corresponding to the Hamiltonian $H_N'(\sigma)+\HH_k^{r-1}(\sigma^\ell)$, we have
\begin{equation}\label{eqn: SBMA Franz de Sanctis distributional identity}
\E\big\langle f_nD_{1k}^1\big\rangle_{\pi_k=r}=\E\E_{i_{1k}}\E_{e_{1k}}\frac{ \langle f_nD_{1k}^1\exp\big(\sum_{\ell\leq n}\Theta^\ell_{1k}\big)\rangle'_{\pi_k=r}}{(\langle \exp(\Theta_{1k})\rangle'_{\pi_k=r})^n}
\end{equation}
for each $r\geq 1$.
Since the uniform random variable $i_{1k}$ and the exponential random variable $e_{1k}$ no longer appear in the Gibbs average $\langle \cdot \rangle_{\pi_k=r}'$, we may replace them by a uniform random variable $i\in \{1,\ldots,N\}$ and an exponential random variable $e\sim \Exp(1)$ independent of all other sources of randomness as in the statement of the theorem. To emphasize this change, we also replace $D_{1k}^1$ and $\Theta_{1k}^\ell$ by $d_{ik}^1=\sigma_i^1e/(1+\lambda_k\sigma_i^*)^2$ and $\theta_{ik}^\ell=\log(1+\lambda_k\sigma_i^\ell)-\lambda_k\sigma_i^\ell e/(1+\lambda_k\sigma_i^*)$, respectively. Notice that this matches the definitions in \eqref{eqn: SBMA Franz de Sanctis variables}. In this new notation \eqref{eqn: SBMA Franz de Sanctis distributional identity} reads
$$\E\big\langle f_nD_{1k}^1\big\rangle_{\pi_k=r}=\E\frac{\langle f_nd_{ik}^1\exp\big(\sum_{\ell\leq n}\theta_{ik}^\ell\big)\rangle'_{\pi_k=r}}{(\langle \exp(\theta_{ik})\rangle'_{\pi_k=r})^n}.$$
Substituting this into \eqref{eqn: SBMA Franz de Sanctis key} and making the change of variables $m=r-1$ reveals that
$$\E\big\langle f_n\widetilde{\L}_k(\sigma^1)\big\rangle=\sum_{m\geq 0}\frac{s_N^m}{m!}\exp(-s_N)\E\frac{\langle f_nd_{ik}^1\exp\big(\sum_{\ell\leq n}\theta_{ik}^\ell\big)\rangle'_{\pi_k=m+1}}{(\langle \exp(\theta_{ik})\rangle'_{\pi_k=m+1})^n}.$$
Notice that whenever $\pi_k=m+1$, the Hamiltonian defining the Gibbs average $\smash{\langle \cdot\rangle'_{\pi_k=m+1}}$ is given by $\smash{H_N'(\sigma)+\HH_k^{m}(\sigma)}$. This Hamiltonian has the same distribution as the Hamiltonian in \eqref{eqn: SBMA perturbed Hamiltonian} defining the original Gibbs average $\langle \cdot\rangle_{\pi_k=m}$. It follows that
$$\E\big\langle f_n\widetilde{\L}_k(\sigma^1)\big\rangle=\sum_{m\geq 0}\frac{s_N^m}{m!}\exp(-s_N)\E\frac{\langle f_nd_{ik}^1\exp\big(\sum_{\ell\leq n}\theta_{ik}^\ell\big)\rangle_{\pi_k=m}}{(\langle \exp(\theta_{ik})\rangle_{\pi_k=m})^n}=\E\frac{\langle f_nd_{ik}^1\exp\big(\sum_{\ell\leq n}\theta_{ik}^\ell\big)\rangle}{\langle \exp(\theta_{ik})\rangle^n}.$$
This is the first equality in \eqref{eqn: SBMA Franz de Sanctis goal}. The second equality in \eqref{eqn: SBMA Franz de Sanctis goal} is obtained by taking $n=1$ and $f_1=1$ in the first equality. This completes the proof. 
\end{proof}

Applying this result along a sequence of perturbation parameters $\smash{(\lambda^N)}$ satisfying \eqref{eqn: SBMA limits to get FDS} reveals that
\begin{equation}\label{eqn: SBMA overlap and Franz de Sanctis in limit 0}
\lim_{N\to \infty}\Big\lvert \E \frac{\langle f_nd_{ik}^1\exp\big(\sum_{\ell\leq n}\theta_{ik}^\ell\big)\rangle_N}{\langle \exp(\theta_{ik})\rangle_N^n}-\E\langle f_n\rangle_N \E\frac{\langle d_{ik}\exp(\theta_{ik})\rangle_N}{\langle \exp(\theta_{ik})\rangle_N}\Big\rvert=0
\end{equation}
for any $1\leq k\leq K_+$ and any function $f_n$ of finitely many spins on $n$ replicas and of the signal $\sigma^*$ with $\norm{f_n}_{L^\infty}\leq 1$. Observing that the denominators $\langle \exp(\theta_{ik}) \rangle_N$ do not depend on the signal $\sigma^*$, it is possible to use the Nishimori identity \eqref{eqn: SBM Nishimori identity} to replace all occurrences of the signal $\sigma^*$ in \eqref{eqn: SBMA overlap and Franz de Sanctis in limit 0} by another replica. For convenience of notation we will denote this new replica by $\sigma^\diamond$ to distinguish it from the signal $\sigma^*$ and at the same time not occupy any specific index. The equations in \eqref{eqn: SBMA overlap and Franz de Sanctis in limit 0} now read
\begin{equation}\label{eqn: SBMA overlap and Franz de Sanctis in limit 1}
\lim_{N\to \infty}\Big\lvert \E\E_\diamond \frac{\langle f_nd_{ik}^1  \exp\big(\sum_{\ell\leq n}\theta_{ik}^\ell\big)\rangle_N}{\langle \exp(\theta_{ik}) \rangle_N^n}-\E\E_\diamond\langle f_n\rangle_N \E\E_\diamond\frac{\langle d_{ik} \exp(\theta_{ik})\rangle_N}{\langle \exp(\theta_{ik})\rangle_N}\Big\rvert=0,
\end{equation}
where $\E_\diamond$ denotes the Gibbs average with respect to the replica $\sigma^\diamond$ only, the bracket $\langle \cdot\rangle_N$ denotes the Gibbs average with respect to all other standard replicas, the function $f_n$ depends on finitely many spins on the $n$ standard replicas and on $\sigma^\diamond$, and, with some abuse of notation,
\begin{equation}
y_{ik}=\frac{e}{1+\lambda_k\sigma_i^\diamond}, \quad \theta_{ik}^\ell=\log(1+\lambda_k\sigma_i^\ell)-\lambda_ky_{ik}\sigma_i^\ell,\quad \text{and} \quad d_{ik}^\ell=\frac{y_{ik}\sigma_i^\ell}{1+\lambda_k\sigma_i^\diamond}
\end{equation}
with $\lambda_k=\lambda_k^N$ for $1\leq k\leq K_+$. We now simplify \eqref{eqn: SBMA overlap and Franz de Sanctis in limit 1} for functions $f_n$ that do not depend on the spin coordinate indexed by $1$.  Introduce the collection of functions
\begin{align}
\F_n=\big\{\text{f}&\text{unctions } f_n \text{ of }\text{ finitely many spins } \sigma_i^\ell, \sigma_i^\diamond  \text{ with } 2\leq i\leq N \notag\\
& \text{ of the } n \text{ standard replicas }  (\sigma^\ell)_{\ell\leq n} \text{ and the special replica }\sigma^\diamond \text{ with } \norm{f_n}_{L^\infty}\leq 1\big\}
\end{align}
and the quantities
\begin{equation}\label{eqn: SBMA Franz de Sanctis asymptotic variables}
y_{k}=\frac{e}{1+\lambda_k\sigma_1^\diamond}, \quad \theta_{k}^\ell=\log(1+\lambda_k\sigma_1^\ell)-\lambda_ky_{k}\sigma_1^\ell,\quad \text{and} \quad d_{k}^\ell=\frac{y_{k}\sigma_1^\ell}{1+\lambda_k\sigma_1^\diamond}.
\end{equation}
For functions $f_n\in \F_n$, the symmetry between sites \eqref{eqn: SBMA symmetry between sites} and the fact that $i\in \{2,\ldots,N\}$ with overwhelming probability in the limit, allow us to replace the uniform random index $i\in \{1,\ldots,N\}$ by the index $1$. The Franz de-Sanctis identities together with assumption \eqref{eqn: SBMA limits to get FDS} therefore have the following important implication.

\begin{corollary}[Asymptotic Franz-de Sanctis identities in inference]
If \eqref{eqn: SBMA limits to get FDS} holds, then for every $1\leq k\leq K_+$ and all functions $f_n\in \F_n$,
\begin{equation}\label{eqn: SBMA overlap and Franz de Sanctis in limit 2}
\lim_{N\to \infty}\Big\lvert \E\E_\diamond \frac{\langle f_nd_{k}^1 \exp\big(\sum_{\ell\leq n}\theta_{k}^\ell\big)\rangle_N}{\langle  \exp(\theta_{k}) \rangle_N^n}-\E\E_\diamond\langle f_n\rangle_N \E\E_\diamond\frac{\langle d_{k} \exp(\theta_{k}) \rangle_N}{\langle  \exp(\theta_{k}) \rangle_N}\Big\rvert=0.
\end{equation}
\end{corollary}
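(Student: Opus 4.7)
The plan is to deduce the corollary from Proposition~\ref{SBMA Franz de Sanctis} in three steps: pass to the limit using the hypothesis \eqref{eqn: SBMA limits to get FDS}, invoke the Nishimori identity to replace every occurrence of the signal $\sigma^*$ by a fresh replica $\sigma^\diamond$, and finally use symmetry between sites to fix the uniformly sampled coordinate at $i = 1$. I expect the Nishimori step to be the only non-mechanical part of the argument.

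First, I would specialize Proposition~\ref{SBMA Franz de Sanctis} to $\lambda = \lambda^N$ at each finite $N$. The error bound obtained there is $\bigl(2\E\langle(\L_k - \E\langle \L_k\rangle)^2\rangle + 16/s_N\bigr)^{1/2}$. The first summand tends to zero by the standing hypothesis \eqref{eqn: SBMA limits to get FDS}, while the second tends to zero because $s_N \to \infty$ by \eqref{eqn: SBM s sequence}. Passing to the limit produces exactly \eqref{eqn: SBMA overlap and Franz de Sanctis in limit 0}, with $\sigma^*$ still appearing in the relevant quantities $d^{\ell}_{ik}, \theta^{\ell}_{ik}$ through $y_{ik} = e/(1+\lambda_k \sigma_i^*)$.

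Second, I would apply the Nishimori identity to replace each occurrence of $\sigma^*$ by a new replica $\sigma^\diamond$. The delicate point is that $\sigma^*$ enters the ratios non-linearly through $y_{ik}$, so one cannot simply invoke \eqref{eqn: SBM Nishimori identity} on the numerator alone. The clean formulation is to observe that each ratio equals a Gibbs expectation under the tilted measure
\[
\tilde G_N(\sigma) = \frac{\exp(\theta_{ik}(\sigma))}{\langle \exp(\theta_{ik})\rangle_N}\, G_N(\sigma),
\]
whose data-conditional factor $\exp(\theta_{ik}(\sigma)) = (1+\lambda_k\sigma_i)\exp(-\lambda_k y_{ik}\sigma_i)$ is, up to a spin-independent prefactor $\exp(-y_{ik})$, proportional to the conditional density of $y_{ik}$ given that the signal equals $\sigma$. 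Thus $\tilde G_N$ is the posterior of $\sigma^*$ in the extended inference problem where $\D$ is augmented by the independent observation $y_{ik}$, so \eqref{eqn: SBM Nishimori identity} holds verbatim for $\tilde G_N$. Swapping $\sigma^*$ with an additional replica $\sigma^\diamond$ drawn from $\tilde G_N$ (and simultaneously rewriting $y_{ik}$ as $e/(1+\lambda_k\sigma_i^\diamond)$ to preserve the joint law) transforms \eqref{eqn: SBMA overlap and Franz de Sanctis in limit 0} into \eqref{eqn: SBMA overlap and Franz de Sanctis in limit 1}.

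Third, I would exploit the symmetry between sites \eqref{eqn: SBMA symmetry between sites}. Since $f_n \in \F_n$ depends only on spin coordinates with index in $\{2,\ldots,N\}$, the joint distribution of all quantities appearing in the left-hand side of \eqref{eqn: SBMA overlap and Franz de Sanctis in limit 2} evaluated at a fixed index $i' \in \{2,\ldots,N\}$ does not depend on the choice of $i'$. Replacing the uniform average over $i \in \{1,\ldots,N\}$ by the value at $i = 1$ differs from the average at any fixed $i' \geq 2$ by a contribution of size $O(1/N)$ coming from the single excluded index, and the integrand is uniformly bounded because $\lambda_k \in [2^{-k-1},2^{-k}]$ ensures that the ratios and the moments of $e \sim \Exp(1)$ are controlled. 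Letting $N \to \infty$ therefore yields \eqref{eqn: SBMA overlap and Franz de Sanctis in limit 2}, completing the proof.
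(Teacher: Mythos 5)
Your proposal is correct and follows essentially the same route as the paper's own derivation (given in the prose preceding the corollary): pass to the limit in Proposition~\ref{SBMA Franz de Sanctis} using \eqref{eqn: SBMA limits to get FDS} together with $s_N\to\infty$, swap $\sigma^*$ for a fresh replica $\sigma^\diamond$ by the Nishimori principle, and use symmetry between sites to replace the uniformly sampled index by $1$; your tilted-measure/extended-posterior reading of the Nishimori step is a valid, more explicit version of the paper's terser observation that the entire ratio is a measurable function of the signal, the data and the independent randomness $(e,i)$, and it correctly lands on \eqref{eqn: SBMA overlap and Franz de Sanctis in limit 1} once the joint law of $(\D,y_{ik},\sigma^\diamond)$ is re-represented with $\sigma^\diamond$ drawn from the plain Gibbs measure and $y_{ik}$ regenerated. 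The only cosmetic slips are in the last step: the law of the expression at a fixed index $i'$ is constant only for $i'$ outside the finitely many sites on which $f_n$ depends (not for every $i'\in\{2,\ldots,N\}$), and the integrand is merely integrable rather than uniformly bounded since $d_k^1$ involves $e\sim\Exp(1)$, but both points still yield the required $\BigO(N^{-1})$ error and do not affect the conclusion.
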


\subsection{Finitary multi-overlap concentration}

The finitary version of the multi-overlap concentration result in \cite{BarMOC} will be uniform over an appropriate class of random probability measures which we now describe. For each integer $N\geq 1$ consider the set of random probability measures on $\Sigma_N$ thought of as a subset of $\{-1,0,1\}^\N$,
\begin{equation}
\G_N=\Big\{G\mid G \text{ is a random probability measure on } \Sigma_N\times \{0\}^\N\Big\},
\end{equation}
and say that a measure $G\in \G_N$ satisfies symmetry between sites if, for any sequence of i.i.d.\@ replicas $(\sigma^\ell)_{\ell\geq 1}$ sampled from $G$,
\begin{equation}\label{eqn: SBMA generic symmetry between sites} 
\big(\sigma_i^\ell)_{i,\ell\geq 1}\stackrel{d}{=}\big(\sigma_{\rho_1(i)}^{\rho_2(\ell)}\big)_{i,\ell\geq 1}
\end{equation}
for any permutation $\rho_1$ on the finite set $\{1,\ldots,N\}$ and any permutation $\rho_2$ of finitely many indices. Denote by $\G_N^s$ the subset of $\G_N$ which satisfies symmetry between sites. Notice that each Gibbs measure $G_N$ defined by \eqref{eqn: SBMA generic Gibbs measure} can be thought of as an element of $\G_N$ by setting $\sigma_i=0$ when $i>N$ for any replica $\sigma\in \Sigma_N$ sampled from $G_N$. In this way, the symmetry between sites in \eqref{eqn: SBMA generic symmetry between sites} and \eqref{eqn: SBMA symmetry between sites} coincide, so in fact $G_N\in \G_N^s$. This identification also suggests that the appropriate notion of the multi-overlap \eqref{eqn: SBMA generic multi-overlaps} for a random probability measure $G\in \G_N$ should be
\begin{equation}\label{eqn: SBMA generic multi-overlaps 2}
R_{\ell_1,\ldots,\ell_n}=\frac{1}{N}\sum_{i\leq N}\sigma_i^{\ell_1}\cdots \sigma_i^{\ell_n},
\end{equation}
where $(\sigma^\ell)$ denotes a sequence of i.i.d.\@ replicas sampled from the Gibbs measure $G$. Denoting by $\langle \cdot \rangle_G$ the average with respect to the random probability measure $G$ we may now sate the main result of this appendix.

\begin{proposition}\label{SBMA finitary MOC}
For every $\epsilon>0$ there exists $\delta>0$ such that the following holds. Let $N\geq \lfloor\delta^{-1}\rfloor$ and $G\in \G_N$ be a random probability measure such that for all $1\leq k\leq K_+=\lfloor \delta^{-1}\rfloor$ and any function $f_n\in \F_n$,
\begin{align}\label{eqn: SBMA finitray MOC assumptions}
\E\big\langle (R_1-&\E\langle R_1\rangle_G)^2\big\rangle_G \leq \delta, \qquad\qquad  \E\big\langle (R_{1,2}-\E\langle R_{1,2}\rangle_G )^2\big\rangle_G\leq \delta,\\
&\Big\lvert \E\E_\diamond \frac{\langle f_nd_{k}^1 \exp\big(\sum_{\ell\leq n}\theta_{k}^\ell\big)\rangle_G}{\langle \exp(\theta_{k})\rangle^n_G}-\E\E_\diamond\langle f_n\rangle_G \E\E_\diamond\frac{\langle d_{k} \exp(\theta_{k})\rangle_G}{\langle \exp(\theta_{k})\rangle_G}\Big\rvert\leq \delta.
\end{align}
Then for any $1\leq m\leq \lfloor \epsilon^{-1}\rfloor$, we have
\begin{equation}
\E\big\langle (R_{1,\ldots,m}-\E\langle R_{1,\ldots,m}\rangle_G)^2\big\rangle_G\leq \epsilon.
\end{equation}

\end{proposition}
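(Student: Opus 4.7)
I argue by contradiction, passing to a subsequential limit and invoking the qualitative multi-overlap concentration theorem of \cite{BarMOC} in this limit. Suppose the conclusion fails. Then there exist $\epsilon>0$, sequences $\delta_n\downarrow 0$ and integers $N_n\geq \lfloor\delta_n^{-1}\rfloor$ with $K_{+,n}=\lfloor\delta_n^{-1}\rfloor\to\infty$, random probability measures $G_n\in \G_{N_n}$ satisfying the hypotheses~\eqref{eqn: SBMA finitray MOC assumptions} with parameter $\delta_n$, and integers $m_n\leq \lfloor\epsilon^{-1}\rfloor$ for which $\E\langle (R_{1,\ldots,m_n}-\E\langle R_{1,\ldots,m_n}\rangle_{G_n})^2\rangle_{G_n}>\epsilon$. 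Passing to a subsequence, I may assume $m_n=m$ is fixed throughout.

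\textbf{Extracting a limit.} I extend each $G_n$ to a random probability measure on $\{-1,0,+1\}^{\N}$ by padding sites beyond $N_n$ with $0$, and record the joint law $\P_n$ of the array $(\sigma_i^\ell)_{i,\ell\geq 1}$ together with the Nishimori replica $(\sigma_i^\diamond)_{i\geq 1}$ under $\E\langle \cdot\rangle_{G_n}$. The sequence $(\P_n)$ lives in the compact set of probability measures on the Polish product space $\{-1,0,+1\}^{\N\times(\N\cup\{\diamond\})}$, so by Prokhorov a subsequential weak limit $\P_\infty$ exists. Since $\P_n\{\sigma_i^\ell=0\}=0$ as soon as $N_n>i$, the limit is supported on $\{-1,+1\}^{\N\times(\N\cup\{\diamond\})}$. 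Joint exchangeability in the site index (from symmetry between sites, implicit in the use of index~$1$ in the Franz--de Sanctis identity) and in the replica index (from i.i.d.\@ sampling of replicas), together with the conditional independence of replicas with respect to a suitable ``Gibbs'' $\sigma$-algebra, are preserved under weak convergence. Along the sequence the three estimates in \eqref{eqn: SBMA finitray MOC assumptions} shrink to $0$, and since $K_{+,n}\to \infty$, the limit $\P_\infty$ satisfies the exact concentration of $R_1$ and $R_{1,2}$ and the exact asymptotic Franz--de Sanctis identity~\eqref{eqn: SBMA overlap and Franz de Sanctis in limit 2} for every integer $k\geq 1$ and every $f_n\in \F_n$ (first on a countable dense family, then extended by density).

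\textbf{Conclusion and main obstacle.} By the Aldous--Hoover representation of the doubly exchangeable array $(\sigma_i^\ell)$ together with the conditional independence of replicas, there is a random probability measure $\kappa$ on $[-1,1]$ such that conditionally on $\kappa$ each replica draws an i.i.d.\@ bias $\tau^\ell\sim \kappa$ and then the spins $\sigma_i^\ell$ are conditionally i.i.d.\@ $\pm 1$-valued of mean $\tau^\ell$. The qualitative multi-overlap concentration analysis of \cite{BarMOC} shows that in such a limiting model the exact Franz--de Sanctis identities for all $k\geq 1$ combined with exact concentration of $R_1$ and $R_{1,2}$ force $\kappa$ to be almost surely a Dirac mass at a deterministic point $a\in[-1,1]$. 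Consequently every multi-overlap is almost surely constant under $\P_\infty$; in particular $\E\langle (R_{1,\ldots,m}-\E\langle R_{1,\ldots,m}\rangle_\infty)^2\rangle_\infty=0$. On the other hand, symmetry between sites gives
\[
\E\langle R_{1,\ldots,m}^2\rangle_{G_n}=\tfrac{1}{N_n}+\tfrac{N_n-1}{N_n}\E\big\langle \sigma_1^1\cdots \sigma_1^m\cdot \sigma_2^1\cdots\sigma_2^m\big\rangle_{G_n}
\]
and $\E\langle R_{1,\ldots,m}\rangle_{G_n}=\E\langle \sigma_1^1\cdots \sigma_1^m\rangle_{G_n}$, both continuous functions of the law of a bounded number of spin entries, and therefore converging to their counterparts under $\P_\infty$. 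Hence $\E\langle (R_{1,\ldots,m}-\E\langle R_{1,\ldots,m}\rangle_{G_n})^2\rangle_{G_n}\to 0$, contradicting the strict inequality $>\epsilon$. The main obstacle is importing \cite{BarMOC}'s conclusion in the form I need: their argument is typically phrased as multi-overlap concentration on average over a randomly drawn perturbation parameter, whereas I require the structural rigidity of $\kappa$ in a single limiting model. The compactness step bridges this gap by isolating a concrete $\P_\infty$ for which the asymptotic Franz--de Sanctis identities already hold exactly, so that the same moment-matching argument pinning $\kappa$ to a Dirac mass goes through without any averaging.
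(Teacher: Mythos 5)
Your overall skeleton is the same as the paper's: argue by contradiction, extract a subsequential limit of the spin array via Prokhorov, observe that because $\delta_n\to 0$ and $K_{+,n}\to\infty$ the hypotheses become exact identities in the limit (concentration of $R_1$ and $R_{1,2}$, and the asymptotic Franz--de Sanctis identities for all $k\geq 1$), and then contradict the surviving lower bound $>\epsilon$ by showing that multi-overlaps are deterministic in the limit. Your closing continuity computation, expressing the finite-$N$ variance of $R_{1,\ldots,m}$ through moments of finitely many spins and passing to the limit, is also fine. The gap is in the structural description of the limit object. The Aldous--Hoover representation for the doubly exchangeable array gives $\sigma_i^\ell\stackrel{d}{=}\sigma(w,u_\ell,v_i,x_{i,\ell})$, so the conditional mean of $\sigma_i^\ell$ is $\bar\sigma(w,u_\ell,v_i)$: it depends on a site variable $v_i$ that is \emph{shared across replicas}. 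Your claimed representation --- a random measure $\kappa$ on $[-1,1]$, i.i.d.\ scalar biases $\tau^\ell\sim\kappa$, and spins conditionally i.i.d.\ across sites with mean $\tau^\ell$ --- contains no shared site randomness; it would force $R_{\ell,\ell'}$ to converge to the product $\tau^\ell\tau^{\ell'}$ and make the whole overlap structure factorize over replicas. This is not a consequence of exchangeability and is false in general: it amounts to assuming from the outset that $\bar\sigma$ does not depend on the site variable. In the paper the reduction happens in two genuinely separate steps: first, exact concentration of $R_{1,2}$ forces $\bar\sigma(w,u,v)$ to be independent of $u$ (the pure-state lemma), but the limit is still a random \emph{function} $v\mapsto\bar\sigma(w,v)$, not a scalar; only then do the exact Franz--de Sanctis identities, via the decoupling lemma and an analyticity-in-$\lambda$ argument (differentiating at $\lambda=0$ and inducting on the number of replicas), show that each multi-overlap $R^\infty_{[n]}(w)=\int_0^1\bar\sigma(w,v)^n\,\mathrm{d}v$ is independent of $w$.

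That second step is the heart of the proof, and your proposal compresses it into the phrase that \cite{BarMOC}'s ``moment-matching argument pinning $\kappa$ to a Dirac mass goes through without any averaging.'' Since your $\kappa$ is not the correct directing object, the hypothesis under which you invoke that rigidity is not established; and even with the correct function-valued representation, you would need to verify (as the paper does, reproducing the pure-state lemma, the decoupling lemma, and the derivative computation) that the argument of \cite{BarMOC} applies verbatim to the exact limiting identities you derived, rather than to their averaged-over-parameters form. So the route is the right one, but as written the proof has a genuine gap: the scalar-bias Aldous--Hoover claim is incorrect, and the key rigidity step is asserted rather than proved.
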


The proof proceeds by contradiction and closely follows Section 3.5 and Section 3.7 of \cite{BarMOC}. Suppose there exists some $\epsilon>0$ such that no matter the choice of $\delta>0$, it is always possible to find some integer $N=N(\delta)\geq \lfloor \delta^{-1}\rfloor$ and some random probability measure $G=G(\delta)\in \G_N$ with
\begin{align}
\E\big\langle (R_1-&\E\langle R_1\rangle_G)^2\big\rangle_G \leq \delta, \qquad\qquad  \E\big\langle (R_{1,2}-\E\langle R_{1,2}\rangle_G )^2\big\rangle_G\leq \delta,\label{eqn: SBMA multi-overlap concentration assumption 1}\\
&\Big\lvert \E\E_\diamond \frac{\langle f_nd_{k}^1 \exp\big(\sum_{\ell\leq n}\theta_{k}^\ell\big)\rangle_G}{\langle \exp(\theta_{k}) \rangle^n_G}-\E\E_\diamond\langle f_n\rangle_G \E\E_\diamond\frac{\langle d_{k} \exp(\theta_{k}) \rangle_G}{\langle \exp(\theta_{k})\rangle_G}\Big\rvert\leq \delta\label{eqn: SBMA multi-overlap concentration assumption 2}
\end{align}
for any $1\leq k\leq K_+=\lfloor \delta^{-1}\rfloor$ and any $f_n\in \F_n$ for which there exists some $1\leq m=m(\delta) \leq \lfloor \epsilon ^{-1}\rfloor$ with
\begin{equation}\label{eqn: SBMA multi-overlap concentration absurd}
\E\big\langle (R_{1,\ldots,m}-\E\langle R_{1,\ldots,m}\rangle_G)^2\big\rangle_G > \epsilon.
\end{equation}
Applying the Prokhorov theorem on the compact metric space $\{-1,0,+1\}^{\N^2}$ and noticing that there are only finitely many choices for $m=m(\delta)$, it is possible to find a subsequence with $\delta\to 0$ along which the distribution of the array $\smash{\big(\sigma_i^\ell\big)_{i,\ell\geq 1}}$ under the averaged Gibbs measure $\smash{\E\langle \cdot \rangle_{G(\delta)}}$ converges in the sense of finite dimensional distributions and along which \eqref{eqn: SBMA multi-overlap concentration assumption 1}, \eqref{eqn: SBMA multi-overlap concentration assumption 2} and \eqref{eqn: SBMA multi-overlap concentration absurd} hold for every $k\geq 1$ and a fixed $1\leq m\leq \lfloor \epsilon^{-1}\rfloor$. Since $\smash{N(\delta)\to \infty}$ and $\smash{G(\delta)\in \G_N}$, in the limit, the distribution of spins will be a measure on $\{-1,+1\}^{\N^2}$ which will inherit the symmetry between sites \eqref{eqn: SBMA generic symmetry between sites}. By the Aldous-Hoover representation (see Theorem 1.4 in \cite{PanSKB}), this symmetry implies the existence of some function $\sigma:[0,1]^4\to \{-1,+1\}$ with
\begin{equation}\label{eqn: SBMA Aldous-Hoover}
\big(\sigma_i^\ell)_{i,\ell\geq 1}\stackrel{d}{=}\big(\sigma(w,u_\ell,v_i,x_{i,\ell})\big)_{i,\ell\geq 1},
\end{equation}
where $w$, $(u_\ell)$, $(v_i)$ and $(x_{i,\ell})$ are i.i.d.\@ uniform random variables on $[0,1]$. Since $\sigma$ takes values in $\{-1,+1\}$, the distribution of the array $(\sigma_i^\ell)$ is encoded by the function
\begin{equation}
\bar{\sigma}(w,u,v)=\E_{x_{i,\ell}}\sigma(w,u,v,x_{i,\ell})=\int_0^1 \sigma(w,u,v,x)\ud x.
\end{equation}
Indeed, the last coordinate $x_{i,\ell}$ is a dummy variable that
corresponds to flipping a biased coin to generate a Bernoulli random variable with expected value $\bar{\sigma}(w,u,v)$. To clarify this further, let $\ud u$ and $\ud v$ denote Lebesgue measure on $[0,1]$, and define the random probability measure
\begin{equation}
G=G_w=\ud u\circ \big(u\mapsto \bar{\sigma}(w,u,\cdot)\big)^{-1}
\end{equation}
on the space of functions of $v\in [0,1]$,
\begin{equation}\label{eqn: SBMA Hilbert space}
H=L^2\big([0,1],\ud v\big)\cap \big\{\norm{\bar{\sigma}}_{L^\infty}\leq 1\big\},
\end{equation}
equipped with the topology of $L^2([0,1],\ud v)$. As described in Section 2 of \cite{PanRSB}, the whole process of generating spins can be broken into the following steps:
\begin{enumerate}[label=(\roman*)]
    \item generate the asymptotic Gibbs measure $G=G_w$ using the uniform random variable $w$;
    \item consider an i.i.d.\@ sequence $\bar{\sigma}^\ell=\bar{\sigma}(w,u_\ell,\cdot)$ of replicas from $G$, which are functions in $H$;
    \item plug in i.i.d.\@ uniform random variables $(v_i)_{i\geq 1}$ to obtain the array $\bar{\sigma}^\ell(v_i)=\bar{\sigma}(w,u_\ell,v_i)$;
    \item finally, use the random variables $(x_{i,\ell})$ to generate $(\sigma_i^\ell)$ by flipping a coin with expected value $\bar{\sigma}^\ell(v_i)$,
    \begin{equation}
    \sigma_i^\ell=2\1\Big\{x_{i,\ell}\leq \frac{1+\bar{\sigma}^\ell(v_i)}{2}\Big\}-1.
    \end{equation}
\end{enumerate}
This suggests that the asymptotic Gibbs average should be the average with respect to the random variables $(u_\ell)$ and $(x_{i,\ell})$ that depend on the replica indices, which we will denote by
\begin{equation}\label{eqn: SBMA asymptotic Gibbs measure}
\langle\cdot \rangle=\E_{(u_\ell),(x_{i,\ell})}.
\end{equation}
We can also expect the asymptotic multi-overlap to be given by 
\begin{equation}\label{eqn: SBMA asymptotic multi-overlaps}
R^\infty_{\ell_1,\ldots,\ell_n}(w,(u_{\ell_j})_{j\leq n})=\E_v\prod_{j\leq n}\bar{\sigma}(w,u_{\ell_j},v)=\int_0^1 \prod_{j\leq n}\bar{\sigma}(w,u_{\ell_j},v)\ud v.
\end{equation}
This intuition is confirmed by the two following results adapted from Section 3.5 and the Appendix in \cite{BarMOC}.

\begin{lemma}
For any finite set of $n$ replicas and every collection $\{\CC_\ell\}_{\ell\leq n}$ of finite indices,
\begin{equation}
\lim_{\delta \to 0}\E\prod_{\ell\leq n}\Big\langle \prod_{i\in \CC_\ell}\sigma_i^\ell\Big\rangle_{G(\delta)}=\E_{w,(v_i)}\prod_{\ell\leq n}\Big\langle \prod_{i\in \CC_\ell} \sigma_i^\ell\Big\rangle.
\end{equation}
\end{lemma}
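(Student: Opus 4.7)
The plan is to recognise the left-hand side as a joint moment of finitely many entries of the spin array under the averaged Gibbs measure, to pass to the limit using the subsequential convergence of finite-dimensional distributions obtained just above the lemma via the Prokhorov extraction, and finally to identify the limiting moment with the right-hand side by unfolding the Aldous-Hoover representation~\eqref{eqn: SBMA Aldous-Hoover}.

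First, since the replicas $\sigma^1,\ldots,\sigma^n$ are i.i.d.\@ samples from $G(\delta)$, the product of their individual Gibbs averages can be combined into a single multi-replica Gibbs average,
\begin{equation*}
\E\prod_{\ell\leq n}\Big\langle \prod_{i\in \CC_\ell}\sigma_i^\ell\Big\rangle_{G(\delta)}=\E\Big\langle \prod_{\ell\leq n}\prod_{i\in \CC_\ell}\sigma_i^\ell\Big\rangle_{G(\delta)}.
\end{equation*}
This expression depends on only finitely many entries of the array $(\sigma_i^\ell)$, and is bounded by $1$. By construction of the subsequence along which the finite-dimensional distributions of $(\sigma_i^\ell)$ under $\E\langle \cdot \rangle_{G(\delta)}$ converge, the expression converges as $\delta \to 0$ to the analogous moment of the limiting array.

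Next, I would invoke the Aldous-Hoover representation to write this limiting moment as $\E \prod_{\ell\leq n}\prod_{i\in \CC_\ell}\sigma(w,u_\ell,v_i,x_{i,\ell})$, where $w,(u_\ell),(v_i),(x_{i,\ell})$ are i.i.d.\@ uniform on $[0,1]$. Within each factor $\CC_\ell$ the indices $i$ are distinct, so no pair $(i,\ell)$ is repeated in the product; consequently, conditioning on $w$, $(u_\ell)$ and $(v_i)$ and integrating out only the coin variables $(x_{i,\ell})$ replaces each $\sigma(w,u_\ell,v_i,x_{i,\ell})$ by $\bar\sigma(w,u_\ell,v_i)$. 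Conditionally on $w$ and $(v_i)$, the packets $(u_\ell,(x_{i,\ell})_{i\geq 1})_{\ell \leq n}$ are independent across $\ell$, so the expectation in these variables of a product over $\ell$ factors as a product of single-$\ell$ expectations. By the very definition \eqref{eqn: SBMA asymptotic Gibbs measure} of $\langle \cdot \rangle=\E_{(u_\ell),(x_{i,\ell})}$, each such single-$\ell$ expectation equals $\langle \prod_{i\in \CC_\ell}\sigma_i^\ell\rangle$, and the outer expectation over $w$ and $(v_i)$ reassembles the right-hand side.

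I do not expect any genuine obstacle here: the convergence step is essentially free from the Prokhorov extraction and the boundedness of the spins, and the whole of the identification is a matter of bookkeeping. The only subtle point to be careful with is to ensure that the product structure over replicas on the left matches, after the Aldous-Hoover substitution, the product structure that emerges on the right from conditioning on $w$ and $(v_i)$; this is precisely what the distinctness of the pairs $(i,\ell)$ in the product guarantees.
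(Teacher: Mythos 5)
Your proposal is correct and follows essentially the same route as the paper: combine the replica-wise Gibbs averages into a single multi-replica average, pass to the limit using the convergence of finite-dimensional distributions along the extracted subsequence, and then unfold the Aldous--Hoover representation, using the conditional independence across $\ell$ given $w$ and $(v_i)$ together with the definition \eqref{eqn: SBMA asymptotic Gibbs measure} of $\langle\cdot\rangle$ to reassemble the right-hand side. The intermediate step of integrating out the coin variables to produce $\bar\sigma$ is a harmless extra detail not needed in the paper's one-line Fubini factorization.
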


\begin{proof}
Let $\CC=\{(i,\ell)\mid \ell \leq n \text{ and } i\in \CC_\ell\}$. By definition of weak convergence in the sense of finite-dimensional marginal distributions,
$$\lim_{\delta \to 0}\E\prod_{\ell\leq n}\Big\langle \prod_{i\in \CC_\ell}\sigma_i^\ell\Big\rangle_{G(\delta)}=\lim_{\delta \to 0}\E\Big\langle\prod_{(i,\ell)\in \CC}\sigma_i^\ell\Big\rangle_{G(\delta)}=\E \prod_{(i,\ell)\in \CC}\sigma(w,u_\ell,v_i,x_{i,\ell}).$$
Recalling the notation \eqref{eqn: SBMA asymptotic Gibbs measure},
$$\E \prod_{(i,\ell)\in \CC}\sigma(w,u_\ell,v_i,x_{i,\ell})=\E_{w,(v_i)}\prod_{\ell\leq n}\E_{(u_\ell)}\prod_{i\in \CC_\ell}\E_{(x_{i,\ell})}\sigma(w,u_\ell,v_i,x_{i,\ell})=\E_{w,(v_i)}\prod_{\ell\leq n}\Big\langle \prod_{i\in \CC_\ell} \sigma_i^\ell\Big\rangle.$$
as required.
\end{proof}

\begin{lemma}
For any collection of sets $\{\L_i\}_{i\geq 1}$ only finitely many of which are non-empty,
\begin{equation}
\lim_{\delta\to 0}\E\Big\langle\prod_{i\geq 1} R_{\L_i}\Big\rangle_{G(\delta)}=\E_w\Big\langle \prod_{i\geq 1}R_{\L_i}^\infty\Big\rangle.
\end{equation}
\end{lemma}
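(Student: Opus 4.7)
The plan is to reduce the assertion to the previous lemma by expanding each multi-overlap as a normalized sum over site indices. Let $k$ denote the number of non-empty $\L_i$, and after relabeling assume these are $\L_1,\ldots,\L_k$. Define $\CC_\ell=\{i\leq k\mid \ell\in \L_i\}$, which is non-empty for only finitely many $\ell$. Expanding gives
\begin{equation*}
\prod_{i\leq k} R_{\L_i}=\frac{1}{N^k}\sum_{(j_1,\ldots,j_k)\in \{1,\ldots,N\}^k}\prod_{i\leq k}\prod_{\ell\in \L_i}\sigma_{j_i}^\ell.
\end{equation*}

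First I would dispose of the diagonal terms. The number of tuples $(j_1,\ldots,j_k)$ with at least one coincidence $j_i=j_{i'}$ is $\BigO(N^{k-1})$, and since the spins are bounded by one each corresponding term is bounded by one. Their total contribution under $\E\langle \cdot\rangle_{G(\delta)}$ is therefore $\BigO(N^{-1})$, which vanishes as $\delta\to 0$ along the subsequence where $N(\delta)\to \infty$. For the remaining distinct tuples, the symmetry between sites \eqref{eqn: SBMA generic symmetry between sites} implies that $\E\langle \prod_{i\leq k}\prod_{\ell\in \L_i}\sigma_{j_i}^\ell\rangle_{G(\delta)}$ depends only on the cardinality of $\{j_1,\ldots,j_k\}$ and not on which specific distinct indices are chosen, so each such term equals $\E\langle \prod_{i\leq k}\prod_{\ell\in \L_i}\sigma_i^\ell\rangle_{G(\delta)}$. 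The number of distinct tuples is $N(N-1)\cdots(N-k+1)=N^k(1+\BigO(N^{-1}))$, so the leading factor tends to $1$.

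Next, since distinct replicas are i.i.d.\@ under the Gibbs measure, we may rearrange
\begin{equation*}
\E\Big\langle\prod_{i\leq k}\prod_{\ell\in \L_i}\sigma_i^\ell\Big\rangle_{G(\delta)}=\E\Big\langle\prod_{\ell}\prod_{i\in \CC_\ell}\sigma_i^\ell\Big\rangle_{G(\delta)}=\E\prod_{\ell}\Big\langle \prod_{i\in \CC_\ell}\sigma_i^\ell\Big\rangle_{G(\delta)}.
\end{equation*}
The previous lemma applied with the collection $\{\CC_\ell\}$ then gives the limit as $\delta\to 0$,
\begin{equation*}
\E\prod_\ell\Big\langle \prod_{i\in \CC_\ell}\sigma_i^\ell\Big\rangle_{G(\delta)}\longrightarrow \E_{w,(v_i)}\prod_\ell \Big\langle \prod_{i\in \CC_\ell}\sigma_i^\ell\Big\rangle.
\end{equation*}

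Finally I would identify this limit with the right-hand side of the desired identity. Under the asymptotic Gibbs measure \eqref{eqn: SBMA asymptotic Gibbs measure}, averaging out the independent dummy variables $(x_{i,\ell})$ via $\E_{x_{i,\ell}}\sigma(w,u_\ell,v_i,x_{i,\ell})=\bar\sigma(w,u_\ell,v_i)$ yields
\begin{equation*}
\E_{w,(v_i)}\prod_\ell\Big\langle\prod_{i\in \CC_\ell}\sigma_i^\ell\Big\rangle=\E_{w,(v_i),(u_\ell)}\prod_\ell\prod_{i\in \CC_\ell}\bar\sigma(w,u_\ell,v_i)=\E_{w,(v_i),(u_\ell)}\prod_{i\leq k}\prod_{\ell\in \L_i}\bar\sigma(w,u_\ell,v_i).
\end{equation*}
Recognizing that $\prod_{\ell\in \L_i}\bar\sigma(w,u_\ell,v_i)$ integrated over the independent variable $v_i$ is exactly $R_{\L_i}^\infty$ as defined in \eqref{eqn: SBMA asymptotic multi-overlaps}, the right-hand side equals $\E_w\langle \prod_{i\geq 1}R_{\L_i}^\infty\rangle$, which completes the identification. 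The only point of care is the combinatorial bookkeeping between the site-index labelling used in the expansion and the replica-index labelling used by the previous lemma; no new analytic input beyond the previous lemma is required.
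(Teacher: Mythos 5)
Your argument is correct and takes essentially the same route as the paper: the same expansion of $\prod_i R_{\L_i}$ over site indices, the same $\BigO(N^{k-1})$ dismissal of tuples with coinciding indices, and the same use of symmetry between sites to reduce to the sites $1,\ldots,k$. The only cosmetic difference is at the last step, where you regroup by replicas, factorize over the conditionally i.i.d.\@ replicas and invoke the previous lemma, while the paper passes directly from the finite-dimensional convergence to the Aldous--Hoover representation; both rest on the same input and give $\E_{w,(u_\ell)}\prod_{i\geq 1}R^\infty_{\L_i}=\E_w\langle \prod_{i\geq 1}R^\infty_{\L_i}\rangle$.
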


\begin{proof}
Write $N\geq \lfloor \delta^{-1}\rfloor$ for the unique integer with $G(\delta)\in \G_{N}$ and suppose without loss of generality that the sets $\L_i$ for $i\leq j$ are non-empty while the sets $\L_i$ for $i>j$ are empty. From \eqref{eqn: SBMA generic multi-overlaps 2},
$$\E\Big\langle\prod_{i\geq 1} R_{\L_i}\Big\rangle_{G(\delta)}=\frac{1}{N^j}\sum_{i_1,\ldots,i_j}\E\Big\langle \prod_{\ell_1\in \L_1}\cdots \prod_{\ell_j\in \L_j}\sigma_{i_1}^{\ell_1}\cdots \sigma_{i_j}^{\ell_j}\Big\rangle_{G(\delta)}.$$
The number of terms in this sum for which at least two of the indices $i_1,\ldots,i_j$ are equal is of order $N^{j-1}$ and is therefore negligible in the limit. Moreover, the symmetry between sites \eqref{eqn: SBMA generic symmetry between sites} implies that whenever $i_1,\ldots,i_j$ are all distinct
$$\E\Big\langle \prod_{\ell_1\in \L_1}\cdots \prod_{\ell_j\in \L_j}\sigma_{i_1}^{\ell_1}\cdots \sigma_{i_j}^{\ell_j}\Big\rangle_{G(\delta)}=\E\Big\langle \prod_{\ell_1\in \L_1}\cdots \prod_{\ell_j\in \L_j}\sigma_{1}^{\ell_1}\cdots \sigma_{j}^{\ell_j}\Big\rangle_{G(\delta)}=\E\Big\langle \prod_{i\geq 1}\prod_{\ell\in \L_i}\sigma^\ell_i\Big\rangle_{G(\delta)}.$$
(This seems to fix a small typo in the second-to-last display of the Appendix in \cite{BarMOC}). Combining these two observations shows that
$$\lim_{\delta \to 0}\E\Big\langle\prod_{i\geq 1} R_{\L_i}\Big\rangle_{G(\delta)}=\E_{w,(u_\ell)}\prod_{i\geq 1}\E_{v_i}\prod_{\ell\in \L_i}\E_{x_{i,\ell}}\sigma(w,u_\ell,v_i,x_{i,\ell})=\E_{w,(u_\ell)}\prod_{i\geq 1}R_{\L_i}^\infty.$$
This completes the proof.
\end{proof}

In the notation of \eqref{eqn: SBMA asymptotic Gibbs measure} and \eqref{eqn: SBMA asymptotic multi-overlaps}, the asymptotic version of \eqref{eqn: SBMA multi-overlap concentration assumption 1} and \eqref{eqn: SBMA multi-overlap concentration assumption 2} therefore reads
\begin{align}
\E \langle (R_1&^\infty)^2\rangle=\big(\E \langle R_1^\infty\rangle\big)^2, \qquad \E \langle (R_{1,2}^\infty)^2\rangle=\big(\E \langle R_{1,2}^\infty\rangle\big)^2 \label{eqn: SBMA asymtotic overlap and Franz de Sanctis 1}\\
&\E\E_\diamond \frac{\langle f_nd_{k}^1 \exp\big(\sum_{\ell\leq n}\theta_{k}^\ell\big)\rangle}{\langle \exp(\theta_{k}) \rangle^n}=\E\E_\diamond\langle f_n\rangle \E\E_\diamond\frac{\langle d_{k} \exp(\theta_{k})\rangle}{\langle \exp(\theta_{k}) \rangle}\label{eqn: SBMA asymtotic overlap and Franz de Sanctis 2}
\end{align}
for any $k\geq 1$ and any function $f_n\in \F_n$, while the asymptotic version of \eqref{eqn: SBMA multi-overlap concentration absurd} becomes
\begin{equation}\label{eqn: SBMA multi-overlap concentration asymptotic absurd}
\E\big\langle (R_{1,\ldots,m}^\infty-\E\langle R_{1,\ldots,m}^\infty \rangle)^2\big\rangle > \epsilon
\end{equation}
for some $1\leq m\leq \lfloor \epsilon^{-1}\rfloor$. We now derive the two most important consequences of the identities in \eqref{eqn: SBMA asymtotic overlap and Franz de Sanctis 1} and \eqref{eqn: SBMA asymtotic overlap and Franz de Sanctis 2} that will allow us to establish multi-overlap concentration. On the one hand, the concentration of the overlap $\E\langle (R_{1,2}^\infty)^2\rangle=(\E\langle R_{1,2}^\infty\rangle)^2$ implies that the system lies in a ``thermal pure state'' and that the function $\bar{\sigma}(w,u,v)$ is therefore almost surely independent of $u$. The proof of this fact is taken from Theorem 3.1 in \cite{PanKSAT}.

\begin{lemma}
If $\E\langle (R_{1,2}^\infty)^2\rangle=(\E\langle R_{1,2}^\infty\rangle)^2$, then for almost all $u,v,w\in [0,1]$,
\begin{equation}
\bar{\sigma}(w,u,v)=\E_u\bar{\sigma}(w,u,v).
\end{equation}
\end{lemma}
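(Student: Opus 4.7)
The plan is to reduce the concentration of the overlap to the vanishing of an $L^2$-valued covariance operator, in three steps: (i) apply Jensen's inequality twice to conclude that $R_{1,2}^\infty$ is almost surely constant; (ii) translate this into an orthogonality statement in the Hilbert space $H = L^2([0,1], \ud v)$; and (iii) close the resulting ``diagonal gap'' via a covariance-operator trace argument.

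For step (i), start from the decomposition
\begin{equation*}
\E\langle (R_{1,2}^\infty)^2\rangle - \big(\E\langle R_{1,2}^\infty\rangle\big)^2 = \E\big\langle (R_{1,2}^\infty - \langle R_{1,2}^\infty\rangle)^2\big\rangle + \E_w\big(\langle R_{1,2}^\infty\rangle - \E\langle R_{1,2}^\infty\rangle\big)^2.
\end{equation*}
The assumption forces both non-negative terms on the right to vanish, so, setting $c := \E\langle R_{1,2}^\infty\rangle$, we obtain $R_{1,2}^\infty(w, u_1, u_2) = c$ for almost every $(w, u_1, u_2)$. For step (ii), view $\phi_u := \bar{\sigma}(w, u, \cdot)$ as an element of $H$ and write $g(w, \cdot) := \E_u \phi_u$. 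Then $R_{1,2}^\infty = \langle \phi_{u_1}, \phi_{u_2}\rangle_H$ and $\langle R_{1,2}^\infty\rangle = \norm{g(w)}_H^2$, so step (i) yields $\norm{g(w)}_H^2 = c$ for almost every $w$ and, upon integrating out $u_2$, also $\langle \phi_{u_1}, g(w)\rangle_H = c$ almost surely in $(w, u_1)$. Defining the centered variable $X_u := \phi_u - g(w, \cdot)$, an expansion gives $\langle X_{u_1}, X_{u_2}\rangle_H = c - c - c + c = 0$ almost surely in $(w, u_1, u_2)$.

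For step (iii), which is the main obstacle, the off-diagonal orthogonality $\langle X_{u_1}, X_{u_2}\rangle_H = 0$ does not directly yield $\norm{X_u}_H = 0$, because the diagonal $\{u_1 = u_2\}$ has measure zero in $[0,1]^2$. To bypass this, I would introduce for almost every $w$ the covariance operator $T(w) f := \E_u \langle X_u, f\rangle_H X_u$ on $H$. This operator is positive semi-definite and trace-class since $\abs{X_u} \le 2$ pointwise, so $\E_u \norm{X_u}_H^2 \le 4$. A Fubini calculation gives $\tr T(w) = \E_u \norm{X_u}_H^2$ and
\begin{equation*}
\tr T(w)^2 = \E_{u_1, u_2} \langle X_{u_1}, X_{u_2}\rangle_H^2 = 0.
\end{equation*}
Since $T(w) \ge 0$, the vanishing of the Hilbert-Schmidt norm $\norm{T(w)}_{HS}^2 = \tr T(w)^2$ forces $T(w) = 0$ as an operator, hence $\tr T(w) = 0$, hence $X_u = 0$ in $H$ almost surely in $(w, u)$. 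Unwinding the definitions yields $\bar{\sigma}(w, u, v) = g(w, v) = \E_u \bar{\sigma}(w, u, v)$ for almost every $(w, u, v)$, as desired.
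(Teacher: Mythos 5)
Your proof is correct, and its first half is exactly the paper's argument: the variance decomposition of $\E\langle (R_{1,2}^\infty)^2\rangle-(\E\langle R_{1,2}^\infty\rangle)^2$ into the $u$-variance averaged over $w$ plus the $w$-variance of the $u$-average forces $R_{1,2}^\infty$ to be almost surely constant (the paper only needs constancy in $u$ for each fixed $w$, so your extra observation that the constant is independent of $w$ is harmless but not required). Where you go beyond the paper is in the second half: the paper simply asserts that a random measure on the separable Hilbert space $H$ whose two independent samples have almost surely constant inner product must be concentrated on a single function, deferring the proof to Theorem 3.1 of \cite{PanKSAT}, whereas you prove this fact directly. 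Your covariance-operator argument is a correct, self-contained way to close precisely the diagonal gap you identify: off-diagonal orthogonality $\langle X_{u_1},X_{u_2}\rangle_H=0$ for almost every pair does not by itself control $\norm{X_u}_H$, but computing $\tr T(w)^2=\E_{u_1,u_2}\langle X_{u_1},X_{u_2}\rangle_H^2=0$ for the positive, self-adjoint, trace-class operator $T(w)$ forces $T(w)=0$, hence $\E_u\norm{X_u}_H^2=\tr T(w)=0$ and $\bar\sigma(w,u,\cdot)=\E_u\bar\sigma(w,u,\cdot)$ in $H$ for almost every $(w,u)$. In short, the paper's route buys brevity through the citation, while yours buys a complete argument that makes explicit the role of separability (equivalently, that an essentially uncountable a.e.-orthogonal family of vectors with non-vanishing norms cannot exist) and the resolution of the null-diagonal issue; the only points worth stating for completeness are the joint measurability and boundedness of $\bar\sigma$, which justify the Bochner integrals and the Fubini exchanges in your trace computations.
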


\begin{proof}
Denote by $\cdot$ the inner product on the Hilbert space \eqref{eqn: SBMA Hilbert space},
$$\bar{\sigma}^1\cdot \bar{\sigma}^2=\E_{v}\bar{\sigma}^1(w,u_1,v)\bar{\sigma}^2(w,u_2,v)=R_{1,2}^\infty,$$
and observe that
\begin{align*}
0&=\E_{w,(u_\ell)} \big(R_{1,2}^\infty\big)^2-\E_{w,(u_\ell)} R_{1,2}^\infty R_{3,4}^\infty=\E_w\Var_{(u_\ell)}\bar{\sigma}^1\cdot \bar{\sigma}^2.
\end{align*}
It follows that for almost all $w\in [0,1]$, the inner product $\bar{\sigma}^1\cdot \bar{\sigma}^2$ of any two replicas sampled from the Gibbs measure $G_w$ is constant almost surely. In other words, the measure $G_w$ is concentrated on a single function which may depend on $w$. This completes the proof.
\end{proof}

The second identity in \eqref{eqn: SBMA asymtotic overlap and Franz de Sanctis 1} therefore implies that, instead of the equality in distribution \eqref{eqn: SBMA Aldous-Hoover}, we actually have
\begin{equation}\label{eqn: SBMA Aldous-Hoover pure state}
\big(\sigma_i^\ell\big)_{i,\ell\geq 1}\stackrel{d}{=}\big(\sigma(w,v_i,x_{i,\ell})\big)_{i,\ell\geq 1}
\end{equation}
for any function $\sigma:[0,1]^3\to \{-1,+1\}$ such that $\int_0^1 \sigma(w,v,x)\ud x=\bar{\sigma}(w,v)$. In particular, the Gibbs average \eqref{eqn: SBMA asymptotic Gibbs measure} simplifies to
\begin{equation}\label{eqn: SBMA asymptotic Gibbs measure pure state}
\langle\cdot \rangle=\E_{(x_{i,\ell})}
\end{equation}
while the multi-overlap \eqref{eqn: SBMA asymptotic multi-overlaps} becomes
\begin{equation}
R_{\ell_1,\ldots,\ell_n}^\infty(w)=\E_v\prod_{j\leq n}\bar{\sigma}(w,v)=\E_v\big(\bar{\sigma}(w,v)^n\big)=\int_0^1 \bar{\sigma}(w,v)^n\ud v.
\end{equation}
On the other hand, the asymptotic Franz-de Sanctis identities in \eqref{eqn: SBMA asymtotic overlap and Franz de Sanctis 2} imply the following decoupling property of the asymptotic Gibbs measure. This is lemma 3.5 in \cite{BarMOC}.

\begin{lemma}[A decoupling lemma]
Fix $\lambda\in \{\lambda_k\mid k\geq 1\}$. If $e_1,e_2$ are independent $\Exp(1)$ random variables and, for $j=1,2$,
\begin{equation}
y_j=\frac{e_j}{1+\lambda \sigma_j^\diamond},\quad \theta_j=\log(1+\lambda \sigma_j)-\lambda y_j\sigma_j,\quad\text{and}\quad d_j=\frac{y_j\sigma_j}{1+\lambda \sigma_j^\diamond},
\end{equation}
then
\begin{equation}\label{eqn: SBMA decoupling lemma}
\E\E_\diamond \frac{\langle d_1 \exp(\theta_1)d_2 \exp(\theta_2)\rangle}{\langle \exp(\theta_1) \exp(\theta_2)\rangle}=\E\E_\diamond \frac{\langle d_1\exp(\theta_1)\rangle}{\langle \exp(\theta_1)\rangle}\E\E_\diamond \frac{\langle d_2 \exp(\theta_2)\rangle}{\langle \exp(\theta_2)\rangle}.
\end{equation}
\end{lemma}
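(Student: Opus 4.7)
The proof exploits the fact that the hypothesis $\E\langle (R_{1,2}^\infty)^2\rangle = (\E\langle R_{1,2}^\infty\rangle)^2$ places the asymptotic system in a \emph{thermal pure state}: by \eqref{eqn: SBMA Aldous-Hoover pure state}--\eqref{eqn: SBMA asymptotic Gibbs measure pure state} we may write $\sigma_i^\ell = \sigma(w, v_i, x_{i,\ell})$ and $\sigma_i^\diamond = \sigma(w, v_i, x_i^\diamond)$ with the family $(x_{i,\ell}, x_i^\diamond)$ independent across all indices, and the asymptotic Gibbs measure is $\langle\,\cdot\,\rangle = \E_{(x_{i,\ell})}$. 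Since $\theta_j$ and $d_j$ depend on each replica only through its spin at coordinate $j$, and since $x_{1,1}$ and $x_{2,1}$ are independent, the Gibbs brackets factor over $j \in \{1,2\}$:
\begin{equation}
\langle d_1\exp(\theta_1)d_2\exp(\theta_2)\rangle = \langle d_1\exp(\theta_1)\rangle\langle d_2\exp(\theta_2)\rangle,\qquad \langle \exp(\theta_1)\exp(\theta_2)\rangle = \langle \exp(\theta_1)\rangle\langle \exp(\theta_2)\rangle.
\end{equation}
Setting $Y_j := \langle d_j\exp(\theta_j)\rangle/\langle \exp(\theta_j)\rangle$, each $Y_j$ is a measurable function of $(w, v_j, \sigma_j^\diamond, e_j)$ alone, so the left-hand side of \eqref{eqn: SBMA decoupling lemma} equals $\E\E_\diamond[Y_1 Y_2]$.

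Conditioning on $w$, the triples $(v_1, \sigma_1^\diamond, e_1)$ and $(v_2, \sigma_2^\diamond, e_2)$ are conditionally i.i.d., so by symmetry between sites $\E[Y_j \mid w] =: H(w)$ is the same function of $w$ for $j = 1$ and $j = 2$. The left- and right-hand sides of \eqref{eqn: SBMA decoupling lemma} then reduce to $\E_w[H(w)^2]$ and $(\E_w H(w))^2$ respectively, so the lemma is equivalent to showing that $H(w)$ is $w$-almost-surely constant.

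To prove this, apply the asymptotic Franz--de Sanctis identity \eqref{eqn: SBMA asymtotic overlap and Franz de Sanctis 2} at the value of $\lambda$ at hand, with test functions $f_n \in \F_n$ taken as monomials $\prod_j \sigma_{i_j}^{\ell_j}$ in the spins at coordinates $i_j \geq 2$ of the $n$ standard replicas and of $\sigma^\diamond$. The same coordinatewise factorization simplifies this identity to
\begin{equation}
\E\E_\diamond\bigl[\langle f_n\rangle Y_1\bigr] = \E\E_\diamond\langle f_n\rangle \cdot \E\E_\diamond[Y_1].
\end{equation}
In the pure state, $\langle f_n\rangle = \prod_{i\geq 2}\bar\sigma(w, v_i)^{a_i}(\sigma_i^\diamond)^{b_i}$ where $a_i, b_i$ count the occurrences of standard-replica and special-replica spins at coordinate $i$; using $\E_\diamond[\sigma_i^\diamond] = \bar\sigma(w, v_i)$ and integrating over the $v_i$'s yields $\E\E_\diamond[\langle f_n\rangle \mid w] = \prod_i R^\infty_{1,\ldots,a_i+b_i}(w)$. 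Conditioning on $w$ and using conditional independence then gives
\begin{equation}
\E_w\Bigl[\prod_i R^\infty_{1,\ldots,a_i+b_i}(w)\, H(w)\Bigr] = \E_w\Bigl[\prod_i R^\infty_{1,\ldots,a_i+b_i}(w)\Bigr] \cdot \E_w[H(w)],
\end{equation}
and by linearity $H(w) - \E_w H$ is orthogonal in $L^2(\E_w)$ to every polynomial in the multi-overlaps $\{R^\infty_{1,\ldots,n}(w)\}_{n\geq 1}$.

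The principal remaining point, and the main technical obstacle, is a density claim: such polynomials must be dense in $L^2(\E_w)$. Using the Aldous--Hoover uniqueness we may identify $w$ with the law on $[-1,1]$ of $\bar\sigma(w, v)$ under $v \sim \mathrm{Unif}[0,1]$, so that the range of $w$ embeds into the compact metric space of probability measures on $[-1,1]$. The multi-overlap $R^\infty_{1,\ldots,n}(w) = \int_0^1\bar\sigma(w,v)^n\,\ud v$ is then the $n$-th moment of this law, and because the moment problem on $[-1,1]$ is determinate the family $\{R^\infty_{1,\ldots,n}(w)\}_{n\geq 1}$ separates points. A Stone--Weierstrass argument produces the required density in $L^2(\E_w)$, and the orthogonality relation above therefore forces $H(w) = \E_w H$ almost surely, completing the proof of the decoupling lemma.
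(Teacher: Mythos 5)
Your argument is correct in substance, but it is a genuinely different proof from the one in the paper. The paper (following Lemma 3.5 of \cite{BarMOC}) deduces the decoupling directly from the Franz--de Sanctis identities alone, by applying \eqref{eqn: SBMA asymtotic overlap and Franz de Sanctis 2} to a truncated version of the unbounded test function $f_{n+1}=d_2^1\exp\big(\sum_{\ell\leq n+1}\theta_2^\ell\big)$ and handling the denominator $\langle \exp(\theta_1)\exp(\theta_2)\rangle$ via Weierstrass approximation of reciprocals; no use is made of the pure-state structure. You instead exploit the pure-state representation \eqref{eqn: SBMA Aldous-Hoover pure state}--\eqref{eqn: SBMA asymptotic Gibbs measure pure state} (legitimately available at this point, since it follows from the second identity in \eqref{eqn: SBMA asymtotic overlap and Franz de Sanctis 1}) to factor the brackets over sites, reducing the lemma to the statement that $H(w)=\E[Y_1\mid w]$ is almost surely constant, and then you feed bounded spin monomials into \eqref{eqn: SBMA asymtotic overlap and Franz de Sanctis 2} to get orthogonality of $H-\E H$ to all polynomials in the multi-overlaps, closing with a moment-determinacy/Stone--Weierstrass density argument. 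What this buys: no truncation is needed (your test functions are genuinely in $\F_n$), and you in effect prove directly that the conditional expectation $Y(w)$ is deterministic, which is precisely the consequence the paper later extracts from the decoupling lemma in the proof of \Cref{SBMA finitary MOC}; what it costs is modularity, since the paper's route shows decoupling follows from the Franz--de Sanctis identities alone. One step should be tightened: ``Aldous--Hoover uniqueness'' does not by itself let you identify $w$ with the law $\mu_w$ of $\bar{\sigma}(w,\cdot)$, and density of polynomials in the moments holds only in $L^2$ of the $\sigma$-algebra generated by $\mu_w$, not of all functions of $w$. What you actually need, and what the pure-state computation gives immediately, is that $H(w)$ and the test quantities $\E\E_\diamond[\langle f_n\rangle\mid w]$ depend on $w$ only through $\mu_w$: indeed $H(w)=\int_{-1}^1\phi\ud\mu_w$ for a fixed bounded (continuous) $\phi$ obtained by averaging $Y_1$ over $x_1$, $x_1^\diamond$ and $e_1$ at fixed value of $\bar{\sigma}(w,v_1)$, and the monomial test functions evaluate to products of moments of $\mu_w$. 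With that observation recorded, your density step and hence the proof go through.
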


\begin{proof}
The idea is to combine a technical truncation argument with the Weierstrass approximation theorem and an application of \eqref{eqn: SBMA asymtotic overlap and Franz de Sanctis 2} to the function $f_{n+1}=d_2^1\exp \sum_{\ell\leq n+1}\theta_2^\ell$; we encourage the interested reader to consult \cite{BarMOC}.
\end{proof}

We are finally in a position to contradict \eqref{eqn: SBMA multi-overlap concentration asymptotic absurd} and prove \Cref{SBMA finitary MOC}. The calculations are very similar in spirit to those in \cite{PanRSBp, PanRSB}, and are taken from Theorem 2.2 in \cite{BarMOC}.

\begin{proof}[Proof of \Cref{SBMA finitary MOC}.]
We follow the proof of Theorem 2.2 in \cite{BarMOC}; we will not give full details, and instead encourage the interested reader to consult \cite{BarMOC}. Recall from \eqref{eqn: SBMA Aldous-Hoover pure state} that $\smash{\sigma_j=\sigma(w,v_j,x_j)}$ and $\smash{\sigma_j^\diamond=\sigma(w,v_j,x_j^\diamond)}$. Since all random variables indexed by $j=1,2$ are independent, if we denote by $\smash{\E_{|w}=\E_{(e_j),(v_j),x_j,x_j^\diamond}}$ the conditional expectation given $w$ and we introduce the random variable
$$Y(w)=\E_{|w}\frac{\langle d_1 \exp(\theta_1)\rangle}{\langle \exp(\theta_1)\rangle}=\E_{|w}\frac{y_1}{1+\lambda\sigma_1^\diamond}\frac{\langle \sigma_1 \exp(\theta_1)\rangle}{\langle \exp(\theta_1)\rangle},$$
which depends implicitly on $\lambda$ through $y_1$ and $\theta_1$, then \eqref{eqn: SBMA decoupling lemma} reads
\begin{align*}
\E \Big(\E_{|w}\frac{\langle d_1 \exp(\theta_1)\rangle}{\langle  \exp(\theta_1)\rangle}\Big)\Big(\E_{|w}\frac{\langle d_2 \exp(\theta_2)\rangle}{\langle \exp(\theta_2)\rangle}\Big)-\Big(\E \frac{\langle d_1 \exp(\theta_1)\rangle}{\langle  \exp(\theta_1)\rangle}\Big)\Big(\E\frac{\langle d_2 \exp(\theta_2)\rangle}{\langle \exp(\theta_2)\rangle}\Big)&=\E\Var_{|w}Y(w)\\
&=0.
\end{align*}
This means that $Y=\E Y$ almost surely. To exploit this fact, through a slight abuse of notation, write $\sigma$ for $\sigma_1$ and observe that conditionally on $\sigma_1^\diamond$,
$$Y(w)=\E_{|w}\int_0^\infty \langle \exp(-\lambda \sigma y)\rangle \frac{\langle \sigma (1+\lambda \sigma) \exp(-\lambda y\sigma)\rangle}{\langle (1+\lambda \sigma) \exp(-\lambda y \sigma)\rangle}y \exp(-y)\ud y.$$
Using the analyticity of both
$$g_w:\gamma\mapsto g_w(\gamma)=\E_{|w}\int_0^\infty \langle \exp(-\gamma \sigma y)\rangle\frac{\langle \sigma(1+\gamma \sigma) \exp(-\gamma y \sigma)\rangle}{\langle (1+\gamma \sigma) \exp(-\gamma y \sigma)\rangle}y \exp(-y)\ud y$$
for a fixed $w$ as well as its $w$-expectation $\E g_w(\gamma)$, it is possible to deduce that $Y(w)=\E Y$ for all $\lambda$ in a small neighbourhood of the origin. With this in mind, introduce the random variable
$$Z(w)=\E_{|w}\int_0^\infty \langle \sigma(1+\lambda \sigma) \exp(-\lambda \sigma)\rangle y \exp(-y)\ud y$$
which is deterministic by the first identity in \eqref{eqn: SBMA asymtotic overlap and Franz de Sanctis 1}. This implies that the random variable
$$X(w)=\frac{Z(w)-Y(w)}{\lambda}=\E_{|w}\int_0^\infty \langle \sigma \exp(-\lambda y\sigma)\rangle\frac{\langle \sigma (1+\lambda \sigma) \exp(-\lambda y\sigma) \rangle}{\langle (1+\lambda \sigma) \exp(-\lambda y\sigma)\rangle}y \exp(-y)\ud y$$
is deterministic for all $\lambda$ in a small neighbourhood of the origin. In particular, all its $\lambda$-derivatives are also independent of $w$. We will now deduce from this observation that all multi-overlaps concentrate. Given $n\geq 1$, applying $\frac{\partial^n}{\partial \lambda^n}$ to the denominator in the expression inside the integral defining $X$ and evaluating at $\lambda=0$ yields the term
$$n!R_{1,\ldots,n+2}\E e(e-1)^n,$$
where $e$ is an $\Exp(1)$ random variable. Since $\E e(e-1)^n=\E(e-1)^{n+1}+\E(e-1)^n>0$ for all $n\geq 1$, the term obtained by applying all derivatives to the denominator in the expression inside the integral defining $X$ produces the multi-overlap $\smash{R_{1,\ldots,n+2}^\infty}$. If along the way we apply a derivative of $\lambda$ to any factor other than the denominator, this will not create a new replica, so all those terms will produce a linear combination of multi-overlaps on strictly less than $n+2$ replicas which by induction we assume to be independent of $w$. This establishes the concentration of all multi-overlaps and contradicts \eqref{eqn: SBMA multi-overlap concentration asymptotic absurd} thus completing the proof.
\end{proof}
\end{appendix}

\bibliographystyle{abbrv}
\bibliography{sparse_prob}

\end{document}